\documentclass[11pt]{article}

\usepackage{graphicx,wrapfig,lipsum}
\usepackage{float}    % For tables and other floats
\usepackage{verbatim} % For comments and other
\usepackage{amsmath,amssymb, amsthm,mathtools}% For math
\usepackage{subfig}   % For subfigures
\usepackage[colorlinks=true,citecolor=blue,linkcolor=blue,urlcolor=blue]{hyperref}
\usepackage{bookmark}
\usepackage{fullpage}
\usepackage{enumerate}
\usepackage{paralist}
\usepackage{xspace}
\usepackage{caption}
\usepackage{bbm}
\usepackage{bm}
\usepackage{url}
\usepackage{mathrsfs}
\usepackage{algorithm}
\usepackage{algorithmic}
\usepackage{xcolor}
\usepackage{microtype}
\usepackage[numbers]{natbib}
\usepackage[section]{placeins}
\usepackage{lscape}
\usepackage{multirow}
\usepackage{tikz}

\usepackage[nohyperref]{jmlr2e}

\makeatletter

%%%%%%%%%%%%%%%%%%%%%%%%%%%%%% LyX specific LaTeX commands.

\AtBeginDocument{\providecommand\secref[1]{Section \ref{sec:#1}}}
\AtBeginDocument{\providecommand\eqref[1]{\ref{eq:#1}}}
\AtBeginDocument{\providecommand\propref[1]{Proposition \ref{prop:#1}}}
\AtBeginDocument{\providecommand\thmref[1]{Theorem \ref{thm:#1}}}
\AtBeginDocument{\providecommand\corref[1]{Corollary \ref{cor:#1}}}
\AtBeginDocument{\providecommand\lemref[1]{Lemma \ref{lem:#1}}}
\AtBeginDocument{\providecommand\defref[1]{Definition \ref{def:#1}}}
\AtBeginDocument{}
\AtBeginDocument{\providecommand\appref[1]{Appendix  \ref{app:#1}}}

% Heading arguments are {volume}{year}{pages}{date submitted}{date published}{paper id}{author-full-names}
% Short headings should be running head and authors last names
\usepackage{lastpage}
\jmlrheading{24}{2023}{1-\pageref{LastPage}}{8/22; Revised 4/23}{4/23}{22-0882}{Gavin Zhang and Salar Fattahi and Richard Y. Zhang} 
\ShortHeadings{PrecGD for Overparameterized Burer--Monteiro}{Zhang, Fattahi, and Zhang}
\hypersetup{ hidelinks }

\firstpageno{1}

\begin{document}

\title{Preconditioned Gradient Descent for Overparameterized Nonconvex Burer--Monteiro
Factorization with Global Optimality Certification
\thanks{Financial support for this work was provided by NSF CAREER Award ECCS-2047462,
NSF Award DMS-2152776, ONR Award N00014-22-1-2127.}}

\author{\name Gavin Zhang \email jialun2@illinois.edu \\
       \addr Electrical and Computer Engineering\\
       University of Illinois at Urbana-Champaign, Urbana, IL 61801, USA
       \AND
       \name Salar Fattahi \email fattahi@umich.edu \\
       \addr Industrial and Operations Engineering\\
       University of Michigan, Ann Arbor, MI 48109, USA
       \AND
       \name Richard Y.\ Zhang \email ryz@illinois.edu \\
       \addr Electrical and Computer Engineering\\
       University of Illinois at Urbana-Champaign, Urbana, IL 61801, USA}
  
\editor{Prateek Jain}
\maketitle

\begin{abstract}%
We consider using gradient descent to minimize the nonconvex function
$f(X)=\phi(XX^{T})$ over an $n\times r$ factor matrix $X$, in which
$\phi$ is an underlying smooth convex cost function defined over
$n\times n$ matrices. While only a second-order stationary point
$X$ can be provably found in reasonable time, if $X$ is additionally
\emph{rank deficient}, then its rank deficiency certifies it as being
globally optimal. This way of certifying global optimality necessarily
requires the search rank $r$ of the current iterate $X$ to be \emph{overparameterized}
with respect to the rank $r^{\star}$ of the global minimizer $X^{\star}$.
Unfortunately, overparameterization significantly slows down the convergence
of gradient descent, from a linear rate with $r=r^{\star}$ to a sublinear
rate when $r>r^{\star}$, even when $\phi$ is strongly convex. In
this paper, we propose an inexpensive preconditioner that restores
the convergence rate of gradient descent back to linear in the
overparameterized case, while also making it agnostic to possible
ill-conditioning in the global minimizer $X^{\star}$. 
\end{abstract}

\begin{keywords}
Low-rank matrix recovery, Burer-Moneiro Factorization, Nonconvex Optimization, Global Optimality Certification
\end{keywords}

\global\long\def\A{\mathbf{A}}%
\global\long\def\J{\mathbf{J}}%
\global\long\def\e{\mathbf{e}}%
\global\long\def\P{\mathbf{P}}%
\global\long\def\Pb{\mathbb{P}}%
\global\long\def\E{\mathbb{E}}%
\global\long\def\V{\mathbb{V}}%
\global\long\def\N{\mathcal{N}}%
\global\long\def\vect{\mathrm{vec}}%
\global\long\def\vbrack#1{\langle#1\rangle}%
\global\long\def\norm#1{\left\Vert #1\right\Vert }%
\global\long\def\bigvbrack#1{\left\langle #1\right\rangle }%
\global\long\def\inner#1#2{\left\langle #1, #2 \right\rangle }%
\global\long\def\S{\mathcal{S}}%
\global\long\def\D{\mathcal{D}}%
\global\long\def\d{\mathrm{d}}%
\global\long\def\EE{\mathbf{E}}%
\global\long\def\AA{\mathcal{A}}%
\global\long\def\JJ{\mathcal{J}}%
\global\long\def\R{\mathbb{R}}%
\global\long\def\B{\mathbb{B}}%
\global\long\def\ub{\mathrm{ub}}%
\global\long\def\lb{\mathrm{lb}}%
\global\long\def\op{\mathrm{op}}%
\global\long\def\eqdef{\overset{\text{def}}{=}}%
\global\long\def\PP{\mathcal{P}}%
\global\long\def\TT{\mathcal{T}}%

\global\long\def\rank{\operatorname{rank}}%

\global\long\def\tr{\operatorname{tr}}%
\global\long\def\orth{\operatorname{orth}}%
\global\long\def\range{\operatorname{range}}%
\global\long\def\cXs{\mathcal{X}_{\text{stuck}}}%
\global\long\def\poly{\operatorname{poly}}%

\section{Introduction}
Numerous state-of-the-art algorithms in statistical and machine learning
can be viewed as gradient descent applied to the nonconvex Burer--Monteiro~\citep{burer2003nonlinear,burer2005local}
problem
\begin{equation}
X^{\star}=\quad\mathrm{minimize}\quad f(X)\eqdef\phi(XX^{T})\text{ over }X\in\R^{n\times r},\tag{BM}\label{eq:ncvx}
\end{equation}
in which $\phi$ is an underlying convex cost function defined over
$n\times n$ matrices. Typically, the search rank $r\ll n$ is set
significantly smaller than $n$, and an efficient gradient oracle
$X\mapsto\nabla f(X)$ is available due to problem structure that
costs $n\cdot\poly(r)$ time per query. 
Under these two assumptions,
each iteration of gradient descent $X_{+}=X-\alpha\nabla f(X)$ costs
$O(n)$ time and memory. 

Gradient descent has become widely popular for problem (\ref{eq:ncvx})
because it is simple to implement but works exceptionally well in
practice~\citep{sun2016guaranteed,bhojanapalli2016dropping,bhojanapalli2016global,park2017non,chen2017solving,park2018finding,chen2019gradient}.
Across a broad range of applications, gradient descent is consistently observed to converge from an arbitrary, possibly random initial guess $X_{0}$ to the global minimum $X^{\star}$, as if the function $f$ were \emph{convex}. In fact, in many cases, the observed convergence rate is even \emph{linear}, meaning that gradient descent converges to $\epsilon$ global suboptimality in $O(\log(1/\epsilon))$ iterations, as if the function $f$ were \emph{strongly convex}. When this occurs, the resulting empirical complexity of $\epsilon$-accuracy in $O(n\cdot\log(1/\epsilon))$ time matches the best figures achievable by algorithms for convex optimization. 

However, due to the nonconvexity of $f$, it is always possible for
gradient descent to fail by getting stuck at a \emph{spurious} local
minimum---a local minimum that is strictly worse than that of the
global minimum. This is a particular concern for safety-critical
applications like electricity grids~\citep{zhang2019spurious} and
robot navigation~\citep{rosen2019se,rosen2020certifiably}, where
mistaking a clearly suboptimal $X$ for the globally optimal $X^{\star}$
could have serious rammifications. Recent authors have derived conditions under which $f$ is guaranteed not to admit spurious local minima,
but such \emph{a priori} global optimality guarantees, which are valid
for all initializations before running the algorithm, can be much
stronger than what is needed for gradient descent to succeed in practice.
For example, it may also be the case that spurious local minima do
generally exist, but that gradient descent is frequently able to avoid
them without any rigorous guarantees of doing so. 

In this paper, we consider   \emph{overparameterizing} the search rank
$r$, choosing it to be large enough so that $\rank(X^{\star})<r$
holds for all globally optimal $X^{\star}$. We are  motivated by the ability to guarantee global optimality \emph{a posteriori}, that is,
\emph{after} a candidate $X$ has already been computed.  To explain,
it has been long suspected and recently rigorously shown~\cite{ge2015escaping,jin2017escape,jin2021nonconvex} that gradient descent can be made to converge to an approximate second-order stationary point
$X$ that satisfies
\begin{equation}
|\inner{\nabla f(X)}V|\le\epsilon_{g}\|V\|_{F},\quad\inner{\nabla^{2}f(X)[V]}V\ge-\epsilon_{H}\|V\|_{F}^{2}\quad\text{ for all }V\in\R^{n\times r}\label{eq:soc-F}
\end{equation}
with arbitrarily small accuracy parameters $\epsilon_{g},\epsilon_{H}>0$.
By evoking an argument first introduced by \citet[Theorem~4.1]{burer2005local}
(see also \citet{journee2010low} and \citet{boumal2016non,boumal2020deterministic}) one can show that an $X$ that
satisfies (\ref{eq:soc-F}) has global suboptimality:
\begin{equation}
f(X)-f(X^{\star})\le\underbrace{C_{g}\cdot\epsilon_{g}}_{\text{gradient norm}}+\underbrace{C_{H}\cdot\epsilon_{H}}_{\text{Hessian curvature}}+\underbrace{C_{\lambda}\cdot\lambda_{\min}(X^{T}X)}_{\text{rank deficiency}}\label{eq:certF}
\end{equation}
where $\lambda_{\min}(\cdot)$ denotes the smallest eigenvalue, and
$C_{g},C_{H},C_{\lambda}>0$ are absolute constants under standard
assumptions. By overparameterizing the search rank so that $r>r^\star$ holds, where $r^\star$ denotes the \emph{maximum rank} over all globally optimal $X^{\star}$, it follows from (\ref{eq:certF})
that the global optimality of an $X$ with $\epsilon_{g}\approx0$
and $\epsilon_{H}\approx0$ is conclusively determined by 
its rank deficiency term $\lambda_{\min}(X^{T}X)$:
\begin{enumerate}
\item (Near globally optimal) If $X\approx X^{\star}$, then $X$ must be
nearly rank deficient with $\lambda_{\min}(X^{T}X)\approx0$. In this
case, the near-global optimality of $X$ can be rigorously \emph{certified}
by evoking (\ref{eq:certF}) with $\epsilon_{g}\approx0$ and $\epsilon_{H}\approx0$
and $\lambda_{\min}(X^{T}X)\approx0$.
\item (Stuck at spurious point) If $f(X)\gg f(X^{\star})$, then by contradiction
$X$ must be nearly full-rank with $\lambda_{\min}(X^{T}X)\approx C_{\lambda}^{-1}(f(X)-f(X^{\star}))\not\approx 0$ bounded away from zero.
\end{enumerate}
As we describe in \secref{Cert}, the three parameters $\epsilon_{g},\epsilon_{H},$
and $\lambda_{\min}(X^{T}X)$ for a given $X$ can all be numerically
evaluated in $O(n)$ time and memory, using a small number of calls
to the gradient oracle $X\mapsto\nabla f(X)$. (In \secref{Cert},
we formally state and prove \eqref{eq:certF} as \propref{euclid_cert}.)

Aside from the ability to certify global optimality, a second benefit
of overparameterization is that $f$ tends to admit fewer spurious
local minima as the search rank $r$ is increased beyond the maximum
rank $r^{\star}\ge\rank(X^{\star})$. Indeed, it is commonly observed
in practice that any local optimization algorithm seem to globally
solve problem (\ref{eq:ncvx}) as soon as $r$ is slightly larger
than $r^{\star}$; see \citep{burer2003nonlinear,journee2010low,rosen2019se}
for numerical examples of this behavior. Towards a rigorous explanation,
\citet{boumal2016non,boumal2020deterministic} pointed out that if
the search rank is overparameterized as $r\ge n$, then the function
$f$ is guaranteed to contain \emph{no spurious local minima}, in
the sense that every second-order stationary point $Z$ satisfying
$\nabla f(Z)=0$ and $\nabla^{2}f(Z)\succeq0$ is guaranteed to be
global optimal $f(Z)=f(X^{\star})$. This result was recently sharpened
by \citet{zhang2022improved}, who proved that if the underlying convex
cost $\phi$ is $L$-gradient Lipschitz and $\mu$-strongly convex,
then overparameterizing the search rank by a constant factor as $r>\max\{r^{\star},\frac{1}{4}(L/\mu-1)^{2}r^{\star}\}$
is enough to guarantee that $f$ contains no spurious local minima.

Unfortunately, overparameterization significantly slows down the
convergence of gradient descent, both in theory and in practice. Under
suitable strong convexity and optimality assumptions on $\phi$, \citet{zheng2015convergent,tu2016low}
showed that gradient descent $X_{+}=X-\alpha\nabla f(X)$ locally
converges as follows
\[
f(X_{+})-f(X^{\star})\le\left[1-\alpha\cdot c\cdot\lambda_{\min}(X^{T}X)\right]\cdot\left[f(X)-f(X^{\star})\right]
\]
where $\alpha>0$ is the corresponding step-size, and $c>0$ is a
constant (see also \secref{sublin} for an alternative derivation). In the exactly parameterized regime $r=r^\star$, this inequality implies linear convergence, because $\lambda_{\min}(X^TX) > 0$ holds within a local neighborhood of the minimizer $X^\star$.
In the overparameterized regime $r>r^{\star}$, however, the iterate $X$ becomes
increasingly singular $\lambda_{\min}(X^{T}X)\to0$ as it makes progress
towards the global minimizer $X^{\star}$, and the convergence quotient
$Q=1-\alpha\cdot c\cdot\lambda_{\min}(X^{T}X)$ approaches 1. In practice,
gradient descent slows down to \emph{sublinear} convergence, now requiring
$\mathrm{poly}(1/\epsilon)$ iterations to yield an $\epsilon$ suboptimal
solution. This is a dramatic, exponential slow-down compared to the
$O(\log(1/\epsilon))$ figure associated with linear convergence under
exact rank parameterization $r=r^{\star}$.

For applications of gradient descent with very large values of $n$,
this exponential slow-down suggests that the ability to certify global
optimality via overparameterization can only come by dramatically worsening the quality of
the computed solution. In most cases, it remains better to exactly
parameterize the search rank as $r=r^{\star}$, in order to compute
a high-quality solution without a rigorous proof of quality. For safety-critical
applications for which a proof of quality is paramount, rank overparameterization
$r>r^{\star}$ is used alongside much more expensive trust-region
methods~\citep{rosen2019se,rosen2020certifiably,boumal2020deterministic}.
These methods can be made immune to the progressive ill-conditioning
$\lambda_{\min}(X^{T}X)\to0$ of the current iterate $X$, but have
per-iteration costs of $O(n^{3})$ time and $O(n^{2})$ memory that
limit $n$ to modest values. 

\subsection{Summary of results}

In this paper, we present an inexpensive \emph{preconditioner} for
gradient descent that restores the convergence rate of gradient descent
back to linear in the overparameterized case, both in theory and
in practice. We propose the following iterations 
\begin{equation}
X_{+}=X-\alpha\nabla f(X)(X^{T}X+\eta I)^{-1},\tag{PrecGD}\label{PrecGD}
\end{equation}
where $\alpha\in(0,1]$ is a fixed step-size, and $\eta\ge0$ is a
regularization parameter that may be changed from iteration to iteration.
We call these iterations \emph{preconditioned} gradient descent or
\ref{PrecGD}, because they can be viewed as gradient descent applied
with a carefully chosen $r\times r$ preconditioner. 

It is easy to see that \ref{PrecGD} should maintain a similar per-iteration $O(n)$
cost to regular gradient descent  in most applications where the Burer-Monteiro approach is used, where $r$ is typically set orders of magnitude smaller than $n$. The method induces an additional cost of $O(r^3)$ each iteration to form and compute the preconditioner $(X^TX+\eta I)^{-1}$. But in applications with very large values of $n$  and very small values of $r$, the small increase in the per-iteration cost, from $O(r)$ to $O(r^3)$, is completely offset by the exponential reduction in the number of iterations, from $O(1/\epsilon)$ to $O(\log(1/\epsilon))$.  Therefore, \ref{PrecGD} can serve
as a plug-in replacement for gradient descent, in order to provide the ability to
certify global optimality without sacrificing the high quality of
the computed solution. 

Our results are summarized as follows:

\begin{figure}[t]
\includegraphics[width=0.5\textwidth]{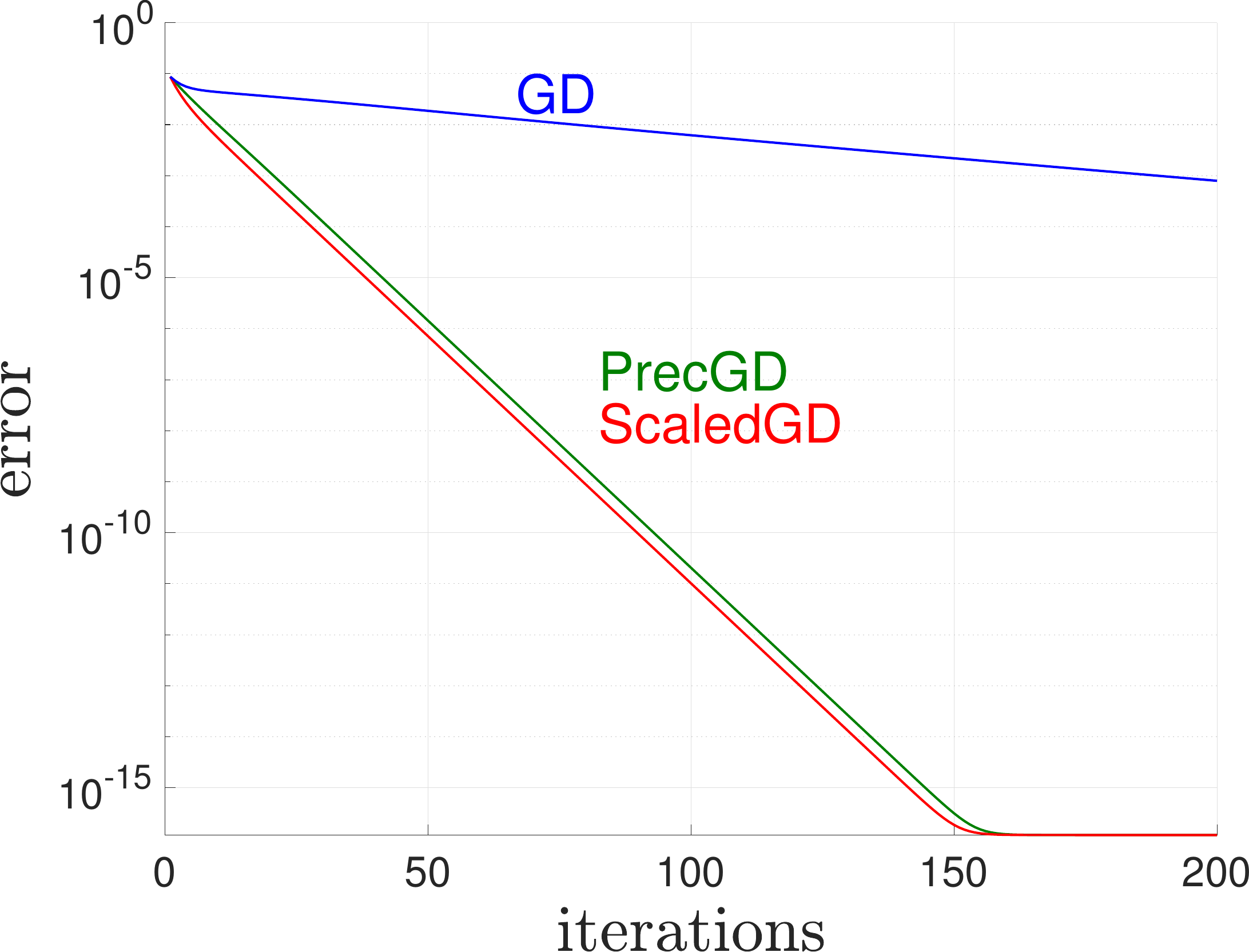}
% first figure itself
\hfill{}\includegraphics[width=0.5\textwidth]{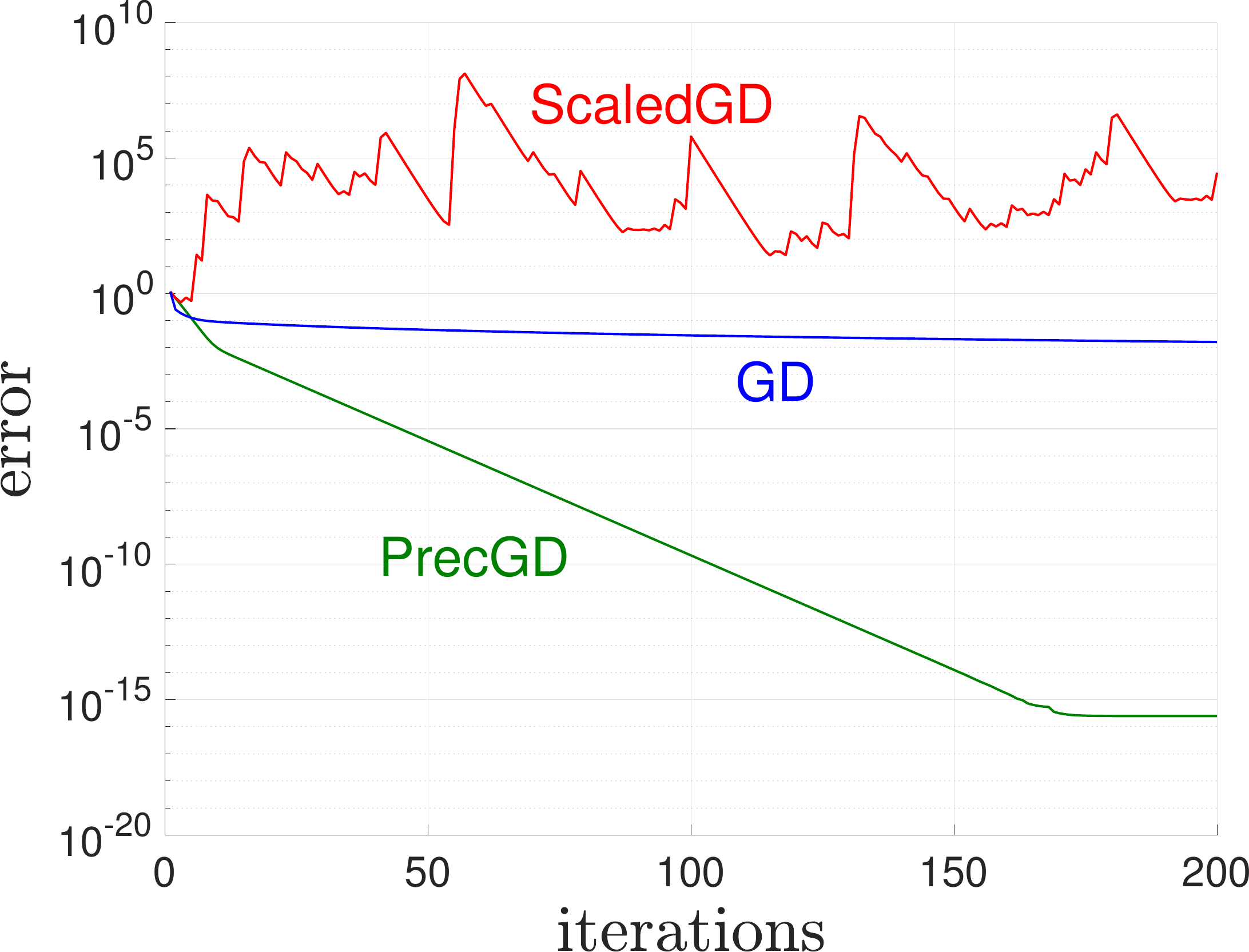}
% second figure itself
 \caption{\textbf{PrecGD converges linearly in the overparameterized regime.}
Comparison of \ref{PrecGD} against regular gradient descent (GD),
and the ScaledGD algorithm of \citet{tong2020accelerating} for an
instance of (\ref{eq:ncvx}) taken from~\citep{NEURIPS2018_f8da71e5,zhang2019sharp}.
The same initial points and the same step-size $\alpha=2\times10^{-2}$
was used for all three algorithms. \textbf{(Left $r=r^{*}$)} Set
$n=4$ and $r^{*}=r=2$. All three methods convergence at a linear
rate, though GD converges at a slower rate due to ill-conditioning
in the ground truth. \textbf{(Right $r>r^{*}$)} With $n=4$, $r=4$
and $r^{*}=2$, overparameterization causes gradient descent to slow
down to a sublinear rate. ScaledGD also behaves sporadically. Only
PrecGD converges linearly to the global minimum.\label{fig:linconv}}
\end{figure}

\paragraph{Local convergence.}

Starting within a neighborhood of the global minimizer $X^{\star}$,
and under suitable strong convexity and optimality assumptions on
$\phi$, classical gradient descent converges to $\epsilon$ suboptimality
in $O(1/\lambda_{r}\log(1/\epsilon))$ iterations where $\lambda_{r}=\lambda_{\min}(X^{\star T}X^{\star})$
is the rank deficiency term of the global minimizer~\citep{zheng2015convergent,tu2016low}.
This result breaks down in the overparameterized regime, where $r>r^{\star}=\rank(X^{\star})$
and $\lambda_{r}=0$ holds by definition; instead, gradient descent
now requires $\poly(1/\epsilon)$ iterations to converge to $\epsilon$
suboptimality~\citep{zhuo2021computational}. 

Under the same strong
convexity and optimality assumptions, we prove that \ref{PrecGD}
with the parameter choice $\eta=\|\nabla f(X)(X^{T}X)^{-1/2}\|_{F}$
converges to $\epsilon$ global suboptimality in $O(\log(1/\epsilon))$
iterations, independent of $\lambda_{r}^{\star}=0$. In fact, we prove
that the convergence rate of \ref{PrecGD} also becomes independent
of the smallest nonzero singular value $\lambda_{r^{\star}}=\lambda_{r^{\star}}(X^{\star T}X^{\star})$
of the global minimizer $X^{\star}$. In practice, this often allows
\ref{PrecGD} to converge faster in the overparameterized regime $r>r^{\star}$
than regular gradient descent in the exactly parameterized regime
$r=r^{\star}$ (see Fig.~\ref{fig:linconv}). In our numerical results,
we observe that the linear convergence rate of \ref{PrecGD} for all
values of $r\ge r^{\star}$ and $\lambda_{r^{\star}}>0$ is the same
as regular gradient descent with a perfectly conditioned global minimizer
$X^{\star}$, i.e. with $r=r^{\star}$ and $\lambda_{r^{\star}}=\lambda_{1}(X^{\star T}X^{\star})$.
In fact, linear convergence was observed even for choices of $\phi$
that do not satisfy the notions of strong convexity considered in
our theoretical results. 

\paragraph{Global convergence.}

If the function $f$ can be assumed to admit no spurious local minima,
then under a strict saddle assumption~\citep{ge2015escaping,ge2017no,jin2017escape},
classical gradient descent can be augmented with random perturbations
to globally converge to $\epsilon$ suboptimality in $O(1/\lambda_{r}\log(1/\epsilon))$
iterations, starting from any arbitrary initial point. In the overparameterized
regime, however, this global guarantee worsens by an exponential factor
to $\poly(1/\epsilon)$ iterations, due to the loss of local linear
convergence. 

Instead, under the same benign landscape assumptions
on $f$, we show that \ref{PrecGD} can be similarly augmented with
random perturbations to globally converge to $\epsilon$ suboptimality
in $O(\log(1/\epsilon))$ iterations, independent of $\lambda_{r}=0$
and starting from any arbitrary initial point. A major difficulty
here is the need to account for a preconditioner $(X^{T}X+\eta I)^{-1}$
that changes after each iteration. We prove an $\tilde{O}(1/\delta^{2})$
iteration bound to $\delta$ approximate second-order stationarity
for the perturbed version of \ref{PrecGD} with a fixed $\eta=\eta_{0}$,
by viewing the preconditioner as a local norm metric that is both
well-conditioned and Lipschitz continuous. 

\paragraph{Optimality certification.}

Finally, a crucial advantage of the overparameterizing the search
rank $r>r^{\star}$ is that it allows \emph{a posteriori} certification
of convergence to a global minimum. We give a short proof that if
$X$ is $\epsilon$ suboptimal, then this fact can be explicitly verified
by appealing to its second-order stationarity and its rank deficiency.
Conversely, we prove that if $X$ is stuck at a spurious second-order
critical point, then this fact can also be explicitly detected via
its lack of rank deficiency.

\subsection{Related work}

\paragraph{Benign landscape.}

In recent years, there has been significant progress in developing
rigorous guarantees on the global optimality of local optimization
algorithms like gradient descent~\citep{ge2016matrix,bhojanapalli2016dropping,sun2016complete,ge2017no,sun2018geometric}.
For example, \citet{bhojanapalli2016global} showed that if the underlying
convex function $\phi$ is $L$-gradient Lipschitz and $\mu$-strongly
convex with a sufficiently small condition number $L/\mu$, then $f$
is guaranteed to have no spurious local minima and satisfy the\emph{
}strict saddle property of \citep{ge2015escaping} (see also \citet{ge2017no}
for an exposition of this result). Where these properties hold, \citet{jin2017escape,jin2021nonconvex}
showed that gradient descent is rigorously guaranteed (after minor
modifications) to converge to $\epsilon$ global suboptimality in
$O(\log(1/\epsilon))$ iterations, starting from any arbitrary initial
point. 

Unfortunately, \emph{a priori} global optimality guarantees, which
must hold for all initializations before running the algorithm, can
often require assumptions that are too strong to be widely applicable
in practice \cite{ma2022blessing, ma2022global}. For example, \citet{zhang2018much,zhang2019sharp} found
for a global optimality guarantee
to be possible, the underlying convex function $\phi$ must have a condition number
of at most $L/\mu<3$, or else the claim is false due to the existence
of a counterexample. And while \citet{zhang2021sharp} later extended
this global optimality guarantee to arbitrarily large condition numbers
$L/\mu$ by overparameterizing the search rank $r>\max\{r^{\star},\frac{1}{4}(L/\mu-1)^{2}r^{\star}\}$,
the result does require suitable strong convexity and optimality assumptions
on $\phi$. Once these assumptions are lifted, \citet{waldspurger2020rank}
showed that a global optimality guarantee based on rank overparameterization
would necessarily require $r\ge n$ in general; of course, with such
a large search rank, gradient descent would no longer be efficient.

In this paper, we rigorously certify the global optimality of a point
$X$ after it has been computed. This kind of \emph{a posteriori}
global optimality guarantee may be more useful in practice, because
it makes no assumptions on the landscape of the nonconvex function $f$,
nor the algorithm used to compute $X$. In particular, $f$ may admit
many spurious local minima, but an \emph{a posteriori} guarantee will
continue to work so long as the algorithm is eventually able to compute
a rank deficient second-order point $X^{\star}$, perhaps after many
failures. Our numerical results find that \ref{PrecGD} is able to
broadly achieve an exponential speed-up over classical gradient descent,
even when our theoretical assumptions do not hold. 

\paragraph{Ill-conditioning and Over-parameterization}
 When minimizing the function $\phi(XX^T)$, ill-conditioning in this problem can come from two separate sources: ill-conditioning of the ground truth $M^\star$ and ill-conditioning of the loss function $\phi$. Both can cause gradient descent to slow down \citep{tu2016low, zhuo2021computational}. In this work, we focus on the former kind of ill-conditioning, because it is usually the more serious issue in practice. In applications like matrix completion or matrix sensing, the condition number of the loss function $\phi$ is entirely driven by the number of samples that the practitioner has collected---the more samples, the better the condition number. Accordingly, any ill-conditioning in $\phi$ can usually be overcome by collecting more samples. On the other hand, the ill-conditioning in $M^\star$ is inherent to the underlying nature of the data. It cannot be resolved, for example, by collecting more data. For these real-world applications, it was recently noted that the condition number of $M^\star$ can be as high as $10^8$ \citep{cloninger2014solving}. Indeed, if the rank of $M^\star$ is unknown or ill-defined, as in the over-parameterized case, the condition number is essentially infinite, and it was previously not known how to make gradient descent converge quickly.

\paragraph{ScaledGD.}

Our algorithm is closely related to the \emph{scaled gradient descent}
or \emph{ScaledGD }algorithm of \citet{tong2020accelerating}, which
uses a preconditioner of the form $(X^{T}X)^{-1}$. They prove that
ScaledGD is able to maintain a constant-factor decrement after each
iteration, even as $\lambda_{r}=\lambda_{\min}(X^{\star T}X^{\star})$
becomes small and $X^{\star}$ becomes ill-conditioned. However, applying
ScaledGD to the overparameterized problem with $\lambda_{r}=0$ and
a rank deficient $X^{\star}$ leads to sporadic and inconsistent behavior.
 The issue is that the admissible step-sizes needed to maintain a
constant-factor decrement also shrinks to zero as $\lambda_{r}$ goes
to zero (we elaborate on this point in detail in \secref{local}).
If we insist on using a constant step-size, then the method will on
occasion \emph{increment }after an iteration (see Fig.~\ref{fig:linconv}). 

Our main result is that regularizing the preconditioner as $(X^{T}X+\eta I)^{-1}$
with an identity perturbation $\eta I$ on the same order of magnitude
as the matrix error norm $\|XX^{T}-X^{\star}X^{\star}\|_{F}$ will
maintain the constant-factor decrement of ScaledGD, while also keeping
a constant admissible step-size. The resulting iterations, which we
call \ref{PrecGD}, is able to converge linearly, at a rate that is
independent of the rank deficiency term $\lambda_{r}$, even as it
goes to zero in the overparameterized regime.

\paragraph{Riemann Staircase.}

An alternative approach for certifying global optimality, often known
in the literature as the \emph{Riemann staircase}~\citep{boumal2015riemannian,boumal2016non,boumal2020deterministic},
is to progressively increase the search rank $r$ only after a second
order stationary point has been found. The essential idea is to keep
the search rank exactly parameterized $r=r^{\star}$ during the local
optimization phase, and to overparameterize only for the purpose of
certifying global optimality. After a full-rank $\epsilon$ second
order stationary point $X$ has been found in as few as $O(\log(1/\epsilon))$
iterations, we attempt to certify it as $\epsilon$ globally suboptimal
by increasing the search rank $r_{+}=r+1$ and augmenting $X_{+}=[X,0]$
with a column of zeros. If the augmented $X_{+}$ remains $\epsilon$
second order stationary under the new search rank $r_{+}$, then it
is certifiably $\epsilon$ globally suboptimal. Otherwise, $X_{+}$
is a saddle point; we proceed to reestablish $\epsilon$ second order
stationarity under the new search rank $r_{+}$ by performing another
$O(\log(1/\epsilon))$ iterations. 

The main issue with the Riemann staircase is that choices of $f$
based on real data often admit global minimizers $X^{\star}$ whose
singular values trail off slowly, for example like a power series
$\sigma_{i}(X^{\star})\approx1/i$ for $i\in\{1,2,\dots,r\}$ (see
e.g.~\citet[Fig.~S3]{kosinski2013private} for a well-cited example).
In this case, the search rank $r$ is always exactly parameterized
$r=r^{\star}$, but the corresponding $\epsilon$ second order stationary
point $X$ becomes progressively ill-conditioned as $r$ is increased.
In practice, ill-conditioning can cause a similarly dramatic slow-down
to gradient descent as overparameterization, to the extent that it
becomes indistinguishable from sublinear convergence. Indeed, existing
implementations of the Riemann staircase are usually based on much
more expensive trust-region methods; see e.g. \citet{rosen2019se,rosen2020certifiably}.

\textbf{Notations.} We denote $\lambda_{i}(M)$ as the $i$-th eigenvalue
of $M$ in descending order, as in $\lambda_{1}(M)\ge\lambda_{2}(M)\ge\cdots\ge\lambda_{n}(M)$. Similarly we use $\lambda_{\max}$ and $\lambda_{\min}$ to denote the largest and smallest singular value of a matrix. The matrix inner product is defined $\inner XY\eqdef\tr(X^{T}Y)$,
and that it induces the Frobenius norm as $\|X\|_{F}=\sqrt{\inner XX}$.
The vectorization $\vect(X)$ is the usual column-stacking operation
that turns a matrix into a column vector and $\otimes$ denote the Kronecker product. 
 Moreover, we use $\|X\|$ to denote the spectral norm (i.e. the induced 2-norm) of a matrix. We use $\nabla f(X)$ to denote the gradient at $X$, which is itself a matrix of same dimensions as $X$. The Hessian $\nabla^2 f(X)$ is defined as the linear operator that satisfies 
$
\nabla^2 f(X)[V] = \lim_{t\to 0} \frac{1}{t} [\nabla f(X+tV)-\nabla f(X)]
$
for all $V$. The symbol $\mathbb{B}(d)$ shows the Euclidean ball of radius $d$
centered at the origin. The notation $\tilde{O}(\cdot)$ is used to hide logarithmic terms in the usual big-O notation.

We always use $\phi(\cdot)$ to denote the original convex objective and $f(X)=\phi(XX^{T})$ to denote the factored objective function. We use $M^\star$ to denote the global minimizer of $\phi(\cdot)$. The dimension of $M^\star$ is $n$, and its rank is $r^\star$. Furthermore, the search rank is denoted by $r$, which means that $X$ is $n\times r$. We always assume that $\phi(\cdot)$ Lipschitz gradients, and is $(\mu,r)$ restricted strongly convex (see next section for precise definition). When necessary, we will also assume that $\phi(\cdot)$ has $L_2$-Lipschitz Hessians.

\section{Convergence Guarantees}

\subsection{Local convergence}

Let $f(X)\eqdef\phi(XX^{T})$ denote the a Burer--Monteiro cost function
defined over $n\times r$ factor matrices $X$. Under gradient Lipschitz
and strong convexity assumptions on $\phi$, it is a basic result
that convex gradient descent $M_{+}=M-\alpha\nabla\phi(M)$ has a
linear convergence rate. Under these same assumptions on $\phi$,
it was shown by \citet{NIPS2015_32bb90e8,tu2016low} that nonconvex
gradient descent $X_{+}=X-\alpha\nabla f(X)$ also has a linear convergence
rate within a neighborhood of the global minimizer $X^{\star}$, provided
that the unique unconstrained minimizer $M^{\star}=\arg\min\phi$ is positive
semidefinite $M^{\star}\succeq0$, and has a rank $r^{\star}=\rank(M^{\star})=r$
that matches the search rank. 
\begin{definition}[Gradient Lipschitz]
\label{def:gradlip}The differentiable function $\phi:\R^{n\times n}\to\R$
is said to be \emph{$L_{1}$-gradient Lipschitz} if 
\[
\|\nabla\phi(M+E)-\nabla\phi(M)\|_{F}\le L_{1}\cdot\|E\|_{F}
\]
holds for all $M,E\in\R^{n\times n}$.
\end{definition}
\begin{definition}[Strong convexity]
\label{def:strcvx}The twice differentiable function $\phi:\R^{n\times n}\to\R$
is said to be\emph{ $\mu$-strongly convex} if 
\[
\inner{\nabla^{2}\phi(M)[E]}E\ge\mu\|E\|_{F}^{2}
\]
holds for all $M,E\in\R^{n\times n}$. It is said to be \emph{$(\mu,r)$-restricted
strongly convex} if the above holds for all matrices $M,E\in\R^{n\times n}$ with rank $\leq r$.
\end{definition}
\begin{remark}
Note that \citet{NIPS2015_32bb90e8,tu2016low} actually assumed restricted
strong convexity, which is a \emph{milder} assumption than the usual
notion of strong convexity. In particular, if $\phi$ is $\mu$-strongly
convex, then it is automatically $(\mu,r)$-restricted strongly convex
for all $r\le n$. In the context of low-rank matrix optimization,
many guarantees made for a strongly convex $\phi$ can be trivially
extended to a restricted strongly convex $\phi$, because queries
to $\phi(M)$ and its higher derivatives are only made with respect
to a low-rank matrix argument $M=XX^{T}$.  We also note that in the context of our work, the conditions in Definitions \ref{def:gradlip} and \ref{def:strcvx} actually only needs to be imposed on symmetric matrices $E$. However, for clarity we will follow the standard definition.
\end{remark}
 If $r^{\star}$, the rank of the unconstrained minimizer $M^{\star}\succeq0$,
is strictly less than the search rank $r$, however, nonconvex gradient
descent slows down to a \emph{sublinear} local convergence rate, both
in theory and in practice. We emphasize that the sublinear rate manifests
in spite of the strong convexity assumption on $\phi$; it is purely
a consequence of the fact that $r^{\star}<r$. In this paper, we prove
that \ref{PrecGD} $X_{+}=X-\alpha\nabla f(X)(X^{T}X+\eta I)^{-1}$
has a \emph{linear} local convergence rate, irrespective of the rank
$r^{\star}\le r$ of the minimizer $M^{\star}\succeq0$. Note that a preliminary version of this result restricted to the nonlinear least-squares cost $f(X)=\|\mathcal{A}(XX^{T})-b\|^{2}$ had appeared in a conference paper by the same authors~\citep{zhang2021preconditioned}.
\begin{theorem}[Linear convergence]
\label{thm:local}Let $\phi$ be $L_{1}$-gradient Lipschitz and
$(\mu,2r)$-restricted strongly convex, and let $M^{\star}=\arg\min\phi$
satisfy $M^{\star}=X^{\star}X^{\star T}$ and $r^{\star}=\rank(M^{\star})\le r$.
Define $f(X)\eqdef\phi(XX^{T})$; if $X$ is sufficiently close to
global optimality
\[
f(X)-f(X^{\star})\le\frac{\mu}{2(1+\mu/L_{1})}\cdot\frac{\lambda_{r^{\star}}^{2}(M^{\star})}{2}
\]
and if $\eta$ is bounded from above and below by the distance to
the global optimizer 
\[
C_{\lb}\cdot\|XX^{T}-M^{\star}\|_{F}\le\eta\le C_{\ub}\cdot\|XX^{T}-M^{\star}\|_{F}
\]
then PrecGD $X_{+}=X-\alpha\nabla f(X)(X^{T}X+\eta I)^{-1}$ converges
linearly
\[
f(X_{+})-f(X^{\star})\le\left(1-\alpha\cdot\tau\right)\left[f(X)-f(X^{\star})\right]\text{ for }\alpha\le\min\{1,1/\ell\},
\]
with constants
\begin{gather*}
\tau=\frac{\mu^{2}}{2L_{1}}\left(1+C_{\ub}\cdot\left(1+\sqrt{2}+\frac{L_{1}+\mu}{\sqrt{L_{1}\mu}}\cdot\sqrt{r-r^{\star}}\right)\right)^{-1}\\
\ell=4L_{1}+(2L_{1}+8L_{1}^{2})\cdot C_{\lb}^{-1}+4L_{1}^{3}\cdot C_{\lb}^{-2}.
\end{gather*}
\end{theorem}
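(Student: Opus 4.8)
The plan is to combine a one-step sufficient-decrease estimate with a Polyak--{\L}ojasiewicz (PL) type inequality for the \emph{preconditioned} gradient, and then iterate. Throughout, write $M=XX^{T}$, $S=\nabla\phi(M)$ (so that $\nabla f(X)=2SX$, and $S=\nabla\phi(M)-\nabla\phi(M^{\star})$ since $M^{\star}$ minimizes $\phi$), $N=X^{T}X+\eta I$, $D=\nabla f(X)N^{-1}$ (so $X_{+}=X-\alpha D$), and let $g_{P}^{2}\eqdef\|\nabla f(X)N^{-1/2}\|_{F}^{2}=\inner{\nabla f(X)}{D}$ be the squared preconditioned gradient norm. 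Everything reduces to proving (i)~$f(X_{+})-f(X)\le-\alpha(1-\tfrac{\alpha\ell}{2})\,g_{P}^{2}$ and (ii)~$g_{P}^{2}\ge2\tau\,[f(X)-f(X^{\star})]$; chaining these with $\alpha\le\min\{1,1/\ell\}$ gives the claimed contraction, and since a strict decrease keeps $X_{+}$ inside the sublevel set defining the neighborhood, the hypotheses are reproduced at $X_{+}$ and the argument iterates.

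\emph{Step 1 (sufficient decrease).} I would Taylor-expand $f$ along $[X,X_{+}]$ using $\inner{\nabla^{2}f(Y)[D]}{D}=\inner{\nabla^{2}\phi(YY^{T})[YD^{T}+DY^{T}]}{YD^{T}+DY^{T}}+2\inner{\nabla\phi(YY^{T})}{DD^{T}}$. By gradient-Lipschitzness of $\phi$ this reduces to bounding $\|YD^{T}+DY^{T}\|_{F}$ and $\|\nabla\phi(YY^{T})\|_{\op}\|D\|_{F}^{2}$ for $Y$ on the segment. The two key estimates at $Y=X$ are: (a)~$\|XD^{T}+DX^{T}\|_{F}\le2\|XD^{T}\|_{F}\le2g_{P}$, which follows from $XN^{-1}X^{T}\preceq I$ together with $\|XD^{T}\|_{F}^{2}=4\tr(S^{2}XN^{-1}X^{T}XN^{-1}X^{T})\le4\tr(S^{2}XN^{-1}X^{T})=g_{P}^{2}$; and (b)~$\|\nabla\phi(M)\|_{\op}\|D\|_{F}^{2}\le L_{1}\|M-M^{\star}\|_{F}\cdot\eta^{-1}g_{P}^{2}\le(L_{1}/C_{\lb})\,g_{P}^{2}$, where the spurious factor $\eta^{-1}$ from $\|D\|_{F}^{2}\le\eta^{-1}g_{P}^{2}$ is exactly absorbed by the lower bound $\eta\ge C_{\lb}\|M-M^{\star}\|_{F}$. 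Passing from $X$ to an intermediate $Y=X-t\alpha D$ only introduces corrections that are higher order in $\alpha$ (controlled via $g_{P}\le2L_{1}\|M-M^{\star}\|_{F}$ and the smallness of $\|M-M^{\star}\|_{F}$); collecting all terms produces the stated $\ell=4L_{1}+(2L_{1}+8L_{1}^{2})C_{\lb}^{-1}+4L_{1}^{3}C_{\lb}^{-2}$, and $\alpha\le1/\ell$ then gives $f(X_{+})-f(X)\le-\tfrac{\alpha}{2}g_{P}^{2}$.

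\emph{Step 2 (preconditioned PL inequality).} Let $R$ be the optimal orthogonal Procrustes alignment of $X$ to $X^{\star}$ and set $E=X-X^{\star}R$, so that $M-M^{\star}=XE^{T}+EX^{T}-EE^{T}$ and hence $\inner{S}{M-M^{\star}}=\inner{\nabla f(X)}{E}-\inner{S}{EE^{T}}$. Convexity of $\phi$ gives $f(X)-f(X^{\star})\le\inner{S}{M-M^{\star}}$, while the variational identity $g_{P}=\max\{\inner{\nabla f(X)}{W}:\|WN^{1/2}\|_{F}\le1\}$, applied to a test direction built from $E$ (with a correction that pre-compensates for the second-order term $-EE^{T}$), yields $g_{P}\gtrsim\inner{S}{M-M^{\star}}/\|EN^{1/2}\|_{F}$. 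Since $\|EN^{1/2}\|_{F}^{2}=\|XE^{T}\|_{F}^{2}+\eta\|E\|_{F}^{2}$, the claim follows from a geometric estimate bounding $\|XE^{T}\|_{F}^{2}+\eta\|E\|_{F}^{2}$ by a constant multiple of $f(X)-f(X^{\star})$ --- a constant that depends on $L_{1}/\mu$ and $C_{\ub}$ and grows like $\sqrt{r-r^{\star}}$, but is \emph{free of both} $\lambda_{1}(M^{\star})$ \emph{and} $\lambda_{r^{\star}}(M^{\star})$; combined with the strong-convexity bound $f(X)-f(X^{\star})\ge\tfrac{\mu}{2}\|M-M^{\star}\|_{F}^{2}$ (obtained by expanding $\phi$ around $M^{\star}$, where $\nabla\phi(M^{\star})=0$), solving the resulting quadratic in $\sqrt{f(X)-f(X^{\star})}$ gives $g_{P}^{2}\ge2\tau\,[f(X)-f(X^{\star})]$ with $\tau$ as stated.

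The main obstacle is this last geometric estimate, which is where overparameterization and the calibration $\eta\asymp\|M-M^{\star}\|_{F}$ earn their keep. I would split $X$ through its SVD into a dominant rank-$r^{\star}$ block and a residual block carrying the $r-r^{\star}$ smallest singular values --- which, in the stated neighborhood, are of order $\sqrt{\|M-M^{\star}\|_{F}}$ --- and split $E$ accordingly. For the dominant block one argues, following the ScaledGD analysis of \citet{tong2020accelerating}, that the Procrustes optimality condition annihilates the troublesome cross term, so that the dominant part of $\|XE^{T}\|_{F}$ is $O(\|M-M^{\star}\|_{F})$ with a numerical constant involving \emph{neither} $\lambda_{1}(M^{\star})$ nor $\lambda_{r^{\star}}(M^{\star})$ --- this is precisely what makes the final rate agnostic to ill-conditioning of $X^{\star}$. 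For the residual block, where $X^{T}X$ is negligible against $\eta I$, the term $\eta\|E\|_{F}^{2}$ takes over, and applying $\tr(\cdot)\le\sqrt{r-r^{\star}}\,\|\cdot\|_{F}$ to the rank-$(r-r^{\star})$ residual Gram matrix converts the extra directions into the $\sqrt{r-r^{\star}}$ appearing in $\tau$; here the \emph{upper} bound $\eta\le C_{\ub}\|M-M^{\star}\|_{F}$ is essential, since too large an $\eta$ would inflate the error component along exactly the directions where the residual is largest. Keeping the ``$\lambda_{r^{\star}}$ block'' and the ``$\eta$ block'' cleanly separated while the Procrustes alignment couples them --- and, relatedly, controlling the higher-order term $\inner{S}{EE^{T}}$, whose magnitude is itself of order $\sqrt{r-r^{\star}}\cdot\|M-M^{\star}\|_{F}^{2}$ --- is the delicate part of the bookkeeping.
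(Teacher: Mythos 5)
Your high-level skeleton---one-step sufficient decrease in the preconditioned norm, plus a preconditioned PL/gradient-dominance inequality, then iterate---matches the paper's. Step 1 is essentially the paper's Lemma~\ref{lem:decr} and Lemma~\ref{lem:bndgrad}; the paper avoids Taylor-expanding $f$ along the segment by writing out $\phi\bigl((X+V)(X+V)^{T}\bigr)$ and invoking gradient Lipschitzness of $\phi$ \emph{once} on the quartic remainder, which is a bit cleaner than tracking $\nabla^{2}f$ at intermediate points, but the estimates and the resulting constant $\ell$ are the same.

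Step 2 is a genuinely different route, and it is where the real risk lies. The paper proves gradient dominance (Lemma~\ref{lem:pl}) without ever introducing a Procrustes alignment: it defines incidence angles $\theta_{k}$ between $M-M^{\star}$ and the subspaces $\{X_{k}Y^{T}+YX_{k}^{T}\}$ built from rank-$k$ truncations $X_{k}$ of $X$, and then runs an induction over $k$ from $r^{\star}$ to $r$ (Lemmas~\ref{lem:gradbnd2}, \ref{lem:dom_base}, \ref{lem:sintheta}): if $\cos\theta_{k}$ is small, that smallness forces $\lambda_{k+1}(XX^{T})$ to be bounded below, and at $k=r$ alignment is guaranteed by Lemma~\ref{lem:align}. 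You instead propose the ScaledGD-style Procrustes factorization $E=X-X^{\star}R$, so $M-M^{\star}=XE^{T}+EX^{T}-EE^{T}$, and aim to bound $\|EN^{1/2}\|_{F}^{2}$ by the suboptimality after splitting $X$ and $E$ into a dominant rank-$r^{\star}$ block and a residual block.

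You have, to your credit, put your finger on the crux (``keeping the $\lambda_{r^{\star}}$ block and the $\eta$ block cleanly separated while the Procrustes alignment couples them, and controlling $\inner{S}{EE^{T}}$''), but this is precisely the part left unproved, and it is not obvious the Procrustes route survives overparameterization. First, the Procrustes annihilation of cross terms that you borrow from \citet{tong2020accelerating} is an exactly-parameterized fact: when $X^{\star}$ is rank-$r^{\star}<r$, the rotation $R$ is not pinned down in the null directions of $X^{\star}$, and the coupling between the dominant part of $X$ and the residual part of $E$ (and vice versa) does not vanish. Second, the higher-order term $\inner{S}{EE^{T}}$ threatens to reintroduce the very dependence you are trying to eliminate: the dominant-block Procrustes error scales like $\|M-M^{\star}\|_{F}/\sqrt{\lambda_{r^{\star}}(M^{\star})}$, so $\|EE^{T}\|_{F}\sim\|M-M^{\star}\|_{F}^{2}/\lambda_{r^{\star}}(M^{\star})$, and comparing $L_{1}\|M-M^{\star}\|_{F}\,\|EE^{T}\|_{F}$ against $f(X)-f(X^{\star})\gtrsim\mu\|M-M^{\star}\|_{F}^{2}$ requires either a strictly smaller neighborhood than the theorem allows or a genuinely nontrivial corrected test direction $W\neq E$ that pre-cancels $EE^{T}$ within the admissible set $\|WN^{1/2}\|_{F}\le1$---which you gesture at but do not construct. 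The paper's induction argument sidesteps both problems by never factoring $M-M^{\star}$ through an alignment of $X$ with $X^{\star}$; instead it quantifies, rank by rank, how much of the error can hide in poorly-conditioned directions of $X$ itself, which is exactly what is needed when $X^{\star}$ is singular.
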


\thmref{local} suggests choosing the size of the identity perturbation
$\eta$ in the preconditioner $(X^{T}X+\eta I)^{-1}$ to be within
a constant factor of the error norm $\|XX^{T}-M^{\star}\|_{F}$. This
condition is reminiscent of trust-region methods, which also requires
a similar choice of $\eta$ to ensure fast convergence towards an
optimal point with a degenerate Hessian (see in particular \citealt[Assumption~2.2]{yamashita2001rate}
and also \citealt{fan2005quadratic}). The following provides an explicit
choice of $\eta$ that satisfies the condition in \thmref{local}
in closed-form.
\begin{corollary}[Optimal parameter]
\label{cor:param}Under the same condition as \thmref{local}, we
have
\begin{gather*}
\frac{\mu}{\sqrt{2}}\cdot\|XX^{T}-M^{\star}\|_{F}\le\|\nabla f(X)(X^{T}X)^{-1/2}\|_{F}\le2L_{1}\cdot\|XX^{T}-M^{\star}\|_{F}
\end{gather*}
\end{corollary}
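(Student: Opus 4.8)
The plan is to rewrite the quantity as a projected gradient and bound the two sides separately. By the chain rule $\nabla f(X)=(\nabla\phi(XX^{T})+\nabla\phi(XX^{T})^{T})X$, and since $M^{\star}=\arg\min\phi$ makes $\nabla\phi(M^{\star})=0$, I would set $G\eqdef\tfrac12(\nabla\phi(XX^{T})+\nabla\phi(XX^{T})^{T})$, a symmetric matrix with $\|G\|_{F}\le\|\nabla\phi(XX^{T})-\nabla\phi(M^{\star})\|_{F}$, so that $\nabla f(X)=2GX$. Writing $\tilde U\eqdef X(X^{T}X)^{-1/2}$, which has orthonormal columns spanning $\range(X)$, gives $\nabla f(X)(X^{T}X)^{-1/2}=2G\tilde U$, hence $\|\nabla f(X)(X^{T}X)^{-1/2}\|_{F}=2\|G\tilde U\|_{F}$, and the statement reduces to sandwiching $\|G\tilde U\|_{F}$ between $\tfrac{\mu}{2\sqrt2}\|XX^{T}-M^{\star}\|_{F}$ and $L_{1}\|XX^{T}-M^{\star}\|_{F}$. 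The upper bound is then immediate: orthonormality of $\tilde U$ gives $\|G\tilde U\|_{F}\le\|G\|_{F}$, and gradient Lipschitzness with $\nabla\phi(M^{\star})=0$ gives $\|G\|_{F}\le L_{1}\|XX^{T}-M^{\star}\|_{F}$.

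For the lower bound I would invoke $(\mu,2r)$-restricted strong convexity through the averaged Hessian $\mathcal H\eqdef\int_{0}^{1}\nabla^{2}\phi(M^{\star}+t(XX^{T}-M^{\star}))\,\d t$: every matrix on this segment has rank at most $r+r^{\star}\le 2r$, so $G=\mathcal H[XX^{T}-M^{\star}]$ with $\mu\|Z\|_{F}^{2}\le\inner{\mathcal H[Z]}{Z}$ and $\|\mathcal H[Z]\|_{F}\le L_{1}\|Z\|_{F}$ for symmetric $Z$ of rank $\le2r$; in particular $\inner{G}{XX^{T}-M^{\star}}\ge\mu\|XX^{T}-M^{\star}\|_{F}^{2}$. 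With $P\eqdef\tilde U\tilde U^{T}$ I would decompose $XX^{T}-M^{\star}=N-B-B^{T}-C$, where $N\eqdef P(XX^{T}-M^{\star})P=XX^{T}-PM^{\star}P$ is the in-range error, $B\eqdef PM^{\star}(I-P)$ is the cross term, and $C\eqdef(I-P)M^{\star}(I-P)$ is the out-of-range piece, so $\|XX^{T}-M^{\star}\|_{F}^{2}=\|N\|_{F}^{2}+2\|B\|_{F}^{2}+\|C\|_{F}^{2}$ because $(I-P)XX^{T}=0$. Then I would lower bound $\|G\tilde U\|_{F}^{2}=\|PGP\|_{F}^{2}+\|(I-P)GP\|_{F}^{2}$ block by block: $\|PGP\|_{F}=\|\tilde U^{T}G\tilde U\|_{F}$ is controlled from below by the restricted strong convexity of the compressed cost $S\mapsto\phi(\tilde U S\tilde U^{T})$ evaluated at $S=X^{T}X$, which detects $N$; and $(I-P)GP=-\mu B^{T}+(I-P)(\mathcal H-\mu I)[XX^{T}-M^{\star}]P$ detects $B$. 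Combined with the Pythagorean identity above this should deliver $\|G\tilde U\|_{F}\ge\tfrac{\mu}{2\sqrt2}\|XX^{T}-M^{\star}\|_{F}$.

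The place where I expect to have to work is the out-of-range piece $C$, and more generally the remainder $(\mathcal H-\mu I)[XX^{T}-M^{\star}]$, whose Frobenius norm is only $(L_{1}-\mu)\|XX^{T}-M^{\star}\|_{F}$ and could swamp the $\mu$-terms above; bounding each block of $XX^{T}-M^{\star}$ in isolation is too lossy. This is exactly where the closeness hypothesis must enter: combined with $f(X)-f(X^{\star})\ge\tfrac{\mu}{2}\|XX^{T}-M^{\star}\|_{F}^{2}$ it forces $\|XX^{T}-M^{\star}\|_{F}<\lambda_{r^{\star}}(M^{\star})$, and since $-C=(I-P)(XX^{T}-M^{\star})(I-P)$ is a compression of $XX^{T}-M^{\star}$ with operator norm below $\lambda_{r^{\star}}(M^{\star})$, the range of $M^{\star}$ must lie almost inside $\range(X)$; the resulting smallness of $C$ and of the relevant part of the remainder is what the factor-of-two cushion in $\mu/\sqrt2$ is there to absorb. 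To keep this bookkeeping tight I would reuse the sharper estimates already established in the proof of \thmref{local}, of which this corollary is essentially a quantitative restatement (it is precisely what makes the closed-form choice $\eta=\|\nabla f(X)(X^{T}X)^{-1/2}\|_{F}$ satisfy the hypothesis $C_{\lb}\|XX^{T}-M^{\star}\|_{F}\le\eta\le C_{\ub}\|XX^{T}-M^{\star}\|_{F}$ of that theorem with $C_{\lb}=\mu/\sqrt2$ and $C_{\ub}=2L_{1}$).
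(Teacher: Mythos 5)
Your upper bound is correct and is, in substance, the paper's: writing $\nabla f(X)(X^{T}X)^{-1/2}=2G\tilde U$ with $\tilde U=X(X^{T}X)^{-1/2}$ having orthonormal columns, then $\|G\tilde U\|_F\le\|G\|_F=\|\nabla\phi(XX^T)-\nabla\phi(M^\star)\|_F\le L_1\|XX^T-M^\star\|_F$, is exactly the computation in \lemref{bndgrad} specialized to $\eta=0$.

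The lower bound is where your proposal diverges from the paper, and as written it is a plan, not a proof. The paper's proof is a two-line invocation of lemmas already established: setting $\eta=0$ in \lemref{pl} makes the $\eta$-dependent correction factor $\bigl(1+\eta(c_0+c_1\sqrt{r-r^\star})/\|XX^T-M^\star\|_F\bigr)^{-1/2}$ collapse to $1$, which gives $\tfrac{\mu}{\sqrt 2}\|XX^T-M^\star\|_F\le\|\nabla f(X)\|_{X,0}^*$ outright, and \lemref{bndgrad} gives the other side. Your block decomposition $E=N-B-B^T-C$ against $P=\tilde U\tilde U^T$ is a legitimate alternative parameterization, and you correctly identify the crux: the out-of-range piece $C=(I-P)M^\star(I-P)$, and the off-range action of $\mathcal H-\mu I$, must be shown small within the closeness radius. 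That is precisely the content of the paper's basis-alignment \lemref{align}, which bounds $\sin\theta=\|C\|_F/\|E\|_F$ by a function of $\rho=\|E\|_F/\lambda_{r^\star}(M^\star)$. But you never carry the bookkeeping through to the constant $\mu/\sqrt 2$; instead you defer to ``estimates already established in the proof of \thmref{local},'' which is circular since this corollary is one of the consumers of exactly those estimates (\lemref{pl}). You also misattribute the role of the $\sqrt 2$: it is not slack available to absorb $C$; it is forced unconditionally by \lemref{scal}'s bound $\|XY^T+YX^T\|_F^2\ge 2\lambda_{\min}(X^TX)\|Y\|_F^2$ on the map $Y\mapsto XY^T+YX^T$, while the out-of-range piece is absorbed by the alignment margin $\cos\theta_k-\delta\ge\mu/(L_1+\mu)$. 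If you push the block-wise computation to completion you will essentially re-derive \lemref{pl}; citing \lemref{pl} and \lemref{bndgrad} at $\eta=0$ is the shortest route, and is what the paper does.
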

We provide a proof of \thmref{local} and \corref{param} in \secref{local}.

Here, we point out that while \ref{PrecGD} becomes immune to $\kappa=\lambda_1(M^\star)/\lambda(M^\star)$, the condition number of the ground truth $M^\star$, its dependence on $\chi = L_1/\mu$, the condition number of the convex loss function $\phi$, is apparently much worse. Concretely, gradient descent is known to have an iteration complexity of $O(\kappa \cdot \chi \cdot \log(1/\epsilon))$, while \thmref{local} says that \ref{PrecGD} has an iteration complexity of $O(\chi^4 \cdot \log(1/\epsilon))$. (Note that the constants $C_\mathrm{lb}$ and $C_\mathrm{ub}$ in Theorem \ref{thm:local} have ``units'' of $\mu$ and $L_1$ respectively, and therefore 
$\ell = O(L_1\chi^2)$.)

In our numerical experiments, however, both methods have the same dependence on $\chi$. In other words, the iteration complexity of PrecGD is strictly better than GD in practice. Therefore, we believe that with a more refined analysis, our dependence on the latter can also be improved. However, as we discussed in the introduction, the ill-conditioning of $M^\star$ is usually much more serious in practice, so it is the main focus of our work. 

\subsection{Global convergence}

If initialized from an arbitrary point $X_{0}$, gradient descent
can become stuck at a suboptimal point $X$ with small gradient norm
$\|\nabla f(X)\|_{F}\le\delta$. A particularly simple way to escape
such a point is to perturb the current iterate $X$ by a small amount
of random noise. Augmenting classical gradient descent with random
perturbations in this manner, \citet{jin2017escape,jin2021nonconvex}
proved convergence to an $\delta$ approximate second-order stationary
point $X$ satisfying $\|\nabla f(X)\|_{F}\le\delta$ and $\nabla f(X)\succeq-\sqrt{\delta}\cdot I$
in at most $O(1/\delta^{2}\log^{4}(nr/\delta))$ iterations, assuming
that the convex function $\phi$ is gradient Lipschitz and also Hessian
Lipschitz. 
\begin{definition}[Hessian Lipschitz]
\label{def:hesslip}The twice-differentiable function $\phi:\R^{n\times n}\to\R$
is said to be $L_{2}$-\emph{Hessian Lipschitz} if 
\[
\|\nabla^{2}\phi(M)[E]-\nabla^{2}\phi(M')[E]\|_{F}\le L_{2}\cdot\|E\|_{F}\cdot\|M-M'\|_{F}
\]
holds for all $M,M',E\in\R^{n\times n}$. 
\end{definition}
It turns out that certain choices of $f$ satisfy the property that
every $\delta$ approximate second-order stationary point $X$ lies
$\poly(\delta)$-close to a global minimum~\citep{sun2016complete,sun2018geometric,bhojanapalli2016global}.
The following definition is adapted from \citet{ge2015escaping};
see also \citep{ge2017no,jin2017escape}.
\begin{definition}[Strict saddle property]\label{def:strictsaddle}
The function $f$ is said to be $(\epsilon_{g},\epsilon_{H},\rho)$-\emph{strict
saddle} if at least one of the following holds for every $X$:
\begin{itemize}
\item $\|\nabla f(X)\|\ge\epsilon_{g}$;
\item $\lambda_{\min}(\nabla f(X))\le-\epsilon_{H}$;
\item There exists $Z$ satisfying $\nabla f(Z)=0$ and $\nabla^{2}f(Z)\succeq0$
such that $\|X-Z\|_{F}\le\rho$.
\end{itemize}
The function is said to be $(\epsilon_{g},\epsilon_{H},\rho)$-\emph{global
strict saddle} if it is $(\epsilon_{g},\epsilon_{H},\rho)$-strict
saddle, and that all $Z$ that satisfy $\nabla f(Z)=0$ and $\nabla^{2}f(Z)\succeq0$
also satisfy $f(Z)=f(X^{\star})$.
\end{definition}
Assuming that $f(X)\eqdef\phi(XX^{T})$ is $(\epsilon_{g},\epsilon_{H},\rho)$-global
strict saddle, \citet{jin2017escape} used perturbed gradient descent
to arrive within a $\rho$-local neighborhood, after which it takes
gradient descent another $O(1/\lambda_{r}\log(\rho/\epsilon))$ iterations
to converge to $\epsilon$ global suboptimality. Viewing $\epsilon_{g},\epsilon_{H},\rho$
as constants with respect to $\epsilon$, the combined method globally
converges to $\epsilon$ suboptimality in $O(1/\lambda_{r}\log(1/\epsilon))$
iterations, as if $f$ were a smooth and strongly convex function.

If the rank $r^{\star}<r$ is strictly less than the search rank $r$,
however, the global guarantee for gradient descent worsens by an exponential
factor to $\poly(1/\epsilon)$ iterations, due to the loss of local
linear convergence. Inspired by \citet{jin2017escape}, we consider
augmenting \ref{PrecGD} with random perturbations, in order to arrive
within a local neighborhood for which our linear convergence result
(\thmref{local}) becomes valid. Concretely, we consider \textit{perturbed}
PrecGD or PPrecGD, defined as 
\begin{equation}
X_{k+1}=X_{k}-\alpha[\nabla f(X_{k})(X_{k}^{T}X_{k}+\eta I)^{-1}+\zeta_{k}],\tag{PPrecGD}\label{PPrecGD}
\end{equation}
where we \emph{fix} the value of the regularization parameter $\eta>0$
and apply a random perturbation $\zeta_{k}$ whenever the gradient
norm becomes small: 
\[
\begin{cases}
\zeta_{k}\sim\mathbb{B}(\beta) & \|\nabla f(X_{k})(X_{k}^{T}X_{k}+\eta_{\mathrm{fix}}I)^{-1/2}\|_{F}\le\epsilon, \text{ and } k \ge k_{\mathrm{last}} + \mathcal{T},\\
\zeta_{k}=0 & \text{otherwise.}
\end{cases}
\]

Here, $k_{\mathrm{last}}$ denotes the last iteration index for which a random perturbation was made. The condition $k \ge k_{\mathrm{last}} + \mathcal{T}$ ensures that the algorithm takes at least $\mathcal{T}$ iterations before making a new perturbation.

The algorithm parameters are the step-size $\alpha>0$, the perturbation
radius $\beta>0$, the period of perturbation $\mathcal{T}$, the
fixed regularization parameter $\eta>0$, and the accuracy threshold
$\epsilon>0$. We show that \ref{PPrecGD} is guaranteed to converge
to an $\epsilon$ second-order stationary point, provided that the
following sublevel set of $\phi$ (which contains all iterates $X_{0},X_{1},\dots,X_{k}$)
is bounded:
\begin{gather*}
\mathcal{X}=\{X\in\R^{n\times r}:\phi(XX^{T})\leq\phi(X_{0}X_{0}^{T})+2\sqrt{\|X_{0}\|_{F}^{2}+\eta}\cdot\alpha\beta\epsilon\}.
\end{gather*}
Let $\Gamma=\max_{X\in\mathcal{X}}\|X\|_{F}$. Below, the notation
$\tilde{O}(\cdot)$ hides polylogarithmic factors in the algorithm
and function parameters $\eta,$ $L_{1},$ $L_{2},$ $\Gamma,$ the
dimensionality $n,$ $r,$ the final accuracy $1/\epsilon,$ and the
initial suboptimality $f(X_{0})-f(X^{\star})$. The proof is given in \secref{global}.

\begin{theorem}[Approximate second-order optimality]
\label{cor_pprecgd} Let $\phi$ be $L_{1}$-gradient and $L_{2}$-Hessian
Lipschitz. Define $f(X)\eqdef\phi(XX^{T})$ and let $X^{\star}=\arg\min f$.
For any $\epsilon=O(1/(L_{d}\sqrt{\Gamma^{2}+\eta}))$ and with an
overwhelming probability, \ref{PPrecGD} with parameters $\alpha=\eta/\ell_{1},$
$\beta=\tilde{O}(\epsilon/L_{d}),$ and $\mathcal{T}=\tilde{O}(L_{1}\Gamma^{2}/(\eta\sqrt{L_{d}\epsilon}))$
converges to a point $X$ that satisfies 
\begin{equation}
\inner{\nabla f(X)}V\le\epsilon\cdot\|V\|_{X,\eta},\quad\inner{\nabla^{2}f(X)[V]}V\ge-\sqrt{L_{d}\epsilon}\cdot\|V\|_{X,\eta}^{2}\quad\text{for all }V,\label{eq_SOC}
\end{equation}
where $\|V\|_{X,\eta}\eqdef\|V(X^{T}X+\eta I)^{1/2}\|_{F}$ in at
most 
\[
\tilde{O}\left(\frac{\ell_{1}\cdot[f(X_{0})-f(X^{\star})]}{\eta^{2}\cdot\epsilon^{2}}\right)\text{ iterations}
\]
where $L_{d}={5\max\{\ell_{2},2\Gamma\ell_{1}\sqrt{\Gamma^{2}+\eta}\}}/{\eta^{2.5}}$,
$\ell_{1}=9\Gamma^{2}L_{1}$, $\ell_{2}=(4\Gamma+2)L_{1}+4\Gamma^{2}L_{2}$. 
\end{theorem}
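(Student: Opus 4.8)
The plan is to recognize \ref{PPrecGD} as \emph{perturbed gradient descent in a variable metric}: introduce the inner product $\langle U,V\rangle_{X,\eta}\eqdef\inner{U}{V(X^{T}X+\eta I)}$, whose induced norm is precisely the $\|\cdot\|_{X,\eta}$ of \eqref{eq_SOC}, and observe that the gradient of $f$ with respect to this metric is exactly $\nabla f(X)(X^{T}X+\eta I)^{-1}$, so that the update $X_{k+1}=X_{k}-\alpha[\nabla f(X_{k})(X_{k}^{T}X_{k}+\eta I)^{-1}+\zeta_{k}]$ is metric gradient descent with the identity retraction plus a ball-uniform perturbation, and the trigger condition $\|\nabla f(X_{k})(X_{k}^{T}X_{k}+\eta I)^{-1/2}\|_{F}\le\epsilon$ is exactly ``the dual metric norm of $\nabla f(X_{k})$ is at most $\epsilon$''. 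Everything then rests on two structural facts about this metric on the bounded sublevel set $\mathcal{X}$ with $\Gamma=\max_{X\in\mathcal{X}}\|X\|_{F}$: it is \emph{uniformly well-conditioned}, since $\eta I\preceq X^{T}X+\eta I\preceq(\Gamma^{2}+\eta)I$, so $\|\cdot\|_{X,\eta}$ and $\|\cdot\|_{F}$ differ only by the factors $\sqrt{\eta}$ and $\sqrt{\Gamma^{2}+\eta}$; and it is \emph{Lipschitz continuous}, since $X\mapsto X^{T}X+\eta I$ is $2\Gamma$-Lipschitz on $\mathcal{X}$. Combined with the fact that $f(X)=\phi(XX^{T})$ inherits from $\phi$ a Euclidean gradient-Lipschitz constant $\ell_{1}=9\Gamma^{2}L_{1}$ and Hessian-Lipschitz constant $\ell_{2}=(4\Gamma+2)L_{1}+4\Gamma^{2}L_{2}$ on $\mathcal{X}$, these two facts are what produce the effective metric-smoothness $\sim\ell_{1}/\eta$ and the effective metric-Hessian-Lipschitz constant $L_{d}$, in which the metric-variation term $2\Gamma\ell_{1}\sqrt{\Gamma^{2}+\eta}$ and the powers of $\eta$ appear.

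First I would establish a descent lemma for the unperturbed step. With $\hat{g}_{k}\eqdef\nabla f(X_{k})(X_{k}^{T}X_{k}+\eta I)^{-1/2}$, Euclidean smoothness gives $f(X_{k+1})\le f(X_{k})-\alpha\|\hat{g}_{k}\|_{F}^{2}+\tfrac{\alpha^{2}\ell_{1}}{2}\|\nabla f(X_{k})(X_{k}^{T}X_{k}+\eta I)^{-1}\|_{F}^{2}$, and the lower spectral bound $X_{k}^{T}X_{k}+\eta I\succeq\eta I$ turns the last term into at most $\tfrac{\alpha^{2}\ell_{1}}{2\eta}\|\hat{g}_{k}\|_{F}^{2}$; the choice $\alpha=\eta/\ell_{1}$ then yields $f(X_{k+1})\le f(X_{k})-\tfrac{\eta}{2\ell_{1}}\|\hat{g}_{k}\|_{F}^{2}$. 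In particular every unperturbed iteration decreases $f$, and any unperturbed iteration with $\|\hat{g}_{k}\|_{F}>\epsilon$ decreases $f$ by at least $\tfrac{\eta}{2\ell_{1}}\epsilon^{2}$; accounting additionally for the bounded one-step increase $\le\alpha\beta\|\nabla f(X_{k})\|_{F}$ possible at a perturbed iterate (which is $O(\alpha\beta\epsilon\sqrt{\Gamma^{2}+\eta})$ at a trigger point) shows that all iterates stay in $\mathcal{X}$, legitimizing the use of $\Gamma$ and of all the Lipschitz constants throughout.

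Next, and this is the hard part, I would prove an escaping-saddle (``improve or localize'') lemma: if a perturbation is made at $X_{k}$, so $\|\hat{g}_{k}\|_{F}\le\epsilon$, and if $\nabla^{2}f(X_{k})$ has curvature below $-\sqrt{L_{d}\epsilon}$ in the metric, i.e.\ $\inner{\nabla^{2}f(X_{k})[V]}{V}<-\sqrt{L_{d}\epsilon}\,\|V\|_{X_{k},\eta}^{2}$ for some $V$, then with overwhelming probability $f(X_{k+\mathcal{T}})\le f(X_{k})-\mathscr{F}$ for an escape amount $\mathscr{F}$ of order roughly $\eta\sqrt{\epsilon^{3}/L_{d}}$ (up to logarithmic factors). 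Following \citet{jin2017escape,jin2021nonconvex}, I would run \ref{PPrecGD} from two starting points separated by a small displacement along the most-negative-curvature direction using the \emph{same} noise realization, and show by an inductive estimate that the separation grows at a geometric rate governed by $\sqrt{L_{d}\epsilon}$, so at least one of the two trajectories must leave a small localization ball around $X_{k}$ within $\mathcal{T}=\tilde{O}(L_{1}\Gamma^{2}/(\eta\sqrt{L_{d}\epsilon}))$ steps and hence (by the descent lemma applied along it) decrease $f$ by $\mathscr{F}$; anti-concentration of $\zeta_{k}\sim\mathbb{B}(\beta)$ with $\beta=\tilde{O}(\epsilon/L_{d})$ makes the initial separation large enough with high probability, and a union bound over the $\poly(1/\epsilon)$ perturbation episodes controls the total failure probability. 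The main obstacle is that, unlike in the fixed Euclidean setting, the quantity whose negative eigenvalue is exploited is the \emph{metric-Hessian}, which combines $\nabla^{2}f$ with the first variation of the preconditioner, which varies along each trajectory and differs between the two coupled trajectories; controlling these variations requires the $2\Gamma$-Lipschitz bound on $X\mapsto X^{T}X+\eta I$ together with the well-conditioning to keep all norms comparable, all while keeping both trajectories inside $\mathcal{X}$ --- and reconciling the localization radius, the perturbation radius $\beta$, the period $\mathcal{T}$, the smallness requirement $\epsilon=O(1/(L_{d}\sqrt{\Gamma^{2}+\eta}))$, and the precise definition of $L_{d}$ is where essentially all of the bookkeeping lives.

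Finally I would assemble a counting argument. Partition the iterations into (a) unperturbed iterations with $\|\hat{g}_{k}\|_{F}>\epsilon$, each decreasing $f$ by $\ge\tfrac{\eta}{2\ell_{1}}\epsilon^{2}$; (b) unperturbed iterations with $\|\hat{g}_{k}\|_{F}\le\epsilon$, which by the trigger rule can only occur within $\mathcal{T}$ steps of a perturbation; and (c) perturbed iterations. Since $f$ is bounded below by $f(X^{\star})$ and its total increase is confined to the $O(\alpha\beta\epsilon\sqrt{\Gamma^{2}+\eta})$ slack built into $\mathcal{X}$, type-(a) iterations number $O(\ell_{1}[f(X_{0})-f(X^{\star})]/(\eta\epsilon^{2}))$, while the escaping-saddle lemma bounds the number of perturbation episodes by $O([f(X_{0})-f(X^{\star})]/\mathscr{F})$, so types (b) and (c) together number $O(\mathcal{T}\cdot[f(X_{0})-f(X^{\star})]/\mathscr{F})$; substituting $\mathcal{T}=\tilde{O}(L_{1}\Gamma^{2}/(\eta\sqrt{L_{d}\epsilon}))$, $\mathscr{F}=\tilde{\Omega}(\eta\sqrt{\epsilon^{3}/L_{d}})$, and $\ell_{1}=9\Gamma^{2}L_{1}$ gives the claimed $\tilde{O}(\ell_{1}[f(X_{0})-f(X^{\star})]/(\eta^{2}\epsilon^{2}))$ bound. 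When the method stops it is at a trigger point with $\|\hat{g}_{k}\|_{F}\le\epsilon$ whose subsequent perturbation episode failed to decrease $f$; the contrapositive of the escaping-saddle lemma then forces $\inner{\nabla^{2}f(X_{k})[V]}{V}\ge-\sqrt{L_{d}\epsilon}\,\|V\|_{X_{k},\eta}^{2}$ for all $V$, and together with $\|\hat{g}_{k}\|_{F}\le\epsilon$ --- equivalently $\inner{\nabla f(X_{k})}{V}\le\epsilon\,\|V\|_{X_{k},\eta}$ for all $V$ by duality --- this is exactly \eqref{eq_SOC}.
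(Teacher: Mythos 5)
Your proposal matches the paper's proof essentially step for step: the paper abstracts to a general "perturbed metric gradient descent" (PMGD) with a metric $P(x)$ assumed well-conditioned ($p_{\lb}I\preceq P\preceq p_{\ub}I$) and Lipschitz, proves a descent lemma (their Lemma~\ref{lemma:grad_decrement}), an escape-from-saddle lemma via a coupling argument with a deviation-control lemma for the varying metric (their Lemmas~\ref{lemma:escape} and~\ref{lem_distance}), and a counting argument (their Theorem~\ref{thm:pmgd}), then specializes to \ref{PPrecGD} via a boundedness/Lipschitz-constants lemma (their Lemma~\ref{lem_Lip}) that produces exactly your $\ell_1=9\Gamma^2 L_1$, $\ell_2=(4\Gamma+2)L_1+4\Gamma^2 L_2$, $p_{\lb}=\eta$, $p_{\ub}=\Gamma^2+\eta$, $L_P=2\Gamma$. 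Your plan is the same argument presented directly in terms of \ref{PPrecGD} rather than factored through the PMGD abstraction, and your identification of the metric-variation control as the central technical obstacle is precisely where the paper's Lemma~\ref{lem_distance} lives.
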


In Theorem~\ref{cor_pprecgd}, the total number of iterations it takes to converge to an approximate second-order stationary point depends additionally on $\Gamma$, the maximal radius of the iterates. The factor of $\Gamma$ is largely an artifact of the proof technique, and does not appear in the practical performance of the algorithm. Previous work on naive gradient descent (see Theorem 8 of \cite{jin2017escape}) also introduced a similar factor. 
Clearly, $\Gamma$ is finite if $\phi(\cdot)$ is coercive, i.e., it diverges to $\infty$ as $\|X\|_F \to \infty.$ (See Lemma~\ref{lem_Lip}.) In many statistical and machine learning applications, the loss function is purposely chosen to be coercive.

Assuming that $f$ is $(\epsilon_{g},\epsilon_{H},\rho)$-global strict
saddle, we use \ref{PPrecGD} to arrive within a $\rho$-local neighborhood
of the global minimum, and then switch to \ref{PrecGD} for another
$O(\log(\rho/\epsilon))$ iterations to converge to $\epsilon$ global
suboptimality due to \thmref{local}. (If the search rank is overparameterized
$r>r^{\star}$, then the switching condition can be explicitly detected
using \propref{local_cert} in the following section.) Below, we use
$\tilde{O}(\cdot)$ to additionally hide polynomial factors in $L_{1},L_{2},\mu,\Gamma$,
while exposing all dependencies on final accuracy $1/\epsilon$ and
the smallest nonzero eigenvalue $\lambda_{r^{\star}}(M^{\star})$.
\begin{corollary}[Global convergence]
\label{cor:global}Let $\phi$ be $L_{1}$-gradient Lipschitz and
$L_{2}$-Hessian Lipschitz and $(\mu,2r)$-restricted strongly convex,
and let $M^{\star}=\arg\min\phi$ satisfy $M^{\star}=X^{\star}X^{\star T}$
and $r^{\star}=\rank(M^{\star})<r$. Suppose that $f(X)\eqdef\phi(XX^{T})$
satisfies $(\epsilon_{g},\epsilon_{H},\rho)$-global strict saddle
with 
\[
\frac{1}{\Gamma^{2}}\cdot\epsilon_{g}+\epsilon_{H}+4L_{1}\cdot\rho^{2}\le\frac{\mu}{1+\mu/L_{1}}\cdot\frac{\lambda_{r^{\star}}^{2}(M^{\star})}{\tr(M^{\star})},
\]
Then, do the following:
\begin{enumerate}
\item (Global phase) Run \ref{PPrecGD} with a fixed $\eta=\eta_{0}\le\Gamma^{2}$
until $\|\nabla f(X_{k})\|_{F}\le\epsilon_{g},$ and $\lambda_{\min}(\nabla^{2}f(X_{k}))\ge-\epsilon_{H},$
and $\lambda_{\min}(X_{k}^{T}X_{k})\le\rho$;
\item (Local phase) Run \ref{PrecGD} with $\eta=\|\nabla f(X)(X^{T}X)^{-1}\|_{F}$
and $\alpha=1/\ell$. 
\end{enumerate}
The combined algorithm arrives at a point $X$ satisfying $f(X)-f(X^{\star})\le\epsilon$
in at most
\[
\tilde{O}\left(\frac{f(X_{0})-f(X^{\star})}{\eta_{0}^{2}}\cdot\left(\frac{1}{\epsilon_{g}^{2}}+\frac{1}{\epsilon_{H}^{4}}\right)+\log\left(\frac{\lambda_{r^{\star}}^{2}(M^{\star})}{\epsilon}\right)\right)\text{ iterations.}
\]
\end{corollary}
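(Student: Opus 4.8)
The plan is to analyze the two phases separately and then splice them together through the \emph{a posteriori} optimality certificate~\eqref{eq:certF}. Phase~1 (PPrecGD) is an invocation of Theorem~\ref{cor_pprecgd}: run it with the fixed regularization $\eta=\eta_0\le\Gamma^2$ and with the internal accuracy threshold set to a value $\bar\epsilon>0$ to be chosen below. Theorem~\ref{cor_pprecgd} then guarantees, with overwhelming probability, that within $\tilde O\big(\ell_1[f(X_0)-f(X^\star)]/(\eta_0^2\bar\epsilon^2)\big)$ iterations PPrecGD produces an iterate $X$ satisfying the weighted second-order conditions~\eqref{eq_SOC}. The first real step is to downgrade~\eqref{eq_SOC} from the iteration-dependent local metric to the Euclidean metric in which the strict-saddle property is phrased: since $\|V\|_{X,\eta_0}^2=\|V(X^TX+\eta_0I)^{1/2}\|_F^2\le(\|X\|^2+\eta_0)\|V\|_F^2\le2\Gamma^2\|V\|_F^2$ on the sublevel set $\mathcal X$, \eqref{eq_SOC} implies $\|\nabla f(X)\|_F\le\sqrt2\,\Gamma\bar\epsilon$ and $\lambda_{\min}(\nabla^2f(X))\ge-2\Gamma^2\sqrt{L_d\bar\epsilon}$. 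I would then choose $\bar\epsilon=\Theta\big(\min\{\epsilon_g/\Gamma,\ \epsilon_H^2/(\Gamma^4L_d)\}\big)$, small enough to also respect the standing requirement $\bar\epsilon=O(1/(L_d\sqrt{\Gamma^2+\eta_0}))$, so that the downgraded bounds become the strict inequalities $\|\nabla f(X)\|<\epsilon_g$ and $\lambda_{\min}(\nabla^2f(X))>-\epsilon_H$. Substituting this $\bar\epsilon$ into the iteration count and absorbing the $\Gamma,L_1,L_2,L_d$ dependence into $\tilde O(\cdot)$ produces the first term $\tilde O\big((f(X_0)-f(X^\star))(\epsilon_g^{-2}+\epsilon_H^{-4})/\eta_0^2\big)$.

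Next I would show that the three-part Phase~1 stopping test is met at (or before) this iterate, so that the phase terminates on schedule. Because $f$ is $(\epsilon_g,\epsilon_H,\rho)$-global strict saddle and the first two alternatives of Definition~\ref{def:strictsaddle} fail strictly at $X$, there is a $Z$ with $\nabla f(Z)=0$, $\nabla^2f(Z)\succeq0$, $\|X-Z\|_F\le\rho$, and $f(Z)=f(X^\star)$. Uniqueness of the minimizer of the $(\mu,2r)$-restricted strongly convex $\phi$ forces $ZZ^T=M^\star$, hence $\rank(Z)=r^\star<r$ and $\sigma_{\min}(Z)=0$; Weyl's inequality then gives $\sigma_{\min}(X)\le\sigma_{\min}(Z)+\|X-Z\|\le\rho$, i.e.\ $\lambda_{\min}(X^TX)\le\rho^2$. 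Thus $\|\nabla f(X)\|_F\le\epsilon_g$, $\lambda_{\min}(\nabla^2f(X))\ge-\epsilon_H$, and $\lambda_{\min}(X^TX)\le\rho^2$ all hold and Phase~1 stops.

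For the gluing step I would apply the certificate~\eqref{eq:certF} (\propref{euclid_cert}) at the terminal Phase~1 iterate $X$, bounding its absolute constants using $\|X\|_F\le\Gamma$ and $\lambda_{\min}(X^TX)\le\rho^2$, to conclude $f(X)-f(X^\star)\le C_g\epsilon_g+C_H\epsilon_H+C_\lambda\rho^2$; the hypothesis of the corollary is exactly the statement (after substituting the explicit forms of $C_g,C_H,C_\lambda$) that this right-hand side is at most $\tfrac{\mu}{2(1+\mu/L_1)}\cdot\tfrac{\lambda_{r^\star}^2(M^\star)}{2}$, which is precisely the near-optimality radius required by Theorem~\ref{thm:local}. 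Phase~2 is then another direct invocation: with $\eta=\|\nabla f(X)(X^TX)^{-1/2}\|_F$, Corollary~\ref{cor:param} certifies the sandwich $C_{\lb}\|XX^T-M^\star\|_F\le\eta\le C_{\ub}\|XX^T-M^\star\|_F$ with $C_{\lb}=\mu/\sqrt2$, $C_{\ub}=2L_1$, so Theorem~\ref{thm:local} yields $f(X_+)-f(X^\star)\le(1-\alpha\tau)[f(X)-f(X^\star)]$ at step-size $\alpha=1/\ell$. Since the decrement is strict, the next iterate stays inside the near-optimal region, so the same estimate applies inductively along the entire Phase~2 trajectory; reaching $f(X)-f(X^\star)\le\epsilon$ therefore takes $O\big(\tfrac1{\alpha\tau}\log(\lambda_{r^\star}^2(M^\star)/\epsilon)\big)$ iterations, and since $1/(\alpha\tau)=\poly(L_1,\mu,r)$ this is the second term $\tilde O(\log(\lambda_{r^\star}^2(M^\star)/\epsilon))$. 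Adding the two phase counts gives the claimed bound, with the probability qualifier inherited verbatim from Theorem~\ref{cor_pprecgd}.

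The step I expect to be the main obstacle is the metric mismatch in Phase~1: Theorem~\ref{cor_pprecgd} certifies second-order stationarity only in the $\|\cdot\|_{X,\eta_0}$-norm, whereas both the strict-saddle property (needed to reach the basin) and the entry hypothesis of Theorem~\ref{thm:local} live in the Euclidean metric, so $\bar\epsilon$ must be tuned delicately enough that the downgraded guarantees simultaneously control $\|\nabla f(X)\|_F$ \emph{and} $\lambda_{\min}(\nabla^2f(X))$ without inflating the iteration count beyond $\epsilon_g^{-2}+\epsilon_H^{-4}$. A secondary but fussy point is matching the certificate constants $C_g,C_H,C_\lambda$ of \propref{euclid_cert} to the precise form $\tfrac1{\Gamma^2}\epsilon_g+\epsilon_H+4L_1\rho^2\le\tfrac{\mu}{1+\mu/L_1}\cdot\tfrac{\lambda_{r^\star}^2(M^\star)}{\tr(M^\star)}$ stated in the hypothesis. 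By contrast, the complication of a preconditioner that changes every iteration is already absorbed into Theorem~\ref{cor_pprecgd}, so it adds nothing new here, and the Phase~2 induction is routine once the entry hypothesis is secured.
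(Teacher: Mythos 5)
Your proposal follows the same two-phase decomposition and the same lemma chain that the paper uses for this corollary: invoke Theorem~\ref{cor_pprecgd} to reach an approximate second-order stationary point, use the global strict saddle property plus Weyl's inequality to conclude rank deficiency, feed this into \propref{euclid_cert} to establish the entry hypothesis of \thmref{local}, and then finish with the local linear convergence rate. The paper's own justification is only a one-paragraph sketch; your version spells out the two details the paper leaves implicit — the conversion from the local $\|\cdot\|_{X,\eta_0}$-stationarity of Theorem~\ref{cor_pprecgd} to the Euclidean $(\epsilon_g,\epsilon_H)$-stationarity in which the strict-saddle property is stated (with the correct choice of the internal threshold $\bar\epsilon = \Theta(\min\{\epsilon_g/\Gamma,\epsilon_H^2/(\Gamma^4 L_d)\})$, which is exactly where the $\epsilon_g^{-2}+\epsilon_H^{-4}$ dependence originates), and the need to match the certificate constants of \propref{euclid_cert} against the corollary's hypothesis. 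Both of these fills are correct and align with what the authors intended; you also implicitly fix a small typo in the Local-phase prescription, using $\eta = \|\nabla f(X)(X^TX)^{-1/2}\|_F$ as in \corref{param} rather than the $(X^TX)^{-1}$ appearing in the corollary text.
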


Corollary \ref{cor:global} follows by running \ref{PPrecGD} until it arrives at an ($\epsilon_{g},\epsilon_{H}$)-second-order stationary point $X_k$ (Theorem~\ref{cor_pprecgd}), and then using the global strict saddle property (Definition~\ref{def:strictsaddle}) to argue that $X_k$ is also rank deficient, with $\lambda_{\min}(X_k^T X_k) \leq \rho^2$ via Weyl's inequality. It follows from second-order optimality and rank deficiency that $X_k$ is sufficiently close to global optimality (Proposition \ref{prop:euclid_cert} below), and therefore switching to \ref{PrecGD} results in linear convergence (Theorem~\ref{thm:local}). Viewing $\epsilon_{g},\epsilon_{H},\rho$ as constants, the combined method globally converges to $\epsilon$ suboptimality
in $O(\log(1/\epsilon))$ iterations, as if $f$ were a smooth and
strongly convex function, even in the overparameterized regime with
$r>r^{\star}$.

Here, we point out that the strict saddle property (Definition~\ref{def:strictsaddle}) is usually defined for a second-order point measured in the Euclidean norm $\|\cdot\|_F$, but that Theorem~\ref{cor_pprecgd} proves convergence to a second-order point measured in the local norm $\|\cdot\|_{X,\eta}$. Clearly, for a fixed $\eta=\eta_{\mathrm{fix}}$, the two notions are equivalent up to a conversion factor. In deciding when to switch from \ref{PPrecGD} to \ref{PrecGD}, Corollary \ref{cor:global} uses the Euclidean norm (via Proposition \ref{prop:euclid_cert} below) to remain consistent with the strict saddle property. In practice, however, it should be less conservative to decide using the local norm (via Proposition \ref{prop:local_cert} below), as this is the preferred norm that the algorithm tries to optimize.

\section{\label{sec:Cert}Certifying Global Optimality via Rank Deficiency}

We now turn to the problem of certifying the global optimality of
an $X$ computed using \ref{PrecGD} by appealing to its rank deficiency.
We begin by rigorously stating the global optimality guarantee previously
quoted in (\ref{eq:certF}). The core argument actually dates back
to \citet[Theorem~4.1]{burer2005local} (and has also appeared in
\citet{journee2010low} and \citet{boumal2016non,boumal2020deterministic})
but we restate it here with a shorter proof in order to convince the
reader of its correctness. 
\begin{proposition}[Certificate of global optimality]
\label{prop:euclid_cert}Let $\phi$ be twice differentiable and
convex and let $f(X)\eqdef\phi(XX^{T})$. If $X$ satisfies $\lambda_{\min}(X^{T}X)\le\epsilon_{\lambda}$
and
\[
\inner{\nabla f(X)}V\le\epsilon_{g}\cdot\|V\|_{F},\quad\inner{\nabla^{2}f(X)[V]}V\ge-\epsilon_{H}\cdot\|V\|_{F}^{2}\quad\text{for all }V,
\]
where $\epsilon_{g},\epsilon_{H},\epsilon_{\lambda}\ge0$, then $X$
has suboptimality
\begin{gather*}
f(X)-f(X^{\star})\le C_{g}\cdot\epsilon_{g}+C_{H}\cdot\epsilon_{H}+C_{\lambda}\cdot\epsilon_{\lambda}.
\end{gather*}
where $C_{g}=\frac{1}{2}\|X\|_{F}$ and $C_{H}=\frac{1}{2}\|X^{\star}\|_{F}^{2}$
and $C_{\lambda}=2\|\nabla^{2}\phi(XX^{T})\|\|X^{\star}\|_{F}^{2}$. 
\end{proposition}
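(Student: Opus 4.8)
The plan is to follow the classical Burer--Monteiro ``lifting'' argument attributed to \citet{burer2005local}, keeping careful track of how the rank-deficiency slack $\epsilon_{\lambda}$ propagates through. First I would record the first two derivatives of $f$. Writing $M=XX^{T}$ and letting $S=\tfrac12(\nabla\phi(M)+\nabla\phi(M)^{T})$ be the symmetrized gradient, the chain rule gives $\nabla f(X)=2SX$ and, for every $V\in\R^{n\times r}$,
\[
\inner{\nabla^{2}f(X)[V]}{V}=2\inner{S}{VV^{T}}+\inner{\nabla^{2}\phi(M)[XV^{T}+VX^{T}]}{XV^{T}+VX^{T}}.
\]
Convexity of $\phi$ makes the last term nonnegative, which is the standard reason the second-order condition is useful here. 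Working with $S$ rather than $\nabla\phi(M)$ is harmless because $f$ only ever evaluates $\phi$ and its derivatives along symmetric matrices.

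Next I would bound $f(X)-f(X^{\star})=\phi(M)-\phi(M^{\star})$ using convexity of $\phi$ and the fact that $M^{\star}=X^{\star}X^{\star T}$ globally minimizes $\phi$:
\[
f(X)-f(X^{\star})\le\inner{\nabla\phi(M)}{M-M^{\star}}=\inner{S}{XX^{T}}-\inner{S}{X^{\star}X^{\star T}}.
\]
The first term equals $\tfrac12\inner{\nabla f(X)}{X}$, which the first-order condition (applied in the direction $V=X$) bounds by $\tfrac12\epsilon_{g}\|X\|_{F}$, producing the constant $C_{g}=\tfrac12\|X\|_{F}$. Everything then reduces to upper bounding $-\inner{S}{X^{\star}X^{\star T}}=-\tr(X^{\star T}SX^{\star})$, which would follow from a lower bound of the form $S\succeq-cI$.

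The crux is extracting such a bound from second-order stationarity together with rank deficiency. Since $\lambda_{\min}(X^{T}X)\le\epsilon_{\lambda}$, choose a unit vector $u\in\R^{r}$ with $\|Xu\|^{2}=u^{T}X^{T}Xu=\lambda_{\min}(X^{T}X)\le\epsilon_{\lambda}$. For an arbitrary $w\in\R^{n}$, apply the Hessian inequality to the rank-one perturbation $V=wu^{T}$: here $\|V\|_{F}=\|w\|$, $VV^{T}=ww^{T}$, and $E:=XV^{T}+VX^{T}=(Xu)w^{T}+w(Xu)^{T}$ satisfies $\|E\|_{F}\le2\|Xu\|\,\|w\|\le2\sqrt{\epsilon_{\lambda}}\,\|w\|$. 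Substituting into the Hessian identity and bounding the nonnegative $\nabla^{2}\phi$-term from above by $\|\nabla^{2}\phi(M)\|\,\|E\|_{F}^{2}\le4\|\nabla^{2}\phi(M)\|\,\epsilon_{\lambda}\|w\|^{2}$ yields
\[
-\epsilon_{H}\|w\|^{2}\le 2\,w^{T}Sw+4\|\nabla^{2}\phi(M)\|\,\epsilon_{\lambda}\|w\|^{2},
\]
i.e. $S\succeq-\bigl(\tfrac12\epsilon_{H}+2\|\nabla^{2}\phi(M)\|\,\epsilon_{\lambda}\bigr)I$. Plugging this into $-\tr(X^{\star T}SX^{\star})\le\bigl(\tfrac12\epsilon_{H}+2\|\nabla^{2}\phi(M)\|\,\epsilon_{\lambda}\bigr)\|X^{\star}\|_{F}^{2}$ and combining with the earlier estimate gives precisely $C_{H}=\tfrac12\|X^{\star}\|_{F}^{2}$ and $C_{\lambda}=2\|\nabla^{2}\phi(XX^{T})\|\,\|X^{\star}\|_{F}^{2}$.

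The main (and really the only) delicate point is controlling the cross term $XV^{T}+VX^{T}$ in $\nabla^{2}f(X)$: the argument succeeds precisely because supporting $V$ on the near-null space of $X$ forces this term to be $O(\sqrt{\epsilon_{\lambda}})$ in Frobenius norm, so that convexity of $\phi$ (which alone gives only nonnegativity, not smallness) can be combined with the operator-norm bound on $\nabla^{2}\phi(M)$. Everything else is elementary bookkeeping.
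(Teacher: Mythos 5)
Your proof is correct and follows essentially the same route as the paper's: both bound $f(X)-f(X^{\star})$ by convexity, handle $\inner{\nabla\phi(M)}{M}$ via the first-order condition at $V=X$, and obtain $\lambda_{\min}(\nabla\phi(M))\ge -\tfrac12\epsilon_H - 2\|\nabla^2\phi(M)\|\epsilon_\lambda$ by testing the second-order condition on a rank-one direction supported on the least right singular vector of $X$. Your $u$ and $w$ are the paper's $v_r$ and $y$, and the identity $\|Xu\|^2=\lambda_{\min}(X^TX)$ plays the role of $\sigma_r^2$; the only cosmetic difference is that you explicitly symmetrize $\nabla\phi(M)$ into $S$ and do not normalize $w$, neither of which changes the argument.
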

\begin{proof}
Let $(u_{r},v_{r},\sigma_{r})$ the $r$-th singular value triple
of $X$, i.e. we have $Xv_{r}=\sigma_{r}u_{r}$ with $\|v_{r}\|=\|u_{r}\|=1$
and $\sigma_{r}^{2}=\lambda_{\min}(X^{T}X)$. For $M=XX^{T}$ and
$M^{\star}=X^{\star}X^{\star T}$, the convexity of $\phi$ implies
$\phi(M^{\star})\ge\phi(M)+\inner{\nabla\phi(M)}{M^{\star}-M}$ and
therefore 
\[
f(X)-f(X^{\star})=\phi(M)-\phi(M^{\star})\le\inner{\nabla\phi(M)}M-\lambda_{\min}[\nabla\phi(M)]\cdot\tr(M^{\star}).
\]
Substituting $V=X$ into the first-order optimality conditions, as
in
\begin{gather*}
\inner{\nabla f(X)}V=2\inner{\nabla\phi(XX^{T})X}X\le\epsilon_{g}\|V\|_{F}=\epsilon_{g}\cdot2C_{g}
\end{gather*}
yields $\inner{\nabla\phi(M)}M\le C_{g}\cdot\epsilon$. Substituting
$V=yv_{r}^{T}$ with an arbitrary $y\in\R^{n}$ with $\|y\|=1$ into
the second-order conditions yields
\begin{align*}
\inner{\nabla^{2}f(X)[V]}V & \le2\inner{\nabla\phi(XX^{T})}{VV^{T}}+\|\nabla^{2}\phi(XX^{T})\|\cdot\|XV^{T}+VX^{T}\|_{F}^{2}\\
 & =2y^{T}\nabla\phi(XX^{T})y+\|\nabla^{2}\phi(XX^{T})\|\cdot\sigma_{r}^{2}\cdot\|u_{r}y^{T}+yu_{r}^{T}\|_{F}^{2}
\end{align*}
which combined with $\inner{\nabla^{2}f(X)[V]}V\ge -\epsilon_{H}\|V\|_{F}^{2}=-\epsilon_{H}$
gives 
\[
-y^{T}\nabla\phi(XX^{T})y\le\frac{1}{2}\epsilon_{H}+2\|\nabla^{2}\phi(XX^{T})\|\cdot\sigma_{r}^{2}
\]
and therefore $-\lambda_{\min}[\nabla\phi(M)]\le\frac{1}{2}\epsilon_{H}+C_{\lambda}\cdot\lambda_{\min}(X^{T}X)$.
\end{proof}

\propref{euclid_cert} can also be rederived with respect to the local
norm in Theorem~\ref{cor_pprecgd}. We omit the proof of the following
as it is essentially identical to that of \propref{euclid_cert}.
\begin{proposition}[Global certificate in local norm]
\label{prop:local_cert}Let $\phi$ be twice differentiable and convex
and let $f(X)\eqdef\phi(XX^{T})$. If $X$ satisfies $\lambda_{\min}(X^{T}X)\le\epsilon_{\lambda}$
and
\[
\inner{\nabla f(X)}V\le\epsilon_{g}\cdot\|V\|_{X,\eta},\quad\inner{\nabla^{2}f(X)[V]}V\ge-\epsilon_{H}\cdot\|V\|_{X,\eta}^{2}\quad\text{for all }V,
\]
where $\|V\|_{X,\eta}\eqdef\|V(X^{T}X+\eta I)^{-1/2}\|_{F}$ and $\epsilon_{g},\epsilon_{H},\epsilon_{\lambda}\ge0$,
then $X$ has suboptimality
\begin{gather*}
f(X)-f(X^{\star})\le C_{g}\cdot\epsilon_{g}+C_{H}\cdot\epsilon_{H}\cdot(\epsilon_{\lambda}+\eta)+C_{\lambda}\cdot\epsilon_{\lambda}
\end{gather*}
where $C_{g}=\frac{1}{2}\sqrt{\|X^{T}X\|_{F}^{2}+\eta\|X\|_{F}^{2}}$
and $C_{H}=\frac{1}{2}\|X^{\star}\|_{F}^{2}$ and $C_{\lambda}=2\|\nabla^{2}\phi(XX^{T})\|\|X^{\star}\|_{F}^{2}$. 
\end{proposition}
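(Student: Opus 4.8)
The plan is to repeat the proof of \propref{euclid_cert} line by line, changing only the two places where the Frobenius norm of a test direction $V$ is replaced by the local norm $\|V\|_{X,\eta}=\|V(X^{T}X+\eta I)^{1/2}\|_{F}$ (the metric paired with the preconditioned gradient in \thmref{local} and Theorem~\ref{cor_pprecgd}; this is the reading under which the stated constants come out correctly). First I would invoke convexity of $\phi$ exactly as before: with $M=XX^{T}$ and $M^{\star}=X^{\star}X^{\star T}$,
\[
f(X)-f(X^{\star})\le\inner{\nabla\phi(M)}{M}-\lambda_{\min}[\nabla\phi(M)]\cdot\tr(M^{\star}),
\]
a step that involves no test direction and therefore transfers verbatim.

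Next, for the term $\inner{\nabla\phi(M)}{M}$ I would substitute $V=X$ into the first-order condition $\inner{\nabla f(X)}{V}\le\epsilon_{g}\|V\|_{X,\eta}$. Using $\nabla f(X)=2\nabla\phi(XX^{T})X$ gives $\inner{\nabla f(X)}{X}=2\inner{\nabla\phi(M)}{M}$, while a one-line cyclic-trace computation gives $\|X\|_{X,\eta}^{2}=\tr\!\big(X^{T}X(X^{T}X+\eta I)\big)=\|X^{T}X\|_{F}^{2}+\eta\|X\|_{F}^{2}$, so that $\inner{\nabla\phi(M)}{M}\le C_{g}\epsilon_{g}$ with $C_{g}=\tfrac12\sqrt{\|X^{T}X\|_{F}^{2}+\eta\|X\|_{F}^{2}}$, as claimed. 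For the term $-\lambda_{\min}[\nabla\phi(M)]$, I would let $(u_{r},v_{r},\sigma_{r})$ be the smallest singular triple of $X$, so $\sigma_{r}^{2}=\lambda_{\min}(X^{T}X)\le\epsilon_{\lambda}$, and substitute $V=yv_{r}^{T}$ with $\|y\|=1$ into the second-order condition. The expansion of $\inner{\nabla^{2}f(X)[V]}{V}$ is word-for-word identical to \propref{euclid_cert}, producing $2y^{T}\nabla\phi(M)y+\|\nabla^{2}\phi(M)\|\,\sigma_{r}^{2}\,\|u_{r}y^{T}+yu_{r}^{T}\|_{F}^{2}$; only the curvature bound's right-hand side changes, to $-\epsilon_{H}\|V\|_{X,\eta}^{2}$. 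Here the one thing to get right is that $v_{r}$ is the \emph{bottom} right singular vector, so $\|yv_{r}^{T}\|_{X,\eta}^{2}=v_{r}^{T}(X^{T}X+\eta I)v_{r}=\sigma_{r}^{2}+\eta\le\epsilon_{\lambda}+\eta$ is small; this is exactly what converts the Euclidean bound into $-\lambda_{\min}[\nabla\phi(M)]\le\tfrac12\epsilon_{H}(\epsilon_{\lambda}+\eta)+2\|\nabla^{2}\phi(M)\|\epsilon_{\lambda}$ after using $\|u_{r}y^{T}+yu_{r}^{T}\|_{F}^{2}\le4$, $\sigma_{r}^{2}\le\epsilon_{\lambda}$, and maximizing over $\|y\|=1$.

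Finally I would substitute both bounds back into the convexity inequality and use $\tr(M^{\star})=\|X^{\star}\|_{F}^{2}$ to obtain $f(X)-f(X^{\star})\le C_{g}\epsilon_{g}+C_{H}\epsilon_{H}(\epsilon_{\lambda}+\eta)+C_{\lambda}\epsilon_{\lambda}$ with $C_{H}=\tfrac12\|X^{\star}\|_{F}^{2}$ and $C_{\lambda}=2\|\nabla^{2}\phi(XX^{T})\|\,\|X^{\star}\|_{F}^{2}$. There is no genuine obstacle: the argument is identical to \propref{euclid_cert} except for the two elementary norm computations above, which is precisely why the paper omits it; the only point deserving care is the bookkeeping of $\|\cdot\|_{X,\eta}$ — tracking where the extra factor $(\epsilon_{\lambda}+\eta)$ enters (on the Hessian term, because the chosen direction has small local norm) versus where a larger factor would appear if a different test direction were used.
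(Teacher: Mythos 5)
Your proposal is correct and takes exactly the approach the paper intends—the paper omits the proof precisely because it is, as you say, a line-by-line repetition of Proposition~\ref{prop:euclid_cert} with the Frobenius norm of each test direction replaced by its $\|\cdot\|_{X,\eta}$ norm. You have also correctly spotted that the $-1/2$ exponent in the statement's definition of $\|V\|_{X,\eta}$ is a typo: the intended norm is $\|V(X^{T}X+\eta I)^{1/2}\|_{F}$ as in Theorem~\ref{cor_pprecgd}, and this is the only reading under which $\|X\|_{X,\eta}^{2}=\|X^{T}X\|_{F}^{2}+\eta\|X\|_{F}^{2}$ and $\|yv_{r}^{T}\|_{X,\eta}^{2}=\sigma_{r}^{2}+\eta$ produce the stated constants $C_{g}$, $C_{H}$, $C_{\lambda}$ and the extra $(\epsilon_{\lambda}+\eta)$ factor on the Hessian term.
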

\begin{remark}
Under the same conditions as Theorem~\ref{cor_pprecgd}, it
follows that $\|X\|_{F},\|X^{\star}\|_{F}\le\Gamma$ and $\|\nabla^{2}\phi(XX^{T})\|\le L_{1}$. 
\end{remark}

Now let us explain how we can use \ref{PrecGD} to solve an instance
of (\ref{eq:ncvx}) to an $X$ with provable global optimality via
either \propref{euclid_cert} or \propref{local_cert}. First, after
overparameterizing the search rank $r>r^{\star}$, we run \ref{PPrecGD}
with a fixed parameter $\eta>0$ until we reach the neighborhood of
a global minimizer $X^{\star}$ where \thmref{local} holds. The following
result says that this condition can always be detected by checking
\propref{euclid_cert}. Afterwards, we can switch to \ref{PrecGD}
with a variable parameter $\eta=\|\nabla f(X)(X^{T}X)^{-1}\|_{F}$
and expect linear convergence towards to global minimum.

\begin{corollary}[Certifiability of near-global minimizers]
Under the same condition as \thmref{local}, let $X$ satisfy $f(X)-f(X^{\star})\le\frac{1}{2}\mu\epsilon^{2}$.
If $r>r^{\star}$, then $X$ also satisfies
\[
\|\nabla f(X)\|_{F}\le2L_{1}\|X\|_{F}\cdot\epsilon,\quad\lambda_{\min}(\nabla^{2}f(X))\ge-L_{1}\cdot\epsilon,\quad\lambda_{\min}(X^{T}X)\le\epsilon.
\]
\end{corollary}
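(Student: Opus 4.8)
The plan is to recognize this corollary as the converse of \propref{euclid_cert}: instead of inferring near-optimality from second-order stationarity and rank deficiency, we infer all three certificate quantities from the assumed near-optimality $f(X)-f(X^{\star})\le\tfrac12\mu\epsilon^{2}$. Everything hinges on a single estimate of the matrix error $\norm{XX^{T}-M^{\star}}_{F}$, which I would establish first. Since $M^{\star}=\arg\min\phi$ is an unconstrained minimizer we have $\nabla\phi(M^{\star})=0$, and since $XX^{T}$ and $M^{\star}=X^{\star}X^{\star T}$ each have rank at most $r$, every point on the line segment between them has rank at most $2r$; hence $(\mu,2r)$-restricted strong convexity yields $f(X)-f(X^{\star})=\phi(XX^{T})-\phi(M^{\star})\ge\tfrac{\mu}{2}\norm{XX^{T}-M^{\star}}_{F}^{2}$. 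Combined with the hypothesis this gives $\norm{XX^{T}-M^{\star}}_{F}\le\epsilon$, which is the workhorse for the rest.

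With this in hand I would dispatch the three conclusions in turn. \emph{Rank deficiency:} since the search rank is overparameterized, $r>r^{\star}=\rank(M^{\star})$, the $r$-th largest eigenvalue of $M^{\star}$ vanishes, so Weyl's inequality gives $\lambda_{\min}(X^{T}X)=\lambda_{r}(XX^{T})\le\lambda_{r}(M^{\star})+\norm{XX^{T}-M^{\star}}\le\norm{XX^{T}-M^{\star}}_{F}\le\epsilon$, where I used that $X^{T}X$ and $XX^{T}$ have the same nonzero eigenvalues. \emph{Gradient:} by $L_{1}$-gradient Lipschitzness and $\nabla\phi(M^{\star})=0$, $\norm{\nabla\phi(XX^{T})}_{F}\le L_{1}\norm{XX^{T}-M^{\star}}_{F}\le L_{1}\epsilon$, so using $\nabla f(X)=2\nabla\phi(XX^{T})X$ we get $\norm{\nabla f(X)}_{F}\le2\norm{\nabla\phi(XX^{T})}\,\norm{X}_{F}\le2L_{1}\epsilon\norm{X}_{F}$. \emph{Hessian:} using the same expansion as in the proof of \propref{euclid_cert}, $\inner{\nabla^{2}f(X)[V]}{V}=2\inner{\nabla\phi(XX^{T})}{VV^{T}}+\inner{\nabla^{2}\phi(XX^{T})[XV^{T}+VX^{T}]}{XV^{T}+VX^{T}}$, I would drop the second term, which is nonnegative by convexity of $\phi$, and bound the first by $2\inner{\nabla\phi(XX^{T})}{VV^{T}}\ge-2\norm{\nabla\phi(XX^{T})}\norm{V}_{F}^{2}\ge-2L_{1}\epsilon\norm{V}_{F}^{2}$, which gives the stated curvature lower bound (up to the absolute constant).

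I do not anticipate a genuine obstacle; the whole statement is a short corollary of the error estimate that already appears in the proof of \thmref{local}, combined with Weyl's inequality. The two points that need a little care are: (i) one must invoke the \emph{restricted} rather than the global form of strong convexity, which is justified by the observation that $XX^{T}-M^{\star}$ has rank at most $2r$; and (ii) the equality $\lambda_{\min}(X^{T}X)=\lambda_{r}(XX^{T})$ together with $\lambda_{r}(M^{\star})=0$ is exactly where the overparameterization hypothesis $r>r^{\star}$ is used --- if $r=r^{\star}$ then $\lambda_{r}(M^{\star})=\lambda_{r^{\star}}(M^{\star})>0$ and the rank-deficiency conclusion fails, which is precisely why overparameterization is indispensable for \emph{a posteriori} certification.
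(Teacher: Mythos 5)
Your proposal is correct and follows essentially the same route as the paper's proof: establish $\|XX^T-M^\star\|_F\le\epsilon$ via $(\mu,2r)$-restricted strong convexity (the paper cites \lemref{riperr}), then get rank deficiency from Weyl plus $\lambda_{r}(M^\star)=0$, the gradient bound from $\nabla f(X)=2\nabla\phi(XX^T)X$ and gradient Lipschitzness, and the curvature bound by dropping the nonnegative second-order term in the Hessian decomposition. Your observation that the Hessian bound carries a factor of $2$ (yielding $-2L_1\epsilon$ rather than $-L_1\epsilon$) is in fact correct — the paper's displayed Hessian identity is missing the factor of $2$ coming from $\nabla f(X)=2\nabla\phi(XX^T)X$, so the stated constant in the corollary is off by that same factor.
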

\begin{proof}
It follows immediately from $\frac{1}{2}\mu\epsilon^{2}\ge f(X)-f(X^{\star})\ge\frac{1}{2}\mu\|XX^{T}-M^{\star}\|_{F}$
in \lemref{riperr}, which yields $\|\nabla\phi(XX^{T})\|_{F}\le L_{1}\epsilon$
via gradient Lipschitzness and $\lambda_{\min}(X^{T}X)=\lambda_{r}(XX^{T})\le\epsilon$
via Weyl's inequality.  Finally, to see why the second statement holds, note that the Hessian of $f(X)$ can be written as 
\[
\inner{V}{\nabla^{2} f(X)[V]}= \inner{V}{\nabla \phi (XX^{T}) V}+ \inner{V}{\nabla^{2} \phi (XX^{T})\left[X V^{T}+V X^{T}\right] X}.
\]
Since  $\phi$ is convex, the second term is always non-negative. Thus the second statement follows from the fact that $\|\nabla \phi(XX^T)\|_F \leq L_1\epsilon$.
\end{proof}

On the other hand, if \ref{PPrecGD} becomes stuck within a neighborhood
of a spurious local minimum or nonstrict saddle point $Z$, then this
fact can also be explicitly detected by numerically evaluating the
rank deficiency parameter $\epsilon_{\lambda}=\lambda_{\min}(X^{T}X)$.
Note that if $\|X-Z\|_{F}\le\rho$, then it follows from Weyl's inequality
that $\lambda_{\min}^{1/2}(Z^{T}Z)\ge\lambda_{\min}^{1/2}(X^{T}X)-\rho$. 
\begin{corollary}[Spurious points have high rank]
Under the same condition as \thmref{local}, let $Z$ satisfy $\nabla f(Z)=0$
and $\nabla^{2}f(Z)\succeq0$. If $r>r^{\star}$, then we have
\begin{gather*}
f(Z)>f(X^{\star})\quad\iff\quad\lambda_{\min}(Z^{T}Z)>\frac{\mu}{4\cdot(L_{1}+\mu)}.\frac{\lambda_{r^{\star}}^{2}(M^{\star})}{\tr(M^{\star})}.
\end{gather*}
\end{corollary}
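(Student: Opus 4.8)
The plan is to prove the two directions of the equivalence separately, leveraging the fact that a point $Z$ with $\nabla f(Z)=0$ and $\nabla^2 f(Z)\succeq 0$ is itself a second-order stationary point with $\epsilon_g = \epsilon_H = 0$, so \propref{euclid_cert} (or rather its quantitative content) applies directly with $\epsilon_\lambda = \lambda_{\min}(Z^TZ)$.

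For the ($\Leftarrow$) direction, I would argue by contrapositive: suppose $f(Z) = f(X^{\star})$ (i.e.\ $Z$ is globally optimal), and show $\lambda_{\min}(Z^TZ)$ must be small. Since $r > r^{\star}$ and $f(Z)=f(X^{\star})$, the matrix $ZZ^T$ is a global minimizer of $\phi$, hence $ZZ^T = M^{\star}$ by uniqueness of the minimizer (which follows from $(\mu,2r)$-restricted strong convexity applied to rank-$\le 2r$ perturbations). Then $\rank(ZZ^T)=r^{\star}<r$, so $Z$ is rank deficient and $\lambda_{\min}(Z^TZ)=\lambda_r(ZZ^T)=0$, which is certainly not $>\frac{\mu}{4(L_1+\mu)}\cdot\frac{\lambda_{r^{\star}}^2(M^{\star})}{\tr(M^{\star})}$. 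This gives the contrapositive and hence ($\Leftarrow$).

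For the ($\Rightarrow$) direction, suppose $f(Z)>f(X^{\star})$ and aim for a lower bound on $\lambda_{\min}(Z^TZ)$. Here I would invoke \propref{euclid_cert} with $X=Z$, $\epsilon_g=\epsilon_H=0$, and $\epsilon_\lambda = \lambda_{\min}(Z^TZ)$, which yields
\[
f(Z)-f(X^{\star}) \le C_{\lambda}\cdot\lambda_{\min}(Z^TZ), \qquad C_{\lambda} = 2\|\nabla^2\phi(ZZ^T)\|\cdot\|X^{\star}\|_F^2.
\]
Rearranging gives $\lambda_{\min}(Z^TZ) \ge \frac{f(Z)-f(X^{\star})}{C_{\lambda}}$. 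To turn this into the stated bound, I would lower-bound $f(Z)-f(X^{\star})$ using restricted strong convexity: by \lemref{riperr} one has $f(Z)-f(X^{\star})\ge \frac{\mu}{2}\|ZZ^T-M^{\star}\|_F^2$, and a separate argument (again invoking the first-order condition $\nabla f(Z)=0$ and the structure of the Burer--Monteiro gradient, as in the proof of \thmref{local}) shows that whenever $Z$ is not globally optimal its distance $\|ZZ^T - M^{\star}\|_F$ is bounded below in terms of $\lambda_{r^{\star}}(M^{\star})$. Combining these with $\|\nabla^2\phi(ZZ^T)\|\le L_1$ and $\|X^{\star}\|_F^2 = \tr(M^{\star})$ should produce $\lambda_{\min}(Z^TZ) > \frac{\mu}{4(L_1+\mu)}\cdot\frac{\lambda_{r^{\star}}^2(M^{\star})}{\tr(M^{\star})}$ after bookkeeping the constants.

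The main obstacle I anticipate is the ($\Rightarrow$) direction's quantitative lower bound on $\|ZZ^T - M^{\star}\|_F$ for a non-optimal second-order stationary point: one has to rule out the possibility that $Z$ is \emph{close} to optimal yet strictly suboptimal, which is exactly the regime excluded by the near-optimality hypothesis of \thmref{local}. The cleanest route is probably to show directly that any $Z$ with $\nabla f(Z)=0$, $\nabla^2 f(Z)\succeq 0$, $r>r^{\star}$, and $f(Z)>f(X^{\star})$ must in fact satisfy $f(Z)-f(X^{\star}) > \frac{\mu}{4(L_1+\mu)}\cdot\frac{\lambda_{r^{\star}}^2(M^{\star})}{\tr(M^{\star})}\cdot\lambda_{\min}(Z^TZ)/\lambda_{\min}(Z^TZ)$ — i.e.\ to establish a \emph{dichotomy} (either $Z$ is globally optimal, or it is suboptimal by a definite margin relative to its rank deficiency), mirroring the ``stuck at spurious point'' discussion in the introduction. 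This dichotomy, combined with the $C_\lambda$ certificate above, closes the equivalence; I expect the constant-chasing to be the only remaining routine-but-tedious part.
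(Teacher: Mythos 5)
Your overall plan matches the paper's proof: handle $(\Leftarrow)$ by contrapositive using uniqueness of $M^\star$ (global optimality forces $ZZ^T = M^\star$, hence $\lambda_{\min}(Z^TZ)=0$ since $r>r^\star$); handle $(\Rightarrow)$ via \propref{euclid_cert} with $\epsilon_g=\epsilon_H=0$, which reduces to showing that spurious second-order stationary points have a definite suboptimality margin. The constant bookkeeping you sketch ($C_\lambda \le 2L_1\tr(M^\star)$ from $\|\nabla^2\phi\|\le L_1$ and $\|X^\star\|_F^2 = \tr(M^\star)$) is exactly what the paper does.

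The one step you flag as an obstacle---establishing the dichotomy that either $f(Z)=f(X^\star)$ or $f(Z)-f(X^\star)$ exceeds a definite margin---has a cleaner route than your proposed detour through a lower bound on $\|ZZ^T-M^\star\|_F$. It falls out of \thmref{local} (equivalently, the gradient-dominance Lemma~\ref{lem:pl}) applied directly to the stationary point: if $f(Z)-f(X^\star)\le R \eqdef \frac{\mu}{2(1+\mu/L_1)}\lambda_{r^\star}^2(M^\star)$, then gradient dominance gives $\|\nabla f(Z)\|_{Z,\eta}^* \gtrsim \|ZZ^T-M^\star\|_F > 0$ whenever $Z$ is not globally optimal, contradicting $\nabla f(Z)=0$. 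Equivalently, one can apply the contraction $f(Z_+)-f(X^\star)\le(1-\alpha\tau)(f(Z)-f(X^\star))$ of Theorem~\ref{thm:local} to the fixed point $Z_+=Z$, forcing $f(Z)=f(X^\star)$. Either way, any spurious stationary $Z$ must satisfy $f(Z)-f(X^\star) > R$, and then $R < C_\lambda\cdot\lambda_{\min}(Z^TZ) \le 2L_1\tr(M^\star)\cdot\lambda_{\min}(Z^TZ)$ yields exactly the stated lower bound $\lambda_{\min}(Z^TZ) > \frac{\mu}{4(L_1+\mu)}\cdot\frac{\lambda_{r^\star}^2(M^\star)}{\tr(M^\star)}$; the intermediate pass through restricted strong convexity and $\|ZZ^T-M^\star\|_F$ that you sketch is not needed.
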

\begin{proof}
It follows from \thmref{local} that any point $Z$ that satisfies
$\nabla f(Z)=0$ within the neighborhood $f(Z)-f(X^{\star})\le R=\frac{\mu}{2\cdot(1+\mu/L_{1})}\lambda_{r^{\star}}^{2}(M^{\star})$
must actually be globally optimal $f(Z)=f(X^{\star})$. Therefore,
any suboptimal $Z$ with $\nabla f(Z)=0$ and $\nabla^{2}f(Z)\succeq0$
and $f(Z)>f(X^{\star})$ must lie outside of this neighborhood, as
in $f(Z)-f(X^{\star})>R$. It follows from \propref{euclid_cert}
that $Z$ must satisfy:
\begin{align*}
R<f(Z)-f(X^{\star}) & \le C_{\lambda}\cdot\lambda_{\min}(Z^{T}Z)\le2L_{1}\tr(M^{\star})\cdot\lambda_{\min}(Z^{T}Z).
\end{align*}
Conversely, if $r>r^{\star}=\rank(M^{\star})$, then $Z$ is globally
optimal $f(Z)=f(X^{\star})$ if and only if it is rank deficient,
as in $\lambda_{\min}(Z^{T}Z)=0$.  
\end{proof}

Finally, we turn to the practical problem of evaluating the parameters
in \propref{euclid_cert}. It is straightforward to see that it costs
$O(nr^{2}+r^{3})$ time to compute the gradient norm term $\epsilon_{g}=\|\nabla f(X)\|_{F}$
and the rank deficiency term $\lambda_{\min}(X^{T}X)$, after computing
the nonconvex gradient $\nabla f(X)$ in $n\cdot\poly(r)$ time via
the gradient oracle. To compute the Hessian curvature $\epsilon_{H}=-\lambda_{\min}[\nabla^{2}f(X)]$
without explicitly forming the $nr\times nr$ Hessian matrix, we suggest
using a shifted power iteration
\[
V_{k+1}=\tilde{V}_{k}/\|\tilde{V}_{k}\|_{F}\qquad\text{ where }\tilde{V}_{k}=\lambda V_{k}-\nabla^{2}f(X)[V_{k}],
\]
where we roughly choose the shift parameter $\lambda$ so that $\lambda\ge\lambda_{\max}[\nabla^{2}f(X)]$
and approximate each Hessian matrix-vector product using finite differences
\[
\nabla^{2}f(X)[V]\approx\frac{1}{t}[\nabla f(X+tV)-\nabla f(X)].
\]
The Rayleigh quotient converges linearly, achieving $\delta$-accuracy
%$\inner{\nabla^{2}f(X)[V_{k}]}{V_{k}}\le\lambda_{\min}(\nabla^{2}f(X))+\delta$
in $O(\log(1/\delta))$ iterations \citep{kaniel1966estimates,paige1971computation,saad1980rates}.
Each iteration requires a single nonconvex gradient evaluation $\nabla f(X+tV)$,
which we have assumed to cost $n\cdot\poly(r)$ time. Technically,
linear convergence to $\lambda_{\min}(\nabla^{2}f(X))$ requires the
eigenvalue to be simple and well separated. If instead the eigenvalue
has multiplicity $b>1$ (or lies within a well-separated cluster of
$b$ eigenvalues), then we use a block power iteration with block-size
$b$ to recover linear convergence to $\lambda_{\min}[\nabla^{2}f(X)]$,
with an increased per-iteration cost of $O(nb)$ time~\citep{saad1980rates}.

\section{Preliminaries}

Our analysis will assume that $\phi$ is $L$-gradient Lipschitz and
$(\mu,2r)$-restricted strongly convex, meaning that
\begin{equation}
\mu\|E\|_{F}^{2}\le\inner{\nabla^{2}\phi(M)[E]}E\le L\|E\|_{F}^{2}\label{eq:rip}
\end{equation}
in which the lower-bound is restricted over matrices $M,E$ whose
$\rank(M)\le2r$ and $\rank(E)\le2r$. (See \defref{gradlip} and
\defref{strcvx}.) The purpose of these assumptions is to render the
function $\phi$ well-conditioned, so that its suboptimality can serve
as a good approximation for the matrix error norm
\[
f(X)-f(X^{\star})\approx\|XX^{T}-M^{\star}\|_{F}^{2}\text{ up to a constant.}
\]
In turn, we would also expect the nonconvex gradient $\nabla f(X)$
to be closely related to the gradient of the matrix error norm $\|XX^{T}-M^{\star}\|_{F}^{2}$
taken with respect to $X$. To make these arguments rigorous, we will
need the following lemma from \citet[Proposition~2.1]{li2019non}.
The proof is a straightforward extension of \citet[Lemma~2.1]{candes2008restricted}.
\begin{lemma}[Preservation of inner product]
\label{lem:ripinner}Let $\phi$ be $L$-gradient Lipschitz and
$(\mu,r)$-restricted strongly convex. Then, we have
\[
\left|\frac{2}{\mu+L}\inner{\nabla^{2}\phi(M)[E]}F-\inner EF\right|\le\frac{L-\mu}{L+\mu}\|E\|_{F}\|F\|_{F}
\]
for all $\rank(M)\le r$ and $\rank(E+F)\le r$.
\end{lemma}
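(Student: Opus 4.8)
The plan is to read the claimed inequality as a ``preservation of inner products'' estimate of the type familiar from the restricted isometry property, and to prove it by polarization. Writing $c\eqdef\tfrac12(L+\mu)$ and $\rho\eqdef\tfrac12(L-\mu)$ and multiplying through by $\tfrac12(L+\mu)$, the claim is equivalent to
\[
\left|\inner{B[E]}{F}\right|\le\rho\,\norm{E}_F\norm{F}_F,\qquad B\eqdef\nabla^2\phi(M)-cI,
\]
where $B$ is a self-adjoint linear operator on $\R^{n\times n}$ (self-adjoint because it is a Hessian shifted by a multiple of the identity).

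First I would handle the diagonal case $F=E$. Restricted strong convexity gives $\inner{\nabla^2\phi(M)[Z]}{Z}\ge\mu\norm{Z}_F^2$, while differentiating the gradient-Lipschitz bound of \defref{gradlip} --- i.e.\ passing to the limit in $\inner{\tfrac1t(\nabla\phi(M+tZ)-\nabla\phi(M))}{Z}$ as $t\to0$ and applying Cauchy--Schwarz --- gives $\inner{\nabla^2\phi(M)[Z]}{Z}\le L\norm{Z}_F^2$. Since $c-\mu=L-c=\rho$, subtracting $c\norm{Z}_F^2$ yields $|\inner{B[Z]}{Z}|\le\rho\norm{Z}_F^2$ for every $Z$ with $\rank(Z)\le r$ (using also $\rank(M)\le r$). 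Next I would polarize: since $B$ is self-adjoint,
\[
4\inner{B[E]}{F}=\inner{B[E+F]}{E+F}-\inner{B[E-F]}{E-F}.
\]
After normalizing $E$ and $F$ to unit Frobenius norm, which is legitimate because the target inequality is homogeneous of degree one in each of $E$ and $F$, the rank hypothesis ensures that $E+F$ and $E-F$ fall within the rank budget on which the diagonal estimate holds; applying that estimate to each term together with the parallelogram law $\norm{E+F}_F^2+\norm{E-F}_F^2=2\norm{E}_F^2+2\norm{F}_F^2=4$ gives $4\,|\inner{B[E]}{F}|\le4\rho$, which after undoing the normalization is exactly the claim.

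I expect the main obstacle to be the rank bookkeeping in the polarization step rather than any analytic difficulty. The diagonal estimate is only available for low-rank arguments, so one must track that $E\pm F$ (and their unit-norm rescalings) remain within the order of restricted strong convexity being assumed; this is why the hypothesis is naturally phrased in terms of the ranks of $E\pm F$ rather than those of $E$ and $F$ in isolation, and why, in the applications where the operands themselves have rank at most $r$ so that $\rank(E\pm F)\le 2r$, the lemma is invoked at the doubled order, matching the $(\mu,2r)$-restricted strong convexity hypothesis of \thmref{local}. Everything else reduces to the two displayed one-line identities above.
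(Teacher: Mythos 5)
Your strategy --- shift the Hessian to $B=\nabla^{2}\phi(M)-cI$ with $c=(L+\mu)/2$, establish the diagonal estimate $|\inner{B[Z]}{Z}|\le\rho\|Z\|_{F}^{2}$ (with $\rho=(L-\mu)/2$) for $Z$ of rank at most $r$, and then polarize --- is exactly the Cand\`es-style argument the paper itself gestures at: it gives no in-text proof of \lemref{ripinner}, citing \citet[Proposition~2.1]{li2019non} and describing the proof as ``a straightforward extension of \citet[Lemma~2.1]{candes2008restricted}.'' So the route is the intended one, and your diagonal bound (upper side from gradient Lipschitzness via the limiting Cauchy--Schwarz argument you give; lower side from $(\mu,r)$-restricted strong convexity, both requiring $\rank(M)\le r$ and $\rank(Z)\le r$) is correct.

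The gap is precisely in the rank bookkeeping you flag but do not close. Two distinct problems. First, the polarization identity requires the diagonal estimate on \emph{both} $E+F$ and $E-F$, but the lemma's stated hypothesis controls only $\rank(E+F)$ and places no bound on $\rank(E-F)$: take $E=\mathrm{diag}(1,1)$ and $F=\mathrm{diag}(-1,0)$, so that $\rank(E+F)=1$ yet $\rank(E-F)=2$. Second, rescaling $E$ and $F$ \emph{separately} to unit Frobenius norm does not preserve $\rank(E+F)\le r$, since $\rank(aE+bF)$ is not invariant to the choice of scalars $(a,b)$, so the hypothesis need not survive your normalization step. You hint at the right resolution (``one must track that $E\pm F$ (and their unit-norm rescalings) remain within the rank budget''), but the printed hypothesis genuinely does not supply it, and indeed the conclusion can fail without something extra. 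The condition that actually makes the argument go through --- and the one the paper verifies each time it invokes the lemma (inside the proof of \lemref{prelimgrad} it explicitly checks $\rank(XY^{\star T}+Y^{\star}X^{T}\pm E)\le 2r$, crucially for both signs) --- is that $E$ and $F$ are simultaneously compressed by a single rank-$\le r$ orthogonal projector $\Pi$, i.e.\ $E=\Pi E\Pi$ and $F=\Pi F\Pi$. That condition is scale-invariant and gives $\rank(aE+bF)\le r$ for every $a,b$, so both the polarization and the normalization become safe and your argument then goes through verbatim. In short: the approach is the right one, but a complete proof must replace the lemma's too-weak printed hypothesis with the stronger projector (or two-sided rank) condition that the argument --- and the paper's own usage --- actually require.
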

\begin{lemma}[Preservation of error norm]
\label{lem:riperr}Let $\phi$ be $L$-gradient Lipschitz and
$(\mu,2r)$-restricted strongly convex. Let $M^{\star}=\arg\min\phi$
satisfy $M^{\star}\succeq0$ and $\rank(M^{\star})\le r$. Define
$f(X)\eqdef\phi(XX^{T})$ and let $X^{\star}=\arg\min f$. Then $f$
satisfies
\[
\frac{1}{2}\mu\|XX^{T}-M^{\star}\|_{F}^{2}\leq f(X)-f(X^{\star})\leq\frac{1}{2}L\|XX^{T}-M^{\star}\|_{F}^{2}
\]
for all $\rank(M)\le r$.
\end{lemma}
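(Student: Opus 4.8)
The plan is to reduce the claimed two-sided bound on $f(X)-f(X^{\star})$ to a second-order Taylor expansion of the convex function $\phi$ around its unconstrained minimizer $M^{\star}$, and then read off the Hessian bounds from restricted strong convexity and gradient Lipschitzness. First I would record two elementary reductions. Since $M^{\star}=\arg\min\phi$ over all of $\R^{n\times n}$, first-order optimality gives $\nabla\phi(M^{\star})=0$. And since $M^{\star}\succeq0$ with $\rank(M^{\star})\le r$, it admits a factorization $M^{\star}=X^{\star}X^{\star T}$ with $X^{\star}\in\R^{n\times r}$; because $f(X)=\phi(XX^{T})\ge\phi(M^{\star})$ for every $X$, with equality at $X=X^{\star}$, we get $f(X^{\star})=\phi(M^{\star})$ and $X^{\star}=\arg\min f$. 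So it suffices to bound $\phi(M)-\phi(M^{\star})$ where $M=XX^{T}$ has $\rank(M)\le r$.

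Next I would apply the integral form of Taylor's theorem along the segment $M_{t}=M^{\star}+t(M-M^{\star})$, $t\in[0,1]$:
\[
\phi(M)-\phi(M^{\star})=\inner{\nabla\phi(M^{\star})}{M-M^{\star}}+\int_{0}^{1}(1-t)\,\inner{\nabla^{2}\phi(M_{t})[M-M^{\star}]}{M-M^{\star}}\,\d t.
\]
The linear term vanishes by the first reduction. For the quadratic term the key point is the rank bookkeeping: $\rank(M)\le r$ and $\rank(M^{\star})\le r$, so $E\eqdef M-M^{\star}$ has $\rank(E)\le 2r$, and likewise $\rank(M_{t})\le\rank(M^{\star})+\rank(M)\le 2r$ along the whole segment. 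Hence $(\mu,2r)$-restricted strong convexity (the lower bound in \eqref{rip}) gives $\inner{\nabla^{2}\phi(M_{t})[E]}{E}\ge\mu\|E\|_{F}^{2}$, while $L$-gradient Lipschitzness — equivalently, the Hessian operator has operator norm at most $L$ — gives the matching upper bound $\inner{\nabla^{2}\phi(M_{t})[E]}{E}\le L\|E\|_{F}^{2}$ with no rank restriction needed on that side. Integrating against $(1-t)$, whose integral over $[0,1]$ equals $\tfrac12$, yields
\[
\tfrac12\mu\|M-M^{\star}\|_{F}^{2}\le\phi(M)-\phi(M^{\star})\le\tfrac12 L\|M-M^{\star}\|_{F}^{2},
\]
which is exactly the claim after substituting $\phi(M)=f(X)$ and $\phi(M^{\star})=f(X^{\star})$.

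I do not expect a substantial obstacle; the only points demanding care are (i) the rank count $\rank(M_{t})\le 2r$ that licenses using the $2r$-restricted constant $\mu$ rather than the $r$-restricted one, and (ii) the justification that $f(X^{\star})=\phi(M^{\star})$, which relies on $M^{\star}$ being positive semidefinite with rank at most the search rank $r$. An alternative to the Taylor-remainder computation would be to integrate $\nabla\phi$ once along the segment and pair the result with $E$, or to invoke \lemref{ripinner} with $E=F=M-M^{\star}$ and the base point $M_t$; both routes produce the same inequalities, but the second-order Taylor form is the most direct.
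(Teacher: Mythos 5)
The paper states Lemma~\ref{lem:riperr} without proof, so there is no in-text argument to compare against; your proof is correct and is the natural one. The second-order Taylor expansion with integral remainder about $M^{\star}$, the cancellation of the linear term via $\nabla\phi(M^{\star})=0$, the rank bookkeeping $\rank(M_t)\le 2r$ and $\rank(M-M^{\star})\le 2r$ that licenses the $(\mu,2r)$-restricted lower bound, the Hessian operator-norm bound $\|\nabla^2\phi\|\le L$ from gradient Lipschitzness for the upper bound, and the identification $f(X^{\star})=\phi(M^{\star})$ via $M^{\star}\succeq 0$, $\rank(M^{\star})\le r$, are all handled correctly.
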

\begin{lemma}[Preservation of error gradient]
\label{lem:prelimgrad}Under the same conditions as \lemref{riperr},
we have \begin{subequations}
\begin{align}
\|\nabla f(X)\|_{F} & \ge\nu\cdot\max_{\|Y\|_{F}=1}\left[\inner E{XY^{T}+YX^{T}}-\delta\|E\|_{F}\|XY^{T}+YX^{T}\|_{F}\right],\label{eq:gradbnd}
\end{align}
\end{subequations} where $\nu=\frac{1}{2}(\mu+L)$ and $\delta=\frac{L-\mu}{L+\mu}$
and $E=XX^{T}-M^{\star}$.
\end{lemma}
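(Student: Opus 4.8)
The plan is to lower-bound $\|\nabla f(X)\|_{F}$ by pairing the nonconvex gradient against a unit-norm test direction $Y$, and then transferring the resulting bound onto the matrix error $E=XX^{T}-M^{\star}$ via \lemref{ripinner}, in the same spirit as the proof of \lemref{riperr}.

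First, the chain rule gives $\nabla f(X)=2\nabla\phi(XX^{T})X$, hence $\inner{\nabla f(X)}{Y}=\inner{\nabla\phi(XX^{T})}{XY^{T}+YX^{T}}$ for every $Y\in\R^{n\times r}$. Since $M^{\star}=\arg\min\phi$ is an \emph{unconstrained} minimizer we have $\nabla\phi(M^{\star})=0$, so the fundamental theorem of calculus along the segment $M_{t}=M^{\star}+tE=(1-t)M^{\star}+tXX^{T}$ gives
\[
\nabla\phi(XX^{T})=\nabla\phi(XX^{T})-\nabla\phi(M^{\star})=\int_{0}^{1}\nabla^{2}\phi(M_{t})[E]\,\d t .
\]
Hence, for any $Y$ with $\|Y\|_{F}=1$,
\[
\|\nabla f(X)\|_{F}\ \ge\ \inner{\nabla f(X)}{Y}\ =\ \int_{0}^{1}\inner{\nabla^{2}\phi(M_{t})[E]}{XY^{T}+YX^{T}}\,\d t .
\]

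Second, I would apply \lemref{ripinner} to each integrand with $M=M_{t}$, the matrix $E=XX^{T}-M^{\star}$, and $F=XY^{T}+YX^{T}$. Multiplying the conclusion of \lemref{ripinner} through by $\nu=\tfrac12(\mu+L)$ and keeping only the lower bound yields
\[
\inner{\nabla^{2}\phi(M_{t})[E]}{F}\ \ge\ \nu\Big(\inner{E}{F}-\delta\,\|E\|_{F}\,\|F\|_{F}\Big),
\]
with a right-hand side that does not depend on $t$. Integrating over $t\in[0,1]$ gives $\inner{\nabla f(X)}{Y}\ge\nu(\inner{E}{XY^{T}+YX^{T}}-\delta\|E\|_{F}\|XY^{T}+YX^{T}\|_{F})$ for every unit $Y$; taking $Y$ to be the maximizer of the right-hand side over the unit sphere then produces the stated inequality. (When that maximum is negative there is nothing to prove, since $\|\nabla f(X)\|_{F}\ge0$.)

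Third, the step needing care is checking the hypotheses of \lemref{ripinner} under only $(\mu,2r)$-restricted strong convexity. The Hessian is evaluated at $M_{t}=(1-t)M^{\star}+tXX^{T}$, a convex combination of two matrices of rank at most $r$, so $\rank(M_{t})\le2r$ as required. The more delicate point is that the inner-product-preservation bound relies on a polarization argument requiring the paired directions $E=XX^{T}-M^{\star}$ and $F=XY^{T}+YX^{T}$ to jointly respect the rank budget. I would handle this by decomposing $E$ into its two rank-$\le r$ pieces $XX^{T}$ and $M^{\star}$, and writing $F=\tfrac12(X+Y)(X+Y)^{T}-\tfrac12(X-Y)(X-Y)^{T}$, applying \lemref{ripinner} to each of the resulting pairs and recombining by bilinearity; alternatively, since $\inner{E}{XY^{T}+YX^{T}}=2\inner{EX}{Y}$ depends on $Y$ only through its component along $\range(EX)$, one can first restrict the test direction $Y$ to a suitable low-dimensional subspace. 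Handling this rank bookkeeping is the main obstacle; the remaining manipulations — the chain rule, the fundamental theorem of calculus, and Cauchy--Schwarz — are routine.
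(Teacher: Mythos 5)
Your overall skeleton --- chain rule to get $\inner{\nabla f(X)}{Y}=\inner{\nabla\phi(XX^T)}{XY^T+YX^T}$, fundamental theorem of calculus along $M_t=M^\star+tE$ using $\nabla\phi(M^\star)=0$, then \lemref{ripinner} applied to the integrand --- matches the paper's proof exactly. You also correctly identify the rank bookkeeping as the one nontrivial step. However, neither of the two fixes you sketch actually closes the gap. The bilinearity decomposition ($E = XX^T - M^\star$ split as $E_1-E_2$, $F$ split via polarization into $F_1-F_2$, apply \lemref{ripinner} to the four pairs) accumulates error $\delta(\|E_1\|_F+\|E_2\|_F)(\|F_1\|_F+\|F_2\|_F)$, which is generally \emph{much} larger than the required $\delta\|E\|_F\|F\|_F$: for instance when $XX^T\approx M^\star$ the former stays $O(\|M^\star\|_F^2)$ while the latter is nearly zero, so this route proves a strictly weaker inequality with different constants. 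Your second alternative --- restricting $Y$ to a subspace because $\inner{E}{XY^T+YX^T}=2\inner{EX}{Y}$ depends only on the component of $Y$ along $\range(EX)$ --- has the right flavor, but the subspace $\range(EX)$ is the wrong choice: projecting $Y$ onto it preserves the inner-product term but does \emph{not} control the penalty term $\|XY^T+YX^T\|_F$, since that projection is not of the form $\Pi(\cdot)\Pi$ with $\Pi X=X$, so there is no guarantee the projected $Y$ remains a maximizer.

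The paper's resolution is to project $Y^\star$ onto the larger subspace $\range(X)+\range(X^\star)$, i.e., set $\Pi=QQ^T$ with $Q=\orth([X,X^\star])$ and replace $Y^\star$ by $\Pi Y^\star$. Because $\Pi X=X$ and $\Pi E\Pi=E$, one gets $X(\Pi Y^\star)^T+(\Pi Y^\star)X^T=\Pi(XY^{\star T}+Y^\star X^T)\Pi$, so the inner-product term is unchanged, $\|XY^{T}+YX^{T}\|_F$ can only decrease, $\|\Pi Y^\star\|_F\le 1$, and --- crucially --- both $E$ and $F$ are sandwiched by $\Pi$, giving $\rank(XY^{\star T}+Y^\star X^T\pm E)\le\rank(\Pi)\le 2r$, exactly what \lemref{ripinner} with $(\mu,2r)$-restricted strong convexity requires. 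So the main idea you gesture at (restrict $Y$ to a low-dimensional subspace before invoking \lemref{ripinner}) is the right one, but the correct subspace is $\range(X)+\range(X^\star)$, chosen so that the projection simultaneously fixes $X$, fixes $E$, and is contractive on the penalty term.
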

\begin{proof}
Let $Y^{\star}$ denote a maximizer for the right-hand side of (\ref{eq:gradbnd}),
and let $\Pi$ denote the orthogonal projector onto 
\[
\range(X)+\range(X^{\star})=\{Xu+X^{\star}v:u,v\in\R^{r}\}.
\]
(Explicitly, $\Pi=QQ^{T}$ where $Q=\orth([X,X^{\star}])$.) We claim
that the projected matrix $Y=\Pi Y^{\star}$ is also a maximizer.
Note that, by the definition of $\Pi$, we have $X=\Pi X$ and $E=\Pi E\Pi$.
It follows that 
\begin{align*}
\inner{XY^{T}+YX^{T}}E & =\inner{\Pi\left[XY^{\star T}+Y^{\star}X^{T}\right]\Pi}E\\
 & =\inner{XY^{\star T}+Y^{\star}X^{T}}{\Pi E\Pi}=\inner{XY^{\star T}+Y^{\star}X^{T}}E,
\end{align*}
and 
\[
\|XY^{T}+YX^{T}\|_{F}=\|\Pi\left[XY^{\star T}+Y^{\star}X^{T}\right]\Pi\|_{F}\le\|XY^{\star T}+Y^{\star}X^{T}\|_{F},
\]
and $\|Y\|_{F}=\|\Pi Y^{\star}\|_{F}\le\|Y^{\star}\|_{F}\le1.$ Therefore,
we conclude that $Y$ is feasible and achieves the same optimal value
as the maximizer $Y^{\star}$. 

Now, let $Y^{\star}=\Pi Y^{\star}$ without loss of generality due
to the above. We evoke the lower-bound in \lemref{ripinner} and $\nabla\phi(M^{\star})=0$
to obtain the following
\begin{align*}
\inner{\nabla f(X)}{Y^{\star}} & =\inner{\nabla\phi(XX^{T})-\nabla\phi(M^{\star})}{XY^{\star T}+Y^{\star}X^{T}}\\
 & =\int_{0}^{1}\inner{\nabla^{2}\phi(M^{\star}+tE)[E]}{XY^{\star T}+Y^{\star}X^{T}}\d t\\
 & \ge\nu\cdot[\inner E{XY^{\star T}+Y^{\star}X^{T}}-\delta\cdot\|E\|_{F}\cdot\|XY^{\star T}+Y^{\star}X^{T}\|_{F}]
\end{align*}
where we crucially note that $\rank(XY^{\star T}+Y^{\star}X^{T}\pm E)\le2r$
because $XY^{\star T}=\Pi XY^{\star T}\Pi$ and $E=\Pi E\Pi$ and
$\rank(\Pi)\le\rank(X)+\rank(X^{\star})\le2r$. We conclude that (\ref{eq:gradbnd})
is true, because $Y^{\star}$ is a maximizer for the right-hand side
of (\ref{eq:gradbnd}).
\end{proof}

\section{\label{sec:sublin}Local Sublinear Convergence of Gradient Descent}

In order to explain why PrecGD is able to maintain linear convergence
in the overparameterized regime $r>r^{\star}$, we must first understand
why gradient descent slows down to sublinear convergence. In this
paper, we focus on a property known as \emph{gradient dominance} \citep{polyak1963gradient,nesterov2006cubic}
or the \emph{Polyak-Łojasiewicz} inequality~\citep{lojasiewicz1963propriete},
which is a simple, well-known sufficient condition for linear convergence.
Here, we use the degree-2 definition from \citet[Definition~3]{nesterov2006cubic}.
\begin{definition}
A function $f$ is said to satisfy \emph{gradient dominance} (in the
Euclidean norm) if it attains a global minimum $f^{\star}=f(X^{\star})$
at some point $X^{\star}$ and we have
\begin{equation}
f(X)-f^{\star}\le R\quad\implies\quad\tau\cdot[f(X)-f^{\star}]\le\frac{1}{2}\|\nabla f(X)\|_{F}^{2}\label{eq:pl1}
\end{equation}
for a radius constant $R>0$ and dominance constant $\tau>0$.
\end{definition}
If the function $f$ is additionally $\ell$-gradient Lipschitz, as
in
\begin{align*}
f(X+\alpha V) & \le f(X)+\alpha\inner{\nabla f(X)}V+\frac{\ell}{2}\alpha^{2}\|V\|_{F}^{2},
\end{align*}
then it follows that the amount of progress made by an iteration of
gradient descent $X_{+}=X-\alpha\nabla f(X)$ is proportional to the
gradient norm squared:
\begin{align*}
f(X_{+}) & \le f(X)-\alpha\inner{\nabla f(X)}{\nabla f(X)}+\frac{\ell}{2}\alpha^{2}\|\nabla f(X)\|_{F}^{2}\\
 & =f(X)-\alpha\left(1-\frac{\ell}{2}\alpha\right)\|\nabla f(X)\|_{F}^{2}\\
 & \le f(X)-\frac{\alpha}{2}\|\nabla f(X)\|_{F}^{2}\qquad\text{ with }\alpha\le\frac{1}{\ell}.
\end{align*}
The purpose of gradient dominance (\ref{eq:pl1}), therefore, is to
ensure that the gradient norm remains large enough for good progress
to be made. Substituting (\ref{eq:pl1}) yields
\begin{align}
f(X_{+})-f^{\star} & \le(1-\tau\alpha)\cdot(f(X)-f^{\star})\qquad\text{ with }\alpha\le\frac{1}{\ell}.\label{eq:gdconv-1}
\end{align}
Starting from an initial point $X_{0}$ within the radius $f(X_{0})-f^{\star}\le R$,
it follows that gradient descent $X_{k+1}=X_{k}-\frac{1}{\ell}\nabla f(X_{k})$
converges to an $\epsilon$-suboptimal point $X_{k}$ that satisfies
$f(X_{k})-f^{\star}\le\epsilon$ in at most $k=O((\tau/\ell)\log(R/\epsilon))$
iterations.

The nonconvex objective $f(X)\eqdef\phi(XX^{T})$ associated with
a well-conditioned convex objective $\phi$ is easily shown to satisfy
gradient dominance (\ref{eq:pl1}) in the exactly parameterized regime
$r=r^{\star}$, for example by manipulating existing results on local
strong convexity~\citep{sun2016guaranteed}~\citep[Lemma~4]{chi2019nonconvex}.
In the overparameterized case $r>r^{\star}$, however, local strong
convexity is lost, and gradient dominance can fail to hold. 

The goal of this section is to elucidate this failure mechanism, in order to motivate the ``fix'' encompassed
by PrecGD. We begin by considering a specific instance
of the following nonconvex objective $f_{0}$, corresponding to a
perfectly conditioned quadratic objective $\phi_{0}$:
\begin{equation}
f_{0}(X)\eqdef\phi_{0}(XX^{T})=f_{0}^{\star}+\frac{1}{2}\|XX^{T}-M^{\star}\|_{F}^{2}.\label{eq:f0def}
\end{equation}
The associated gradient norm has a variational characterization
\begin{align}
\|\nabla f_{0}(X)\|_{F}=\max_{\|Y\|_{F}=1}\inner{\nabla f_{0}(X)}Y & =\max_{\|Y\|_{F}=1}\inner{XX^{T}-M^{\star}}{XY^{T}+YX^{T}},\label{eq:vari}
\end{align}
which we can interpret as a \emph{projection} from the error vector
$XX^{T}-M^{\star}$ onto the linear subspace $\{XY^{T}+YX^{T}:Y\in\R^{n\times r}\}$,
as in
\begin{align}
\|\nabla f_{0}(X)\|_{F} & =\|XX^{T}-M^{\star}\|_{F}\|XY^{\star T}+Y^{\star}X^{T}\|_{F}\cos\theta.\label{eq:gd-proj}
\end{align}
Here, the incidence angle $\theta$ is defined 
\begin{equation}
\cos\theta=\max_{Y\in\R^{n\times r}}\frac{\inner{XX^{T}-M^{\star}}{XY^{T}+YX^{T}}}{\|XX^{T}-M^{\star}\|_{F}\|XY^{T}+YX^{T}\|_{F}},\label{eq:cosdef}
\end{equation}
and $Y^{\star}$ is a corresponding maximizer for (\ref{eq:cosdef})
scaled to satisfy $\|Y^{\star}\|_{F}=1$. Substituting the suboptimality $f_{0}(X)-f_{0}^{\star}$
in place of the error norm $\|XX^{T}-M^{\star}\|_{F}$ via \lemref{riperr}
yields a critical identity:
\begin{equation}
\frac{1}{2}\|\nabla f_{0}(X)\|_{F}^{2}=\|XY^{\star T}+Y^{\star}X^{T}\|_{F}^{2}\cdot\cos^{2}\theta\cdot[f_{0}(X)-f_{0}^{\star}].\label{eq:gd-cos}
\end{equation}
The loss of gradient dominance  implies that at
least one of the two terms $\|XY^{\star T}+Y^{\star}X^{T}\|_{F}$
and $\cos\theta$ in (\ref{eq:gd-cos}) must decay to zero as gradient
descent makes progress towards the solution. 

The term $\cos\theta$ in (\ref{eq:gd-cos}) becomes small if the
error $XX^{T}-M^{\star}$ becomes \emph{poorly aligned} to the linear
subspace $\{XY^{T}+YX^{T}:Y\in\R^{n\times r}\}$. In fact, this failure
mechanism cannot occur within a sufficiently small neighborhood of
the ground truth, due to the following key lemma. Its proof is technical,
and is deferred to \appref{pf_align}. 
\begin{lemma}[Basis alignment]
\label{lem:align}For $M^{\star}\in\R^{n\times n},$ $M^{\star}\succeq0$,
suppose that $X\in\R^{n\times r}$ satisfies $\|XX^{T}-M^{\star}\|_{F}\le\rho\lambda_{r^{\star}}(M^{\star})$
with $r^{\star}=\rank(M^{\star})$ and $\rho\le1/\sqrt{2}$. Then
the incidence angle $\theta$ defined in (\ref{eq:cosdef}) satisfies
\begin{equation}
\sin\theta=\frac{\|(I-XX^{\dagger})M^{\star}(I-XX^{\dagger})\|_{F}}{\|XX^{T}-M^{\star}\|_{F}}\le\frac{1}{\sqrt{2}}\frac{\rho}{\sqrt{1-\rho^{2}}}\label{eq:ratio-1-1}
\end{equation}
where $\dagger$ denotes the pseudoinverse.
\end{lemma}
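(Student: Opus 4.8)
Write $E\eqdef XX^{T}-M^{\star}$ (we may assume $E\neq0$, otherwise both sides vanish), let $P\eqdef XX^{\dagger}$ be the orthogonal projector onto $\range(X)$ and $P^{\perp}\eqdef I-P$, and set $N\eqdef P^{\perp}M^{\star}P^{\perp}\succeq0$ and $K\eqdef PM^{\star}P^{\perp}$. The plan is to first prove the displayed identity for $\sin\theta$, and then bound its numerator $\|N\|_{F}$.

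\emph{Step 1: the identity.} Since $\mathcal{T}_{X}\eqdef\{XY^{T}+YX^{T}:Y\in\R^{n\times r}\}$ is a linear subspace of symmetric matrices and $E$ is symmetric, definition (\ref{eq:cosdef}) says exactly that $\|E\|_{F}\cos\theta$ is the Frobenius norm of the orthogonal projection of $E$ onto $\mathcal{T}_{X}$; hence $\|E\|_{F}^{2}\sin^{2}\theta=\|E\|_{F}^{2}(1-\cos^{2}\theta)$ is the squared distance from $E$ to $\mathcal{T}_{X}$. Writing $X=UC$ with $U$ an orthonormal basis of $\range(X)$ and $C$ of full row rank, one sees that in the basis adapted to $\range(X)\oplus\range(X)^{\perp}$ the subspace $\mathcal{T}_{X}$ is precisely the set of symmetric matrices whose bottom-right block vanishes, so its orthogonal complement is $\{P^{\perp}SP^{\perp}:S=S^{T}\}$ and the residual of the projection of $E$ is $P^{\perp}EP^{\perp}$. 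Since $P^{\perp}X=0$ gives $P^{\perp}EP^{\perp}=-P^{\perp}M^{\star}P^{\perp}=-N$, this yields $\sin\theta=\|N\|_{F}/\|E\|_{F}$, which is the stated identity because $N=(I-XX^{\dagger})M^{\star}(I-XX^{\dagger})$.

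\emph{Step 2: the key operator inequality.} Because $M^{\star}\succeq0$ has smallest nonzero eigenvalue $\lambda_{r^{\star}}\eqdef\lambda_{r^{\star}}(M^{\star})$, its pseudoinverse obeys $(M^{\star})^{\dagger}\preceq\lambda_{r^{\star}}^{-1}I$. From $P^{\perp}X=0$ we have $P^{\perp}E=-P^{\perp}M^{\star}$ and $EP^{\perp}=-M^{\star}P^{\perp}$, so using $M^\star (M^\star)^\dagger M^\star = M^\star$,
\[
N=(P^{\perp}M^{\star})(M^{\star})^{\dagger}(M^{\star}P^{\perp})=P^{\perp}E(M^{\star})^{\dagger}EP^{\perp}\preceq\lambda_{r^{\star}}^{-1}\,P^{\perp}E^{2}P^{\perp}.
\]
Expanding $E^{2}=EPE+EP^{\perp}E$ and sandwiching by $P^{\perp}$ gives $P^{\perp}E^{2}P^{\perp}=(PEP^{\perp})^{T}(PEP^{\perp})+(P^{\perp}EP^{\perp})^{2}=K^{T}K+N^{2}$ (again using $P^{\perp}X=0$, whence $PEP^{\perp}=-K$). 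Hence $\lambda_{r^{\star}}N\preceq K^{T}K+N^{2}$.

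\emph{Step 3: extracting the constant.} The crude bound $\|N\|_{F}\le\lambda_{r^{\star}}^{-1}\|P^{\perp}E^{2}P^{\perp}\|_{F}\le\lambda_{r^{\star}}^{-1}\|E\|_{F}^{2}\le\rho^{2}\lambda_{r^{\star}}$ (using the hypothesis $\|E\|_{F}\le\rho\lambda_{r^{\star}}$) gives $\|N\|_{\op}\le\rho^{2}\lambda_{r^{\star}}$; combined with $N^{2}\preceq\|N\|_{\op}N$ this turns the operator inequality into $(1-\rho^{2})\lambda_{r^{\star}}N\preceq K^{T}K$. Taking traces and using $\tr(N)\ge\|N\|_{F}$ (valid since $N\succeq0$) gives $(1-\rho^{2})\lambda_{r^{\star}}\|N\|_{F}\le\|K\|_{F}^{2}$. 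Since the four blocks $PEP,PEP^{\perp},P^{\perp}EP,P^{\perp}EP^{\perp}$ of $E$ are mutually orthogonal and $\|PEP^{\perp}\|_{F}=\|P^{\perp}EP\|_{F}=\|K\|_{F}$, we get $\|E\|_{F}^{2}\ge2\|K\|_{F}^{2}$, so $\|N\|_{F}\le\|E\|_{F}^{2}/\bigl(2(1-\rho^{2})\lambda_{r^{\star}}\bigr)$; substituting $\lambda_{r^{\star}}\ge\|E\|_{F}/\rho$ yields
\[
\sin\theta=\frac{\|N\|_{F}}{\|E\|_{F}}\le\frac{\rho}{2(1-\rho^{2})}\le\frac{\rho}{\sqrt{2}\sqrt{1-\rho^{2}}},
\]
where the last inequality holds because $1-\rho^{2}\ge1/2$ when $\rho\le1/\sqrt{2}$. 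The main obstacle is getting this constant right: using only $N\preceq\lambda_{r^{\star}}^{-1}P^{\perp}E^{2}P^{\perp}$ gives merely $\sin\theta\le\rho$, and recovering the sharp factor requires routing through the off-diagonal block $K$ and the self-improving spectral bound $\|N\|_{\op}\le\rho^{2}\lambda_{r^{\star}}$; the only other point needing care is the exact characterization of $\mathcal{T}_{X}^{\perp}$ in Step 1.
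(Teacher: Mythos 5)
Your proof is correct, and it takes a genuinely different route from the paper's. The paper's Appendix~A proof factors $M^{\star}=ZZ^{T}$, rotates into a block coordinate system via the SVD of $X$, and hinges on a technical auxiliary lemma showing $\lambda_{\min}(Z_{1}^{T}Z_{1})\ge\lambda_{\max}(Z_{2}^{T}Z_{2})$ (proved by contradiction through a small SDP relaxation); the desired bound then falls out of a chain of block-wise trace inequalities. Your proof avoids the factorization and the change of basis entirely: you work coordinate-free with the projector $P^{\perp}=I-XX^{\dagger}$, use the pseudoinverse identity $M^{\star}(M^{\star})^{\dagger}M^{\star}=M^{\star}$ and the spectral bound $(M^{\star})^{\dagger}\preceq\lambda_{r^{\star}}^{-1}I$ to obtain the operator inequality $\lambda_{r^{\star}}N\preceq K^{T}K+N^{2}$, and then bootstrap: the crude Frobenius bound $\|N\|_{\op}\le\rho^{2}\lambda_{r^{\star}}$ lets you absorb the $N^{2}$ term and conclude $(1-\rho^{2})\lambda_{r^{\star}}N\preceq K^{T}K$, after which the trivial estimate $2\|K\|_{F}^{2}\le\|E\|_{F}^{2}$ finishes. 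This is shorter, cleaner, and in fact delivers the slightly sharper constant $\rho/(2(1-\rho^{2}))$, which you correctly observe dominates $\rho/(\sqrt{2}\sqrt{1-\rho^{2}})$ for $\rho\le1/\sqrt{2}$. The role played in the paper by the interlacing lemma for $Z_{1},Z_{2}$ is replaced in your argument by the self-improving step $N^{2}\preceq\|N\|_{\op}N$ together with the a~priori bound on $\|N\|_{\op}$; both serve to show that the $N^{2}$ contribution in $P^{\perp}E^{2}P^{\perp}$ is a lower-order correction. Your Step~1 identification of $\mathcal{T}_{X}^{\perp}$ with $\{P^{\perp}SP^{\perp}:S=S^{T}\}$ when $k=\rank(X)$ (using that $C$ has full row rank) is exactly what underlies the paper's projection computation, so that part is essentially the same.
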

The term $\|XY^{\star T}+Y^{\star}X^{T}\|_{F}$ in (\ref{eq:gd-cos})
becomes small if the error vector $XX^{T}-M^{\star}$ concentrates
itself within the \emph{ill-conditioned directions} of $\{XY^{T}+YX^{T}:Y\in\R^{n\times r}\}$.
In particular, if $XX^{T}-M^{\star}$ lies entirely with the subspace
$\{u_{r}y^{T}+yu_{r}^{T}:y\in\R^{n}\}$ associated with the $r$-th
eigenpair $(\lambda_{r},u_{r})$ of the matrix $XX^{T}$, and if the
corresponding eigenvalue $\lambda_{r}=\lambda_{\min}(X^{T}X)$ decays
towards zero, then the term $\|XY^{\star T}+Y^{\star}X^{T}\|_{F}$
must also decay towards zero. The following lemma provides a lower-bound
on $\|XY^{\star T}+Y^{\star}X^{T}\|_{F}$ by accounting for this mechanism.
\begin{lemma}[Basis scaling]
\label{lem:scal}For any $H\in\R^{n\times n}$ and $X\in\R^{n\times r}$,
there exists a choice of $Y^{\star}=\arg\max_{Y}\inner H{XY^{T}+YX^{T}}$
such that 
\[
\|XY^{\star T}+Y^{\star}X^{T}\|_{F}^{2}\ge2\cdot\lambda_{k}(XX^{T})\cdot\|Y^{\star}\|_{F}^{2}\qquad\text{ where }k=\rank(X).
\]
\end{lemma}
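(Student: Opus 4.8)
The plan is to exploit the fact that $Y\mapsto\inner H{XY^{T}+YX^{T}}$ is a \emph{linear} functional of $Y$, so that once a normalization is imposed --- say $\|Y\|_{F}=1$, which is harmless since the claimed inequality is homogeneous of degree two in $Y^{\star}$ on both sides --- its maximizer can be written down explicitly. Since $\inner H{XY^{T}+YX^{T}}=\inner{(H+H^{T})X}{Y}$, only the symmetric part of $H$ enters, so I would first reduce to the case $H=H^{T}$. Then Cauchy--Schwarz shows that, up to a positive scalar, the unique maximizer is $Y^{\star}=HX$, unless $HX=0$, in which case the functional is identically zero and the choice $Y^{\star}=0$ makes the inequality trivial. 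This is precisely where the phrase ``there exists a choice of $Y^{\star}$'' does its work: for a general, possibly non-symmetric $H$, or in the degenerate case $HX=0$, one must select the maximizer appropriately.

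Having fixed $Y^{\star}=HX$ with $H=H^{T}$, I would set $P\eqdef XX^{T}\succeq0$, noting that $\rank(P)=\rank(X)=k$ and that $\lambda_{k}(XX^{T})=\lambda_{k}(P)$ is the smallest nonzero eigenvalue of $P$. A direct computation gives $XY^{\star T}=XX^{T}H=PH$ and $Y^{\star}X^{T}=HXX^{T}=HP=(PH)^{T}$, so that
\begin{align*}
\|XY^{\star T}+Y^{\star}X^{T}\|_{F}^{2} &=\|PH+HP\|_{F}^{2}=2\|PH\|_{F}^{2}+2\inner{PH}{HP}\\
&=2\tr(P^{2}H^{2})+2\|P^{1/2}HP^{1/2}\|_{F}^{2}.
\end{align*}
The last term is nonnegative because $P^{1/2}HP^{1/2}$ is symmetric, so it may be discarded. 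It therefore remains to prove $\tr(P^{2}H^{2})\ge\lambda_{k}(P)\cdot\tr(PH^{2})$, and to match the right-hand side via the identity $\|Y^{\star}\|_{F}^{2}=\|HX\|_{F}^{2}=\tr(X^{T}H^{2}X)=\tr(H^{2}P)=\tr(PH^{2})$.

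The remaining inequality is a one-line spectral estimate. Writing the eigendecomposition $P=\sum_{i=1}^{k}\mu_{i}u_{i}u_{i}^{T}$ with $\mu_{1}\ge\cdots\ge\mu_{k}=\lambda_{k}(P)>0$, we have $\tr(P^{2}H^{2})=\sum_{i=1}^{k}\mu_{i}^{2}\|Hu_{i}\|^{2}$ and $\tr(PH^{2})=\sum_{i=1}^{k}\mu_{i}\|Hu_{i}\|^{2}$; since $\mu_{i}\ge\mu_{k}$ for each $i\le k$ and every summand $\|Hu_{i}\|^{2}$ is nonnegative, the inequality holds term by term. Chaining the displays gives $\|XY^{\star T}+Y^{\star}X^{T}\|_{F}^{2}\ge2\tr(P^{2}H^{2})\ge2\lambda_{k}(XX^{T})\,\|Y^{\star}\|_{F}^{2}$, as claimed.

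There is no serious obstacle: the argument is short and elementary. The only points requiring a little care are (i) the reduction to symmetric $H$ and to the explicit maximizer $Y^{\star}\propto HX$, together with the degenerate case $HX=0$; (ii) recognizing that the cross term $\inner{PH}{HP}$ equals $\|P^{1/2}HP^{1/2}\|_{F}^{2}\ge0$ and can hence be dropped; and (iii) matching $\|Y^{\star}\|_{F}^{2}$ exactly with the weighted trace $\tr(PH^{2})$, so that the smallest-eigenvalue estimate produces precisely the stated constant $2\lambda_{k}(XX^{T})$.
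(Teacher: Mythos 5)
Your proof is correct, but it takes a genuinely different route from the paper's and — importantly — it constructs a different maximizer $Y^{\star}$. The paper picks $Y^{\star}=\JJ^{\dagger}(H)$, the minimum-Frobenius-norm solution to $\min_{Y}\|\JJ(Y)-H\|_{F}^{2}$ where $\JJ(Y)=XY^{T}+YX^{T}$; after reducing to $X=[\Sigma;0]$ it writes down $\JJ^{\dagger}$ explicitly, computes $\|\JJ^{\dagger}\|_{\op}=(\sqrt{2}\sigma_{k})^{-1}$, and concludes from $Y^{\star}\in\range(\JJ^{T})$ that $\|Y^{\star}\|_{F}\le\|\JJ^{\dagger}\|_{\op}\|\JJ(Y^{\star})\|_{F}$. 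You instead take $Y^{\star}=HX$ (after symmetrizing $H$), which is the Cauchy--Schwarz argmax under the constraint $\|Y\|_{F}=1$ — that is, $Y^{\star}\propto\JJ^{*}(H)$ rather than $Y^{\star}$ satisfying the normal equations $\JJ^{*}\JJ(Y^{\star})=\JJ^{*}(H)$ — and then verify the inequality by the explicit trace computation $\|\JJ(Y^{\star})\|_{F}^{2}=2\tr(P^{2}H^{2})+2\|P^{1/2}HP^{1/2}\|_{F}^{2}\ge 2\tr(P^{2}H^{2})\ge 2\lambda_{k}(P)\tr(PH^{2})=2\lambda_{k}(P)\|Y^{\star}\|_{F}^{2}$. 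Your argument is more elementary (no pseudoinverse of a fourth-order operator) and is a valid proof of the lemma as literally worded, since the lemma only asserts that \emph{some} choice works. The tradeoff is that the paper's operator-norm approach automatically proves the inequality for \emph{every} $Y^{\star}\in\range(\JJ^{T})$, and this is the version actually needed downstream: in \lemref{gradbnd2} the lemma is invoked with $Y^{\star}_{k}=\JJ_{k}^{\dagger}(E)$, a genuinely different vector from your $HX$ (in coordinates one is $[\tfrac{1}{2}H_{11};H_{12}^{T}]\Sigma^{\dagger}$ and the other is $[H_{11};H_{12}^{T}]\Sigma$, which are not parallel unless $H_{11}=0$ or $H_{12}=0$ or $\Sigma$ is degenerate). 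Your computation does not by itself cover that choice, so if you wanted your version to slot into the rest of the paper you would either re-run the same spectral estimate for the pseudoinverse solution, or observe — as the paper's proof implicitly shows — that membership in $\range(\JJ^{T})$ is the only structural property of $Y^{\star}$ actually used.
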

\begin{proof}
Define $\JJ:\R^{n\times r}\to\S^{n}$ such that $\JJ(Y)=XY^{T}+YX^{T}$
for all $Y$. We observe that $Y^{\star}=\JJ^{\dagger}(H)$ where
$\dagger$ denotes the pseudoinverse. Without loss of generality,
let $X=[\Sigma;0]$ where $\Sigma=\mathrm{diag}(\sigma_{1},\dots,\sigma_{r})$
and $\sigma_{1}\ge\cdots\ge\sigma_{r}\ge0$. Then, the minimum norm
solution is written
\begin{align*}
\JJ^{\dagger}(H) & =\arg\min_{Y=[Y_{1};Y_{2}]}\left\Vert \begin{bmatrix}\Sigma Y_{1}^{T}+Y_{1}\Sigma & \Sigma Y_{2}^{T}\\
Y_{2}\Sigma & 0
\end{bmatrix}-\begin{bmatrix}H_{11} & H_{12}\\
H_{12}^{T} & H_{22}
\end{bmatrix}\right\Vert ^{2}=\begin{bmatrix}\frac{1}{2}H_{11}\\
H_{12}^{T}
\end{bmatrix}\Sigma^{\dagger},
\end{align*}
where $\Sigma^{\dagger}=\mathrm{diag}(\sigma_{1}^{-1},\dots,\sigma_{k}^{-1},0,\dots,0)$.
From this we see that the pseudoinverse $\JJ^{\dagger}$ has operator
norm
\[
\|\JJ^{\dagger}\|_{\mathrm{op}}=\max_{\|H\|_{F}=1}\|\JJ^{\dagger}(H)\|=(\sqrt{2}\sigma_{k})^{-1},
\]
with maximizer $H_{11}^{\star}=0$ and $H_{22}^{\star}=0$ and $H_{12}^{\star}=\frac{1}{\sqrt{2}}e_{k}h^{T}$,
where $h$ is any unit vector with $\|h\|=1$ and $e_{k}$ is the
$k$-th column of the identity matrix. The desired claim then follows
from the fact that $Y^{\star}\in\range(\JJ^{T})$ and therefore $Y^{\star}=\JJ^{\dagger}\JJ(Y^{\star})$
and $\|Y^{\star}\|_{F}^{2}\le\|\JJ^{\dagger}\|_{\mathrm{op}}^{2}\cdot\|\JJ(Y^{\star})\|_{F}^{2}.$
\end{proof}

Suppose that $\cos^{2}\theta\ge1/2$ holds due to \lemref{align}
within the neighborhood $f(X)-f^{\star}\le R$ for some radius $R>0$.
Substituting \lemref{scal} into (\ref{eq:gd-cos}) yields a \emph{local}
gradient dominance condition
\begin{equation}
\frac{1}{2}\|\nabla f_{0}(X)\|_{F}^{2}\ge\lambda_{\min}(X^{T}X)\cdot[f_{0}(X)-f_{0}^{\star}].\label{eq:simplelb}
\end{equation}
In the overparameterized case $r>r^{\star}$, however, \eqref{eq:simplelb}
does not prove gradient dominance, because $\lambda_{\min}(X^{T}X)$
becomes arbitrarily small as it converges towards $\lambda_{r}(M^{\star})=0$.
Indeed, the inequality (\ref{eq:simplelb}) suggests a \emph{sublinear}
convergence rate, given that 
\begin{equation}
f_{0}(X_{+})-f_{0}^{\star}\le\left(1-\alpha\lambda_{\min}(X^{T}X)\right)\left(f_{0}(X)-f_{0}^{\star}\right),
\end{equation}
has a linear convergence rate $1-\alpha\lambda_{r}(XX^{T})$ that
itself converges to $1$.

\section{\label{sec:local}Local Linear Convergence of Preconditioned Gradient
Descent}

In the  literature, \emph{right preconditioning}
is a technique frequently used to improve the condition number of
a matrix (i.e. its \emph{conditioning}) without affecting its column
span (i.e. its \emph{alignment}); see e.g.~\citet[Section~9.3.4]{saad2003iterative}
or~\citet[Chapter~10]{greenbaum1997iterative}. In this section,
we define a local norm and dual local norm based on\emph{ }right preconditioning
with a positive definite preconditioner 
\begin{align*}
\|U\|_{X,\eta} & \eqdef\|UP_{X,\eta}^{1/2}\|_{F}, & \|V\|_{X,\eta}^{*} & \eqdef\|VP_{X,\eta}^{-1/2}\|_{F}, & P_{X,\eta} & \eqdef X^{T}X+\eta I.
\end{align*}
If we can demonstrate gradient dominance under the dual local norm
\[
f(X)-f^{\star}\le R\quad\implies\quad\tau_{X,\eta}\cdot[f(X)-f^{\star}]\le\frac{1}{2}(\|\nabla f(X)\|_{X,\eta}^{*})^{2}
\]
for some radius constant $R>0$ and dominance constant $\tau_{P}>0$,
and if the function $f$ remains gradient Lipschitz under the local
norm
\begin{align*}
f(X+\alpha V) & \le f(X)+\alpha\inner{\nabla f(X)}V+\frac{\ell_{X,\eta}}{2}\alpha^{2}\|V\|_{X,\eta}^{2},
\end{align*}
for some new lipschitz constant $\ell_{X,\eta}$,
then it follows from the same reasoning as before that the \emph{right
preconditioned} gradient descent iterations $X_{+}=X-\alpha\nabla f(X)P_{X,\eta}^{-1}$
achieves linear convergence
\begin{align*}
f(X_{+})-f^{\star} & \le\left(1-\alpha\cdot\tau_{X,\eta}\right)\cdot(f(X)-f^{\star})\qquad\text{ with step size }\alpha\le\ell_{X,\eta}^{-1}.
\end{align*}
Starting from an initial point $X_{0}$ within the radius $f(X_{0})-f^{\star}\le R$,
it follows that preconditioned gradient descent 
converges to an $\epsilon$-suboptimal point $X_{k}$ that satisfies
$f(X_{k})-f^{\star}\le\epsilon$ in at most $k=O((\tau_{X,\eta}/\ell_{X,\eta})\log(R/\epsilon))$
iterations.

In order to motivate our choice of preconditioner $P_{X,\eta}$, we
return to the perfectly conditioned function $f_{0}(X)=f_{0}^{\star}+\frac{1}{2}\|XX^{T}-M^{\star}\|_{F}^{2}$
considered in the previous section. Repeating the derivation of (\ref{eq:gd-proj})
results in the following
\begin{align}
\|\nabla f_{0}(X)\|_{X,\eta}^{*}= & \max_{\|Y\|_{X,\eta}=1}\inner{\nabla f_{0}(X)}Y=\max_{\|Y\|_{P}=1}\inner{XX^{T}-M^{\star}}{XY^{T}+YX^{T}}\nonumber \\
= & \|XX^{T}-M^{\star}\|_{F}\|XY^{\star T}+Y^{\star}X^{T}\|_{F}\cos\theta,\label{eq:mgd-proj}
\end{align}
in which the incidence angle $\theta$ coincides with the one previously
defined in (\ref{eq:cosdef}), but the corresponding maximizer $Y^{\star}$
is rescaled so that $\|Y^{\star}\|_{X,\eta}=1$. Suppose that $\cos^{2}\theta\ge1/2$
holds due to \lemref{align} within the neighborhood $f(X)-f^{\star}\le R$
for some radius $R>0$. Evoking \lemref{scal} with $Y\gets YP_{X,\eta}^{+1/2}$
and $X\gets XP_{X,\eta}^{-1/2}$ to lower-bound $\|XY^{\star T}+Y^{\star}X^{T}\|_{F}$
yields:
\begin{equation}
\frac{1}{2}(\|\nabla f_{0}(X)\|_{X,\eta}^{*})^{2}\ge\lambda_{\min}(P_{X,\eta}^{-1/2}X^{T}XP_{X,\eta}^{-1/2})\cdot[f_{0}(X)-f_{0}^{\star}].\label{eq:simplelb-1}
\end{equation}
While right preconditioning does not affect the term $\cos\theta$,
which captures the \emph{alignment} between the column span of $X$
and the ground truth $M^{\star}$, it can substantially improve the
\emph{conditioning} of the subspace $\{XY^{T}+YX^{T}:Y\in\R^{n\times r}\}$. 

In particular, choosing $\eta=0$ sets $P_{X,0}=X^{T}X$
and $\lambda_{\min}(P_{X,\eta}^{-1/2}X^{T}XP_{X,\eta}^{-1/2})=1$.
While $f_{0}$ fails to satisfy gradient dominance under the Euclidean
norm, this derivation shows that gradient dominance does indeed hold
after a change of norm. The following is a specialization of \lemref{pl}
that we prove later in this section.
\begin{corollary}[Gradient dominance with $\eta=0$]
Let $\phi$ be $(\mu,r)$-restricted strongly convex and $L$-gradient
Lipschitz, and let $M^{\star}\succeq0$ satisfy $\nabla\phi(M^{\star})=0$
and $r^{\star}=\rank(M^{\star})\le r$. Then, $f(X)\eqdef\phi(XX^{T})$
satisfies gradient dominance
\begin{equation}
f(X)-f^{\star}\le\frac{\mu\cdot\lambda_{r^{\star}}^{2}(M^{\star})}{2(1+L/\mu)}\quad\implies\quad\frac{\mu^{2}}{2L}\cdot[f(X)-f^{\star}]\le\frac{1}{2}(\|\nabla f(X)\|_{X,0}^{*})^{2}.\label{eq:gdom1-1}
\end{equation}
\end{corollary}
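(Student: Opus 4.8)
The plan is to prove this directly, as the $\eta=0$ instance of the general preconditioned gradient-dominance bound \lemref{pl}; the steps below are what the proof of \lemref{pl} specializes to. Write $E=XX^{T}-M^{\star}$, let $\theta$ be the incidence angle of \eqref{eq:cosdef}, and recall that for $\eta=0$ we have $P_{X,0}=X^{T}X$, so $\|Y\|_{X,0}=\|Y(X^{T}X)^{1/2}\|_{F}$ and $\|\nabla f(X)\|_{X,0}^{*}=\max_{\|Y\|_{X,0}=1}\inner{\nabla f(X)}{Y}$ (pseudoinverses throughout, as in the $f_{0}$ discussion of \secref{sublin}). \emph{Step 1 (reduce the hypothesis, control the alignment).} By \lemref{riperr}, $\tfrac12\mu\|E\|_{F}^{2}\le f(X)-f^{\star}$, so the hypothesis forces $\|E\|_{F}\le\rho\,\lambda_{r^{\star}}(M^{\star})$ with $\rho^{2}=\mu/(\mu+L)\le\tfrac12$. \lemref{align} then gives $\sin^{2}\theta\le\tfrac12\rho^{2}/(1-\rho^{2})=\mu/(2L)$, i.e.\ $\cos^{2}\theta\ge1-\mu/(2L)$. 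A short computation upgrades this to the scalar bound $\cos\theta-\delta\ge\mu/(\mu+L)$ for $\delta:=\tfrac{L-\mu}{L+\mu}$, since $\cos\theta\ge\tfrac{L}{L+\mu}$ is equivalent to $(1-\tfrac{1}{2t})(t+1)^{2}\ge t^{2}$ with $t:=L/\mu\ge1$.

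\emph{Step 2 (construct a test direction and push it through $\nabla f$).} Imitating \eqref{eq:mgd-proj}, let $\Pi$ be the orthogonal projector onto $\range(X)+\range(X^{\star})$ and $P_{\mathcal{T}}$ the projector onto $\mathcal{T}:=\{XY^{T}+YX^{T}:Y\in\R^{n\times r}\}$, and set $W:=\Pi\,(P_{\mathcal{T}}E)\,\Pi$. Using $E=\Pi E\Pi$ and $\Pi X=X$ one checks that $W\in\mathcal{T}$, $\inner{E}{W}=\|P_{\mathcal{T}}E\|_{F}^{2}=\|E\|_{F}^{2}\cos^{2}\theta$, and $\|W\|_{F}\le\|E\|_{F}\cos\theta$. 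Let $\mathcal{J}(Y):=XY^{T}+YX^{T}$ and put $Y_{0}:=\Pi\,\mathcal{J}^{\dagger}(W)$, so that $\mathcal{J}(Y_{0})=W$ and $Y_{0}=\Pi Y_{0}$. Applying the operator-norm computation from the proof of \lemref{scal} to the whitened map $\tilde{Y}\mapsto\tilde{X}\tilde{Y}^{T}+\tilde{Y}\tilde{X}^{T}$, with $\tilde{X}=X(X^{T}X)^{-1/2}$ (all of whose nonzero singular values equal $1$) and $\tilde{Y}=Y(X^{T}X)^{1/2}$ (so $\|\tilde{Y}\|_{F}=\|Y\|_{X,0}$), yields $\|Y_{0}\|_{X,0}\le\tfrac{1}{\sqrt2}\|W\|_{F}$. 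Since $Y_{0}=\Pi Y_{0}$, both $\mathcal{J}(Y_{0})$ and $\mathcal{J}(Y_{0})\pm E$ have rank $\le\rank(\Pi)\le2r$, so the integral argument in the proof of \lemref{prelimgrad} (i.e.\ \lemref{ripinner} along the segment from $M^{\star}$ to $XX^{T}$, together with $\nabla\phi(M^{\star})=0$) gives
\[
\inner{\nabla f(X)}{Y_{0}}\ \ge\ \nu\bigl[\inner{E}{\mathcal{J}(Y_{0})}-\delta\,\|E\|_{F}\,\|\mathcal{J}(Y_{0})\|_{F}\bigr]\ \ge\ \nu\,\|E\|_{F}^{2}\cos\theta\,(\cos\theta-\delta),\qquad\nu:=\tfrac12(\mu+L).
\]

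\emph{Step 3 (assemble).} Plugging $Y_{0}/\|Y_{0}\|_{X,0}$ into the dual-norm formula and using $\|Y_{0}\|_{X,0}\le\tfrac{1}{\sqrt2}\|E\|_{F}\cos\theta$ together with Step 1,
\[
\|\nabla f(X)\|_{X,0}^{*}\ \ge\ \frac{\inner{\nabla f(X)}{Y_{0}}}{\|Y_{0}\|_{X,0}}\ \ge\ \sqrt2\,\nu\,\|E\|_{F}(\cos\theta-\delta)\ \ge\ \sqrt2\,\nu\cdot\frac{\mu}{\mu+L}\,\|E\|_{F}\ =\ \frac{\mu}{\sqrt2}\,\|E\|_{F},
\]
so $\tfrac12(\|\nabla f(X)\|_{X,0}^{*})^{2}\ge\tfrac{\mu^{2}}{4}\|E\|_{F}^{2}$. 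Finally $f(X)-f^{\star}\le\tfrac12 L\|E\|_{F}^{2}$ by \lemref{riperr}, hence $\tfrac{\mu^{2}}{2L}[f(X)-f^{\star}]\le\tfrac{\mu^{2}}{4}\|E\|_{F}^{2}\le\tfrac12(\|\nabla f(X)\|_{X,0}^{*})^{2}$, which is \eqref{eq:gdom1-1}.

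\emph{Main obstacle.} The crux is the preconditioned replacement for \lemref{prelimgrad}: choosing a single $Y_{0}$ that simultaneously (i) has small $\|\cdot\|_{X,0}$-norm relative to $\|\mathcal{J}(Y_{0})\|_{F}$ — here whitening by $(X^{T}X)^{-1/2}$ turns the factor $2\lambda_{\min}(X^{T}X)$ of \lemref{scal} into a bare constant $2$, which is exactly why $\eta=0$ erases the rank-deficiency dependence — (ii) lies in $\range(\Pi)$, so the rank conditions feeding \lemref{ripinner} survive, and (iii) still realizes the full projection $P_{\mathcal{T}}E$, so the $\cos\theta$ factor is not discarded. The only other genuine constraint is $\cos\theta-\delta\ge0$ (used in its sharper form $\ge\mu/(\mu+L)$): this is precisely what forces the stated radius $R=\tfrac{\mu\lambda_{r^{\star}}^{2}(M^{\star})}{2(1+L/\mu)}$, since a larger radius shrinks $\cos\theta$ and can drive $\cos\theta-\delta$ negative.
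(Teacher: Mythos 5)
Your proof is correct and follows essentially the same underlying mechanics as the paper's: you invoke \lemref{riperr} to convert the suboptimality hypothesis into a bound on $\|E\|_F$, use \lemref{align} to control the incidence angle, use the whitened version of \lemref{scal} to pick a low-$\|\cdot\|_{X,0}$-norm preimage of the projection of $E$ onto $\{XY^T+YX^T\}$, push it through the restricted-isometry inner-product bound (\lemref{ripinner}, as in \lemref{prelimgrad}), and assemble. The one genuinely useful observation beyond ``plug $\eta=0$ into \lemref{pl}'' is that the induction over $k$ in the paper's proof of \lemref{pl} collapses when $\eta=0$: with $\eta=0$ the factor $(1+\eta/\lambda_k(XX^T))^{-1/2}$ in \lemref{gradbnd2} equals $1$ for every $k\le\rank(X)$, so one can take $k=\rank(X)$ directly and control $\cos\theta$ with a single application of \lemref{align} rather than running the $\theta_k$ induction through \lemref{sintheta}. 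Your scalar check that $\cos^2\theta\ge 1-\mu/(2L)$ implies $\cos\theta\ge L/(L+\mu)$ (equivalently $3t^2\ge 1$ with $t=L/\mu\ge 1$) is correct.

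One small slip worth fixing: you define $Y_0:=\Pi\,\mathcal{J}^\dagger(W)$, which is the minimum-\emph{Frobenius}-norm preimage of $W$, but the whitened application of \lemref{scal} bounds the $\|\cdot\|_{X,0}$-norm of the minimum-$\|\cdot\|_{X,0}$-norm preimage (equivalently, $\tilde{\mathcal{J}}^\dagger(W)$ in the whitened coordinates). These need not coincide. The conclusion you want is simply the existence of some $Y_0$ with $\mathcal{J}(Y_0)=W$, $\Pi Y_0=Y_0$, and $\|Y_0\|_{X,0}\le\tfrac{1}{\sqrt2}\|W\|_F$, which does follow from the whitening argument (take the min-$\|\cdot\|_{X,0}$-norm solution and project by $\Pi$, which preserves $\mathcal{J}(Y_0)=W$ since $W=\Pi W\Pi$ and can only decrease $\|\cdot\|_{X,0}$). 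So the bound holds, but the particular formula $Y_0=\Pi\mathcal{J}^\dagger(W)$ is not the right object to name.
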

In fact, the resulting iterations $X_{+}=X-\alpha\nabla f(X)(X^{T}X)^{-1}$
coincide with the ScaledGD of \citet{tong2020accelerating}. One might
speculate that gradient dominance in this case would readily imply
linear convergence, given that 
\[
f(X_{+})-f^{\star}\le(1-\alpha\tau_{X,0})(f(X)-f^{\star})\qquad\text{with }\alpha\le\max\{1,\ell_{X,0}^{-1}\}.
\]
However, the Lipschitz parameter $\ell_{X,\eta}$ may diverge to infinity
as $\eta\to0$, and this causes the admissible step-size $\alpha\le\max\{1,\ell_{X,\eta}^{-1}\}$
to shrink to zero. Conversely, if we insist on using a fixed step-size
$\alpha>0$, then the objective function may on occasion \emph{increase}
after an iteration, as in $f(X_{+})>f(X)$. Indeed, this possible
increment explains the apparently sporadic behavior exhibited by ScaledGD.

Measured under the Euclidean norm, the function $f$ is gradient Lipschitz
but not gradient dominant. Measured under a right-preconditioned $P$-norm
with $P=X^{T}X$, the function $f$ is gradient dominant but not gradient
Lipschitz. Viewing the Euclidean norm as simply a right-preconditioned
norm with $P=I$, a natural idea is to \emph{interpolate} between
these two norms, by choosing the preconditioner $P_{X,\eta}=X^{T}X+\eta I$.
It is not difficult to show that keeping $\eta$ sufficiently \emph{large}
with respect to the error norm $\|XX^{T}-M^{\star}\|_{F}$ is enough
to ensure that $f$ continues to satisfy gradient Lipschitzness under
the local norm. The proof of \lemref{decr} and \lemref{bndgrad}
below follows from straightforward linear algebra, and are deferred
to \appref{pf_decr} and \appref{pf_bndgrad} respectively.
\begin{lemma}[Gradient Lipschitz]
\label{lem:decr}Let $\phi$ be $L$-gradient Lipschitz. Let $M^{\star}=\arg\min\phi$
satisfy $M^{\star}\succeq0$. Then $f(X)\eqdef\phi(XX^{T})$ satisfies
\begin{gather*}
f(X+V)\le f(X)+\inner{\nabla f(X)}V+\frac{\ell_{X,\eta}}{2}\|V\|_{X,\eta}^{2}\\
\text{where }\ell_{X,\eta}=L\cdot\left[4+\frac{2\|XX^{T}-M^{\star}\|_{F}+4\|V\|_{X,\eta}}{\lambda_{\min}(X^{T}X)+\eta}+\left(\frac{\|V\|_{X,\eta}}{\lambda_{\min}(X^{T}X)+\eta}\right)^{2}\right].
\end{gather*}
\end{lemma}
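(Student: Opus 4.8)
The plan is to reduce the claim to the ordinary descent lemma for the convex, $L$-gradient Lipschitz function $\phi$, and then absorb the resulting second-order remainder into the local norm $\|\cdot\|_{X,\eta}$ using two elementary norm comparisons. First I would write $f(X+V)=\phi\big((X+V)(X+V)^{T}\big)=\phi(XX^{T}+E)$ with $E=XV^{T}+VX^{T}+VV^{T}$, and apply $\phi(M+E)\le\phi(M)+\inner{\nabla\phi(M)}{E}+\tfrac{L}{2}\|E\|_{F}^{2}$ at $M=XX^{T}$. By the chain rule $\inner{\nabla\phi(XX^{T})}{XV^{T}+VX^{T}}=\inner{\nabla f(X)}{V}$, which leaves
\[
f(X+V)\le f(X)+\inner{\nabla f(X)}{V}+\inner{\nabla\phi(XX^{T})}{VV^{T}}+\tfrac{L}{2}\|XV^{T}+VX^{T}+VV^{T}\|_{F}^{2},
\]
so the whole task becomes bounding the last two terms by $\tfrac12\ell_{X,\eta}\|V\|_{X,\eta}^{2}$.

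The two comparisons I would establish up front are $\|XV^{T}\|_{F}^{2}=\inner{X^{T}X}{V^{T}V}\le\inner{X^{T}X+\eta I}{V^{T}V}=\|V\|_{X,\eta}^{2}$, and $(\lambda_{\min}(X^{T}X)+\eta)\|V\|_{F}^{2}\le\|V\|_{X,\eta}^{2}$, the latter simply because $X^{T}X+\eta I\succeq(\lambda_{\min}(X^{T}X)+\eta)I$. I would also record $\|\nabla\phi(XX^{T})\|_{F}=\|\nabla\phi(XX^{T})-\nabla\phi(M^{\star})\|_{F}\le L\|XX^{T}-M^{\star}\|_{F}$, using $\nabla\phi(M^{\star})=0$, together with $\|VV^{T}\|_{F}\le\|V\|_{F}^{2}$. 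With these in hand, Cauchy--Schwarz on the cross term gives $\inner{\nabla\phi(XX^{T})}{VV^{T}}\le L\|XX^{T}-M^{\star}\|_{F}\,\|V\|_{F}^{2}\le L\,\|XX^{T}-M^{\star}\|_{F}\,\|V\|_{X,\eta}^{2}/(\lambda_{\min}(X^{T}X)+\eta)$, which is exactly the $2\|XX^{T}-M^{\star}\|_{F}/(\lambda_{\min}(X^{T}X)+\eta)$ contribution to $\ell_{X,\eta}$. For the quadratic term, the triangle inequality gives $\|XV^{T}+VX^{T}+VV^{T}\|_{F}\le 2\|XV^{T}\|_{F}+\|VV^{T}\|_{F}$; squaring and inserting the two comparisons bounds it by $\|V\|_{X,\eta}^{2}\big[4+4t+t^{2}\big]$, where $t=\|V\|_{X,\eta}/(\lambda_{\min}(X^{T}X)+\eta)$. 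Adding the two pieces, and factoring out $\tfrac{L}{2}\|V\|_{X,\eta}^{2}$, reproduces the stated $\ell_{X,\eta}$ verbatim.

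I do not expect a genuine obstacle — the result is essentially a bookkeeping exercise, as the paper itself signals — but the one point requiring care is making the constants come out exactly rather than merely up to an absolute factor. Concretely, in the expansion of $(2\|XV^{T}\|_{F}+\|VV^{T}\|_{F})^{2}$ one must be disciplined about which quantity is charged to which power of $t$: $\|XV^{T}\|_{F}$ should cost a bare $\|V\|_{X,\eta}$ with no denominator, whereas $\|VV^{T}\|_{F}$ — and likewise $\|V\|_{F}^{2}$ in the cross term — must be charged the full $\|V\|_{X,\eta}^{2}/(\lambda_{\min}(X^{T}X)+\eta)$. Mixing these up would still give a valid descent inequality but with a looser Lipschitz constant than the one in the statement.
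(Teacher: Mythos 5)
Your proof is correct and follows the same route as the paper's: apply the descent lemma for $\phi$ at $M=XX^{T}$ with perturbation $E=XV^{T}+VX^{T}+VV^{T}$, identify the linear part with $\inner{\nabla f(X)}V$ via the chain rule, and then absorb $\inner{\nabla\phi(XX^{T})}{VV^{T}}+\tfrac{L}{2}\|E\|_{F}^{2}$ into $\|V\|_{X,\eta}^{2}$ using $\|XV^{T}\|_{F}\le\|V\|_{X,\eta}$, $\|VV^{T}\|_{F}\le\|V\|_{X,\eta}^{2}/(\lambda_{\min}(X^{T}X)+\eta)$, and $\|\nabla\phi(XX^{T})\|_{F}\le L\|XX^{T}-M^{\star}\|_{F}$. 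The only cosmetic difference is how you derive the two norm comparisons (via $\|XV^{T}\|_{F}^{2}=\inner{X^{T}X}{V^{T}V}$ and $(\lambda_{\min}+\eta)\|V\|_{F}^{2}\le\|V\|_{X,\eta}^{2}$ rather than the paper's factorization through $\|(X^{T}X+\eta I)^{-1/2}X^{T}\|$ and $\|(X^{T}X+\eta I)^{-1}\|$), which yields the identical constants.
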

\begin{lemma}[Bounded gradient]
\label{lem:bndgrad}Under the same conditions as \lemref{decr},
the search direction $V=\nabla f(X)(X^{T}X+\eta I)^{-1}$ satisfies
$\|V\|_{X,\eta}=\|\nabla f(X)\|_{X,\eta}^{*}\le2L\|XX^{T}-M^{\star}\|_{F}$. 
\end{lemma}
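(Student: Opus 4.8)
The plan is to dispatch this lemma by elementary linear algebra in two stages: first the stated identity, then the stated inequality. For the identity, I would substitute the search direction $V=\nabla f(X)(X^{T}X+\eta I)^{-1}=\nabla f(X)P_{X,\eta}^{-1}$ directly into the definition of the local norm. Since $P_{X,\eta}=X^{T}X+\eta I$ is symmetric positive definite, its powers commute, so $\|V\|_{X,\eta}=\|VP_{X,\eta}^{1/2}\|_{F}=\|\nabla f(X)P_{X,\eta}^{-1}P_{X,\eta}^{1/2}\|_{F}=\|\nabla f(X)P_{X,\eta}^{-1/2}\|_{F}$, which is exactly $\|\nabla f(X)\|_{X,\eta}^{*}$ by definition of the dual local norm.

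For the inequality, I would first record the gradient formula $\nabla f(X)=2\nabla\phi(XX^{T})X$, which holds because $\nabla\phi(M)$ is symmetric (the same identity is already used in the proof of \propref{euclid_cert}). Then, by submultiplicativity of the Frobenius norm against the spectral norm,
\[
\|\nabla f(X)\|_{X,\eta}^{*}=2\bigl\|\nabla\phi(XX^{T})\,XP_{X,\eta}^{-1/2}\bigr\|_{F}\le 2\,\|\nabla\phi(XX^{T})\|_{F}\cdot\|XP_{X,\eta}^{-1/2}\|.
\]
The one substantive observation is the contraction bound $\|XP_{X,\eta}^{-1/2}\|\le 1$: writing a thin SVD $X=U\Sigma W^{T}$ gives $X^{T}X+\eta I=W(\Sigma^{2}+\eta I)W^{T}$, hence $XP_{X,\eta}^{-1/2}=U\Sigma(\Sigma^{2}+\eta I)^{-1/2}W^{T}$, whose singular values are $\sigma_{i}/\sqrt{\sigma_{i}^{2}+\eta}\le 1$ (this also handles the degenerate case $\eta=0$ with the pseudoinverse, where the nonzero singular values equal $1$). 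Finally, since $M^{\star}=\arg\min\phi$ with $M^{\star}\succeq 0$ is an unconstrained minimizer, first-order optimality gives $\nabla\phi(M^{\star})=0$, so the $L$-gradient Lipschitz hypothesis yields $\|\nabla\phi(XX^{T})\|_{F}=\|\nabla\phi(XX^{T})-\nabla\phi(M^{\star})\|_{F}\le L\|XX^{T}-M^{\star}\|_{F}$. Combining the three displayed bounds gives $\|\nabla f(X)\|_{X,\eta}^{*}\le 2L\|XX^{T}-M^{\star}\|_{F}$.

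There is no real obstacle here; this is the ``straightforward linear algebra'' that the text promised, and the proof is a few lines. The only points that need a little care are getting the factor of $2$ correct in the gradient of $f$, verifying $\|XP_{X,\eta}^{-1/2}\|\le 1$ via the SVD (including the rank-deficient case when $\eta=0$), and recalling that $\nabla\phi(M^{\star})=0$ follows from $M^{\star}$ being an unconstrained minimizer of the convex $\phi$ rather than a constrained one.
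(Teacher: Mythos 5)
Your proof is correct and reaches the same bound by essentially the same mechanism as the paper: the gradient structure $\nabla f(X)=2\nabla\phi(XX^{T})X$, the contraction $\|X(X^{T}X+\eta I)^{-1/2}\|\le 1$, and the Lipschitz estimate $\|\nabla\phi(XX^{T})\|_{F}\le L\|XX^{T}-M^{\star}\|_{F}$ via $\nabla\phi(M^{\star})=0$. The only presentational difference is that the paper routes through the variational characterization $\|\nabla f(X)\|_{X,\eta}^{*}=\max_{\|Y\|_{X,\eta}=1}\inner{\nabla\phi(XX^{T})}{XY^{T}+YX^{T}}$ and applies Cauchy--Schwarz together with $\|XY^{T}+YX^{T}\|_{F}\le 2\|X(X^{T}X+\eta I)^{-1/2}\|\,\|Y\|_{X,\eta}$, whereas you bound $\|\nabla f(X)P_{X,\eta}^{-1/2}\|_{F}$ directly by submultiplicativity of the Frobenius norm against the spectral norm; these are two writings of the same inequality.
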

Substituting $X_{+}=X-\alpha\nabla f(X)(X^{T}X+\eta I)^{-1}$ into
\lemref{decr} yields the usual form of the Lipschitz gradient decrement\begin{subequations}\label{eq:local1}
\begin{equation}
f(X_{+})\le f(X)-\alpha\cdot(\|\nabla f(X)\|_{X,\eta}^{*})^{2}+\alpha^{2}\cdot\frac{\ell_{X,\eta}}{2}(\|\nabla f(X)\|_{X,\eta}^{*})^{2}\label{eq:local1a}
\end{equation}
in which the \emph{local} Lipschitz term $\ell_{X,\eta}$ is bounded
by \lemref{bndgrad} as
\begin{equation}
\ell_{X,\eta}\le4L+(2L+8L^{2})\cdot\frac{\|XX^{T}-M^{\star}\|_{F}}{\lambda_{\min}(X^{T}X)+\eta}+4L^{3}\cdot\left(\frac{\|XX^{T}-M^{\star}\|_{F}}{\lambda_{\min}(X^{T}X)+\eta}\right)^{2}.\label{eq:local1b}
\end{equation}
\end{subequations}By keeping $\eta$ sufficiently \emph{large} with
respect to the error norm $\|XX^{T}-M^{\star}\|_{F}$, it follows
that $\ell_{X,\eta}$ can be replaced by a \emph{global} Lipschitz
constant $\ell\ge\ell_{X,\eta}$ that is independent of $X$. 

Our main result in this paper is that keeping $\eta$ sufficiently
\emph{small} with respect to the error norm $\|XX^{T}-M^{\star}\|_{F}$
is enough to ensure that $f$ satisfies gradient dominance, even in
the overparameterized regime where $r>r^{\star}$.
\begin{lemma}[Gradient dominance]
\label{lem:pl}Let $\phi$ be $L$-gradient Lipschitz and $(\mu,2r)$-restricted
strongly convex. Let $M^{\star}=\arg\min\phi$ satisfy $M^{\star}\succeq0$
and $r^{\star}=\rank(M^{\star})\le r$. Then, $f(X)\eqdef\phi(XX^{T})$
satisfies 
\begin{gather*}
f(X)-f^{\star}\le\frac{\mu}{2(1+L/\mu)}\cdot\lambda_{r^{\star}}^{2}(M^{\star})\qquad\implies\\
\frac{\mu}{\sqrt{2}}\left(1+\eta\cdot\frac{c_{0}+c_{1}\cdot\sqrt{r-r^{\star}}}{\|XX^{T}-M^{\star}\|_{F}}\right)^{-1/2}\le\frac{\|\nabla f(X)\|_{X,\eta}^{*}}{\|XX^{T}-M^{\star}\|_{F}}
\end{gather*}
where $c_{0}=1+\sqrt{2}$ and $c_{1}=(L+\mu)/\sqrt{\mu L}$. 
\end{lemma}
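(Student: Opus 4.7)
The plan is to construct a specific test direction $Y^\star$ and to lower bound $\|\nabla f(X)\|_{X,\eta}^*$ via the ratio $\inner{\nabla f(X)}{Y^\star}/\|Y^\star\|_{X,\eta}$. I take $Y^\star = \JJ^\dagger(E)$, the minimum Frobenius-norm preimage of $E_\parallel$ (the orthogonal projection of $E = XX^T - M^\star$ onto $\range(\JJ)$, where $\JJ(Y) = XY^T + YX^T$), computed explicitly in a basis where $X = [\Sigma;0]$ as in the proof of \lemref{scal}.

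Applying \lemref{prelimgrad} with this $Y^\star$, and using $\JJ(Y^\star) = E_\parallel$, $\inner{E}{E_\parallel} = \cos^2\theta\,\|E\|_F^2$, and $\|E_\parallel\|_F = \cos\theta\,\|E\|_F$, yields
\[
\inner{\nabla f(X)}{Y^\star} \ge \nu\,\cos\theta\,(\cos\theta - \delta)\,\|E\|_F^2,
\]
with $\nu = (L+\mu)/2$ and $\delta = (L-\mu)/(L+\mu)$. The sublevel-set hypothesis together with \lemref{riperr} gives $\|E\|_F^2 \le \lambda_{r^\star}^2(M^\star)/(1 + L/\mu)$, into which \lemref{align} plugs to give $\cos^2\theta \ge 1 - \mu/(2L)$. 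A short calculation then shows $\cos\theta \ge L/(L+\mu)$, so that $\nu(\cos\theta - \delta) \ge \mu/2$.

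Using the explicit block form of $Y^\star$ from \lemref{scal} and writing $E_\parallel$ with top-left block $H_{11}$ and off-diagonal block $H_{12}$, one obtains
\[
\|Y^\star\|_{X,0}^2 = \|Y^\star X^T\|_F^2 = \tfrac{1}{4}\|H_{11}\|_F^2 + \|H_{12}\|_F^2 \le \tfrac{1}{2}\|E_\parallel\|_F^2.
\]
This already yields the $\eta = 0$ case: $\|\nabla f(X)\|_{X,0}^* \ge (\mu/\sqrt{2})\|E\|_F$.

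The main obstacle is to additionally bound $\|Y^\star\|_F^2/\|Y^\star\|_{X,0}^2 \le (c_0 + c_1\sqrt{r-r^\star})/\|E\|_F$; substituting into the identity $(\|\nabla f(X)\|_{X,\eta}^*)^2 \ge \inner{\nabla f(X)}{Y^\star}^2/(\|Y^\star\|_{X,0}^2 + \eta\|Y^\star\|_F^2)$ then produces the $(1 + \eta(c_0 + c_1\sqrt{r-r^\star})/\|E\|_F)^{-1}$ factor and completes the proof. Since $Y^\star$ contains a factor $\Sigma^{-1}$, the $r - r^\star$ small singular values of $X$ threaten to blow up $\|Y^\star\|_F^2$ in the overparameterized regime. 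I would split the singular directions of $X$ into the $r^\star$ ``big'' directions, where $\sigma_j \gtrsim \sqrt{\lambda_{r^\star}(M^\star)}$ gives a contribution producing the $c_0$ term, and the $r - r^\star$ ``small'' directions, where the relevant entries of $H_{11}$ and $H_{12}$ reduce to expressions of the form $\sigma_j^2 - M_{11,jj}$ and entries of $M^\star$ lying in the small subspace. The rank-$r^\star$ structure of $M^\star$, combined with Weyl's inequality and \lemref{riperr}, bounds each such contribution by a multiple of $\|E\|_F$, and a Cauchy--Schwarz-type sum across the $r - r^\star$ small directions produces the $\sqrt{r-r^\star}$ scaling.
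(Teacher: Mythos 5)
Your reduction to the claimed auxiliary bound
\[
\frac{\|Y^\star\|_F^2}{\|Y^\star\|_{X,0}^2}\;\le\;\frac{c_0+c_1\sqrt{r-r^\star}}{\|XX^T-M^\star\|_F}
\]
is the crux, and unfortunately that bound is false for the test direction $Y^\star = \JJ^\dagger(E)$ you propose. Consider $n=r=2$, $r^\star=1$, a perfectly conditioned loss, $M^\star=vv^T$ with $v=(\cos\phi,\sin\phi)$, and $X=\mathrm{diag}(\cos\phi,\sin\phi)$. Then $E=XX^T-M^\star$ is purely off-diagonal with $E_{12}=E_{21}=-\cos\phi\sin\phi$ and $\|E\|_F=\sqrt{2}\cos\phi\sin\phi$, and $\phi$ can be taken arbitrarily small while staying inside the sublevel set. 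The pseudoinverse formula from \lemref{scal} gives $Y^\star_{12}=E_{12}/(2\sigma_2)=-\tfrac12\cos\phi=\Theta(1)$, so $\|Y^\star\|_F^2=\tfrac14$ stays bounded away from zero even as $\|E\|_F\to 0$; meanwhile $\|Y^\star\|_{X,0}^2=\tfrac14\|E\|_F^2$, and the ratio is exactly $1/\|E\|_F^2$, not $O(1/\|E\|_F)$. Feeding this into your identity $(\|\nabla f(X)\|_{X,\eta}^*)^2 \ge \inner{\nabla f(X)}{Y^\star}^2/(\|Y^\star\|_{X,0}^2+\eta\|Y^\star\|_F^2)$ yields a lower bound of order $\|E\|_F^4/(\|E\|_F^2+\eta)$, which for $\eta$ of the same order as $\|E\|_F$ is a factor $\|E\|_F$ weaker than what Lemma \ref{lem:pl} promises, and cannot be repaired by adjusting constants.

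The failure mode is structural, not a slack in a Cauchy--Schwarz step: whenever $E$ has a cross-term $E_{ij}$ coupling a large singular direction $i$ to a small one $j$, the $1/\sigma_j$ factor in $\JJ^\dagger(E)$ places $\Theta(1)$ mass into $Y^\star_{:,j}$, and there is no cancellation coming from the rank-$r^\star$ structure of $M^\star$, Weyl, or \lemref{riperr} that forces $\kappa_j \lesssim \sigma_j^2\|E\|_F$. (In the example, $E_{12}=-\cos\phi\sin\phi$ is not small relative to $\sigma_2^2=\sin^2\phi$.) This is precisely the reason the paper does \emph{not} test against the full pseudoinverse; instead it tests against the truncated direction $\JJ_k^\dagger(E)$ at a level $k$ that is selected adaptively, via the induction in Lemmas \ref{lem:dom_base} and \ref{lem:sintheta}: either the truncated alignment $\cos\theta_k$ is already large enough (and then $\sigma_k$ is provably bounded below), or else the smallness of $\cos\theta_k$ \emph{forces} $\lambda_{k+1}(XX^T)$ to be bounded below so that level $k+1$ can be safely admitted. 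That trade-off between alignment and conditioning is the content of Lemma \ref{lem:pl}'s proof, and is the missing ingredient in your sketch. Your preamble (using \lemref{prelimgrad} with a feasible test direction, extracting $\cos\theta\ge L/(L+\mu)$ from \lemref{align}, and obtaining the $\eta=0$ case) is correct and coincides with the paper's base case; what you would need to add is this $k$-selection induction in place of the single global $Y^\star$.
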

Substituting $f(X)-f^{\star}\le\frac{L}{2}\|XX^{T}-M^{\star}\|_{F}^{2}$
from \lemref{riperr} into \lemref{pl} recovers the usual form of
gradient dominance \begin{subequations}\label{eq:local2}
\begin{equation}
\tau_{X,\eta}\cdot[f(X)-f^{\star}]\le\frac{1}{2}(\|\nabla f(X)\|_{X,\eta}^{*})^{2}\label{eq:local2a}
\end{equation}
in which the \emph{local} dominance term $\tau_{X,\eta}$ reads 
\begin{equation}
\tau_{X,\eta}=\frac{\mu^{2}}{2L}\left(1+\eta\cdot\frac{c_{0}+c_{1}\cdot\sqrt{r-r^{\star}}}{\|XX^{T}-M^{\star}\|_{F}}\right)^{-1}>0.\label{eq:local2b}
\end{equation}
\end{subequations}By keeping $\eta$ sufficiently \emph{small} with
respect to the error norm $\|XX^{T}-M^{\star}\|_{F}$, it follows
that $\tau_{X,\eta}$ can be replaced by a \emph{global} dominance
constant $\tau\le\tau_{X,\eta}$ that is independent of $X$. Finally,
substituting the global Lipschitz constant $\ell\ge\ell_{X,\eta}$
and the global dominance constant $\tau\le\tau_{X,\eta}$ into (\ref{eq:local1})
and (\ref{eq:local2}) yields a proof of linear convergence in \thmref{local}.

\begin{proof}[Proof of \thmref{local}]It follows from (\ref{eq:local1b})
that
\[
\eta\ge C_{\lb}\cdot\|XX^{T}-M^{\star}\|_{F}\quad\implies\quad\ell_{X,\eta}\le4L+\frac{2L+8L^{2}}{C_{\lb}}+\frac{4L^{3}}{C_{\lb}^{2}}\eqdef\ell.
\]
Substituting $\ell\ge\ell_{X,\eta}$ into (\ref{eq:local1a}) yields
a guaranteed gradient decrement
\begin{equation}
f(X_{+})-f(X)\le-\frac{\alpha}{2}(\|\nabla f(X)\|_{X,\eta}^{*})^{2}\le0\text{ for }\alpha\le\min\{1,\ell^{-1}\},\label{eq:local3}
\end{equation}
for a fixed step-size $\alpha>0$. It follows from (\ref{eq:local2b})
that
\[
\eta\le C_{\ub}\cdot\|XX^{T}-M^{\star}\|_{F}\quad\implies\quad\tau_{X,\eta}\ge\frac{\mu^{2}}{2L}\left(1+\frac{c_{0}+c_{1}\cdot\sqrt{r-r^{\star}}}{C_{\ub}^{-1}}\right)^{-1}\eqdef\tau.
\]
Substituting $\tau\le\tau_{X,\eta}$ and gradient dominance (\ref{eq:local2a})
into the decrement in (\ref{eq:local3}) yields linear convergence
\[
f(X_{+})-f^{\star}\le(1-\alpha\cdot\tau)\cdot(f(X)-f^{\star})\text{ for }\alpha\le\min\{1,\ell^{-1}\},
\]
which is exactly the claim in \thmref{local}. \end{proof}

\begin{proof}[\corref{param}] Within the neighborhood
stated in \lemref{pl} where $f$ is gradient dominant, it follows
immediately from \lemref{bndgrad} and \lemref{pl} that the choice
of $\eta=\|\nabla f(X)\|_{X,0}^{*}$ satisfies 
\[
\frac{\mu}{\sqrt{2}}\cdot\|XX^{T}-M^{\star}\|_{F}\le\|\nabla f(X)\|_{X,0}^{*}\le2L\cdot\|XX^{T}-M^{\star}\|_{F},
\]
which is exactly the claim in \corref{param}. \end{proof}

We now turn our attention to the proof of \lemref{pl}. Previously,
in motivating our proof for gradient dominance under the Euclidean
norm, we derived a bound like
\begin{align}
\|\nabla f_{0}(X)\|_{F}^{*} & =\max_{\|Y\|_{F}=1}\inner{XX^{T}-M^{\star}}{XY^{T}+YX^{T}}\nonumber \\
 & =\|XX^{T}-M^{\star}\|_{F}\|XY^{\star T}+Y^{\star}X^{T}\|_{F}\cos\theta\label{eq:proofmot1}
\end{align}
where $Y^{\star}$ is a maximizer such that $\|Y^{\star}\|_{F}=1$.
We found that $\cos\theta$ is always large, because the error $XX^{T}-M^{\star}$
is guaranteed to align well with the linear subspace $\{XY^{T}+YX^{T}:Y\in\R^{n\times r}\}$,
but that the term $\|XY^{\star T}+Y^{\star}X^{T}\|_{F}$ can decay
to zero if the error concentrates within the degenerate directions
of the subspace. 

In our initial experiments with PrecGD, we observed that small values
of $\eta$ caused the error to preferrably align towards the \emph{well-conditioned}
directions of the subspace. Suppose that $X$ contains $k$ large,
well-conditioned singular values, and $r-k$ near-zero singular values.
Let $X_{k}$ denote the rank-$k$ approximation of $X$, constructed
by setting the $r-k$ near-zero singular values of $X$ as exactly
zero:
\[
X_{k}=\sum_{i=1}^{k}\sigma_{i}u_{i}v_{i}^{T}=\arg\min_{\tilde{X}\in\R^{n\times r}}\left\{ \|\tilde{X}-X\|:\rank(\tilde{X})\le k\right\} .
\]
Then, our observation is that small values of $\eta$ tend to concentrate
the error $XX^{T}-M^{\star}$ within the well-conditioned subspace
$\{X_{k}Y^{T}+YX_{k}^{T}:Y\in\R^{n\times r}\}$.

In order to sharpen the bound (\ref{eq:proofmot1})\textbf{ }to reflect
the possibility that $\{XY^{T}+YX^{T}:Y\in\R^{n\times r}\}$ may contain
degenerate directions that do not significantly align with the error
vector $XX^{T}-M^{\star}$, we suggest the following refinement 
\begin{align*}
\|\nabla f_{0}(X)\|_{F} & \ge\max_{\|Y\|_{F}=1}\left\{ \inner{XX^{T}-M^{\star}}{XY^{T}+YX^{T}}:Yv_{i}=0\text{ for }i>k\right\} \\
 & =\max_{\|Y\|_{F}=1}\inner{XX^{T}-M^{\star}}{X_{k}Y^{T}+YX_{k}^{T}}\\
 & =\|XX^{T}-M^{\star}\|_{F}\|X_{k}Y_{k}^{\star T}+Y_{k}^{\star}X_{k}^{T}\|_{F}\cos\theta_{k},
\end{align*}
where each $\cos\theta_{k}$ measures the alignment between the error
$XX^{T}-M^{\star}$ and the well-conditioned subspace $\{X_{k}Y^{T}+YX_{k}^{T}:Y\in\R^{n\times r}\}$.
While $\cos\theta_{k}$ must be necessarily be worse than $\cos\theta$,
given that the well-conditioned subspace is a subset of the whole
subspace, our hope is that eliminating the degenerate directions will
allow the term $\|X_{k}Y_{k}^{\star T}+Y_{k}^{\star}X_{k}^{T}\|_{F}$
to be significantly improved from $\|XY^{\star T}+Y^{\star}X^{T}\|_{F}$. 
\begin{lemma}[Alignment lower-bound]
\label{lem:gradbnd2}Let $X=\sum_{i=1}^{r}\sigma_{i}u_{i}v_{i}^{T}$
with $\|u_{i}\|=\|v_{i}\|=1$ and $\sigma_{1}\ge\cdots\ge\sigma_{r}$
denote its singular value decomposition. Under the same conditions
as \lemref{pl}, we have 
\begin{equation}
\frac{\|\nabla f(X)\|_{X,\eta}^{*}}{\|XX^{T}-M^{\star}\|_{F}}\ge\max_{k\in\{1,2,\dots,r\}}\frac{\mu+L}{\sqrt{2}}\cdot\frac{\cos\theta_{k}-\delta}{\sqrt{1+\eta/\lambda_{k}(XX^{T})}}\label{eq:maxcos}
\end{equation}
where $\delta=\frac{L-\mu}{L+\mu}$ and each $\theta_{k}$ is defined
\begin{equation}
\cos\theta_{k}=\max_{Y\in\R^{n\times r}}\frac{\inner{XX^{T}-M^{\star}}{X_{k}Y^{T}+YX_{k}^{T}}}{\|XX^{T}-M^{\star}\|_{F}\|X_{k}Y^{T}+YX_{k}^{T}\|_{F}},\qquad X_{k}=\sum_{i=1}^{k}\sigma_{i}u_{i}v_{i}^{T}.\label{eq:cosk}
\end{equation}
\end{lemma}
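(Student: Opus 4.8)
The plan is to lower-bound $\|\nabla f(X)\|_{X,\eta}^{*}=\max_{Y}\inner{\nabla f(X)}{Y}/\|Y\|_{X,\eta}$ by exhibiting, for each index $k$, one feasible direction $Y$ whose ratio $\inner{\nabla f(X)}{Y}/\|Y\|_{X,\eta}$ already meets the $k$-th term on the right of \eqref{eq:maxcos}. That direction will be chosen so that (i) it is supported on the $k$ dominant right singular directions of $X$, i.e.\ $Yv_{i}=0$ for $i>k$, which forces $XY^{T}+YX^{T}=X_{k}Y^{T}+YX_{k}^{T}$ and eventually surfaces the factor $\cos\theta_{k}$; and (ii) after a rescaling by the preconditioner it is the minimum-norm preimage of the error $E=XX^{T}-M^{\star}$ under $Y\mapsto X_{k}Y^{T}+YX_{k}^{T}$, so that \lemref{scal} controls $\|Y\|_{X,\eta}$ through the conditioning of $X_{k}$ relative to $P_{X,\eta}$.

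For the first half, for any $Y$ with $Yv_{i}=0$ ($i>k$) I would write $\inner{\nabla f(X)}{Y}=\inner{\nabla\phi(XX^{T})}{XY^{T}+YX^{T}}=\inner{\nabla\phi(XX^{T})-\nabla\phi(M^{\star})}{X_{k}Y^{T}+YX_{k}^{T}}$ using $\nabla\phi(M^{\star})=0$ and $XY^{T}=X_{k}Y^{T}$, and then expand $\nabla\phi(XX^{T})-\nabla\phi(M^{\star})=\int_{0}^{1}\nabla^{2}\phi(M^{\star}+tE)[E]\,\d t$. Provided $Y$ lies in the column span of $[X,X^{\star}]$ -- which I enforce by replacing $Y$ with its orthogonal projection $\Pi Y$ onto that span, an operation that preserves $Yv_{i}=0$ ($i>k$) and does not increase $\|Y\|_{X,\eta}$ -- both $E$ and $X_{k}Y^{T}+YX_{k}^{T}$ lie in a common subspace of dimension $\le r^{\star}+r\le 2r$, while each midpoint $M^{\star}+tE=(1-t)M^{\star}+tXX^{T}$ has rank $\le r^{\star}+r\le 2r$; hence the restricted strong convexity and \lemref{ripinner} apply and give
\[
\inner{\nabla f(X)}{Y}\ge\tfrac{1}{2}(\mu+L)\bigl[\inner{E}{X_{k}Y^{T}+YX_{k}^{T}}-\delta\,\|E\|_{F}\,\|X_{k}Y^{T}+YX_{k}^{T}\|_{F}\bigr].
\]
This is the exact analogue of the estimate in the proof of \lemref{prelimgrad}, now carried out with the truncated map $Y\mapsto X_{k}Y^{T}+YX_{k}^{T}$.

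It remains to choose $Y$. Substituting $Y=\tilde{Y}P_{X,\eta}^{-1/2}$ turns $X_{k}Y^{T}+YX_{k}^{T}$ into $\tilde{X}_{k}\tilde{Y}^{T}+\tilde{Y}\tilde{X}_{k}^{T}$ with $\tilde{X}_{k}=X_{k}P_{X,\eta}^{-1/2}$ and $\|Y\|_{X,\eta}=\|\tilde{Y}\|_{F}$; crucially the maps $Y\mapsto X_{k}Y^{T}+YX_{k}^{T}$ and $\tilde{Y}\mapsto\tilde{X}_{k}\tilde{Y}^{T}+\tilde{Y}\tilde{X}_{k}^{T}$ have the \emph{same} range. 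Applying \lemref{scal} to the pair $(\tilde{X}_{k},E)$ furnishes a $\tilde{Y}$ for which $X_{k}Y^{T}+YX_{k}^{T}$ equals the orthogonal projection of $E$ onto that common range, so by \eqref{eq:cosk} we get $\inner{E}{X_{k}Y^{T}+YX_{k}^{T}}=\cos^{2}\theta_{k}\|E\|_{F}^{2}$ and $\|X_{k}Y^{T}+YX_{k}^{T}\|_{F}=\cos\theta_{k}\|E\|_{F}$, and the same lemma gives $\cos^{2}\theta_{k}\|E\|_{F}^{2}\ge2\,\lambda_{k}(\tilde{X}_{k}\tilde{X}_{k}^{T})\,\|\tilde{Y}\|_{F}^{2}$. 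A one-line diagonalization in the singular basis of $X$ yields $\lambda_{k}(\tilde{X}_{k}\tilde{X}_{k}^{T})=\lambda_{k}(XX^{T})/(\lambda_{k}(XX^{T})+\eta)$, whence $\|Y\|_{X,\eta}^{2}=\|\tilde{Y}\|_{F}^{2}\le\tfrac{1}{2}\bigl(1+\eta/\lambda_{k}(XX^{T})\bigr)\cos^{2}\theta_{k}\|E\|_{F}^{2}$. Projecting this $Y$ onto the span of $[X,X^{\star}]$ as above leaves $\inner{E}{X_{k}Y^{T}+YX_{k}^{T}}$ unchanged (since $E=\Pi E\Pi$) and can only shrink $\|X_{k}Y^{T}+YX_{k}^{T}\|_{F}$ and $\|Y\|_{X,\eta}$, so the displayed bound gives $\inner{\nabla f(X)}{Y}\ge\tfrac{1}{2}(\mu+L)\cos\theta_{k}(\cos\theta_{k}-\delta)\|E\|_{F}^{2}$ for every $k$ with $\cos\theta_{k}\ge\delta$ (for the remaining $k$ the $k$-th term of \eqref{eq:maxcos} is nonpositive, so nothing is needed). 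Dividing by $\|Y\|_{X,\eta}\le\cos\theta_{k}\|E\|_{F}\sqrt{\tfrac{1}{2}(1+\eta/\lambda_{k}(XX^{T}))}$ collapses the $\cos\theta_{k}$ factors and leaves $\|\nabla f(X)\|_{X,\eta}^{*}\ge\frac{\mu+L}{\sqrt{2}}\cdot\frac{(\cos\theta_{k}-\delta)\,\|XX^{T}-M^{\star}\|_{F}}{\sqrt{1+\eta/\lambda_{k}(XX^{T})}}$; taking the maximum over $k$ is \eqref{eq:maxcos}.

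The step I expect to be the main obstacle, and where the care goes, is reconciling two competing demands on the test direction: \lemref{ripinner} needs $Y$ (together with $E$) confined to the $\le 2r$-dimensional span of $[X,X^{\star}]$, whereas the direction that optimally trades alignment ($\cos\theta_{k}$) against conditioning ($\lambda_{k}$) -- the minimum-norm solution from \lemref{scal} -- generically pokes out of that span. The resolution is that the projection onto the span is harmless for the one quantity appearing with a favorable sign, namely $\inner{E}{X_{k}Y^{T}+YX_{k}^{T}}$, which is invariant because $E=\Pi E\Pi$, and is only beneficial for the quantities in the denominator, $\|X_{k}Y^{T}+YX_{k}^{T}\|_{F}$ and $\|Y\|_{X,\eta}$, which both shrink under left-multiplication by a projector -- but verifying these invariances and monotonicities precisely is the crux of the bookkeeping. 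A minor loose end is the degenerate case $\lambda_{k}(XX^{T})=0$: then for $\eta>0$ the $k$-th term of \eqref{eq:maxcos} vanishes and there is nothing to prove, so one may assume $\sigma_{k}>0$, under which the identities $\rank(X_{k})=\rank(\tilde{X}_{k})=k$ used above hold.
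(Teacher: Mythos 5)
Your proof is correct and follows essentially the same route as the paper: both lower-bound $\|\nabla f(X)\|_{X,\eta}^{*}$ by testing against a direction supported on the top-$k$ right singular directions of $X$, pass to the linearized model via the restricted well-conditioning lemma (\lemref{ripinner}) after projecting onto $\range([X,X^{\star}])$, and control the local norm of the test direction with the scaling lemma (\lemref{scal}) applied in the $P_{X,\eta}^{-1/2}$-rescaled coordinates, yielding the factor $\lambda_{k}(XX^{T})/(\lambda_{k}(XX^{T})+\eta)$. The only cosmetic difference is that you take the pseudoinverse in the rescaled coordinates (so that the hypothesis of \lemref{scal} holds by construction), whereas the paper takes $\JJ_{k}^{\dagger}(E)$ in the original coordinates and applies \lemref{scal} after the change of variables; both choices are valid.
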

\begin{proof}
Let $E=XX^{T}-M^{\star}$ and $\JJ(Y)=XY^{T}+YX^{T}$ and $\JJ_{k}=X_{k}Y^{T}+YX_{k}^{T}$.
Repeating the proof of \lemref{prelimgrad} yields the following corollary
\[
\|\nabla f(X)\|_{X,\eta}^{*}\ge\nu\cdot\left\{ \max_{\|Y\|_{X,\eta}=1}\inner E{\JJ(Y)}-\delta\|E\|_{F}\|\JJ(Y)\|_{F}\right\} 
\]
where $\nu=\frac{1}{2}(\mu+L)$ and $\delta=\frac{L-\mu}{L+\mu}$.
For any $k\in\{1,2,\dots,r\}$, we can restrict this problem so that
\begin{align*}
\|\nabla f(X)\|_{X,\eta}^{*} & \ge\nu\cdot\left\{ \max_{\|Y\|_{X,\eta}=1}\inner E{\JJ(Y)}-\delta\|E\|_{F}\|\JJ(Y)\|_{F}:Yv_{i}=0\text{ for }i>k\right\} \\
 & =\nu\cdot\left\{ \max_{\|Y\|_{X,\eta}=1}\inner E{\JJ_{k}(Y)}-\delta\|E\|_{F}\|\JJ_{k}(Y)\|_{F}\right\} \\
 & \ge\nu\cdot\|E\|_{F}\|\JJ_{k}(Y_{k}^{\star})\|_{F}(\cos\theta_{k}-\delta)
\end{align*}
where $Y_{k}^{\star}=\JJ_{k}^{\dagger}(E)$ denotes the solution of
(\ref{eq:cosk}) rescaled so that $\|Y\|_{X,\eta}=1$. Let $P=X^{T}X+\eta I$
and observe that $\|Y_{k}^{\star}\|_{X,\eta}^{2}=\|Y_{k}^{\star}P^{1/2}\|_{F}^{2}$.
It follows from \lemref{scal} with $X\gets X_{k}P^{-1/2}$ and $Y\gets Y_{k}^{\star}P^{1/2}$
that
\begin{align*}
\|X_{k}Y_{k}^{\star T}+Y_{k}^{\star}X_{k}^{T}\|_{F}^{2} & \ge2\cdot\lambda_{\min}(P^{-1/2}X_{k}^{T}X_{k}P^{-1/2})\cdot\|Y_{k}^{\star}\|_{X,\eta}^{2}.
\end{align*}
In turn, we have $P=\sum(\sigma_{i}^{2}+\eta)v_{i}v_{i}^{T}$ and
therefore
\[
\lambda_{\min}(P^{-1/2}X_{k}^{T}X_{k}P^{-1/2})=\min_{i\le k}\left\{ \frac{\sigma_{i}^{2}}{\eta+\sigma_{i}^{2}}\right\} =\frac{\sigma_{k}^{2}}{\eta+\sigma_{k}^{2}}=\frac{1}{1+\eta/\sigma_{k}^{2}}.
\]
Substituting these together yields 
\begin{align*}
\|\nabla f(X)\|_{X,\eta}^{*}\ge & \nu\cdot\|E\|_{F}\cdot\|\JJ_{k}(Y_{k}^{\star})\|_{F}\cdot(\cos\theta_{k}-\delta)\\
\ge & \frac{\mu+L}{2}\cdot\|E\|_{F}\cdot\frac{\sqrt{2}}{\sqrt{1+\eta/\sigma_{k}^{2}}}\cdot(\cos\theta_{k}-\delta).
\end{align*}
\end{proof}

From \lemref{gradbnd2}, we see that gradient dominance holds if the
subspace $\{X_{k}Y^{T}+YX_{k}^{T}:Y\in\R^{n\times r}\}$ induced by
the rank-$k$ approximation of $X$ is \emph{well-conditioned}, and
if the error vector $XX^{T}-M^{\star}$ is\emph{ well-aligned} with
it. Specifically, this is to require both $\lambda_{k}(XX^{T})$ and
$\cos\theta_{k}$ to remain sufficiently large for the \emph{same}
value of $k$. Within a neighborhood of the ground truth, it follows
from Weyl's inequalty that $\lambda_{k}(XX^{T})$ will remain sufficiently
large for $k=r^{\star}$; see \lemref{dom_base} below. In the overparameterized
regime $r>r^{\star}$, however, it is not necessarily true that $\cos\theta_{k}\to1$
for $k=r^{\star}$. Instead, we use an induction argument: if $\cos\theta_{k}$
is too small to prove gradient dominance, then the smallness of $\cos\theta_{k}$
provides a lower-bound on $\lambda_{k+1}(XX^{T})$ via \lemref{sintheta}
below. Inductively repeating this argument for $k=r^{\star},r^{\star}+1,\dots$
arrives at a lower-bound on $\lambda_{r}(XX^{T})$. At this point,
\lemref{align} guarantees that $\cos\theta_{r}$ is large, and therefore,
we conclude that gradient dominance must hold.
\begin{lemma}[Base case]
\label{lem:dom_base}Under the same conditions as \lemref{pl}, let
$f(X)-f(X^{\star})\leq\frac{\mu}{2(1+L/\mu)}\cdot\lambda_{r^{\star}}^{2}(M^{\star})$.
Then, 
\[
\lambda_{r^{\star}}(XX^{T})\ge(\sqrt{1+L/\mu}-1)\cdot\|XX^{T}-M^{\star}\|_{F}.
\]
\end{lemma}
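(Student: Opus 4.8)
The plan is to derive this directly from the error-norm bound in \lemref{riperr} together with Weyl's eigenvalue perturbation inequality; it serves as the base case $k=r^{\star}$ for the inductive argument outlined just before \lemref{pl}.

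First I would convert the suboptimality hypothesis into a bound on the matrix error norm. By the lower bound $\tfrac12\mu\|XX^{T}-M^{\star}\|_{F}^{2}\le f(X)-f(X^{\star})$ from \lemref{riperr}, the assumption $f(X)-f(X^{\star})\le\frac{\mu}{2(1+L/\mu)}\lambda_{r^{\star}}^{2}(M^{\star})$ gives $\|XX^{T}-M^{\star}\|_{F}^{2}\le\frac{1}{1+L/\mu}\lambda_{r^{\star}}^{2}(M^{\star})$, i.e.
\[
\lambda_{r^{\star}}(M^{\star})\ge\sqrt{1+L/\mu}\cdot\|XX^{T}-M^{\star}\|_{F}.
\]
Here it is essential that $r^{\star}=\rank(M^{\star})$, so that $\lambda_{r^{\star}}(M^{\star})$ is genuinely the smallest \emph{nonzero} eigenvalue of $M^{\star}\succeq0$.

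Next I would apply Weyl's inequality to the symmetric positive semidefinite matrices $XX^{T}$ and $M^{\star}$, writing $XX^{T}=M^{\star}+(XX^{T}-M^{\star})$, which yields $\lambda_{r^{\star}}(XX^{T})\ge\lambda_{r^{\star}}(M^{\star})-\|XX^{T}-M^{\star}\|\ge\lambda_{r^{\star}}(M^{\star})-\|XX^{T}-M^{\star}\|_{F}$, using that the spectral norm is dominated by the Frobenius norm. Combining this with the previous display gives
\[
\lambda_{r^{\star}}(XX^{T})\ge\bigl(\sqrt{1+L/\mu}-1\bigr)\cdot\|XX^{T}-M^{\star}\|_{F},
\]
which is exactly the claim; if a statement in terms of $X^{T}X$ were wanted instead, one uses the elementary identity $\lambda_{r^{\star}}(X^{T}X)=\lambda_{r^{\star}}(XX^{T})$ (for $r^{\star}\le r$) as elsewhere in the paper.

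I do not expect any real obstacle here, since both ingredients are elementary and already established; the only care needed is bookkeeping of the constant $1+L/\mu$ (note that the neighborhood in \thmref{local} is stated with the strictly more conservative radius $\frac{\mu}{2(1+\mu/L_{1})}\cdot\frac{\lambda_{r^{\star}}^{2}}{2}$, so the lemma applies on that smaller set a fortiori) and confirming the rank assumption is what makes $\lambda_{r^{\star}}(M^{\star})>0$.
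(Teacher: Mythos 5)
Your proof is correct and follows essentially the same route as the paper's: convert the suboptimality hypothesis into the error-norm bound $\|XX^{T}-M^{\star}\|_{F}^{2}\le\tfrac{1}{1+L/\mu}\lambda_{r^{\star}}^{2}(M^{\star})$ via \lemref{riperr}, then apply Weyl's inequality to $XX^{T}=M^{\star}+(XX^{T}-M^{\star})$ and combine. The only difference is that you spell out the spectral-to-Frobenius norm step and the role of $r^{\star}=\rank(M^{\star})$, which the paper leaves implicit.
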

\begin{proof}
By our choice of neighborhood, we have
\[
\|XX^{T}-M^{\star}\|_{F}^{2}\leq\frac{2}{\mu}\cdot[f(X)-f(X^{\star})]\leq\frac{1}{1+L/\mu}\cdot\lambda_{r^{\star}}^{2}(M^{\star}).
\]
The desired claim follows from Weyl's inequality:
\begin{align*}
\lambda_{r^{\star}}(XX^{T}) & =\lambda_{r^{\star}}(M^{\star}+XX^{T}-M^{\star})\\
 & \ge\lambda_{r^{\star}}(M^{\star})-\|XX^{T}-M^{\star}\|_{F}\\
 & \ge(\sqrt{1+L/\mu}-1)\cdot\|XX^{T}-M^{\star}\|_{F}.
\end{align*}
\end{proof}

\begin{lemma}[Induction step]
\label{lem:sintheta}Under the same conditions as \lemref{pl}, let
$f(X)-f(X^{\star})\leq\frac{\mu}{2(1+L/\mu)}\cdot\lambda_{r^{\star}}^{2}(M^{\star})$.
Then, $\cos\theta_{k}$ defined in (\ref{eq:cosk}) gives the following
lower-bound on $\lambda_{k+1}(XX^{T})$:
\[
\frac{\lambda_{k+1}^{2}(XX^{T})\cdot(r-k)}{\|XX^{T}-M^{\star}\|_{F}^{2}}-\frac{\mu L}{(L+\mu)^{2}}\ge\left(\frac{L}{L+\mu}\right)^{2}-\cos^{2}\theta_{k}.
\]
\end{lemma}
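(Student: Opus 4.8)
The plan is to translate the statement into a bound on a doubly-projected error matrix, and then to split into two regimes according to the size of $\lambda_{k+1}(XX^T)$. Write $E=XX^T-M^\star$, let $X=\sum_{i=1}^r\sigma_iu_iv_i^T$ be the SVD, put $\Pi_k=X_kX_k^\dagger$ (the orthogonal projector onto $\mathrm{span}\{u_1,\dots,u_k\}$) and $P=I-\Pi_k$. A symmetric $S$ is orthogonal to $X_kY^T+YX_k^T$ for every $Y$ precisely when $SX_k=0$, i.e.\ when $S=PSP$, so the orthogonal complement (inside the symmetric matrices) of $\{X_kY^T+YX_k^T:Y\in\R^{n\times r}\}$ is $\{PWP:W=W^T\}$. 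Since $E$ is symmetric, the maximizer in (\ref{eq:cosk}) realizes the orthogonal projection of $E$ onto that subspace, so $\cos^2\theta_k=1-\|PEP\|_F^2/\|E\|_F^2$, i.e.\ $\|PEP\|_F^2=\sin^2\theta_k\,\|E\|_F^2$. Using $\frac{\mu L}{(L+\mu)^2}+\big(\frac{L}{L+\mu}\big)^2=\frac{L}{L+\mu}$ and $\frac{L}{L+\mu}-\cos^2\theta_k=\sin^2\theta_k-\frac{\mu}{L+\mu}$, the claim becomes equivalent to $\|PEP\|_F^2\le(r-k)\lambda_{k+1}^2(XX^T)+\tfrac{\mu}{L+\mu}\|E\|_F^2$. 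I would then record $PEP=P(XX^T)P-PM^\star P=A-PM^\star P$, where $A=\sum_{i>k}\sigma_i^2u_iu_i^T\succeq 0$ has $\|A\|_F^2=\sum_{i>k}\sigma_i^4\le(r-k)\lambda_{k+1}^2(XX^T)$ and $PM^\star P\succeq 0$, together with two consequences of the hypothesis: by \lemref{riperr}, $\|E\|_F\le\lambda_{r^\star}(M^\star)/\sqrt{1+L/\mu}$; and for $k\ge r^\star$ (the range needed by the induction) Weyl's inequality gives $\lambda_{k+1}(XX^T)\le\lambda_{k+1}(M^\star)+\|E\|=\|E\|\le\|E\|_F$.

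\textbf{Easy regime.} If $(r-k)\lambda_{k+1}^2(XX^T)\ge\frac{L}{L+\mu}\|E\|_F^2$, the bound is immediate from the trivial sandwich estimate $\|PEP\|_F\le\|E\|_F$, since $\|E\|_F^2=\frac{L}{L+\mu}\|E\|_F^2+\frac{\mu}{L+\mu}\|E\|_F^2\le(r-k)\lambda_{k+1}^2(XX^T)+\frac{\mu}{L+\mu}\|E\|_F^2$.

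\textbf{Hard regime.} If instead $(r-k)\lambda_{k+1}^2(XX^T)<\frac{L}{L+\mu}\|E\|_F^2$, then because $A,PM^\star P\succeq 0$ the cross term $-2\inner{A}{PM^\star P}$ in $\|A-PM^\star P\|_F^2$ is nonpositive, so $\|PEP\|_F^2\le\|A\|_F^2+\|PM^\star P\|_F^2\le(r-k)\lambda_{k+1}^2(XX^T)+\|PM^\star P\|_F^2$, and it remains to establish $\|PM^\star P\|_F^2\le\frac{\mu}{L+\mu}\|E\|_F^2$. Here I would invoke an alignment bound of the type of \lemref{align} for the pair $(X_k,M^\star)$: the projector $I-X_kX_k^\dagger$ is exactly $P$, so $\|PM^\star P\|_F$ equals $\sin\theta^{(k)}$ times $\|X_kX_k^T-M^\star\|_F=\|E-A\|_F$, which in this regime stays within a fixed multiple of $\|E\|_F$ because $\|A\|_F^2<\frac{L}{L+\mu}\|E\|_F^2$; feeding the neighborhood size $\|E\|_F\le\lambda_{r^\star}(M^\star)/\sqrt{1+L/\mu}$ into the alignment estimate then calibrates the constant to exactly $\mu/(L+\mu)$.

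\textbf{Main obstacle.} The delicate step is the estimate $\|PM^\star P\|_F^2\le\frac{\mu}{L+\mu}\|E\|_F^2$ in the hard regime: controlling the portion of $M^\star$ lying outside the top-$k$ eigenspace of $XX^T$ without paying a factor of the condition number $\lambda_1(M^\star)/\lambda_{r^\star}(M^\star)$ or of the dimension. A direct Davis--Kahan perturbation bound is too lossy. One must exploit that the eigendirections being projected out carry at most $\lambda_{k+1}(XX^T)\le\|E\|_F$ of the $XX^T$-mass, which forces cancellation inside $PEP=A-PM^\star P$, in combination with the already-established global alignment $\|(I-XX^\dagger)M^\star(I-XX^\dagger)\|_F\le\sqrt{\mu/(2L)}\,\|E\|_F$ from \lemref{align}. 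Pinning the constant down to $\mu/(L+\mu)$, rather than anything larger, is precisely what lets the induction over $k=r^\star,\dots,r-1$ close against the base case $\lambda_{r^\star}(XX^T)\ge(\sqrt{1+L/\mu}-1)\|E\|_F$ of \lemref{dom_base}.
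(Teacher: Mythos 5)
Your translation of the statement is correct and coincides with the paper's opening moves: set $P=I-X_kX_k^\dagger$, identify $\sin^2\theta_k\,\|E\|_F^2=\|PEP\|_F^2$, rewrite the goal as $\|PEP\|_F^2\le(r-k)\lambda_{k+1}^2(XX^T)+\tfrac{\mu}{L+\mu}\|E\|_F^2$, decompose $PEP=A-PM^\star P$ with $A=PXX^TP\succeq 0$, use $\|A\|_F^2\le(r-k)\lambda_{k+1}^2(XX^T)$, and drop the PSD cross term. The remaining ingredient is a bound on $\|PM^\star P\|_F^2$, and this is exactly where you stop: you flag it as your ``main obstacle'' and only sketch a strategy. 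So the proposal, as written, is incomplete, and the gap is the heart of the lemma.

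The sketched strategy also does not close as stated. To apply \lemref{align} to the pair $(X_k,M^\star)$ you need $\|X_kX_k^T-M^\star\|_F\le\rho'\lambda_{r^\star}(M^\star)$ with $\rho'\le 1/\sqrt 2$. In your hard regime you only control $\|X_kX_k^T-M^\star\|_F=\|E-A\|_F\le\bigl(1+\sqrt{L/(L+\mu)}\,\bigr)\|E\|_F$, and combined with $\|E\|_F\le\lambda_{r^\star}(M^\star)/\sqrt{1+L/\mu}$ the best you can conclude is $\rho'\le\bigl(1+\sqrt{L/(L+\mu)}\,\bigr)/\sqrt{1+L/\mu}$. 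At $L/\mu=1$ this equals $1/\sqrt 2+1/2>1/\sqrt 2$, so the radius hypothesis of \lemref{align} is not verified precisely in the best-conditioned case, and the advertised ``calibration to $\mu/(L+\mu)$'' is never established. For comparison, the paper's proof replaces $\|PM^\star P\|_F$ by $\|(I-XX^\dagger)M^\star(I-XX^\dagger)\|_F$ and then applies \lemref{align} to $(X,M^\star)$, for which the radius hypothesis does hold; but since $P=I-X_kX_k^\dagger\succeq I-XX^\dagger$, one has $\|PM^\star P\|_F\ge\|(I-XX^\dagger)M^\star(I-XX^\dagger)\|_F$ by projector monotonicity, i.e.\ that inequality in the printed proof runs in the opposite direction to what the comparison allows. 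So that step of the paper also warrants a closer look and cannot simply be borrowed to fill your gap.
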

\begin{proof}
For $r^\star \le k\le r$, we have from $M^\star\succeq 0$
\begin{align*}
 & \|(I-X_{k}X_{k}^{\dagger})(XX^{T}-M^{\star})(I-X_{k}X_{k}^{\dagger})\|_{F}^{2}\\
= & \|(I-X_{k}X_{k}^{\dagger})XX^{T}(I-X_{k}X_{k}^{\dagger})\|_{F}^{2} +\|(I-X_{k}X_{k}^{\dagger})M^{\star}(I-X_{k}X_{k}^{\dagger})\|_{F}^{2} \\
& \quad -2\inner{(I-X_{k}X_{k}^{\dagger})XX^{T}(I-X_{k}X_{k}^{\dagger})}{(I-X_{k}X_{k}^{\dagger})M^{\star}(I-X_{k}X_{k}^{\dagger})} \\
\le & \lambda_{k+1}^{2}(XX^{T})(r-k)
+\|(I-X_{r^\star}X_{r^\star}^{\dagger})M^{\star}(I-X_{r^\star}X_{r^\star}^{\dagger})\|_{F}^{2} + 0
\end{align*}
and therefore
\begin{align*}
\sin^{2}\theta_{k} & \le\frac{\lambda_{k+1}^{2}(XX^{T})(r-k)}{\|XX^{T}-M^{\star}\|_{F}^{2}}+\frac{\|(I-X_{r^\star}X_{r^\star}^{\dagger})M^{\star}(I-X_{r^\star}X_{r^\star}^{\dagger})\|_{F}^{2}}{\|X_{r^\star}X_{r^\star}^{T}-M^{\star}\|_{F}^{2}}
\end{align*}
By the choice of the neighborhood, we have
\[
\|XX^{T}-M^{\star}\|_{F}^{2}\leq\frac{2}{\mu}\cdot[f(X)-f(X^{\star})]\leq\frac{1}{1+L/\mu}\cdot\lambda_{r^{\star}}^{2}(M^{\star}).
\]
Substituting $\rho=\frac{1}{\sqrt{1+L/\mu}}\le\frac{1}{\sqrt{2}}$
into \lemref{basis_k} proves that
\[
\frac{\|(I-X_{r^\star}X_{r^\star}^{\dagger})M^{\star}(I-X_{r^\star}X_{r^\star}^{\dagger})\|_{F}^{2}}{\|XX^{T}-M^{\star}\|_{F}^{2}}\leq\frac{1}{2}\frac{\rho}{1-\rho^{2}}=\frac{\mu}{2L}.
\]
Splitting 
\begin{align*}
1-\frac{\mu}{2L} & =\left(\frac{L}{L+\mu}\right)^{2}+\frac{\mu}{2L}\frac{(3L^{2}-\mu^{2})}{(L+\mu)^{2}}\\
 & \ge\left(\frac{L}{L+\mu}\right)^{2}+\frac{\mu L}{(L+\mu)^{2}}
\end{align*}
 and bounding $\cos^{2}\theta_{k}\ge1-\sin^{2}\theta_{k}$ yields
the desired bound. 
\end{proof}

Rigorously repeating this induction results in a proof of \lemref{pl}.

\begin{proof}[\lemref{pl}] \lemref{gradbnd2} proves
gradient dominance if we can show that both $\cos\theta_{k}$ and
$\lambda_{k}(XX^{T})$ remain large for the same value of $k$. By
\lemref{dom_base} we have
\begin{equation}
\frac{\lambda_{r^{\star}}(XX^{T})}{\|XX^{T}-M^{\star}\|_{F}}\ge\sqrt{1+L/\mu}-1\ge\sqrt{2}-1=\frac{1}{1+\sqrt{2}}.\label{eq:induct3}
\end{equation}
If $\cos\theta_{r^{\star}}\ge\frac{L}{L+\mu}$, then substituting
(\ref{eq:induct3}) into \lemref{gradbnd2} with $k=r^{\star}$ yields
gradient dominance:
\begin{align}
\frac{\|\nabla f(X)\|_{X,\eta}^{*}}{\|XX^{T}-M^{\star}\|} & \ge\frac{(\mu+L)}{\sqrt{2}}\cdot\left(\frac{L}{L+\mu}-\frac{L-\mu}{L+\mu}\right)\left(1+\frac{\eta}{\lambda_{r^{\star}}(XX^{T})}\right)^{-1/2}\nonumber \\
 & \ge\frac{\mu}{\sqrt{2}}\left(1+\eta\cdot\frac{1+\sqrt{2}}{\|XX^{T}-M^{\star}\|_{F}}\right)^{-1/2}.\label{eq:inductbnd1}
\end{align}
Otherwise, if $\cos\theta_{r^{\star}}<\frac{L}{L+\mu}$, then we proceed
with an induction argument. Beginning at the base case $k=r^{\star}$,
we evoke \lemref{sintheta} and use $\cos\theta_{k}<\frac{L}{L+\mu}$
to lower-bound $\lambda_{k+1}(XX^{T})$ by a constant:
\begin{gather}
\frac{\lambda_{k+1}^{2}(XX^{T})\cdot(r-k)}{\|XX^{T}-M^{\star}\|_{F}^{2}}-\frac{\mu L}{(L+\mu)^{2}}\ge\left(\frac{L}{L+\mu}\right)^{2}-\cos^{2}\theta_{k}>0,\nonumber \\
\implies\quad\frac{\lambda_{k+1}(XX^{T})}{\|XX^{T}-M^{\star}\|_{F}}>\frac{1}{\sqrt{r-r^{\star}}}\cdot\frac{\sqrt{\mu L}}{L+\mu}.\label{eq:induct2}
\end{gather}
If $\cos\theta_{k+1}\ge\frac{L}{L+\mu}$, then substituting (\ref{eq:induct2})
into \lemref{gradbnd2} yields gradient dominance:
\begin{align}
\frac{\|\nabla f(X)\|_{X,\eta}^{*}}{\|XX^{T}-M^{\star}\|} & \ge\frac{(\mu+L)}{\sqrt{2}}\left(\frac{L}{L+\mu}-\frac{L-\mu}{L+\mu}\right)\left(1+\frac{\eta}{\lambda_{k+1}(XX^{T})}\right)^{-1/2}\nonumber \\
 & \ge\frac{\mu}{\sqrt{2}}\left(1+\eta\cdot\frac{\sqrt{r-r^{\star}}\cdot\frac{L+\mu}{\sqrt{\mu L}}}{\|XX^{T}-M^{\star}\|_{F}}\right)^{-1/2}.\label{eq:inductbnd2}
\end{align}
Otherwise, if $\cos\theta_{k+1}<\frac{L}{L+\mu}$, then we repeat
the same argument in (\ref{eq:induct2}) with $k\gets k+1$, until
we arrive at $k=r$. At this point, \lemref{sintheta} guarantees
$\cos\theta_{r}\ge\frac{L}{L+\mu}$, since
\[
0\ge\underbrace{\frac{\lambda_{k+1}^{2}(XX^{T})\cdot(r-k)}{\|XX^{T}-M^{\star}\|_{F}^{2}}}_{=0\text{ because }k=r}-\frac{\mu L}{(L+\mu)^{2}}\ge\left(\frac{L}{L+\mu}\right)^{2}-\cos^{2}\theta_{k},
\]
so the induction terminates with (\ref{eq:inductbnd2}). Finally,
lower-bounding the two bounds (\ref{eq:inductbnd1}) and (\ref{eq:inductbnd2})
via $\min\{a^{-1},b^{-1}\}\ge(a+b)^{-1}$ yields the desired \lemref{pl}.
\end{proof}

\section{\label{sec:global}Global Convergence}

In this section, we study the global convergence of perturbed PrecGD or \ref{PPrecGD} from an arbitrary initial point to an approximate second order stationary point. To establish the global convergence of \ref{PPrecGD}, we study a slightly more general variant of gradient descent, which we call \textit{perturbed metric gradient descent}.

\subsection{Perturbed Metric Gradient Descent}
Let $P:\R^{d}\to\S_{++}^{d}$ denote an arbitrary metric function,
which we use to define the following two local norms
\[
\|v\|_{x}\eqdef\sqrt{v^{T}P(x)v},\qquad\|v\|_{x}^{*}\eqdef\sqrt{v^{T}P(x)^{-1}v}
\]
Let $f:\R^{d}\to\R$ denote an arbitrary $\ell_1$-gradient Lipschitz
and $\ell_{2}$-Hessian Lipschitz function. We consider solving the general
minimization problem $f^{\star}=\min_{x}f(x)$ via \textit{perturbed metric gradient descent}, defined as 
\begin{equation}
x_{k+1}=x_{k}-\alpha P(x_{k})^{-1}\nabla f(x_{k})+\alpha\zeta_{k},\tag{PMGD}\label{eq:pmgd}
\end{equation}
in which the random perturbation $\zeta_{k}$ is chosen as
\[
\begin{cases}
\zeta_{k}\sim\mathbb{B}(\beta) & \text{if }\;\|\nabla f(x)\|_{x}^{*}\le\epsilon, \text{ and it has been at least } \mathcal{T} \text{ iters since last perturbation}\\
\zeta_{k}=0 & \text{otherwise.}
\end{cases}
\]

Indeed, \ref{PPrecGD} is a special case of \ref{eq:pmgd} after choosing $x = \mathrm{vec}(X)$ and $P(x) = (X^TX+\eta_0 I_n)\otimes I_r$. Our main result in this section is to show that \ref{eq:pmgd} converges to $\epsilon$-second order stationary point measured in the metric norm
\[
\|\nabla f(x)\|_{x}^{*}\le\epsilon,\quad\nabla^{2}f(x)\succeq-\sqrt{L_d\epsilon}\cdot P(x),
\]
for some constant $L_d$ to be defined later, in at most $\tilde{O}(\epsilon^{-2})$ iterations under two assumptions:
\begin{itemize}
	\item The metric
	$P$ should be \emph{well-conditioned}:
	\begin{equation}
	p_{\lb}I\preceq P(x)\preceq p_{\ub}I\qquad\text{for all }x\in\R^{d}.\label{eq:well-cond}
	\end{equation}
	for some $p_\ub\geq p_\lb>0$.
	\item The metric $P$ should be \emph{Lipschitz continuous}:
	\begin{equation}
	\|P(x)-P(y)\|\le L_{P}\cdot\|x-y\|\qquad\text{for all }x,y\in\R^{d},\label{eq:lipP}
	\end{equation}
	for some $L_P>0$.
\end{itemize}

\paragraph{\textbf{Comparison to Perturbed Gradient Descent.}} \citet{jin2021nonconvex} showed that the episodic injection of isotropic noise to gradient descent enables it to escape strict saddle points efficiently. Indeed, this algorithm, called perturbed gradient descent or PGD for short, can be regarded as an special case of PMGD, with a crucial simplification that the metric function $P$ is the identity mapping throughout the iterations of the algorithm. As will be explained later, such simplification enables PGD to behave almost like the power method within the vicinity of a strict saddle point, thereby steering the iterations away from it at an exponential rate along the negative curvature of the function. Extending this result to \ref{eq:pmgd} with a more general metric function that changes along the solution trajectory requires a more intricate analysis, which will be provided next. Our main theorem shows that \ref{eq:pmgd} can also escape strict saddle points, so long as the metric function $P(x)$ remains well-conditioned and Lipschitz continuous.

\begin{theorem}[Global Convergence of PMGD]\label{thm:pmgd}Let $f$ be $\ell_{1}$-gradient and $\ell_{2}$-Hessian
	Lipschitz, and let $P$ satisfy $p_{\lb}I\preceq P(x)\preceq p_{\ub}I$ and $\|P(x)-P(y)\|\le L_{P}\cdot\|x-y\|$.
	Then, with an overwhelming probability and for any $\epsilon =\tilde{O}(1/(L_d p_\ub))$, \ref{eq:pmgd} with perturbation radius $\beta = \tilde{\mathcal{O}}(\epsilon/L_d)$ and time interval $\mathcal{T} = \tilde{\mathcal{O}}({\ell_1}/{(p_\lb\sqrt{ L_{d}\epsilon})})$ converges to a point $x$ that satisfies 
	\[
	\|\nabla f(x)\|_{x}^{*}\le\epsilon,\quad\text{and} \quad \nabla^{2}f(x)\succeq-\sqrt{L_d\epsilon}\cdot P(x),
	\]
	in at most $\tilde{O}(\mathcal{C}(f(x)-f^*)/\epsilon^{2})$ iterations, where $f^*$ is the optimal objective value, and $L_d = {5\max\{\ell_2,L_P \ell_1\sqrt{p_\ub}\}}/{p_\lb^{2.5}}$, $\mathcal{C} = {\ell_1}/{p_\lb^2}$.
\end{theorem}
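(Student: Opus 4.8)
The plan is to port the three-regime ``escape-or-descend'' analysis of perturbed gradient descent \citep{jin2021nonconvex} to the metric-norm setting, using the objective $f$ itself as the potential. The only essential departure from the Euclidean case is that the dynamics operator near a critical point is $I-\alpha P(x_k)^{-1}\nabla^2 f(x_k)$, which is neither symmetric nor constant along the trajectory; everything else is bookkeeping with the Euclidean norms replaced by their $P(\cdot)$-weighted counterparts. First I would establish a metric-norm descent lemma: on a step with $\zeta_k=0$, the Euclidean $\ell_1$-gradient Lipschitz bound gives $f(x_{k+1})\le f(x_k)-\alpha\langle\nabla f(x_k),P(x_k)^{-1}\nabla f(x_k)\rangle+\tfrac{\ell_1}{2}\alpha^2\|P(x_k)^{-1}\nabla f(x_k)\|^2$, and since $\|P(x_k)^{-1}\nabla f(x_k)\|^2\le p_{\lb}^{-1}(\|\nabla f(x_k)\|_{x_k}^*)^2$ by \eqref{eq:well-cond}, the choice $\alpha\le p_{\lb}/\ell_1$ yields $f(x_{k+1})\le f(x_k)-\tfrac{\alpha}{2}(\|\nabla f(x_k)\|_{x_k}^*)^2$. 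Thus whenever the metric-norm gradient exceeds $\epsilon$, one step decreases $f$ by $\Omega(\alpha\epsilon^2)$, while a perturbation step raises $f$ by only $O(\alpha\beta\sqrt{p_{\ub}}\,\epsilon)$ to leading order, which is negligible for our choice of $\beta$.

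It remains to treat a point $\tilde x$ that is $\epsilon$-stationary in the metric norm but fails the curvature condition, i.e.\ $\nabla^2 f(\tilde x)\not\succeq-\sqrt{L_d\epsilon}\,P(\tilde x)$. Abbreviating $\tilde P=P(\tilde x)$ and passing to the fixed coordinates $y=\tilde P^{1/2}(x-\tilde x)$, the deterministic part of \ref{eq:pmgd} becomes $y_{k+1}=y_k-\alpha\,\tilde P^{-1/2}\nabla f(\tilde x+\tilde P^{-1/2}y_k)+(\text{metric correction})$, and linearizing $\nabla f$ about $\tilde x$ exposes the \emph{symmetric} matrix $\mathcal H\eqdef\tilde P^{-1/2}\nabla^2 f(\tilde x)\tilde P^{-1/2}$, which by hypothesis has $\lambda_{\min}(\mathcal H)<-\gamma$ with $\gamma=\sqrt{L_d\epsilon}$. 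Following \citet{jin2021nonconvex}, I would run a coupled pair of trajectories from $x_0$ and $x_0'=x_0+\nu\,\tilde P^{-1/2}e_1$, where $e_1$ is the bottom eigenvector of $\mathcal H$ and $\nu$ is polynomially small, and track $w_k=\tilde P^{1/2}(x_k-x_k')$. A Taylor expansion gives $w_{k+1}=(I-\alpha\mathcal H)w_k+\alpha\,r_k$, with residual $r_k$ collecting a Hessian-Lipschitz term of size $O(\ell_2\|x_k-\tilde x\|\,\|w_k\|/p_{\lb})$ and a metric-Lipschitz term of size $O(L_P\|\nabla f(x_k)\|\,\|w_k\|/p_{\lb}^2)$; as long as both iterates stay within an $O(\sqrt{\epsilon/L_d})$-ball of $\tilde x$ --- exactly the ``stuck'' scenario --- these residuals are dominated by $\tfrac12\alpha\gamma\|w_k\|$, so the $e_1$-component of $w_k$ grows at least like $(1+\tfrac12\alpha\gamma)^k$ and must exit the ball within $\mathcal T=\tilde{O}(1/(\alpha\gamma))=\tilde{O}(\ell_1/(p_{\lb}\sqrt{L_d\epsilon}))$ iterations.

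Consequently at most one of the two runs can stay trapped, so the ``stuck region'' around $\tilde x$ has width $O(\nu)$ along $e_1$; a uniform perturbation of radius $\beta=\tilde{O}(\epsilon/L_d)$ lands outside it with overwhelming probability, after which the descent lemma extracts a fixed-size decrease of $f$ over the ensuing $\mathcal T$ steps. Combining the two cases, every block of $\mathcal T$ iterations either certifies that the current iterate is an $(\epsilon,\sqrt{L_d\epsilon})$-second-order stationary point in the metric norm or decreases $f$ by a fixed positive amount; since $f\ge f^\star$, the total number of blocks is $\tilde{O}(\mathcal C(f(x_0)-f^\star)/\epsilon^2)$ with $\mathcal C=\ell_1/p_{\lb}^2$, which is the claimed bound. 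The restriction $\epsilon=\tilde{O}(1/(L_d p_{\ub}))$ is precisely what keeps the escape-ball radius small enough for the linearization errors above to be controlled, and the per-perturbation increase of $f$ is what produces the small additive slack in the sublevel set used to bound $\Gamma$.

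The hard part will be the non-normal, time-varying dynamics operator $I-\alpha P(x_k)^{-1}\nabla^2 f(x_k)$: in classical PGD this is a fixed symmetric matrix and the escape argument reduces to a clean power-method estimate, whereas here I must (i) symmetrize through the \emph{fixed} change of coordinates $y=\tilde P^{1/2}(\cdot-\tilde x)$ so that $\mathcal H$ has a genuine bottom eigenvector to amplify along, and (ii) show that the drift of $P(x_k)$ away from $\tilde P$ over the horizon $\mathcal T$ --- bounded by $L_P$ times the ball radius via \eqref{eq:lipP} --- perturbs the dynamics only below the order of the $(1+\alpha\gamma)$ amplification. This second step is exactly why the ``effective Hessian-Lipschitz constant'' must be $L_d=5\max\{\ell_2,L_P\ell_1\sqrt{p_{\ub}}\}/p_{\lb}^{2.5}$ rather than merely $\ell_2$, the $L_P\ell_1$ term accounting for the coupling between the changing metric and the small-but-nonzero gradient along the escape trajectory. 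Keeping the mutually dependent quantities $\mathcal T$, $\beta$, and the escape-ball radius consistent --- so that geometric growth beats every residual before the ball is left --- is the delicate calibration, and it mirrors the ``improve-or-localize'' lemma of \citet{jin2021nonconvex} with every norm replaced by its $P(\tilde x)$-weighted version.
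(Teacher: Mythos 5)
Your proposal mirrors the paper's proof essentially step for step: the metric-norm descent lemma with step-size $\alpha = O(p_{\lb}/\ell_1)$ matches Lemma~\ref{lemma:grad_decrement}, the coupled-trajectory escape argument in the fixed coordinates $w_k = P(\bar{x})^{1/2}(x_k-x_k')$ driven by the symmetric operator $P(\bar{x})^{-1/2}\nabla^2 f(\bar{x})P(\bar{x})^{-1/2}$ matches Lemmas~\ref{lem:coupleseq_gd}--\ref{lem_distance}, the improve-or-localize step matches Lemma~\ref{lem_bound}, and the final counting argument splitting iterations into large-gradient and escape blocks is the same. You even identify correctly the two sources in the residual (Hessian non-constancy and metric drift) and explain why $L_d$ must combine $\ell_2$ with $L_P\ell_1\sqrt{p_{\ub}}$, which is precisely the content of Lemma~\ref{lem_distance}.
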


Before providing the proof for Theorem \ref{thm:pmgd}, we first show how it can be invoked to prove Theorem~\ref{cor_pprecgd}. To apply Theorem \ref{thm:pmgd}, we need to show that: $(i)$ $f(X) = \phi(XX^T)$ is gradient and Hessian Lipschitz; and $(ii)$ $P = (XX^\top+\eta I)$ is well-conditioned. However, the function $f(X) = \phi(XX^T)$ may neither be gradient  nor Hessian Lipschitz, even if these properties hold for $\phi$. To see this, consider $\phi(M) = \|M-M^\star\|_F^2$. Evidently, $\phi(M)$ is $2$-gradient Lipschitz with constant Hessian. However, $f(X) = \phi(XX^\top) = \|XX^\top-M^\star\|_F^2$ is neither gradient- nor Hessian-Lipschitz since it is a quartic function of $X$. To alleviate this hurdle, we show that, under a mild condition on the coercivity of $\phi$, the iterations of \ref{PPrecGD} reside in a bounded region, within which $f(X)$ is both gradient and Hessian Lipschitz. 

\begin{lemma}\label{lem_Lip}
Suppose that $\phi$ is coercive. Let $\Gamma_F$ and $\Gamma_2$ be defined as
\begin{align}
	\Gamma_F &= \max\left\{\|X\|_F: \phi(XX^T)\leq \phi(X_0X_0^T)+2\sqrt{\|X_0\|_F^2+\eta}\cdot \alpha r\epsilon\right\}\label{eq_GammaF}\\
	\Gamma_2 &= \max\left\{\|X\|_2: \phi(XX^T)\leq \phi(X_0X_0^T)+2\sqrt{\|X_0\|_F^2+\eta}\cdot \alpha r\epsilon\right\}.\label{eq_Gamma2}
\end{align}
Then, the following statements hold:
\begin{itemize}
\item Every iteration of \ref{PPrecGD} satisfies $\|X_t\|_F\leq \Gamma_F$ and $\|X_t\|\leq \Gamma_2$.
\item The function $f(X)$ is $9\Gamma_F^2L_1$-gradient Lipschitz within the ball $\{M: \|M\|_F\leq \Gamma_F\}$.
\item The function $f(X)$ is $\left((4\Gamma_F+2)L_1+4\Gamma_F^2 L_2\right)$-Hessian Lipschitz within the ball $\{M: \|M\|_F\leq \Gamma_F\}$.
\item For $\P_{X,\eta} = (X^{T}X+\eta I_{n})\otimes I_{r}$, we have $\eta I \preceq \P_{X,\eta}\preceq (\Gamma_2^2+\eta)I$ and $\|\P_{X,\eta}-\P_{Y,\eta}\|\leq 2\Gamma_2\norm {X-Y}$ within the ball $\{M: \|M\|\leq \Gamma_2\}$.
\end{itemize}
\end{lemma}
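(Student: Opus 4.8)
The four parts are best proved in the order $(ii)$, $(iii)$, $(iv)$, $(i)$: the first three are purely algebraic statements about $f$ and $\P_{X,\eta}$ on a fixed ball, whereas $(i)$ --- boundedness of the iterates --- invokes them. For $(ii)$ I would start from $\nabla f(X)=2\nabla\phi(XX^{T})X$ and telescope
\[
\nabla f(X)-\nabla f(Y)=2\bigl[\nabla\phi(XX^{T})-\nabla\phi(YY^{T})\bigr]X+2\nabla\phi(YY^{T})(X-Y),
\]
bounding the first summand by $L_{1}$-gradient-Lipschitzness of $\phi$ together with $\|XX^{T}-YY^{T}\|_{F}\le(\|X\|+\|Y\|)\|X-Y\|_{F}\le 2\Gamma_F\|X-Y\|_{F}$ and $\|X\|\le\Gamma_F$, and the second by $\|\nabla\phi(YY^{T})\|_{F}=\|\nabla\phi(YY^{T})-\nabla\phi(M^{\star})\|_{F}\le L_{1}\|YY^{T}-M^{\star}\|_{F}\le 2L_{1}\Gamma_F^{2}$ (using $\nabla\phi(M^{\star})=0$ and $\|X^{\star}\|_{F}\le\Gamma_F$); collecting gives $9\Gamma_F^{2}L_{1}$. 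For $(iii)$ I would use $\nabla^{2}f(X)[V]=2\nabla\phi(XX^{T})V+2\nabla^{2}\phi(XX^{T})[XV^{T}+VX^{T}]X$ (the identity already used in the excerpt) and telescope $\nabla^{2}f(X)[V]-\nabla^{2}f(Y)[V]$ into summands that change one argument at a time --- in $\nabla\phi$, in $\nabla^{2}\phi$, in the bracket $XV^{T}+VX^{T}$, and in the trailing factor --- controlling these, in turn, by $L_1$-gradient-Lipschitzness of $\phi$, $L_2$-Hessian-Lipschitzness of $\phi$, the operator-norm bound $\|\nabla^{2}\phi(\cdot)\|\le L_{1}$ (a consequence of $L_1$-gradient-Lipschitzness, used elsewhere in the paper), and the elementary inequalities $\|XV^{T}+VX^{T}\|_{F}\le 2\Gamma_F\|V\|_{F}$, $\|(X-Y)V^{T}+V(X-Y)^{T}\|_{F}\le 2\|X-Y\|_{F}\|V\|_{F}$, $\|XX^{T}-YY^{T}\|_{F}\le 2\Gamma_F\|X-Y\|_{F}$; collecting yields $(4\Gamma_F+2)L_{1}+4\Gamma_F^{2}L_{2}$.

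For $(iv)$: since $\P_{X,\eta}=(X^{T}X+\eta I)\otimes I$ and Kronecker multiplication by an identity preserves both the eigenvalue multiset and the spectral norm, the eigenvalues of $\P_{X,\eta}$ are exactly $\{\sigma_{i}(X)^{2}+\eta\}_{i}$, which lie in $[\eta,\ \|X\|^{2}+\eta]\subseteq[\eta,\ \Gamma_2^{2}+\eta]$, giving $\eta I\preceq\P_{X,\eta}\preceq(\Gamma_2^{2}+\eta)I$. For the Lipschitz bound, $\P_{X,\eta}-\P_{Y,\eta}=(X^{T}X-Y^{T}Y)\otimes I$, so $\|\P_{X,\eta}-\P_{Y,\eta}\|=\|X^{T}X-Y^{T}Y\|=\|X^{T}(X-Y)+(X-Y)^{T}Y\|\le(\|X\|+\|Y\|)\|X-Y\|\le 2\Gamma_2\|X-Y\|$.

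For $(i)$ I would induct on the iteration index, treating $(ii)$--$(iv)$ as established facts on the ball $\{X:\|X\|_{F}\le\Gamma_F\}$ (after enlarging it by an arbitrarily small amount so that whole step-segments lie inside it). The base case is $X_{0}$ lying in the sublevel set $\mathcal{X}$ by definition, so it suffices to show each step keeps $\phi(X_{t}X_{t}^{T})$ below the threshold $\phi(X_{0}X_{0}^{T})+2\sqrt{\|X_{0}\|_{F}^{2}+\eta}\,\alpha r\epsilon$. A non-perturbed step $X_{t+1}=X_{t}-\alpha\nabla f(X_{t})(X_{t}^{T}X_{t}+\eta I)^{-1}$ is a genuine descent step: the Euclidean constant $9\Gamma_F^{2}L_{1}$ from $(ii)$ converts into a local-norm smoothness constant via $\|V\|_{F}^{2}\le\eta^{-1}\|V\|_{X,\eta}^{2}$, so with step size $\alpha=\eta/(9\Gamma_F^{2}L_{1})$ one gets $f(X_{t+1})\le f(X_{t})$ exactly as in the local-norm descent of \secref{local}, with the $\ell_{X,\eta}$ bound of \lemref{decr} and the gradient bound of \lemref{bndgrad} controlled on the ball. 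A perturbed step is taken only when $\|\nabla f(X_{k})\|_{X_{k},\eta}^{*}\le\epsilon$ and $\|\zeta_{k}\|\le\beta$; expanding along the step and using these two facts bounds the increase of $\phi$ by the slack term above, and because a perturbation not followed by a commensurate decrease over the next $\mathcal{T}$ iterations would mean the iterate was already an approximate second-order stationary point --- at which point the algorithm halts --- the function value never drifts above $\phi(X_{0}X_{0}^{T})$ by more than a single perturbation's worth. Finally, coercivity of $\phi$ makes $\mathcal{X}$ bounded, so $\Gamma_F,\Gamma_2<\infty$, and $\|X_{t}\|_{F}\le\Gamma_F$, $\|X_{t}\|\le\Gamma_2$ hold by definition of $\Gamma_F,\Gamma_2$.

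I expect the main obstacle to be $(i)$: one has to break the apparent circularity between ``the iterates stay in $\mathcal{X}$'' and ``$f$ is smooth enough along the trajectory for a descent step to decrease $\phi$'' --- which is exactly why $(ii)$--$(iv)$ are proved first as trajectory-independent facts on a fixed ball --- and one must verify that the perturbation-induced increases of $\phi$ do not accumulate over the run, which rests on the improve-or-halt structure of the perturbation schedule. Parts $(ii)$--$(iv)$ are routine once the relevant matrix-norm inequalities are assembled.
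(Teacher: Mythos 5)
Your proof is correct and follows essentially the same route as the paper. Parts (ii)--(iv) are proved by the same telescoping and Kronecker-identity manipulations, and part (i) rests on the same improve-or-halt observation — that a perturbation is only injected at a small-gradient point, and the subsequent $\mathcal{T}$ steps net a decrease, so the function value never exceeds $\phi(X_0X_0^T)$ plus one perturbation's worth of slack — which the paper isolates into a preparatory Lemma~\ref{lem_ub}. The one thing you do better is to make the logical order explicit: the paper presents part (i) first (via Lemma~\ref{lem_ub}), but Lemma~\ref{lem_ub} quietly invokes the descent guarantee, which requires the Lipschitz constants of $f$, i.e.\ parts (ii)--(iii); your explicit "prove (ii)--(iv) on the fixed ball first, then induct for (i)" is the right way to break the apparent circularity, which the paper handles only implicitly. (Two small notes you can ignore: your constant-chasing in (ii) actually yields $8L_1\Gamma_F^2$, slightly better than the stated $9L_1\Gamma_F^2$, and the lemma statement's $\alpha r\epsilon$ is a typo for the $\alpha\beta\epsilon$ appearing in the sublevel-set definition and in Lemma~\ref{lem_ub}.)
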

Equipped with Lemma~\ref{lem_Lip} and Theorem~\ref{thm:pmgd}, we are ready to present the global convergence result for \ref{PPrecGD}.

% \begin{proof}

\vspace{2mm}
\begin{proof} [Theorem \ref{cor_pprecgd}]
	Due to our definition of $\Gamma_F$ and $\Gamma_2$, every iteration of~\ref{PPrecGD} belongs to the set $\mathcal{D} = \{X: \|X\|_F\leq \Gamma_F, \|X\|\leq \Gamma_2\}$. On the other hand, Lemma~\ref{lem_Lip} implies that $f(X)$ is $9L_1\Gamma_F^2$-gradient Lipschitz and $((4\Gamma_F+2)L_1+4\Gamma_F^2 L_2)$-Hessian Lipschitz within $\mathcal{D}$. Moreover, $\eta I \preceq \P_{X,\eta}\preceq (\Gamma_2^2+\eta)I$ and $\|\P_{X,\eta}-\P_{Y,\eta}\|\leq 2\Gamma_2\norm {X-Y}$ within $\mathcal{D}$. Therefore, invoking Theorem~\ref{thm:pmgd} with parameters $\ell_1 = 9L_1\Gamma_F^2$, $\ell_2 = ((4\Gamma_F+2)L_1+4\Gamma_F^2 L_2)$, $p_\lb = \eta$, $p_\ub = \Gamma_2^2+\eta$, and $L_P = 2\Gamma_2$ completes the proof. 
 \end{proof}

\subsection{Proof of Theorem~\ref{thm:pmgd}}
To prove Theorem~\ref{thm:pmgd}, we follow the main idea of \citet{jin2021nonconvex} and split the iterations into two parts:
\begin{itemize}
	\item {\bf Large gradient in local norm:} Suppose that $\norm{\nabla f(x)}_x^*> \epsilon$ for some $\epsilon>0$. Then, we show in Lemma~\ref{lemma:grad_decrement} that a single iteration of \ref{PPrecGD} without perturbation reduces the objective function by $\Omega(\epsilon^2)$.
	\item {\bf Large negative curvature in local norm:} Suppose that $\|\nabla f(x)\|_x^*\geq \epsilon$ and $x$ is not an $\epsilon$-second order stationary point in local norm, i.e., $\nabla^2 f(x)\not\succeq -\sqrt{L_d\epsilon}P(x)$. We show in Lemma~\ref{lemma:escape} that perturbing $x$ with an isotropic noise followed by $\tilde{\mathcal{O}}(\epsilon^{-1/2})$ iterations of \ref{PPrecGD} reduces the the objective function by $\tilde{\Omega}(\epsilon^{3/2})$. 
\end{itemize}
Combining the above two scenarios, we show that PMGD decreases the objective value by $\tilde{\Omega}(\epsilon^2)$ per iteration (on average). Therefore, it takes at most $\tilde{\mathcal{O}}((f(x_0)-f^*)\epsilon^{-2})$ iterations to reach a $\epsilon$-second order point in local norm.

\begin{lemma}[Large gradient $\implies$ large decrement]\label{lemma:grad_decrement}
	Let $f$ is $\ell_{1}$-gradient Lipschitz, and let $p_{\lb}I\preceq P(x)\preceq p_\ub I$ for every $x$. Suppose that $x$ satisfies
	$\|\nabla f(x)\|_{x}^{*}>\epsilon$, and define
	\[
	x_{+}=x-\alpha P(x)^{-1}\nabla f(x)
	\]
	with step-size $\alpha=p_{\lb}/(2\ell_1)$. Then, we have
	\[
	f(x_{+})-f(x)<-\frac{p_{\lb}}{4\ell_1}\epsilon^{2}.
	\]
\end{lemma}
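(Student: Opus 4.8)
The plan is to invoke the Euclidean descent inequality coming from $\ell_1$-gradient Lipschitzness of $f$, and then convert the two resulting Euclidean terms into the dual local norm $\|\cdot\|_{x}^{*}$ using only the lower well-conditioning bound $p_{\lb}I\preceq P(x)$.

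First I would set $\Delta\eqdef x_{+}-x=-\alpha P(x)^{-1}\nabla f(x)$ and write
\[
f(x_{+})\le f(x)+\inner{\nabla f(x)}{\Delta}+\frac{\ell_1}{2}\norm{\Delta}^{2}.
\]
The linear term evaluates exactly to $\inner{\nabla f(x)}{\Delta}=-\alpha\,\nabla f(x)^{T}P(x)^{-1}\nabla f(x)=-\alpha\,(\|\nabla f(x)\|_{x}^{*})^{2}$. For the quadratic term, $\norm{\Delta}^{2}=\alpha^{2}\,\nabla f(x)^{T}P(x)^{-2}\nabla f(x)$, and since $P(x)\succeq p_{\lb}I$ implies $P(x)^{-1}\preceq p_{\lb}^{-1}I$, operator monotonicity gives $P(x)^{-2}=P(x)^{-1/2}P(x)^{-1}P(x)^{-1/2}\preceq p_{\lb}^{-1}P(x)^{-1}$, hence $\norm{\Delta}^{2}\le(\alpha^{2}/p_{\lb})(\|\nabla f(x)\|_{x}^{*})^{2}$.

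Combining the two bounds yields $f(x_{+})-f(x)\le-\alpha\bigl(1-\tfrac{\ell_1\alpha}{2p_{\lb}}\bigr)(\|\nabla f(x)\|_{x}^{*})^{2}$. Substituting $\alpha=p_{\lb}/(2\ell_1)$ makes $\tfrac{\ell_1\alpha}{2p_{\lb}}=\tfrac14$, so the bracket equals $\tfrac34\ge\tfrac12$, and therefore $f(x_{+})-f(x)\le-\tfrac{\alpha}{2}(\|\nabla f(x)\|_{x}^{*})^{2}=-\tfrac{p_{\lb}}{4\ell_1}(\|\nabla f(x)\|_{x}^{*})^{2}$. Finally, plugging in the hypothesis $\|\nabla f(x)\|_{x}^{*}>\epsilon$ gives the strict inequality $f(x_{+})-f(x)<-\tfrac{p_{\lb}}{4\ell_1}\epsilon^{2}$. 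There is no genuine obstacle here: the only step meriting care is the operator inequality $P(x)^{-2}\preceq p_{\lb}^{-1}P(x)^{-1}$, which is precisely where the \emph{lower} bound $p_{\lb}I\preceq P(x)$ enters; the upper bound $p_{\ub}$ plays no role in this lemma.
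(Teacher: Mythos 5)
Your proposal is correct and follows essentially the same route as the paper: both start from the Euclidean $\ell_1$-Lipschitz descent inequality, identify the linear term as $-\alpha(\|\nabla f(x)\|_x^*)^2$, and convert the Euclidean quadratic term to the dual local norm by paying a factor $1/p_{\lb}$ via the lower bound $P(x)\succeq p_{\lb}I$ (the paper writes this as $\frac{\ell_1}{2}\|\Delta\|^2\le\frac{\ell_1}{2p_{\lb}}\|\Delta\|_x^2$, which is your operator inequality $P(x)^{-2}\preceq p_{\lb}^{-1}P(x)^{-1}$ stated in a different form). The remaining arithmetic with $\alpha=p_{\lb}/(2\ell_1)$ matches as well.
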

\begin{proof}
	Due to the gradient Lipschitz continuity of $f(x)$, we have:
	\begin{align}
	f(x_{+}) & \le f(x)+\alpha\inner{\nabla f(x)}{-P(x)^{-1}\nabla f(x)}+\frac{\ell_1}{2p_{\lb}}\alpha^{2}\|P(x)^{-1}\nabla f(x)\|_{x}^{2}\\
	& =f(x)-\alpha(\|\nabla f(x)\|_{x}^{*})^{2}\left(1-\frac{\ell_1}{2p_{\lb}}\alpha\right),\\
	& \le f(x)-\frac{\alpha}{2}(\|\nabla f(x)\|_{x}^{*})^{2}\\
	&\leq f(x)-\frac{p_\lb}{4\ell_1}\epsilon^2,\label{eq_grad_dec}
	\end{align}
	where in the last inequality we used
	the optimal step-size $\alpha=p_{\lb}/(2\ell_1)$ and the assumption $\|\nabla f(x)\|_x^*>\epsilon$.
\end{proof}
\begin{lemma}[Escape from saddle point]\label{lemma:escape}
	\label{lem:escape}Let $f$ be $\ell_1$-gradient and $\ell_{2}$-Hessian
	Lipschitz. Moreover, let $p_{\lb}I\preceq P(x)\preceq p_\ub I$ and $\left\|P(x)-P(y)\right\|\leq L_P\|x-y\|$ for every $x$ and $y$.
	Suppose that $\bar{x}$ satisfies $\|\nabla f(\bar{x})\|_{\bar{x}}^{*}\le\epsilon$ and $\nabla^{2}f(\bar{x})\not\succeq -\sqrt{L_d\epsilon}\cdot P(\bar{x})$. Then, the PMGD defined as
	\[
	\qquad x_{k+1}=x_{k}-\alpha P(x_{k})^{-1}\nabla f(x_{k}),\qquad \text{starting at}\qquad
	x_{0}=\bar{x}+\alpha\cdot\xi,
	\]
	with step-size $\alpha=p_{\lb}/(2\ell_1)$ and initial perturbation $\xi\sim \B(\beta)$
	with $\beta=\epsilon/(400 L_d\iota^{3})$ achieves the following decrement with
	probability of at least $1-\delta$ 
	\[
	f(x_{t})-f(\bar{x})\le-\underbrace{\frac{1}{50\iota^{3}}\sqrt{\frac{\epsilon^{3}}{L_d}}}_{:=\mathcal{F}}\qquad\text{ after } \mathcal{T}=\frac{\ell_1}{p_\lb\sqrt{ L_{d}\epsilon}}\iota\text{ iterations,}
	\]
	where $L_d = {5\max\{\ell_2,L_P \ell_1\sqrt{p_\ub}\}}/{p_\lb^{2.5}}$ and $\iota=c\cdot\log({p_{\ub}}d\ell_1(f(x_0)-f^*)/(p_\lb \ell_2\epsilon\delta))$ for some absolute constant $c$. 
\end{lemma}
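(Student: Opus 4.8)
The plan is to adapt the coupling-plus-volume argument that \citet{jin2021nonconvex} use to show perturbed gradient descent escapes strict saddles, with the new ingredient being that the variation of the preconditioner $P(x_k)$ along the trajectory must be treated as an additional perturbation to the linearized dynamics. The first step is an ``improve-or-localize'' claim: if the run started at $x_0$ fails to decrease $f$ by $\mathcal{F}$ within $\mathcal{T}$ iterations, then every iterate stays inside a ball $\B(\mathcal{S})$ about $\bar{x}$ with $\mathcal{S}^2=\Theta(\mathcal{T}\alpha p_\ub\mathcal{F}/p_\lb^2)$. This follows from the per-step decrement $f(x_k)-f(x_{k+1})\ge\tfrac{\alpha}{2}(\|\nabla f(x_k)\|_{x_k}^{*})^2\ge\tfrac{\alpha}{2p_\ub}\|\nabla f(x_k)\|^2$ of Lemma~\ref{lemma:grad_decrement} (monotonicity of $f$ along the trajectory makes $f(x_0)-f(x_\tau)\le\mathcal{F}$ for every $\tau\le\mathcal{T}$), together with $\|x_{k+1}-x_k\|\le\tfrac{\alpha}{p_\lb}\|\nabla f(x_k)\|$ and Cauchy--Schwarz $\|x_\tau-x_0\|^2\le\tau\sum_{k<\tau}\|x_{k+1}-x_k\|^2$; since $\|x_0-\bar{x}\|\le\alpha\beta$ is negligible, the entire trajectory is localized near $\bar{x}$.

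The core step is the coupling. Let $H=\nabla^2 f(\bar{x})$, let $-\gamma$ be the smallest eigenvalue of $P(\bar{x})^{-1/2}HP(\bar{x})^{-1/2}$ so that $\gamma\ge\sqrt{L_d\epsilon}$ by hypothesis, and let $v_1$ be the associated generalized eigenvector of $(H,P(\bar{x}))$. I would run two copies of \ref{eq:pmgd} with identical randomness whose initial perturbations differ by $x_0-x_0'=\mu_0 v_1$ for a small scalar $\mu_0$ to be fixed, set $w_k=x_k-x_k'$, and Taylor expand around $\bar{x}$ to get
\[
w_{k+1}=\left(I-\alpha P(\bar{x})^{-1}H\right)w_k-\alpha\psi_k,
\]
where $\psi_k$ collects (i) the Hessian-Lipschitz remainder of size $O(\ell_2\mathcal{S}\|w_k\|/p_\lb)$ and (ii) the preconditioner-variation terms, bounded using $\|P(x_k)^{-1}-P(\bar{x})^{-1}\|\le (L_P/p_\lb^2)\mathcal{S}$ and $\|\nabla f(x_k)\|\le\sqrt{p_\ub}\epsilon+\ell_1\mathcal{S}$ to be $O((L_P\ell_1/p_\lb^2)\mathcal{S}\|w_k\|)$ plus a lower-order piece $O((L_P\sqrt{p_\ub}\epsilon/p_\lb^2)\|w_k\|)$. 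The definition $L_d=5\max\{\ell_2,L_P\ell_1\sqrt{p_\ub}\}/p_\lb^{2.5}$, together with the smallness of $\epsilon$ assumed in Theorem~\ref{thm:pmgd}, is calibrated exactly so that $\|\psi_k\|$ is at most a small constant fraction of $\gamma\|w_k\|$. Decomposing $w_k$ into its $v_1$-component, which expands by $1+\alpha\gamma$ under the frozen map $I-\alpha P(\bar{x})^{-1}H$, and its complement, which does not expand, and running the standard induction shows the $v_1$-component dominates and grows like $(1+\alpha\gamma)^k\mu_0/2$ while both trajectories stay in $\B(\mathcal{S})$. Hence after $\mathcal{T}=\iota/(\alpha\gamma)$ steps, $\|w_\mathcal{T}\|\gtrsim e^{\iota}\mu_0$, which exceeds $2\mathcal{S}\ge\|x_\mathcal{T}-\bar{x}\|+\|x_\mathcal{T}'-\bar{x}\|$ once $\mu_0\gtrsim\mathcal{S}e^{-\iota}$, a contradiction; so at least one of the two runs decreases $f$ by $\mathcal{F}$.

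Finally I would pass to the probability bound. The ``stuck region'' $\cXs\subseteq\B(\beta)$ of perturbations $\xi$ whose run fails to escape has, by the coupling step, width at most $\mu_0/\alpha$ along $v_1$, so $\Pr[\xi\in\cXs]\le\tfrac{\mu_0/\alpha}{\beta}\cdot\tfrac{\mathrm{vol}(\B^{d-1})}{\mathrm{vol}(\B^{d})}\le\tfrac{\mu_0\sqrt{d}}{\alpha\beta}$; taking $\mu_0=\Theta(\alpha\beta\delta/\sqrt{d})$ makes this at most $\delta$, while $\iota=\Theta(\log(p_\ub d\ell_1(f(x_0)-f^*)/(p_\lb\ell_2\epsilon\delta)))$ is large enough to keep $\mu_0\gtrsim\mathcal{S}e^{-\iota}$ so the contradiction above is valid. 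Substituting $\alpha=p_\lb/(2\ell_1)$, $\gamma\ge\sqrt{L_d\epsilon}$, and $\beta=\epsilon/(400L_d\iota^3)$ into $\mathcal{T}=\iota/(\alpha\gamma)$ gives $\mathcal{T}\le\tfrac{\ell_1}{p_\lb\sqrt{L_d\epsilon}}\iota$, and tracking the absolute constants delivers $f(x_\mathcal{T})-f(\bar{x})\le-\mathcal{F}=-\tfrac{1}{50\iota^3}\sqrt{\epsilon^3/L_d}$ with probability at least $1-\delta$, using that $f(x_0)-f(\bar{x})\le\alpha\beta\|\nabla f(\bar{x})\|+O(\alpha^2\beta^2)$ is negligible against $\mathcal{F}$. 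I expect the coupling step to be the main obstacle: unlike the identity-metric case of \citet{jin2021nonconvex}, the linearized map is not even approximately constant, so one must simultaneously dominate the Hessian-Lipschitz remainder and the preconditioner-variation remainder over the radius-$\mathcal{S}$ ball by the spectral gap $\alpha\gamma$; this balancing is precisely what forces the form of $L_d$ (the exponent $p_\lb^{2.5}$ and the $L_P$ dependence), and making the bookkeeping tight enough to hit the advertised numerical constants is the delicate part.
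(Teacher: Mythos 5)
Your proposal follows essentially the same route as the paper: improve-or-localize to keep both coupled trajectories in a ball of radius $\mathcal{S}$ around $\bar{x}$; a coupled linearized dynamics with a deviation term decomposed into a Hessian-Lipschitz remainder and a preconditioner-variation remainder, the latter controlled via $\|P(x)^{-1}-P(\bar{x})^{-1}\|$ and the smallness of $\|\nabla f\|$ near $\bar{x}$ (this is exactly the paper's Lemma~\ref{lem_distance}); then the stuck-region width/volume argument of Jin et al. The only bookkeeping difference worth flagging: the paper works in the symmetrized coordinates $z_t=P(\bar{x})^{1/2}(x_t-y_t)$, where the frozen map $I-\alpha P(\bar{x})^{-1/2}\nabla^2 f(\bar{x})P(\bar{x})^{-1/2}$ is self-adjoint, which makes the component decomposition and the bound $\|q(t)\|\le\|p(t)\|/2$ cleaner and yields the tighter improve-or-localize constant $\mathcal{S}^2=O(\alpha\mathcal{T}\mathcal{F}/p_{\lb})$ (your Euclidean version picks up an extra $p_{\ub}/p_{\lb}$), and the condition needed is really $L_d\mathcal{S}\lesssim\gamma/\iota$ rather than a constant fraction of $\gamma$, since the deviation accumulates over $\mathcal{T}\sim\iota/(\alpha\gamma)$ steps.
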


Before presenting the sketch of the proof for Lemma~\ref{lemma:escape}, we complete the proof of Theorem~\ref{thm:pmgd} based on Lemmas~\ref{lemma:grad_decrement} and~\ref{lemma:escape}.

\begin{proof} [Theorem~\ref{thm:pmgd}.] 
Let us define $T = 2\left({\mathcal{T}}/{\mathcal{F}}+ 4\ell_1/(p_{\lb}\epsilon^2)\right)\cdot{(f(x_0)-f^*)}$. By contradiction, suppose that $x_t$ is \textit{not} a $\epsilon$-second order stationary point in local norm for any $t\leq T$. This implies that we either have $\|\nabla f(x_t)\|_{x_t}^{*}>\epsilon$, or $\|\nabla f({x_t})\|_{{x_t}}^{*}\le\epsilon$ and $\nabla^{2}f({x_t})\not\succeq -\sqrt{L_d\epsilon}\cdot P({x}_t)$ for every $t\leq T$. Define $T_1$ as the number of iterations that satisfy $\|\nabla f(x_t)\|_{x_t}^{*}>\epsilon$. Similarly, define $T_2$ as the number of iterations that satisfy $\|\nabla f({x_t})\|_{{x_t}}^{*}\le\epsilon$ and $\nabla^{2}f({x_t})\not\succeq -\sqrt{L_d\epsilon}\cdot P({x}_t)$. Evidently, we have $T_1+T_2 = T$. We divide our analysis into two parts:
\begin{itemize}
	\item Due to the definition of $T_2$, and in light of Lemma~\ref{lemma:escape}, we perturb the metric gradient at least $T_2/\mathcal{T}$ times. After each perturbation followed by $\mathcal{T}$ iterations, the PMGD reduces the objective function by at least $\mathcal{F}$ with probability $1-\delta$. Since the objective value cannot be less than $f^*$, we have
	$$
	f(x_0)-(\mathcal{F}/\mathcal{T})T_2\geq f^*\implies T_2\leq (\mathcal{T}/\mathcal{F})\cdot ({f(x_0)-f^*}),
	$$
	which holds with probability of at least
	$$
	1-(T_2/\mathcal{T})\delta\geq 1-\frac{f(x_0)-f^*}{\mathcal{F}}\cdot\delta = 1-\delta',
	$$
	where $\delta$ is chosen to be small enough so that $\delta' = ({f(x_0)-f^*})\delta/{\mathcal{F}} < 1$.
	\item Excluding $T_2$ iterations that are within $\mathcal{T}$ steps after adding the perturbation, we are left with $T_1$ iterations with $\|\nabla f(x_t)\|_{x_t}^{*}>\epsilon$. According to Lemma~\ref{lemma:grad_decrement}, the PMGD reduces the objective by $p_{\lb}\epsilon^2/(4\ell_1)$ at every iteration $x_t$ that satisfies $\|\nabla f(x_t)\|_{x_t}^{*}>\epsilon$. Therefore, we have 
	$$
	f(x_0)-(p_{\lb}\epsilon^2/(4\ell_1))T_1\geq f^*\implies T_1\leq (4\ell_1/(p_{\lb}\epsilon^2))\cdot ({f(x_0)-f^*})
	$$
\end{itemize}
Recalling the definition of $T$, the above two cases imply that $T_1+T_2< T$ with probability of at least $1-\delta'$, which is a contradiction. Therefore, $\|\nabla f({x_t})\|_{{x_t}}^{*}\le\epsilon$ and $\nabla^{2}f({x_t})\succeq -\sqrt{L_d\epsilon}\cdot P({x}_t)$ after at most $2\left({\mathcal{T}}/{\mathcal{F}}+ 4\ell_1/(p_{\lb}\epsilon^2)\right)\cdot{(f(x_0)-f^*)}$ iterations. Finally, note that 
\begin{align*}
	T& = \mathcal{O}\left(\left(\frac{\ell_1}{p_\lb^2\epsilon^2}+\frac{\ell_1}{p_{\lb}\epsilon^2}\right)\cdot (f(x_0)-f^*)\cdot \iota^4\right)\\
	& = \tilde{\mathcal{O}}\left(\mathcal{C}\cdot\frac{(f(x_0)-f^*)}{\epsilon^2}\right).
\end{align*}
This completes the proof. 
\end{proof}

Finally, we explain the proof of Lemma~\ref{lemma:escape}. To streamline the presentation, we only provide a sketch of the proof and defer the detailed arguments to the appendix.
\paragraph{Sketch of the proof for Lemma~\ref{lemma:escape}.}
The proof of this lemma follows that of~\citep[Lemma 5.3]{jin2021nonconvex} with a few key differences to account for the metric function $P(x)$, which will be explained below. 

Consider two sequences $\{x_t\}_{t=0}^T$ and $\{y_t\}_{t=0}^T$ generated by PMGD and initialized at $x_0=\bar{x}+\alpha\zeta_1$ and $y_0=\bar{x}+\alpha\zeta_1$, for some $\zeta_1,\zeta_2\in\mathbb{B}(\beta)$. By contradiction, suppose that Lemma~\ref{lem:escape} does not hold for either sequences $\{x_t\}_{t=0}^{\mathcal{T}}$ and $\{y_t\}_{t=0}^{\mathcal{T}}$, i.e., $f(x_{t})-f(\bar{x})\geq -\mathcal{F}$ and $f(y_{t})-f(\bar{x})\geq -\mathcal{F}$ for $t\leq \mathcal{T}$. That is, PMGD did not make sufficient decrement along $\{x_t\}_{t=0}^{\mathcal{T}}$ and $\{y_t\}_{t=0}^{\mathcal{T}}$. As a critical step in our proof, we show in the appendix (see Lemma~\ref{lem_bound}) that such small decrement in the objective function implies that both sequences $\{x_t\}_{t=0}^{\mathcal{T}}$ and $\{y_t\}_{t=0}^{\mathcal{T}}$ must remain close to $\bar{x}$. The closeness of $\{x_t\}_{t=0}^T$ and $\{y_t\}_{t=0}^T$ to $\bar{x}$ implies that their differences can be modeled as a quadratic function with a small deviation term:
\begin{align}\label{eq_dist}
&P({\bar{x}})^{1/2}\left(x_{t+1}-y_{t+1}\right) \nonumber\\
=& P({\bar{x}})^{1/2}\left(x_{t}-y_{t}\right) - \alpha P({\bar{x}})^{1/2}(P(x)^{-1}\nabla f(x_t)-P(y)^{-1}\nabla f(y_t)) \nonumber\\
=& \left(I - \alpha P({\bar{x}})^{-1/2}\nabla^2f(\bar{x})P(\bar{x})^{-1/2}\right)P({\bar{x}})^{1/2}(x_{t}-y_{t})+\alpha\xi(\bar{x}, x_t,y_t)
\end{align}
\begin{sloppypar}
\noindent where the deviation term $\xi(\bar{x},x_t,y_t)$ is defined as 
$$
\xi(\bar{x},  x_t,y_t) \!=\! P({\bar{x}})^{1/2}\left(P({\bar{x}})^{-1}\nabla^2f(\bar{x})(x_{t}\!-\!y_{t})\!-\!\left(P({x_t})^{-1}\nabla f(x_t)\!-\!P({y_t})^{-1}\nabla f(y_t)\right)\right)
$$
\citet{jin2021nonconvex} showed that, when $P({x_t}) = P({y_t}) = P({\bar{x}}) = I$, the deviation term $\xi(\bar{x},  x_t,y_t)$ remains small. However, such argument cannot be readily applied to the PMGD, since the metric function $P(x)$ can change drastically throughout the iterations. The crucial step in our proof is to show that the rate of change in $P(x)$ slows down drastically around stationary points.
\end{sloppypar}
\begin{lemma}[Informal]\label{lem_dist_informal}
	Let $f$ be $\ell_1$-gradient and $\ell_2$-Hessian Lipschitz. Let $P(x)$ be well-conditioned and Lipschitz continuous. Suppose that $u$ satisfies $\|\nabla f(u)\|_u^*\leq \epsilon$. Then, we have
	\begin{align*}
	\|\zeta(u,x,y)\|\leq C_1 \max\{\|x-u\|,\|y-u\|\}\left\|P(u)^{1/2}(x-y)\right\|+C_2\epsilon \left\|P(u)^{1/2}(x-y)\right\|,
	\end{align*}
	for constants $C_1$ and $C_2$ that only depend on $\ell_1, \ell_2, L_P, p_{\lb}, p_{\ub}$.
\end{lemma}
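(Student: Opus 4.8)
The plan is to expand the deviation term
\[
\zeta(u,x,y)=P(u)^{1/2}\Bigl(P(u)^{-1}\nabla^{2}f(u)(x-y)-\bigl(P(x)^{-1}\nabla f(x)-P(y)^{-1}\nabla f(y)\bigr)\Bigr)
\]
(the quantity appearing in \eqref{eq_dist}, with $(u,x,y)$ in place of $(\bar{x},x_{t},y_{t})$) by making two substitutions, one at a time: first replacing $P(x)^{-1}$ and $P(y)^{-1}$ by $P(u)^{-1}$, and then replacing $\nabla f(x)-\nabla f(y)$ by its linearization $\nabla^{2}f(u)(x-y)$. After both substitutions the leading terms cancel, leaving $\zeta(u,x,y)=-P(u)^{-1/2}E_{2}-P(u)^{1/2}E_{1}$, where $E_{2}=\int_{0}^{1}\bigl[\nabla^{2}f(y+s(x-y))-\nabla^{2}f(u)\bigr](x-y)\,\d s$ is the Hessian-linearization error and $E_{1}=(P(x)^{-1}-P(u)^{-1})\nabla f(x)-(P(y)^{-1}-P(u)^{-1})\nabla f(y)$ collects the metric-swap error. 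Throughout I would move freely between $\|x-y\|$ and $\|P(u)^{1/2}(x-y)\|$ using $p_{\lb}^{1/2}\|x-y\|\le\|P(u)^{1/2}(x-y)\|\le p_{\ub}^{1/2}\|x-y\|$, valid by well-conditioning.

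The term $E_{2}$ is handled at once by $\ell_{2}$-Hessian Lipschitzness: since $\|\nabla^{2}f(y+s(x-y))-\nabla^{2}f(u)\|\le\ell_{2}\|y+s(x-y)-u\|\le\ell_{2}\max\{\|x-u\|,\|y-u\|\}$, we get $\|P(u)^{-1/2}E_{2}\|\le p_{\lb}^{-1}\ell_{2}\max\{\|x-u\|,\|y-u\|\}\cdot\|P(u)^{1/2}(x-y)\|$, which is already of the first claimed form. For $E_{1}$, I would split it as
\[
E_{1}=(P(x)^{-1}-P(u)^{-1})(\nabla f(x)-\nabla f(y))+(P(x)^{-1}-P(y)^{-1})\nabla f(y),
\]
and use $\|A^{-1}-B^{-1}\|\le\|A^{-1}\|\,\|B^{-1}\|\,\|A-B\|$ together with well-conditioning and Lipschitz continuity to obtain $\|P(a)^{-1}-P(b)^{-1}\|\le p_{\lb}^{-2}L_{P}\|a-b\|$. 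Combined with $\ell_{1}$-gradient Lipschitzness, the first summand of $E_{1}$ is at most $p_{\lb}^{-2}L_{P}\ell_{1}\|x-u\|\,\|x-y\|$, again of the first claimed form.

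The crux --- and the main obstacle --- is the second summand $(P(x)^{-1}-P(y)^{-1})\nabla f(y)$, since $\nabla f(y)$ is generically $\Theta(1)$, so a naive bound would leave a term proportional to $\|x-y\|$ with no vanishing prefactor and ruin the estimate. This is exactly where the stationarity hypothesis $\|\nabla f(u)\|_{u}^{*}\le\epsilon$ is indispensable: writing $\nabla f(y)=\nabla f(u)+(\nabla f(y)-\nabla f(u))$, and noting $\|\nabla f(u)\|\le\|P(u)^{1/2}\|\cdot\|\nabla f(u)\|_{u}^{*}\le p_{\ub}^{1/2}\epsilon$ while $\|\nabla f(y)-\nabla f(u)\|\le\ell_{1}\|y-u\|$, we get $\|\nabla f(y)\|\le p_{\ub}^{1/2}\epsilon+\ell_{1}\|y-u\|$, so this summand is at most $p_{\lb}^{-2}L_{P}\bigl(p_{\ub}^{1/2}\epsilon+\ell_{1}\|y-u\|\bigr)\|x-y\|$: its $\|y-u\|$ part is of the first claimed form and its $\epsilon$ part is of the second. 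Collecting the contributions of $E_{2}$ and of the three pieces of $E_{1}$, multiplying the $E_{1}$ terms by $\|P(u)^{1/2}\|\le p_{\ub}^{1/2}$, and converting every $\|x-y\|$ into $\|P(u)^{1/2}(x-y)\|$, I would read off $C_{1}=O\bigl(p_{\lb}^{-1}\ell_{2}+p_{\ub}^{1/2}p_{\lb}^{-5/2}L_{P}\ell_{1}\bigr)$ and $C_{2}=O\bigl(p_{\ub}p_{\lb}^{-5/2}L_{P}\bigr)$, both depending only on $\ell_{1},\ell_{2},L_{P},p_{\lb},p_{\ub}$, which is the claim. Apart from the use of stationarity on the bare-$\nabla f(y)$ piece, every step is routine operator-norm bookkeeping.
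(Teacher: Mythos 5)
Your proposal is correct and mirrors the paper's own proof of the formal version (Lemma~\ref{lem_distance}): your $E_{2}$ is exactly the paper's $T_{1}$ term (Hessian-linearization error, controlled by $\ell_{2}$-Hessian Lipschitzness), your $E_{1}$ is the paper's $T_{2}$ term with the same algebraic rearrangement $(P(x)^{-1}-P(u)^{-1})(\nabla f(x)-\nabla f(y))+(P(x)^{-1}-P(y)^{-1})\nabla f(y)$, and you invoke stationarity in the same place and same way to bound $\|\nabla f(y)\|\le p_{\ub}^{1/2}\epsilon+\ell_{1}\|y-u\|$. The constants you read off match the paper's up to routine bookkeeping (you even carry the $\sqrt{p_{\ub}}$ factor on the $T_{2}$ piece more carefully than the paper's displayed chain).
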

The formal version of Lemma~\ref{lem_dist_informal} can be found in appendix (see Lemma~\ref{lem_distance}).
Recall that due to our assumption, the term $\max\{\|x_t-\bar{x}\|,\|y_t-\bar{x}\|\}$ must remain small for every $t\leq\mathcal{T}$. Therefore, applying Lemma~\ref{lem_dist_informal} with $u=\bar{x}$, $x=x_t$, and $y=y_t$ implies that $\|\zeta(\bar{x}, x_t,y_t)\| = o(\|x_t-y_t\|)$. Therefore, (\ref{eq_dist}) can be further approximated as
\begin{align}\label{eq_eig}
P(\bar{x})^{1/2}\left(x_{t}-y_{t}\right) \approx \left(I - \alpha P({\bar{x}})^{-1/2}\nabla^2f(\bar{x})P(\bar{x})^{-1/2}\right)^tP({\bar{x}})^{1/2}(x_{0}-y_{0})
\end{align}
\begin{sloppypar}
	\noindent Indeed, the above approximation enables us to argue that $P^{1/2}_{\bar{x}}\left(x_{t}-y_{t}\right)$ evolves according to a power iteration.
	In particular, suppose that $x_0$ and $y_0$ are picked such that $P({\bar{x}})^{1/2}(x_0-y_0) = cv$, where $v$ is the eigenvector corresponding the smallest eigenvalue $\lambda_{\min}\left(P({\bar{x}})^{-1/2}\nabla^2f(\bar{x})P({\bar{x}})^{-1/2}\right) = -\gamma \leq -\sqrt{L_d\epsilon}<0$. With this assumption, (\ref{eq_eig}) can be approximated as the following power iteration:
\end{sloppypar}
\begin{align}\label{eq_power}
	P(\bar{x})^{1/2}\left(x_{t}-y_{t}\right) \approx c (1+\alpha\gamma)^t v
\end{align}
Suppose that $\{x_{t}\}_{t=0}^{\mathcal{T}}$ does not escape the strict saddle point, i.e., $f(x_t)-f(x_0)\geq -\mathcal{F}$. This implies that $\{x_{t}\}_{t=0}^{\mathcal{T}}$ remains close to $\bar{x}$. On the other hand, (\ref{eq_power}) implies that the sequence $\{y_{t}\}_{t=0}^{\mathcal{T}}$ must diverge from $\bar{x}$ \textit{exponentially fast}, which is in direct contradiction with our initial assumption that  $\{y_{t}\}_{t=0}^{\mathcal{T}}$ and $\bar{x}$ remain close. This in turn implies that $f(x_{t})-f(\bar{x})< -\mathcal{F}$ or $f(y_{t})-f(\bar{x})< -\mathcal{F}$. In other words, at least one of the sequences $\{x_{t}\}_{t=0}^{\mathcal{T}}$ and $\{y_{t}\}_{t=0}^{\mathcal{T}}$ must escape the strict saddle point. Considering $\{y_{t}\}_{t=0}^{\mathcal{T}}$ as a perturbed variant of $\{x_{t}\}_{t=0}^{\mathcal{T}}$, the above argument implies that it takes an \textit{exponentially small} perturbation in the direction of $v$ for PMGD to escape the strict saddle point $\bar{x}$. The sketch of the proof is completed by noting that such perturbation in the direction $v$ is guaranteed to occur (with high probability) with a random isotropic perturbation of $x_0$.

\section{Numerical Experiments}

In this section, we provide numerical experiments to illustrate our
theoretical findings. All experiments are implemented
using MATLAB 2020b, and performed with a 2.6 Ghz 6-Core Intel Core
i7 CPU with 16 GB of RAM.

We begin with numerical simulations for our local convergence result,
\thmref{local}, where we proved that PrecGD with an appropriately
chosen regularizer $\eta$ converges linearly towards the optimal
solution $M^{\star}$, at a rate that is independent of both ill-conditioning
and overparameterization. In contrast, gradient descent is slowed
down significantly by both ill-conditioning and overparameterization.

We plot the convergence of GD and PrecGD for three choices of $\phi(\cdot)$, which correspond to the
problems of low-rank matrix recovery~\citep{candes2011tight}, 1-bit
matrix sensing~\citep{davenport20141}, and phase retrieval~\citep{candes2013phaselift}
respectively: 
%All three problems are commonly used in applications where the dimension of the optimization variables are extremely large, so that anything more than a linear per-iteration complexity is too costly, making the Burer-Monteiro formulation a necessity. We briefly describe each choice of $\phi(\cdot)$ below and leave the details of our implementation to subsequent sections.
\begin{itemize}
\item \textbf{Low-rank matrix recovery with $\ell_{2}$ loss}. Our goal is to find a low-rank matrix $M^{\star}\succeq 0$ that satisfies
$\mathcal{A}(M^{\star})=b$, where $\mathcal{A}:\mathbb{R}^{n\times n}\to\mathbb{R}^{m}$
is a linear operator and $b\in\mathbb{R}^{m}$ is given. To find $M^{\star}$,
we minimize the objective 
\begin{equation}
\phi(M)=\|\mathcal{A}(M)-b\|^{2}\label{eq:phim}
\end{equation}
subject to the constraint that $M$ is low-rank. The Burer-Montiero
formulation of this problem then becomes: minimize $f(X)=\|\mathcal{A}(XX^{T})-b\|^{2}$
with $X\in\mathbb{R}^{n\times r}$.
\item \textbf{1-bit matrix sensing}. The goal of 1-bit matrix recovery is
to recovery a low-rank matrix $M^{\star}\succeq0$
through 1-bit measurements of each entry $M_{ij}$. Each measurement on the $M_{ij}$ is equal to $1$ with probability $\sigma(M_{ij})$
and $0$ with probability $1-\sigma(M_{ij})$, where $\sigma(\cdot)$
is the sigmoid function. After a number of measurements have been
taken, let $\alpha_{ij}$ denote the percentage of measurements on
the $(i,j)$-entry that is equal to 1. To recover $M^{\star}$, we
want to find the maximum likelihood estimator for $M^{\star}$ by
minimizing 
\begin{equation}
\phi(M)=\sum_{i=1}^{n}\sum_{j=1}^{n}(\log(1+e^{M_{ij}})-\alpha_{ij}M_{ij}).\label{eq:1bit}
\end{equation}
subject to the constraint $\mathrm{rank}(M)\leq r$.
\item \textbf{Phase retrieval}. The goal of phase retrieval is to recover
a vector $z\in\mathbb{R}^{d}$ from $m$ measurements of the form
$y_{i}=|\langle a_{i},z\rangle|^{2}$, where $a_{i},i=1,\dots,m$
are measurement vectors in $\mathbb{C}^{d}$. Equivalently, we can
view this problem as recovering a rank-1 matrix $zz^{*}$ from $m$
linear measurements of the form $y_{i}=\langle a_{i}a_{i}^{*},zz^{*}\rangle$.
To find $zz^{*}$ we minimize the $\ell_{2}$ loss 
\begin{equation}
\phi(M)=\sum_{i=1}^{m}\left(\langle a_{i}a_{i}^{*},M\rangle-y_{i}\right)^{2}\label{eq:phaser}
\end{equation}
subject to the constraint that $M$ is rank-1. 
\end{itemize}
One can readily check that both low-rank matrix
recovery (\ref{eq:phim}) and 1-bit matrix sensing (\ref{eq:1bit-1})
satisfy $(\mu,r)$-restricted strong convexity (see \citealt{li2019non}
for details), so our theoretical results predict that PrecGD will converge linearly.
While phase retrieval (\ref{eq:phaser})
\textit{does not} satisfy restricted strong convexity, we
will nevertheless see that PrecGD continues to converge linearly for phase retrieval.
This indicates that PrecGD will continue to work well for more general
optimization problems that present a low-rank structure. We leave
the theoretical justifications of these numerical results as future
work.

\subsection{\label{sec:gauss}Low-rank matrix recovery with $\ell_{2}$ loss}

In this problem we assume that there is a $n\times n$, rank $r^{\star}$
matrix $M^{\star}\succeq0$, which we call the ground truth, that
we cannot observe directly. However, we have access to linear measurements
of $M^{\star}$ in the form $b=A(M^{\star})$. Here the linear operator
$\mathcal{A}:\mathbb{R}^{n\times n}\to\mathbb{R}^{m}$ is defined
as $\mathcal{A}(M^{\star})=[\langle A_{1},M^{\star}\rangle,\dots\langle A_{m},M^{\star}\rangle]$,
where each $A_{i}$ is a fixed matrix of size $n\times n$. The goal
is to recovery $M^{\star}$ from $b$, potentially with $m\ll n^{2}$
measurements by exploiting the low-rank structure of $M^{\star}$.
This problem has numerous applications in areas such as colloborative
filtering \citep{rennie2005fast}, quantum state tomography \citep{gross2010quantum},
power state estimation \citep{zhang2019spurious}.

To recover $M^{\star}$, we minimize the objective $\phi(M)$ in (\ref{eq:phim})
by solving the unconstrained problem
\[
\min_{X\in\mathbb{R}^{n\times r}}f(X)=\phi(XX^{T})=\|\mathcal{A}(XX^{T})-b\|^{2}
\]
using both GD and PrecGD. 
%Our goal is to show that the convergence rate of GD is slowed down significantly by both ill-conditioning and overparameterization, while PrecGD is immune to both.
To gauge the effects of ill-conditioning, in our experiments we consider
two choices of $M^{\star}$: one well-conditioned and one ill-conditioned.
In the well-conditioned case, the ground truth is a rank-2
($r^{*}=2$) positive semidefinite matrix of size $100\times100$, where
both of the non-zero eigenvalues are $1$. To generate $M^{\star}$,
we compute $M^{\star}=Q^{T}\Lambda Q$, where $\Lambda=\mathrm{diag}(1,1,0,\dots,0)$
and $Q$ is a random orthogonal matrix of size $n\times n$ (sampled
uniformly from the orthogonal group). In the ill-conditioned case,
we set $M^{\star}=Q^{T}\Lambda Q$, where $\Lambda=\mathrm{diag}(1,1/5,0,\dots,0)$.

For each $M^{\star}$ we perform two set of experiments: the exactly-parameterized
case with $r=2$ and the overparameterized case where $r=4$.
The step-size is set to $2\times10^{-6}$ for both GD and PrecGD in the first case and to $1\times10^{-5}$ in the latter case.
For PrecGD, the regularization parameter is set to $\eta=\|\nabla f(X)\|_{X,0}^{*}$.
Both methods are initialized near the ground truth. In particular, we compute $M^{\star}=ZZ^{T}$
with $Z\in\mathbb{R}^{n\times r}$ and choose the initial point as $X_{0}=Z+10^{-2}w$,
where $w$ is a $n\times r$ random matrix with standard Gaussian
entries. In practice, the closeness of a initial point to the ground truth can be guaranteed via spectral initialization (\citealt{chi2019nonconvex, tu2016low}). Finally, to ensure that $\phi(M)=\|A(M)-b\|^{2}$ satisfies restricted
strong convexity, we set the linear operator $\mathcal{A}:\mathbb{R}^{n\times n}\to\mathbb{R}^{m}$
to be $\mathcal{A}(M^{\star})=[\langle A_{1},M^{\star}\rangle,\dots\langle A_{m},M^{\star}\rangle]$,
where $m=3nr$ and each $A_{i}$ is a standard Gaussian matrix with
i.i.d. entries (\citealt{recht2010guaranteed}).

We plot the error $\|XX^{T}-M^{\star}\|_{F}$ versus the number of
iterations for both GD and PrecGD. The results of our experiments
for a well-conditioned $M^{\star}$ are shown on the first row of
Figure~\ref{fig:gauss}. We see here that if $M^{\star}$ is well-conditioned
and $r=r^{*}$, then GD converges at a linear rate and reaches machine
precision quickly. The performance of PrecGD is almost identical.
However once the search rank $r$ exceeds the true rank $r^{*}$,
then GD slows down significantly, as we can see from the figure on the
right. In contrast, PrecGD continue to converge at a linear rate,
obtaining machine accuracy within a few hundred iterations.

\begin{figure}[th]
\centering
\begin{tikzpicture}
\newcommand{\coord}{4.3}
\newcommand{\wid}{0.45}
\newcommand{\rl}{0.25}
\node (image) at (\rl,\coord) {
\includegraphics[width = \wid\textwidth]{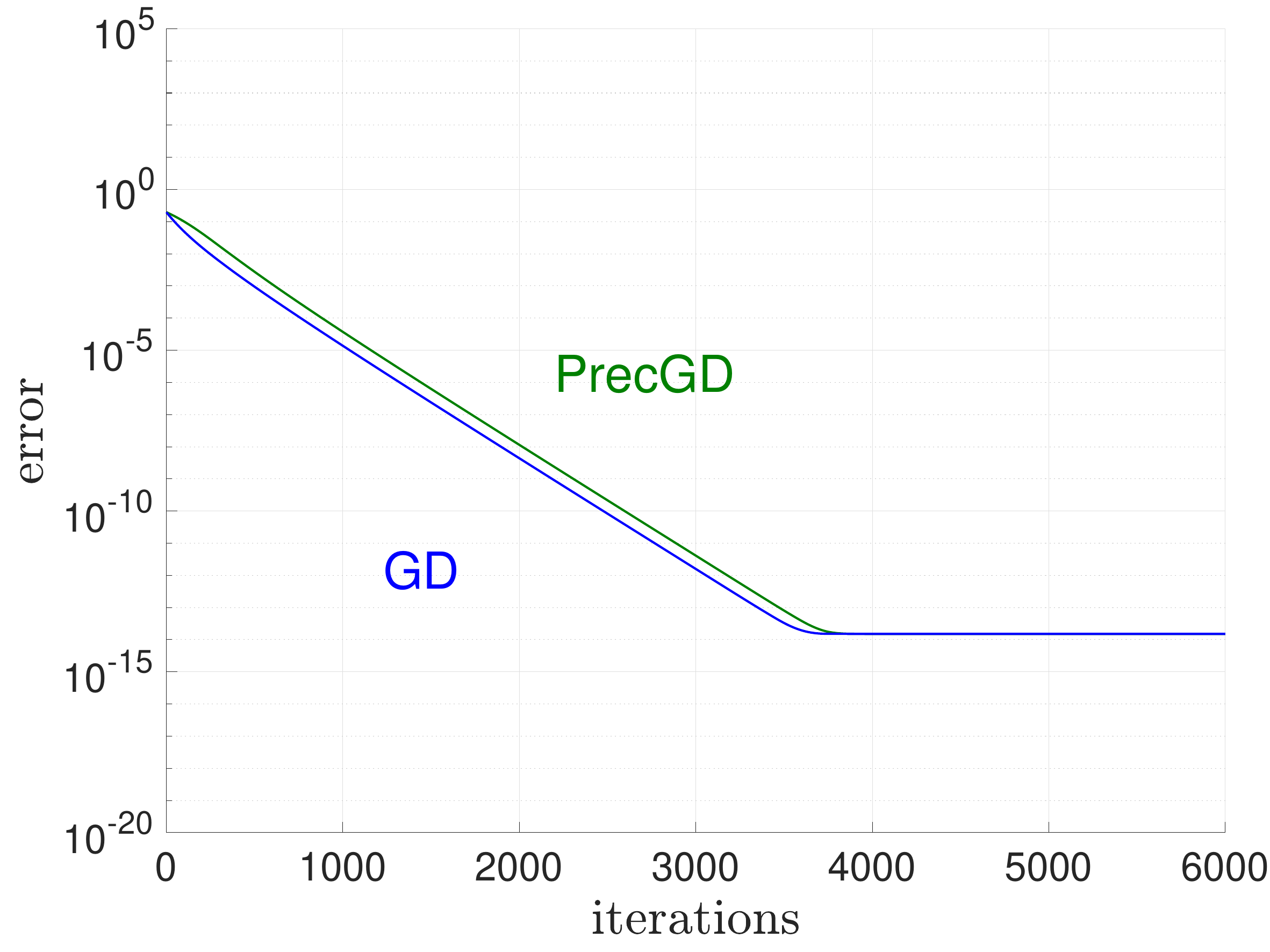}
};
\node (image) at (2\rl+\wid\textwidth,\coord) {
\includegraphics[width = \wid\textwidth]{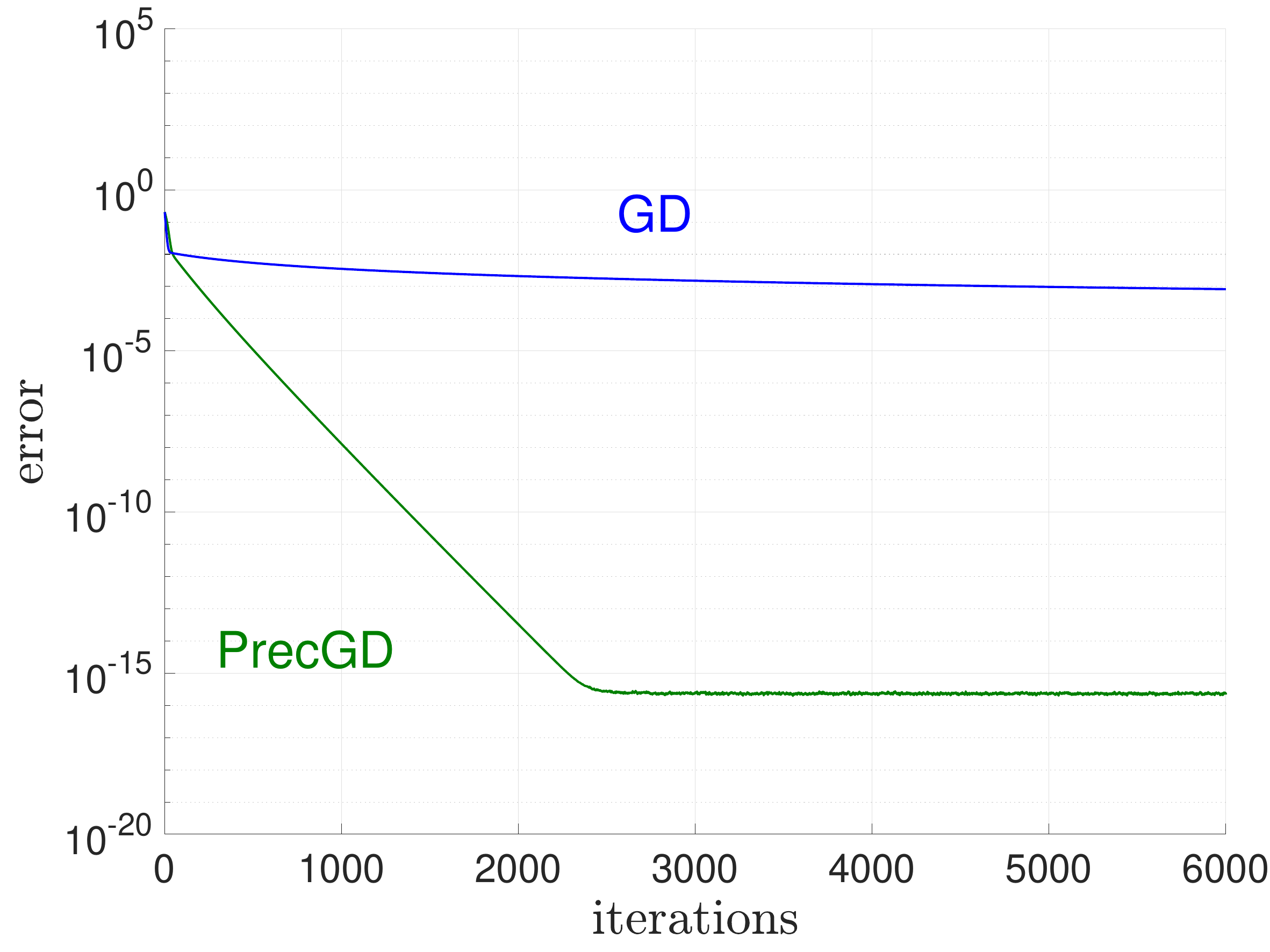}
};
\node (image) at (\rl,-2) {
\includegraphics[width = \wid\textwidth]{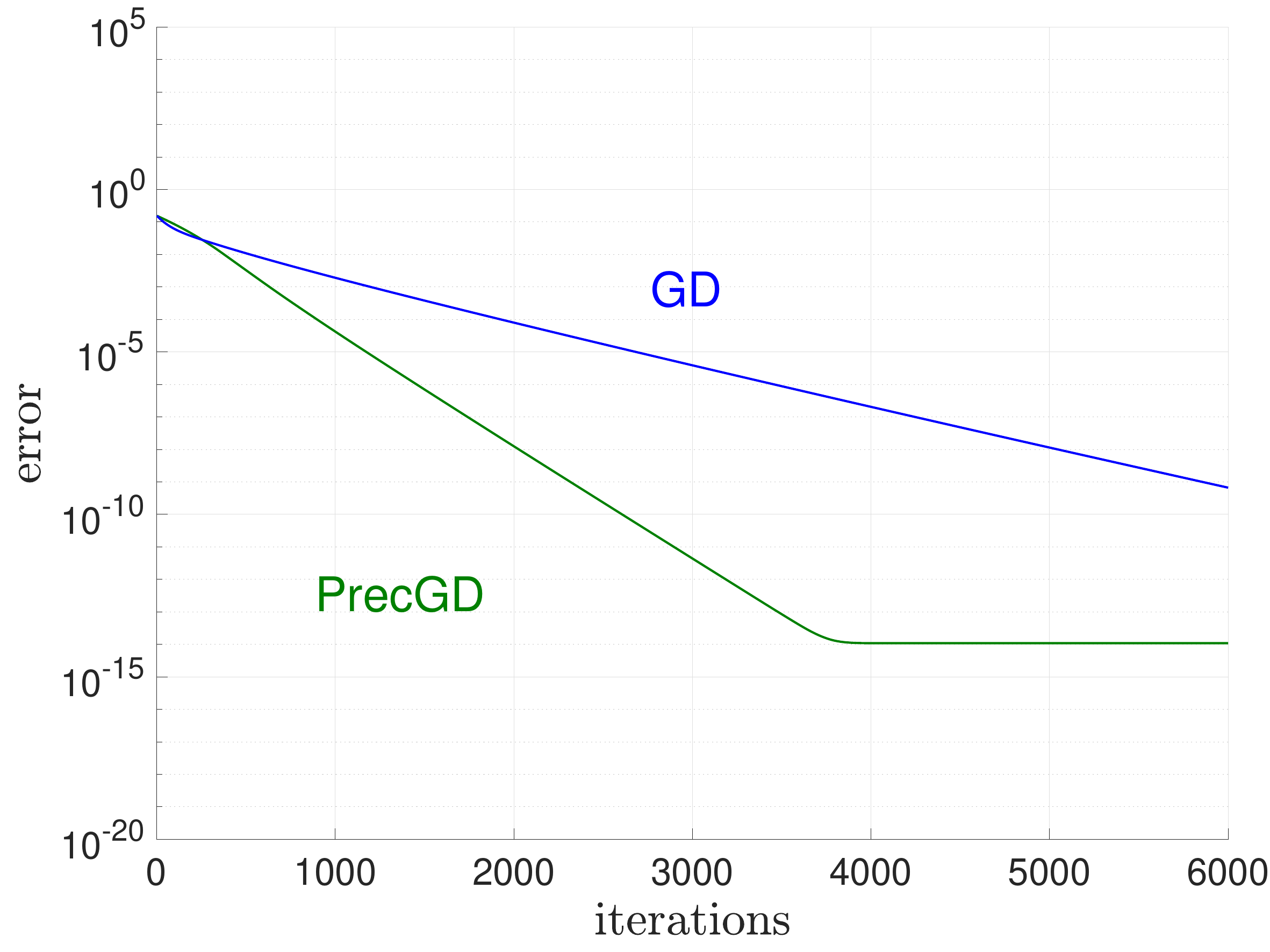}
};
\node (image) at (2\rl+\wid\textwidth,-2) {
\includegraphics[width = \wid\textwidth]{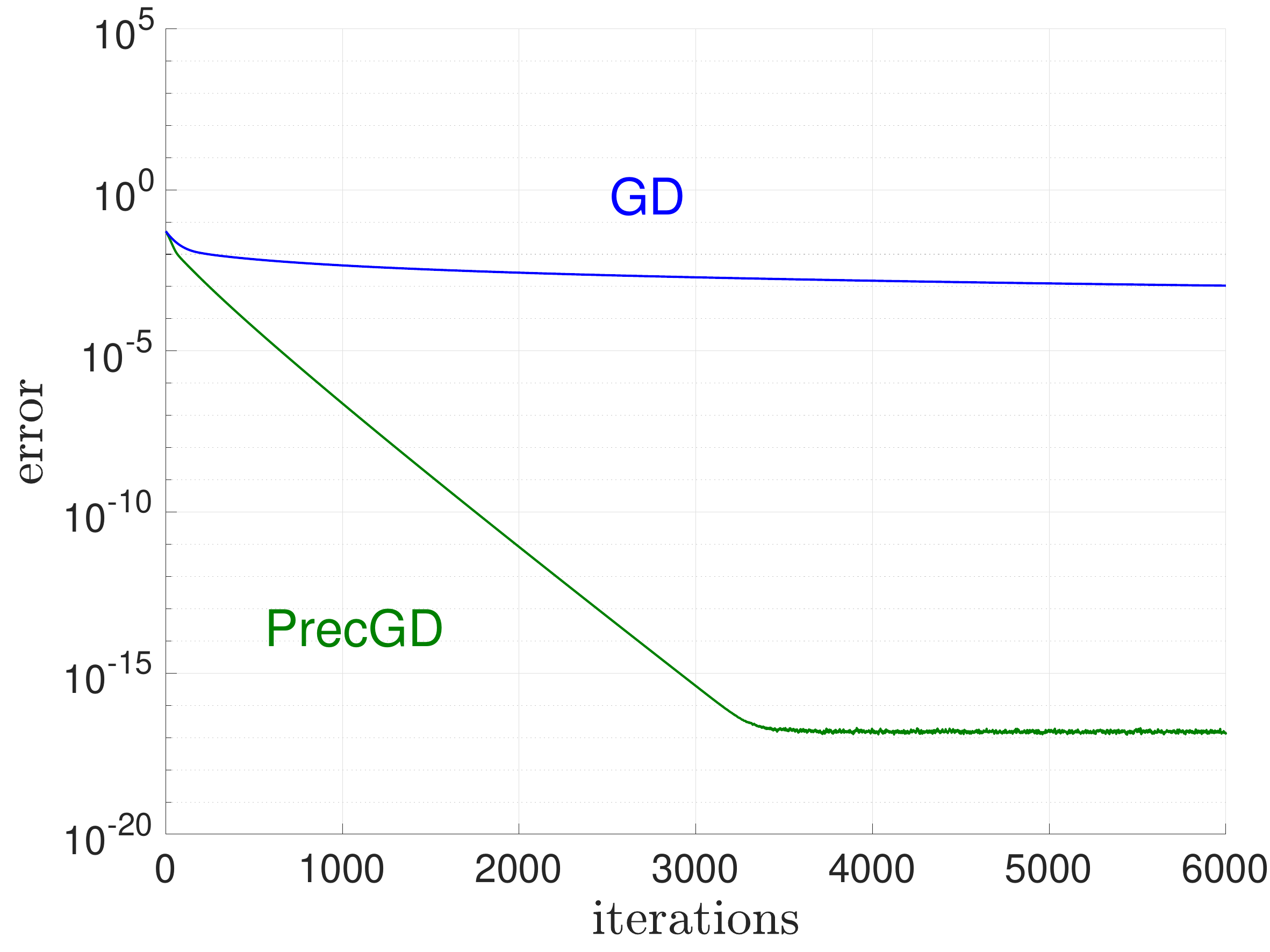}
};

\node[] at (0.9,7.5) {\Large $r=r^*$
};
\node[] at (8.7,7.5) {\Large $r>r^*$
};
\node[rotate=90] at (-4,0) {\large ill-conditioned
};
\node[rotate=90] at (-4,4.5) {\large well-conditioned
};
\end{tikzpicture}
\caption{\footnotesize
Low-rank matrix recovery with $\ell_{2}$ loss. \textbf{First row}: Well-conditioned ($\kappa=1$), rank-2
ground truth of size $100\times100$. The left panel shows the performance of GD and PrecGD for $r=r^* = 2$. Both
algorithms converge linearly to machine error.
The right panel shows the performance of GD and PrecGD for $r = 4$. The overparameterized GD converges sublinearly,
while PrecGD maintains the same converge rate. \textbf{Second row}: Ill-conditioned
($\kappa=5$), rank-2 ground truth of size $100\times100$. The left panel shows the performance of GD and PrecGD for $r=r^* = 2$. GD stagnates
due to ill-conditioning while PrecGD converges linearly. The right panel shows the performance of GD and PrecGD for $r = 4$. The overparameterized GD continues to stagnate, while PrecGD maintains the
same linear convergence rate.}
\label{fig:gauss}
\end{figure}

The results of our experiments for an ill-conditioned $M^{\star}$
are shown on the second row of Figure \ref{fig:gauss}. We can see
that ill-conditioning causes GD to slow down significantly, even in
the exactly-parameterized case. In the overparameterized case, GD
becomes even slower. On the other hand, PrecGD is fully agnostic
to ill-conditioning and overparameterization. In fact, as Theorem
1 shows, the convergence rate of PrecGD is unaffected by both ill-conditioning
and overparameterization.

In Figure \ref{fig:glob} , we also plot a comparison of PrecGD, ScaledGD \cite{tong2020accelerating} and GD with all three methods initialized at a random initial point and using the same step-size. Here, we see that both GD and PrecGD was able to converge towards the global solution, while ScaledGD behaves sporadically and diverges.
\begin{figure}
    \centering
    \includegraphics[scale=0.12]{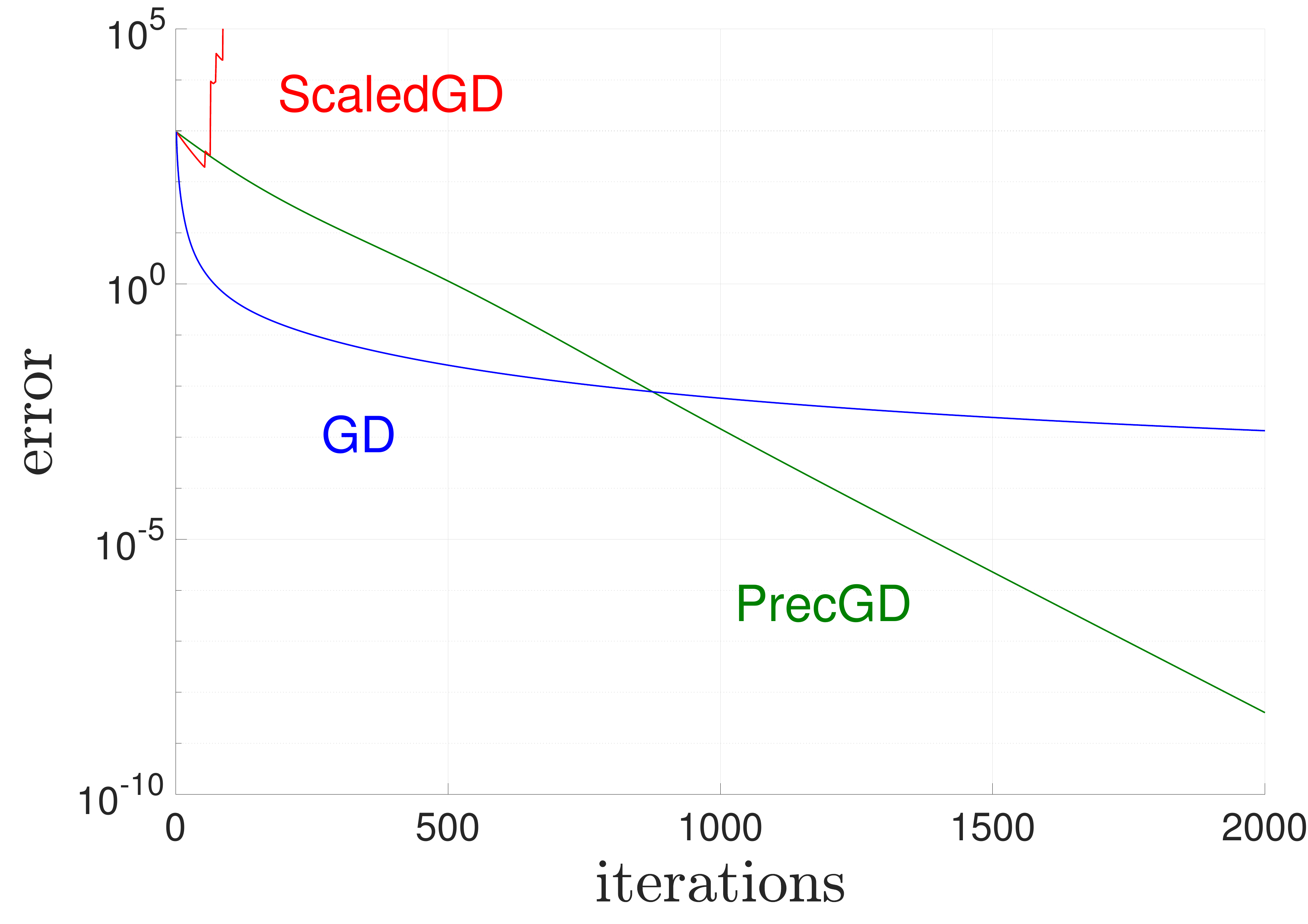}
    \caption{\textbf{PrecGD, ScaledGD and GD with random initialization.} Comparison of PrecGD against regular gradient descent (GD),
and the ScaledGD algorithm.
All three methods uses the same global Gaussian random initialization.  The same step-size $\alpha=2\times10^{-3}$
was used for all three algorithms. With $n=4$, $r=4$
and $r^{*}=2$, overparameterization causes gradient descent to slow
down to a sublinear rate. ScaledGD behaves sporadically and diverges. Only
PrecGD converges linearly to the global minimum.}
\label{fig:glob}
\end{figure}

\subsection{1-bit Matrix Sensing}

\begin{figure}
\centering
\begin{tikzpicture}
\newcommand{\coord}{4.3}
\newcommand{\wid}{0.45}
\newcommand{\rl}{0.25}
\node (image) at (\rl,\coord) {
\includegraphics[width = \wid\textwidth]{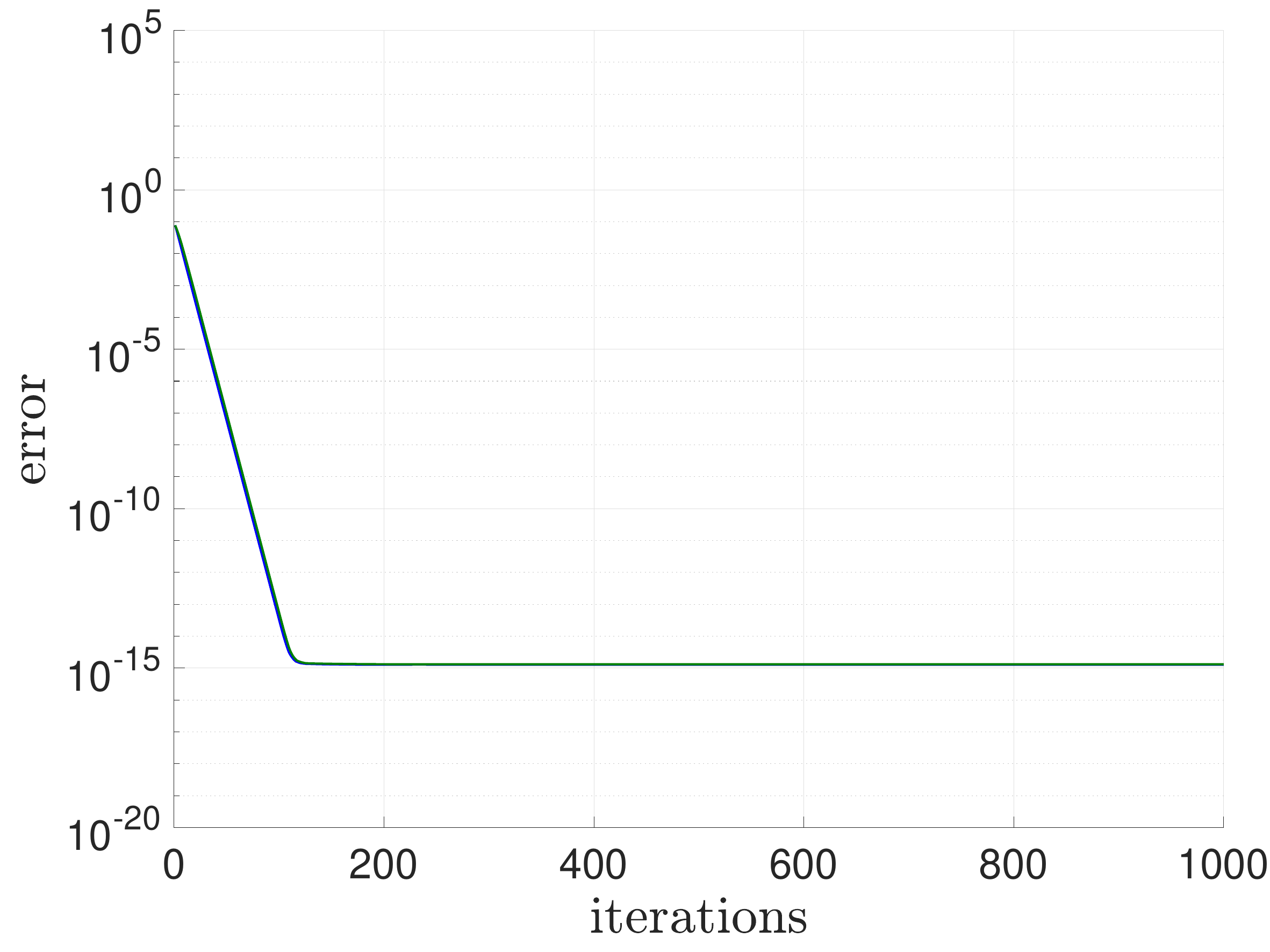}
};
\node (image) at (2\rl+\wid\textwidth,\coord) {
\includegraphics[width = \wid\textwidth]{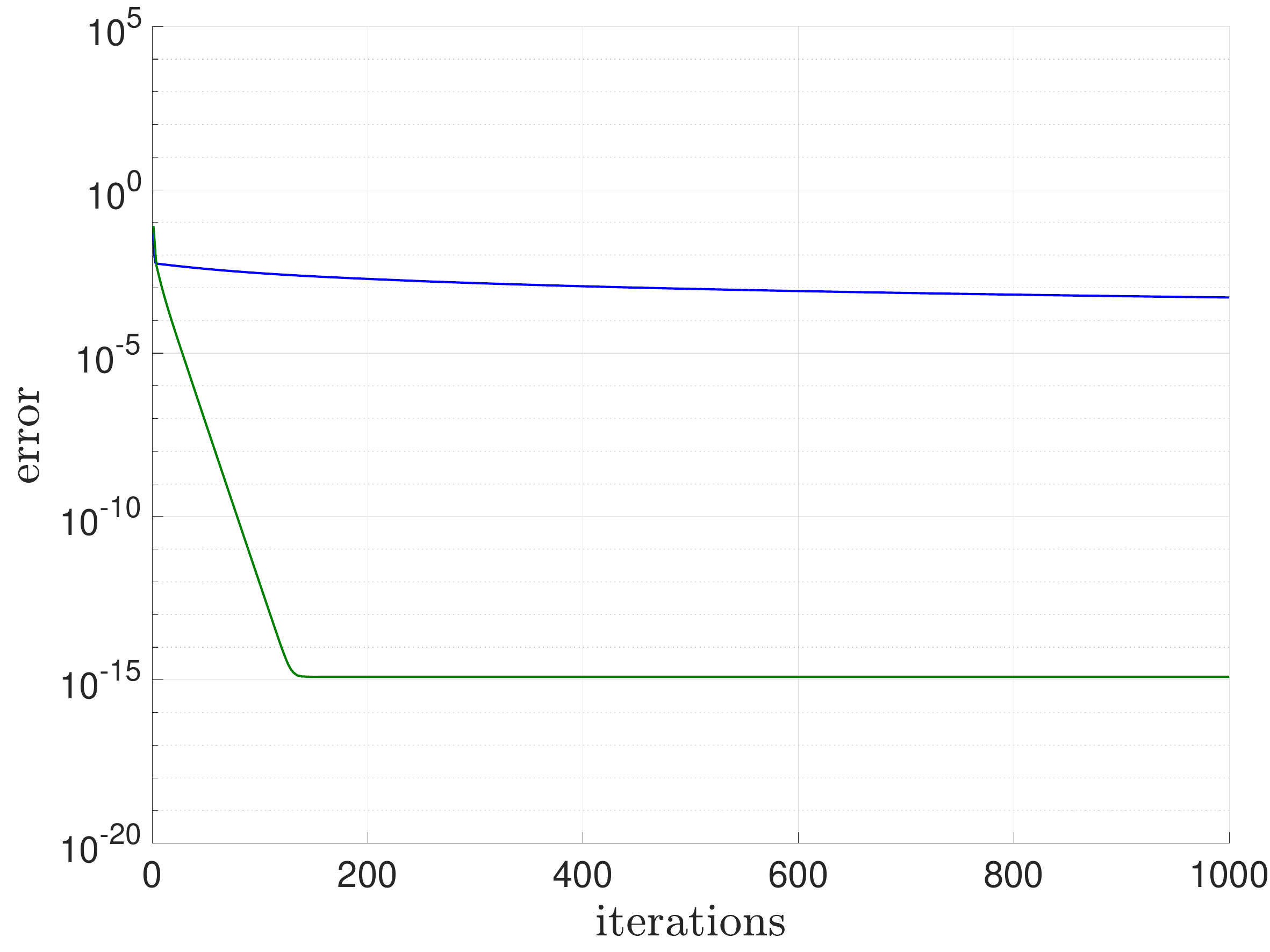}
};
\node (image) at (\rl,-2) {
\includegraphics[width = \wid\textwidth]{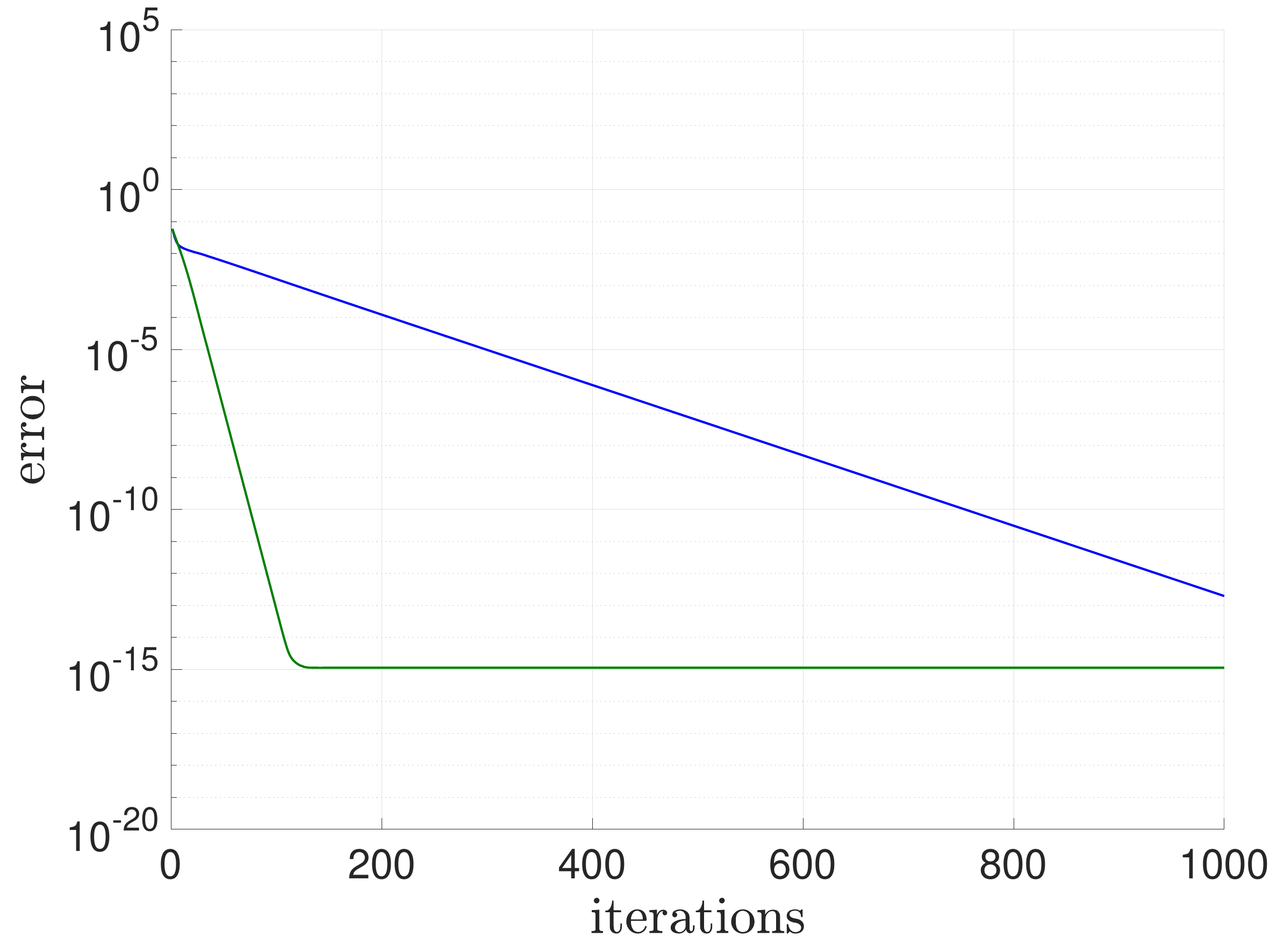}
};
\node (image) at (2\rl+\wid\textwidth,-2) {
\includegraphics[width = \wid\textwidth]{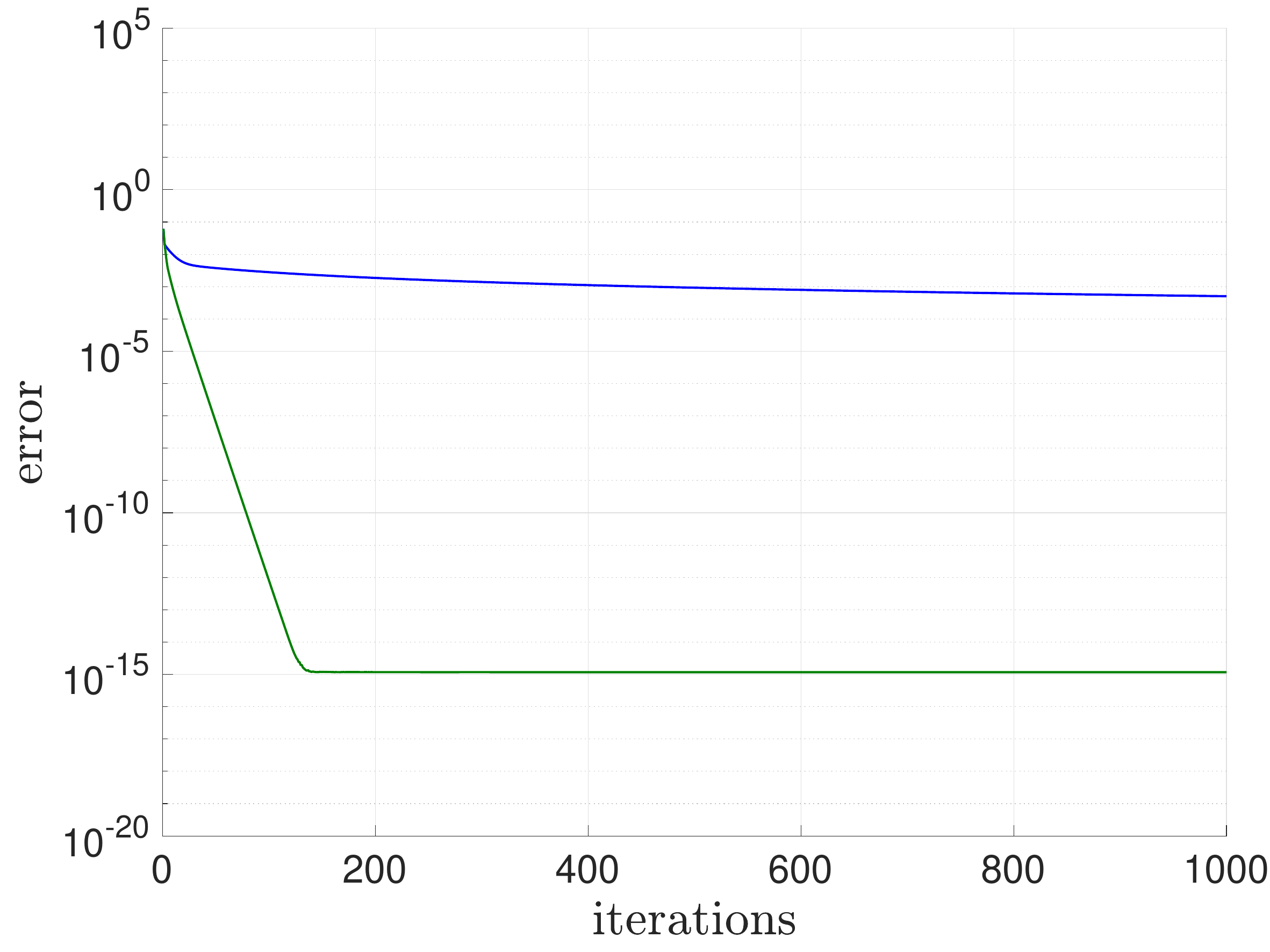}
};

\node[] at (0.9,7.5) {\Large $r=r^*$
};
\node[] at (8.7,7.5) {\Large $r>r^*$
};
\node[rotate=90] at (-4,0) {\large ill-conditioned
};
\node[rotate=90] at (-4,4.5) {\large well-conditioned
};
\end{tikzpicture}
\caption{\footnotesize
1-bit matrix sensing. \textbf{First row}: Well-conditioned ($\kappa=1$), rank-2
ground truth of size $100\times100$. The left panel shows the performance of GD and PrecGD for $r=r^* = 2$. Both
algorithms converge linearly to machine error.
The right panel shows the performance of GD and PrecGD for $r = 4$. The overparameterized GD converges sublinearly,
while PrecGD maintains the same converge rate. \textbf{Second row}: Ill-conditioned
($\kappa=10$), rank-2 ground truth of size $100\times100$. The left panel shows the performance of GD and PrecGD for $r=r^* = 2$. GD stagnates
due to ill-conditioning while PrecGD converges linearly. The right panel shows the performance of GD and PrecGD for $r = 4$. The overparameterized GD continues to stagnate, while PrecGD maintains the
same linear convergence rate.}
\label{fig:1bit}
\end{figure}
Similar to low-rank matrix recovery, in 1-bit matrix sensing we also
assume that there is a low rank matrix $M^{\star}\succeq0$,
which we call the ground truth, that we cannot observe directly, but
have access to a total number of $m$ 1-bit measurements of $M^{\star}$.
Each measurement of $M_{ij}$ 
is 1 with probability $\sigma(M_{ij})$ and 0 with probability $1-\sigma(M_{ij})$, where $\sigma(\cdot)$ is the sigmoid function.
This problem is a variant of the classical matrix completion problem
and appears in applications where only quantized observations are
available; see \citep{singer2008remark,gross2010quantum}
for instance.

Let $\alpha_{ij}$
denote the percentage of measurements on the $(i,j)$-entry that is
equal to 1. Then the MLE estimator can formulated as the minimizer
of 
\begin{equation}
\phi(M)=\sum_{i=1}^{n}\sum_{j=1}^{n}(\log(1+e^{M_{ij}})-\alpha_{ij}M_{ij}).\label{eq:1bit-1}
\end{equation}
It is easy to check that $\nabla^{2}\phi(M)$ is positive definite with bounded eigenvalues (see \citealt{li2019non}), so $\phi(M)$ satisfies the restricted
strong convexity, which is required by Theorem \ref{thm:local}.

To find the minimizer, we solve the problem $\min_{X\in\mathbb{R}^{n\times r}}\phi(XX^{T})$
using GD and PrecGD. For presentation, we assume that the
number of measurements $m$ is large enough so that $\alpha_{ij}=\sigma(M_{ij})$.
In this case the optimal solution of (\ref{eq:1bit-1}) is $M^{\star}$
and the error $\|XX^{T}-M^{\star}\|$ will go to zero when GD or PrecGD
converges.

In our experiments, we use exactly the same choices of well- and ill-conditioned $M^{\star}$
as in Section \ref{sec:gauss}.
The rest of the experimental set up is also the same. We perform two
set of experiments: (1) the exactly-parameterized case with $r=2$
and (2) the overparameterized case where $r=4$. Moreover, we use the same
initialization scheme and same regularization parameter $\eta=\|\nabla f(X)\|_{X,0}^{*}$
for PrecGD. The step-size is chosen to be $0.5$ in all four plots. 

 Our experiments are shown in Figure
\ref{fig:1bit}. We observe almost identical results as those of low-rank
matrix recovery in Figure \ref{fig:gauss}. In short, for 1-bit matrix sensing, both ill-conditioning
and overparameterization causes gradient descent to slow down significantly,
while PrecGD maintains a linear convergence rate independent of both.

\subsection{Phase Retrieval}

\begin{figure}[t]
\centering
\begin{tikzpicture}
\newcommand{\coord}{4.5}
\newcommand{\wid}{0.45}
\newcommand{\rl}{0.25}
\node (image) at (\rl,\coord) {
\includegraphics[width = \wid\textwidth]{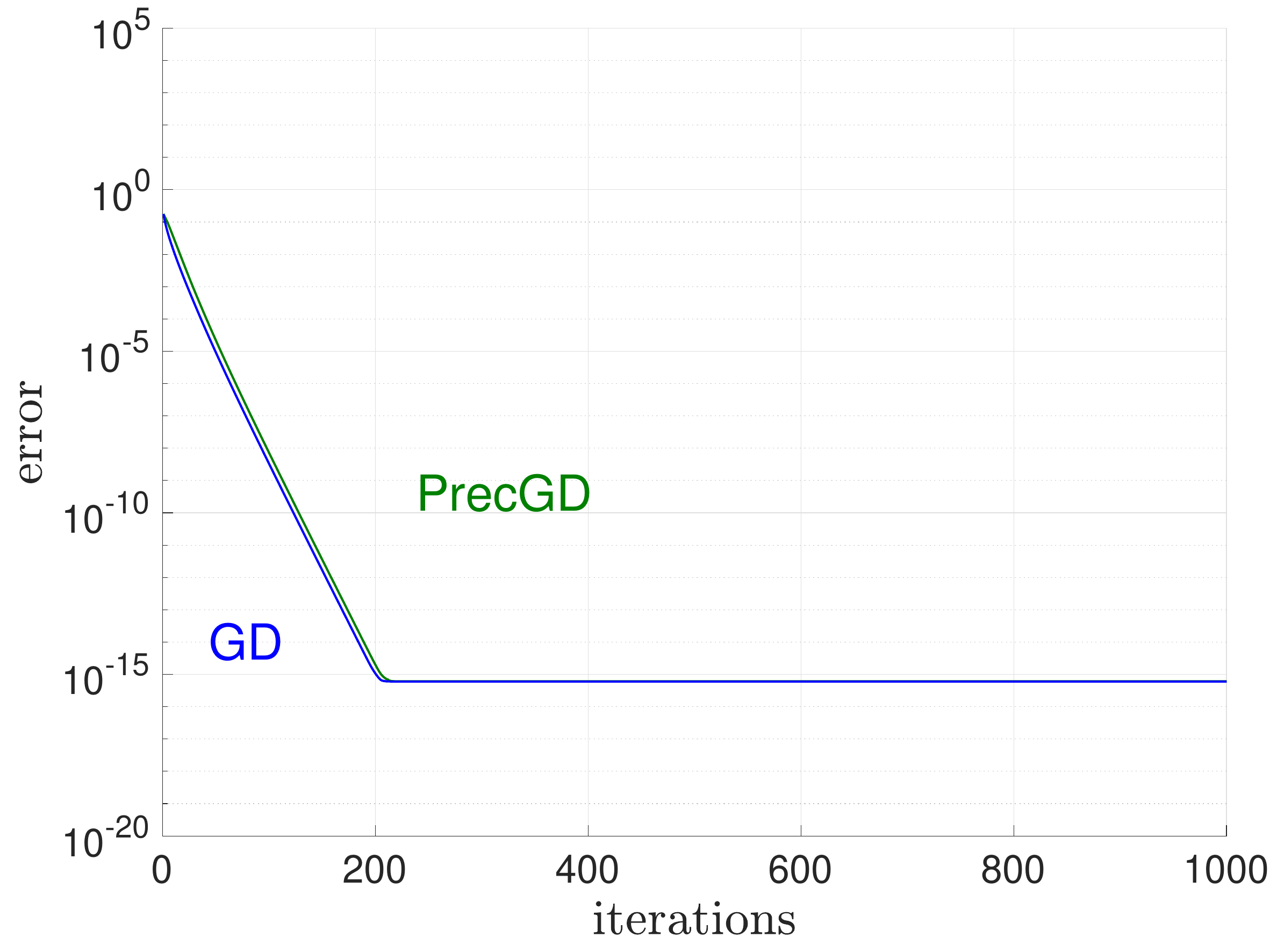}
};
\node (image) at (2\rl+\wid\textwidth,\coord) {
\includegraphics[width = \wid\textwidth]{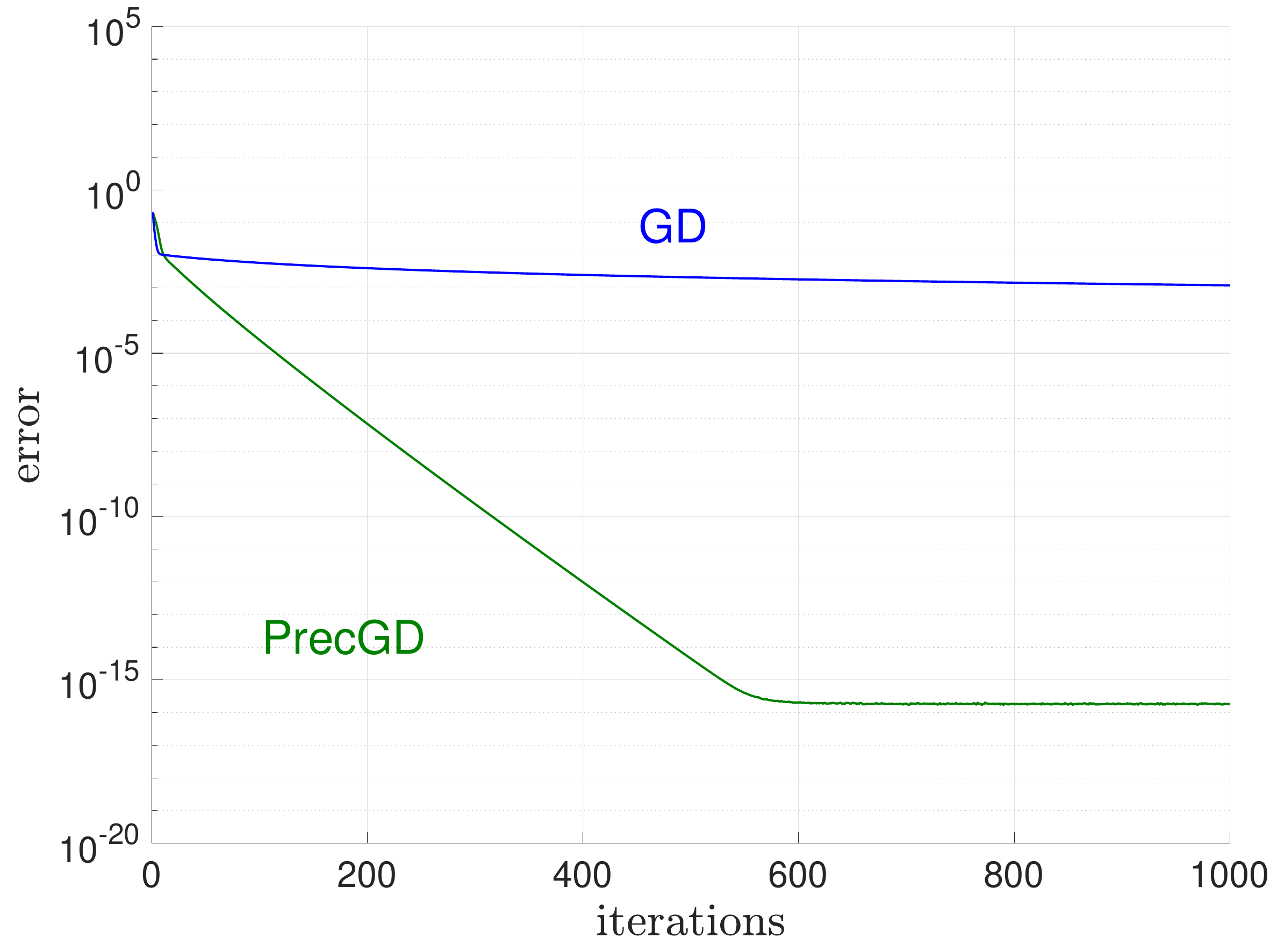}
};
\node (image) at (\rl,-1) {
\includegraphics[width = \wid\textwidth]{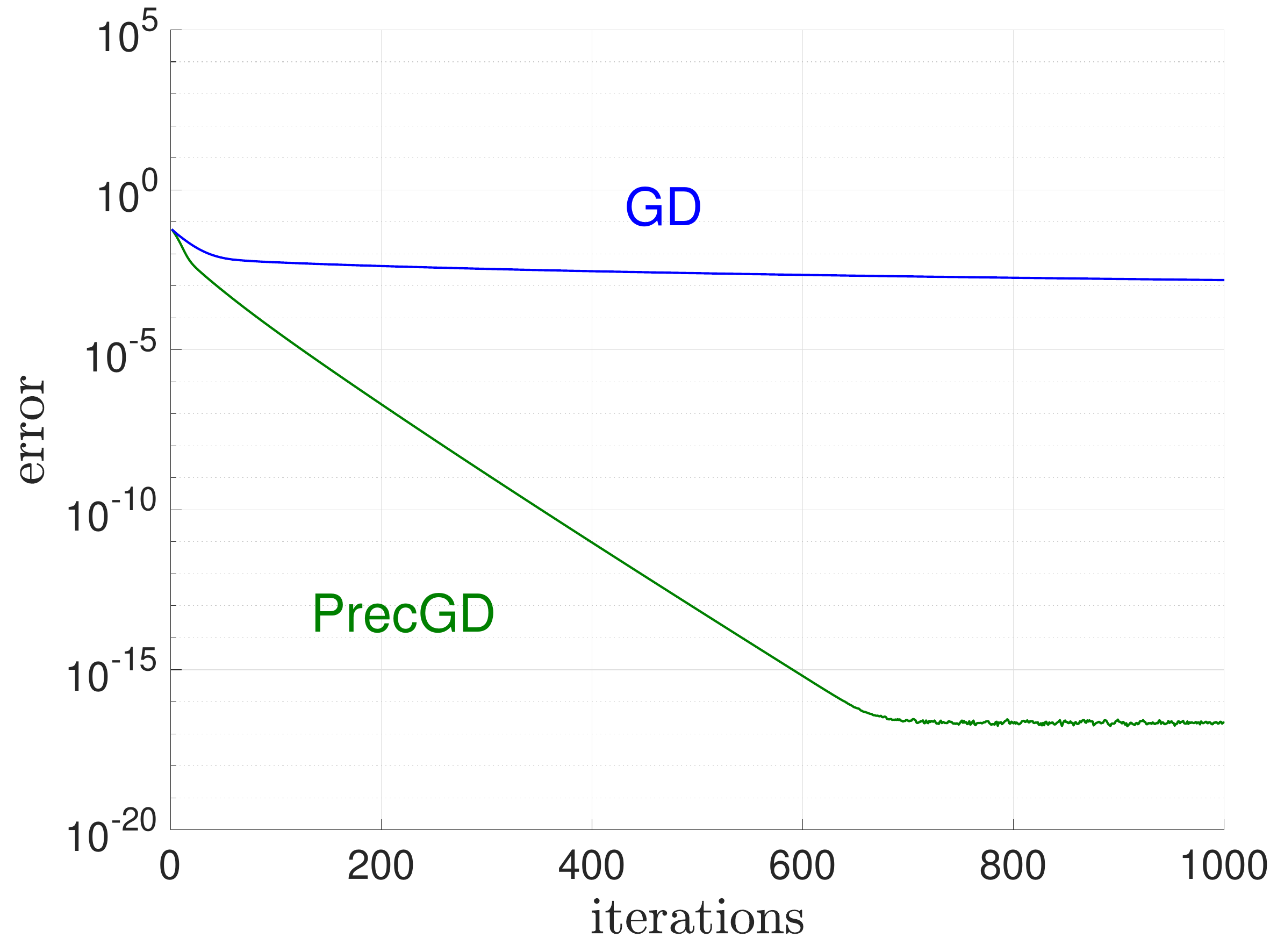}
};
\node (image) at (2\rl+\wid\textwidth,-1) {
\includegraphics[width = \wid\textwidth]{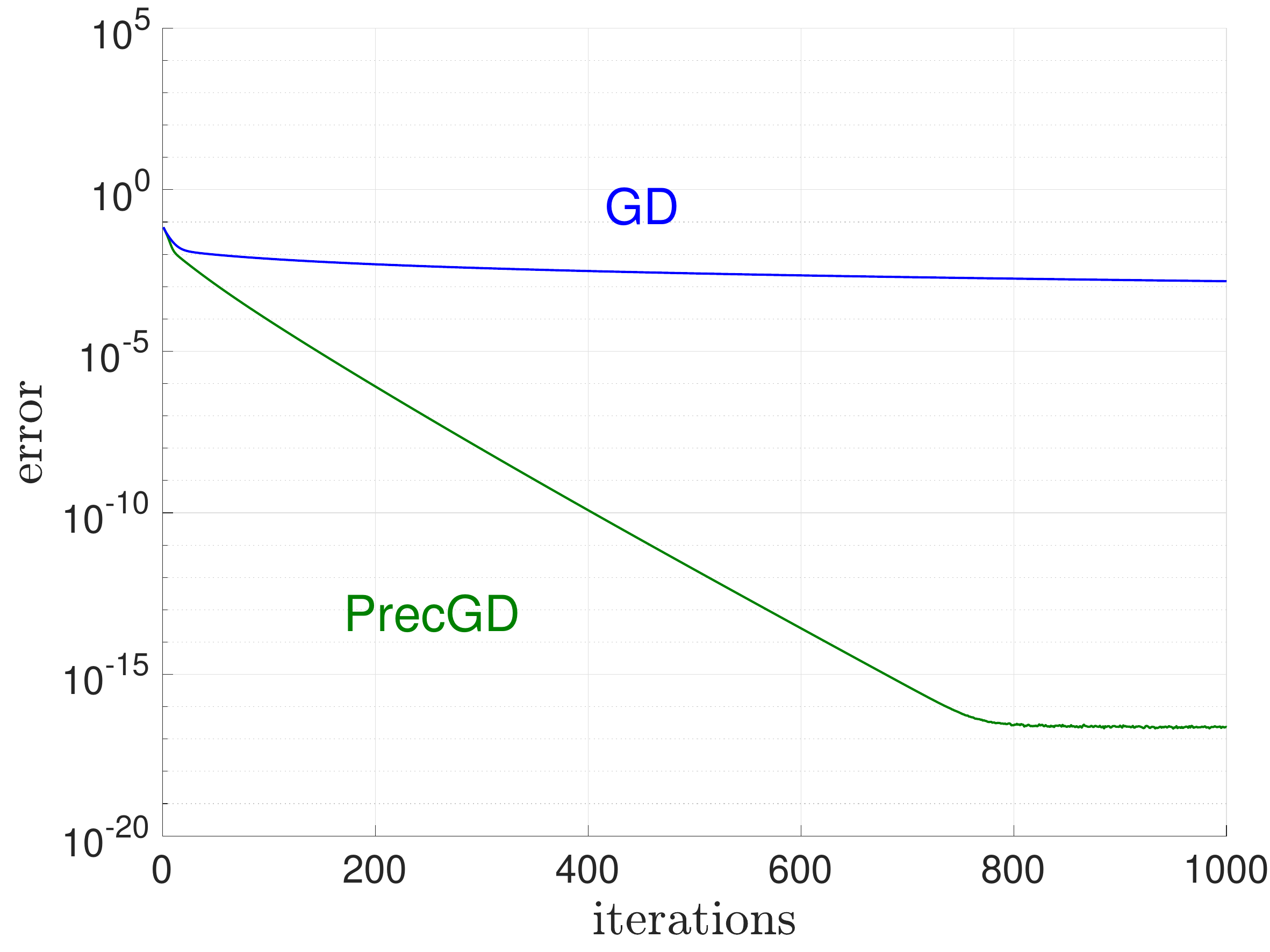}
};

\node[] at (0.9,7.5) {\Large $r=r^*$
};
\node[] at (8.7,7.5) {\Large $r>r^*$
};
\node[rotate=90] at (-4,0) {\large ill-conditioned
};
\node[rotate=90] at (-4,4.5) {\large well-conditioned
};
\end{tikzpicture} \caption{\footnotesize
Phase retrieval. \textbf{First row}: Well-conditioned ($\kappa=1$), rank-2
ground truth of size $100\times100$. The left panel shows the performance of GD and PrecGD for $r=r^* = 2$. Both
algorithms converge linearly to machine error.
The right panel shows the performance of GD and PrecGD for $r = 4$. The overparameterized GD converges sublinearly,
while PrecGD maintains the same converge rate. \textbf{Second row}: Ill-conditioned
($\kappa=5$), rank-2 ground truth of size $100\times100$. The left panel shows the performance of GD and PrecGD for $r=r^* = 2$. GD stagnates
due to ill-conditioning while PrecGD converges linearly. The right panel shows the performance of GD and PrecGD for $r = 4$. The overparameterized GD continues to stagnate, while PrecGD maintains the
same linear convergence rate.}
\label{fig:phase}
\end{figure}
For our final set of experiments we consider the problem of recovering
a low matrix $M^{\star}\succeq0$ from
\textit{quadratic} measurements of the form $y_{i}=a_{i}^{T}M^{\star}a_{i}$
where $a_{i}\in\mathbb{R}^{n}$ are the measurement vectors. In general,
the measurement vectors $a_{i}$ can be complex, but for illustration
purposes we focus on the case where the measurements are real. Suppose
that we have a total of $m$ measurements, then our objective is 
\begin{equation}
\min_{X\in\mathbb{R}^{n\times r}}f(X)=\sum_{i=1}^{m}(\|a_{i}^{T}X\|_{F}^{2}-y_{i})^{2}.
\end{equation}
In the special case where $M^{\star}$ is rank-1, this problem is
known as phase retrieval, which arises in a wide range of problems
including crystallography \citep{harrison1993phase,millane1990phase},
diffraction and array imaging \citep{bunk2007diffractive}, quantum
mechanics \citep{corbett2006pauli} and so on.

To gauge the effects of ill-conditioning in $M^{\star}$, we
focus on the case where $M^{\star}$ is rank-2 instead. As before,
we consider two choices of $M^{\star}$, one well-conditioned and
one ill-conditioned, generated exactly the same way as the previous
two problems. The measurement vectors $a_{i}$ are chosen to be random
vectors with standard Gaussian entries.

We perform two set of experiments: (1) the exactly-parameterized case
with $r=2$ and (2) the overparameterized case where $r=4$. In the
case $r=2$, the step-size is set to $4\times10^{-4}$ and in the
case $r=4$, the step-size is set to $10^{-4}$. As before, both methods
are initialized near the ground truth: we compute $M^{\star}=ZZ^{T}$
with $Z\in\mathbb{R}^{n\times r}$ and set the initial point $X_{0}=Z+10^{-2}w$,
where $w$ is a $n\times r$ random matrix with standard Gaussian
entries.

Our experiments are shown in Figure \ref{fig:1bit}. Even though the
objective for phase retrieval no longer satisfies restricted strong
convexity, we still observe the same results as before. Both ill-conditioning
and overparameterization causes gradient descent to slow down significantly,
while PrecGD maintains a linear convergence rate independent of both.

\subsection{Certification of optimality}
A key advantage of overparameterization is that it allows us to \textit{certify} the optimality of a point $X$ computed using local search methods. As we proved in Proposition \ref{prop:euclid_cert}, the suboptimality of a point $X$ can be bounded as 
\begin{equation}
\label{eq:subo}
f(X)-f(X^{\star})\le C_{g}\cdot\epsilon_{g}+C_{H}\cdot\epsilon_{H}+C_{\lambda}\cdot\epsilon_{\lambda}.
\end{equation}
Here we recall that $
\inner{\nabla f(X)}V\le\epsilon_{g}\cdot\|V\|_{F}, \inner{\nabla^{2}f(X)[V]}V\ge-\epsilon_{H}\cdot\|V\|_{F}^{2}
$ for all $V$, and $\lambda_{\min}(X^TX)\leq \epsilon_{\lambda}.$
To evaluate the effectiveness of this optimality certificate, we consider three problems as before: matrix sensing with $\ell_2$ loss, $1$-bit matrix sensing, and phase retrieval. The experimental setup is the same as before.
For each problem, we plot the function value $f(X)-f(X^\star)$ as the number of iterations increases, where $X^\star$ is the global minimizer of $f(\cdot)$. Additionally, we also compute the suboptimality as given by (\ref{eq:subo}). The constants in (\ref{eq:subo}) can be computed efficiently in linear time. For $\epsilon_H$ in particular, we apply the shifted power-iteration as described in Section \ref{sec:Cert}.

The results are shown in Figures \ref{fig:cert1} and \ref{fig:cert2}, for matrix sensing, phase retrieval, and 1-bit matrix sensing, respectively. We see that in each case, the upper bound in (\ref{eq:subo}) indeed bounds the suboptimality $f(X)-f(X^\star).$ Moreover, this upper bound also converges linearly, albeit at a different rate. This slower rate is due to the fact that $\epsilon_g$, the norm of the gradient, typically scales as $\sqrt{\epsilon_H}$ (\citealt{jin2017escape, nesterov2006cubic}), hence it converges to $0$ slower (by a square root). As a result, we see in all three plots that the upper bound converges slower roughly by a factor of a square root. In practice, this mean that if we want to certify $n$ digits of accuracy within optimality, we would need our iterate to be accurate up to roughly $2n$ digits.

\begin{figure}
    \centering
    \includegraphics[width=0.48\textwidth]{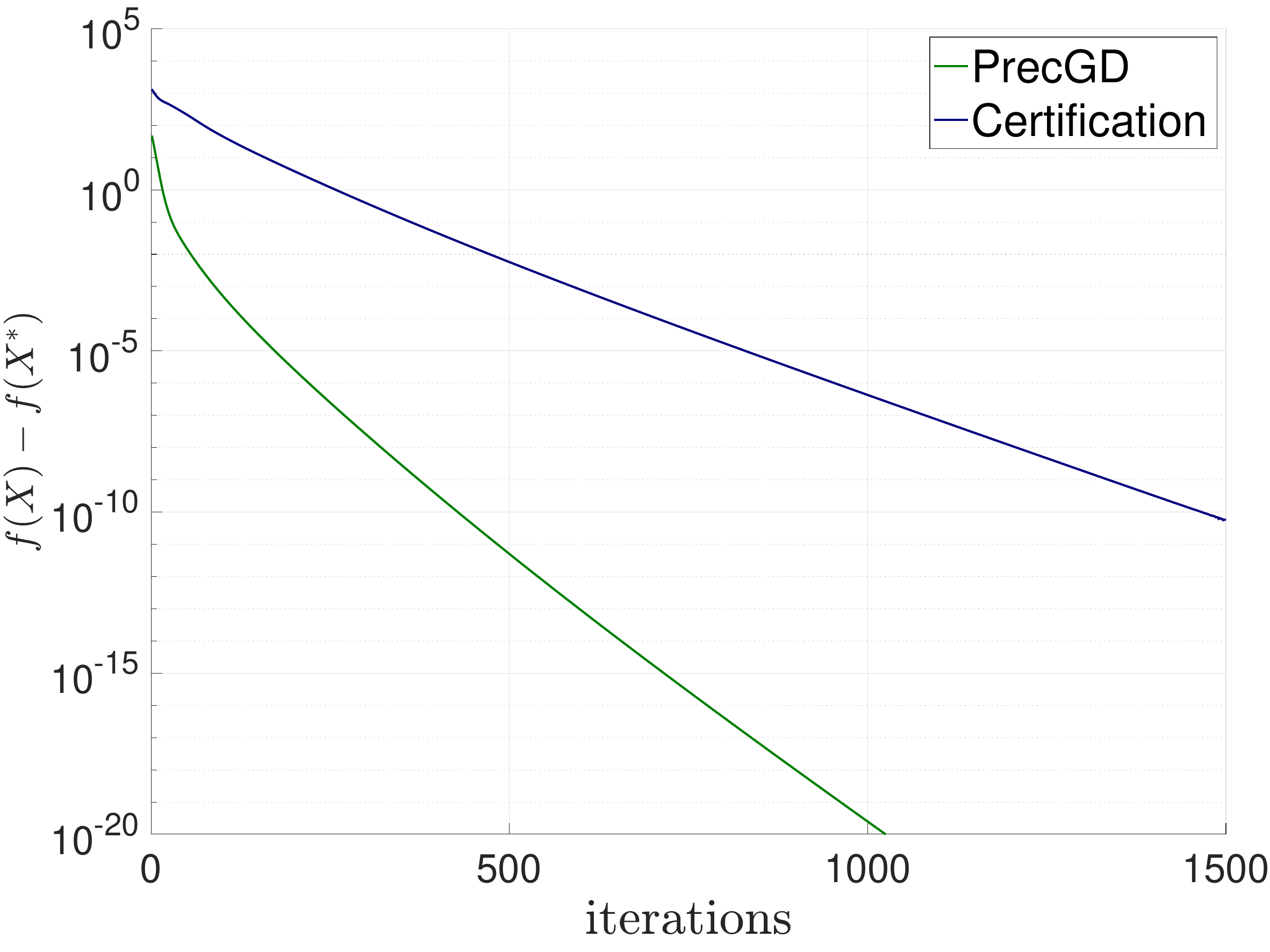}
    \includegraphics[width=0.48\textwidth]{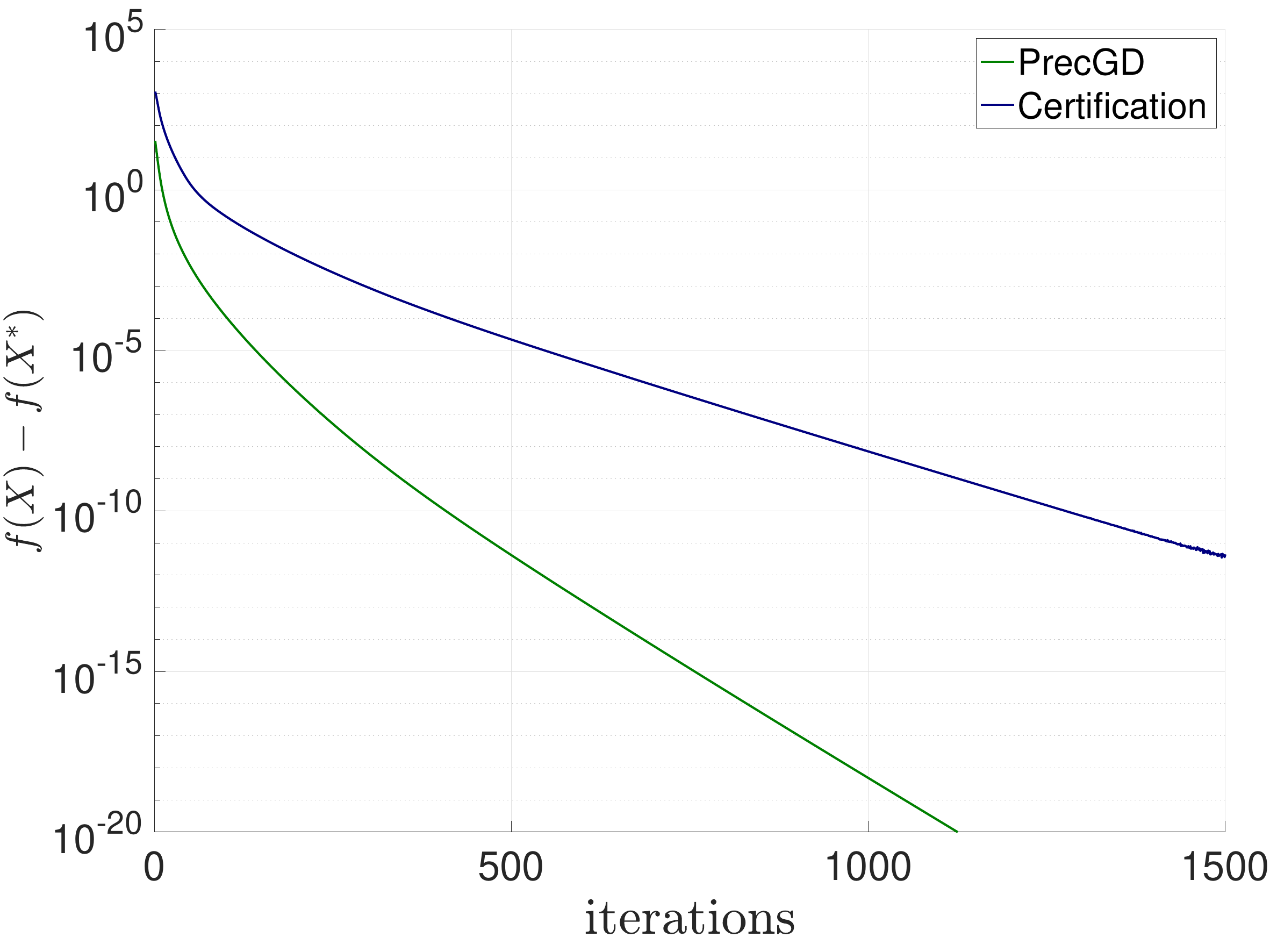}
    \caption{\footnotesize
    Certificate of global optimality. \textbf{Left}: Matrix sensing with a well-conditioned ($\kappa=1$), rank-2 ground truth of size $100\times 100$. The search rank is set to $r=4$ and the algorithm is initialized within a neighborhood of radius $10^{-2}$ around the ground truth. The stepsize is set to $5\times 10^{-5}$. \textbf{Right}: Phase retrieval with a well-conditioned ($\kappa=1$), rank-2 ground truth of size $100\times 100$. The search rank is set to $r=4$ and the algorithm is initialized within a neighborhood of radius $10^{-2}$ around the  ground truth. The step-size is set to $3\times 10^{-5}$.}
    \label{fig:cert1}
\end{figure}

\begin{figure}
    \centering
    \includegraphics[width=0.48\textwidth]{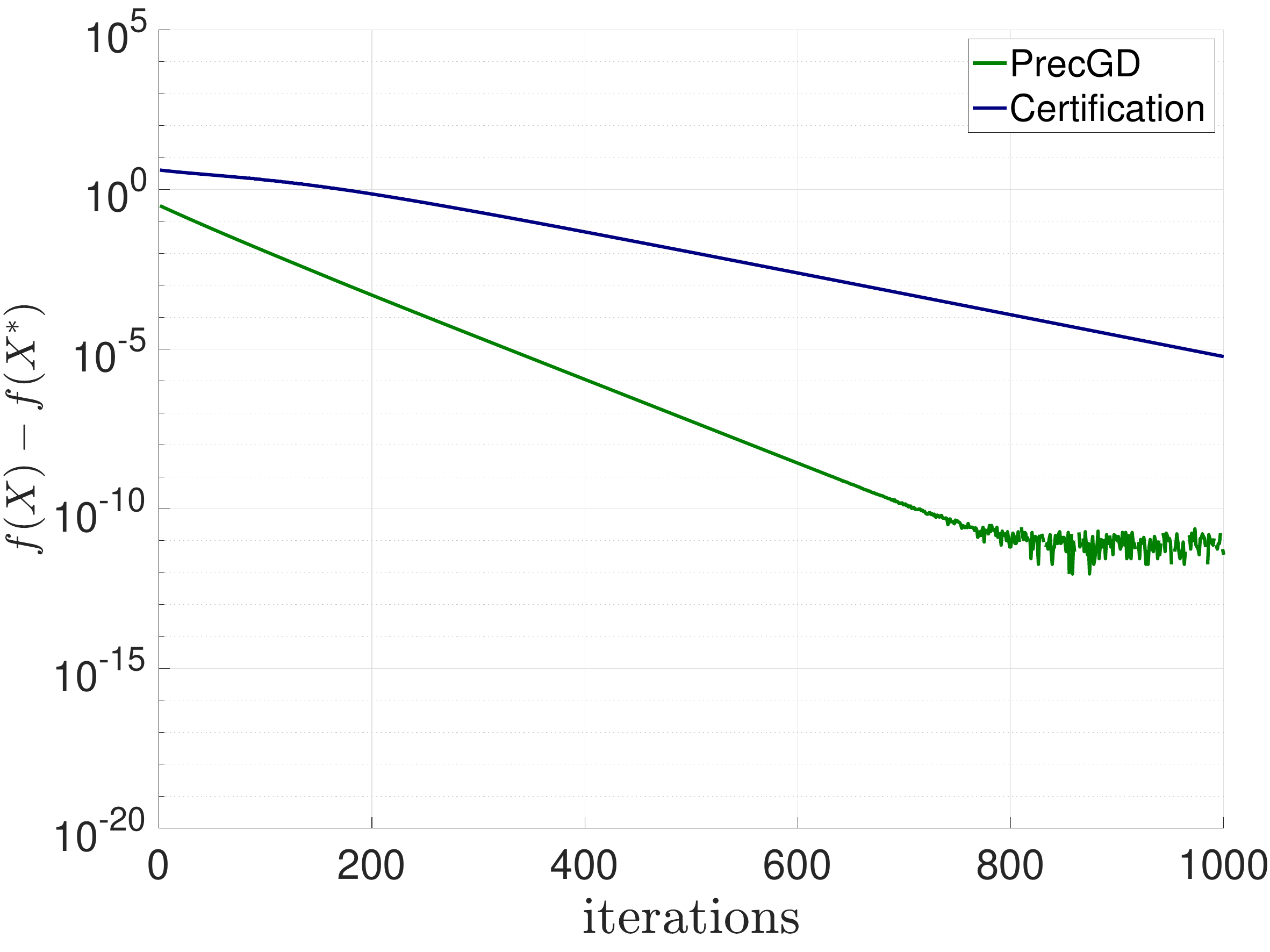}
    \caption{\footnotesize
    Certificate of global optimality for 1-bit matrix sensing with a well-conditioned ($\kappa=1$), rank-2 ground truth of size $100\times 100$. The search rank is set to $r=4$, and the algorithm is initialized within a neighborhood of radius $10^{-2}$ around the ground truth. The step-size is set to $3\times 10^{-2}$.}
    \label{fig:cert2}
\end{figure}

\section{Conclusions}
In this work, we consider the problem of minimizing a smooth convex function $\phi$ over a positive semidefinite matrix $M$. The Burer-Monteiro approach eliminates the large $n\times n$ positive semidefinite matrix by reformulating the problem as minimizing the nonconvex function
$f(X)=\phi(XX^{T})$ over an $n\times r$ factor matrix $X$. Here, we overparameterize the search rank $r>r^\star$ with respect to the true rank $r^\star$ of the solution $X^\star$, because the rank deficiency of a second-order stationary point $X$ allows us to \textit{certify} that $X$ is globally optimal. 

Unfortunately, gradient descent becomes extremely slow once the problem is overparameterized.
 Instead, we propose a method known as PrecGD, which enjoys a similar per-iteration cost as gradient descent, but speeds up the convergence rate of gradient descent exponentially in
overparameterized case. In particular, we prove that within a neighborhood around the ground truth, PrecGD converges linearly towards the ground truth, at a rate independent of both ill-conditioning in the ground truth and overparameterization. We also prove that, similar to gradient descent, a perturbed version of PrecGD converges globally, from any initial point. 

Our numerical experiments find that preconditioned gradient descent works equally well in restoring the linear convergence of gradient descent in the overparmeterized regime for choices of $\phi$ that do not satisfy restricted strong convexity. We leave the justification of these results for future work. 

\acks{Financial support for this work was provided by NSF CAREER Award ECCS-2047462,
NSF Award DMS-2152776, ONR Award N00014-22-1-2127.}

\appendix

\section{\label{app:pf_align}Proof of Basis Alignment (\lemref{align})}

For $X\in\R^{n\times r}$ and $Z\in\R^{n\times r^{\star}}$, suppose
that $X$ satisfies
\begin{equation}
\rho\eqdef\frac{\|XX^{T}-ZZ^{T}\|_{F}}{\lambda_{\min}(Z^{T}Z)}<\frac{1}{\sqrt{2}}.\label{eq:rhodef}
\end{equation}
In this section, we prove that the incidence angle $\theta$ defined
as
\[
\cos\theta=\max_{Y\in\R^{n\times r}}\frac{\inner{XX^{T}-ZZ^{T}}{XY^{T}+YX^{T}}}{\|XX^{T}-ZZ^{T}\|_{F}\|XY^{T}+YX^{T}\|_{F}},
\]
satisfies 
\begin{equation}
\sin\theta=\frac{\|(I-XX^{\dagger})ZZ^{T}(I-XX^{\dagger})\|_{F}}{\|XX^{T}-ZZ^{T}\|_{F}}\le\frac{1}{\sqrt{2}}\frac{\rho}{\sqrt{1-\rho^{2}}}\label{eq:bnd}
\end{equation}
where $\dagger$ denotes the pseudoinverse. 

First, note that an $X$ that satisfies (\ref{eq:rhodef}) must have
$\rank(X)\ge r^{\star}$. This follows from Weyl's inequality
\begin{align*}
\lambda_{r^{\star}}(XX^{T}) & \ge\lambda_{r^{\star}}(ZZ^{T})-\|XX^{T}-ZZ^{T}\|_{F}\ge\left(1-\frac{1}{\sqrt{2}}\right)\cdot\lambda_{r^{\star}}(ZZ^{T}).
\end{align*}
Next, due to the rotational invariance of this problem, we can assume
without loss of generality\footnote{We compute the singular value decomposition $X=USV^{T}$ with $U\in\R^{n\times n}$
and $V\in\R^{r\times r}$, and then set $X\gets U^{T}XV$ and $Z\gets U^{T}Z$.} that $X,Z$ are of the form 
\begin{equation}
X=\begin{bmatrix}X_{1} & 0\\
0 & X_{2}
\end{bmatrix},\quad Z=\begin{bmatrix}Z_{1}\\
Z_{2}
\end{bmatrix},\quad\sigma_{\min}(X_{1})\ge\sigma_{\max}(X_{2})\label{invari}
\end{equation}
where $X_{1}\in\mathbb{R}^{r\times k},\;Z_{1}\in\R^{k\times r^{\star}}$.
For $k\ge r^{\star}$, the fact that $\rank(X)\ge r^{\star}$ additionally
implies that $\sigma_{\min}(X_{1})>0.$ 

The equality in (\ref{eq:bnd}) immediately follows by setting $k=\rank(X)$
and solving the projection problem
\begin{align*}
\|E\|_{F}\sin\theta & =\min_{Y}\|(XY^{T}+YX^{T})-(XX-ZZ^{T})\|_{F}\\
 & =\min_{Y_{1},Y_{2}}\left\Vert \begin{bmatrix}X_{1}Y_{1}^{T}+Y_{1}X_{1}^{T} & X_{1}Y_{2}^{T}\\
Y_{2}X_{1}^{T} & 0
\end{bmatrix}-\begin{bmatrix}X_{1}X_{1}^{T}-Z_{1}Z_{1}^{T} & Z_{2}Z_{1}^{T}\\
Z_{1}Z_{2}^{T} & -Z_{2}Z_{2}^{T}
\end{bmatrix}\right\Vert _{F}\\
 & =\|Z_{2}Z_{2}^{T}\|_{F}=\|(I-XX^{\dagger})ZZ^{T}(I-XX^{\dagger})\|_{F}.
\end{align*}
Before we prove the inequality in (\ref{eq:bnd}), we state and prove
a technical lemma that will be used in the proof.
\begin{lemma}
\label{lem:ab}Suppose that $X,Z$ are in the form in (\ref{invari}),
and $k\geq r^{\star}$. If $\rho$ defined in (\ref{eq:rhodef}) satisfies
$\rho<1/\sqrt{2}$, we have $\lambda_{\min}(Z_{1}^{T}Z_{1})\ge\lambda_{\max}(Z_{2}^{T}Z_{2}).$ 
\end{lemma}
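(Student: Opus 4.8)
The plan is to argue by contradiction. Suppose that $a\eqdef\lambda_{\min}(Z_1^{T}Z_1)<b\eqdef\lambda_{\max}(Z_2^{T}Z_2)$; I will show this forces $\rho\ge 1/\sqrt{2}$, contradicting the hypothesis. Writing $E\eqdef XX^{T}-ZZ^{T}$ in the block form induced by (\ref{invari}), its diagonal blocks are $E_{11}=X_1X_1^{T}-Z_1Z_1^{T}$ and $E_{22}=X_2X_2^{T}-Z_2Z_2^{T}$, and its off-diagonal block is $E_{12}=-Z_1Z_2^{T}=E_{21}^{T}$, so that $\|E\|_F^2=\|E_{11}\|_F^2+2\|Z_1Z_2^{T}\|_F^2+\|E_{22}\|_F^2$. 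The key is to lower-bound each of these three pieces so that together they add up to $\tfrac12(a+b)^2$.

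For the cross term I would use the elementary fact that $\tr(AB)\ge\lambda_{\min}(A)\,\tr(B)$ for positive semidefinite $A,B$, applied with $A=Z_1^{T}Z_1$ and $B=Z_2^{T}Z_2$: this gives $\|Z_1Z_2^{T}\|_F^2=\tr(Z_1^{T}Z_1\,Z_2^{T}Z_2)\ge a\,\|Z_2\|_F^2\ge ab$. For the diagonal blocks I would evaluate the relevant Rayleigh quotients at extremal eigenvectors. Let $p$ be a unit eigenvector of $Z_1Z_1^{T}$ for its eigenvalue $\lambda_{r^{\star}}(Z_1Z_1^{T})=a$ (which is available as a genuine eigenvalue of $Z_1Z_1^T$ precisely because $k\ge r^{\star}$); then $\|E_{11}\|_F\ge\lambda_{\max}(E_{11})\ge p^{T}X_1X_1^{T}p-a\ge\sigma_{\min}^2(X_1)-a$. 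Let $v$ be a unit top eigenvector of $Z_2Z_2^{T}$; then $\|E_{22}\|_F\ge-\lambda_{\min}(E_{22})\ge b-v^{T}X_2X_2^{T}v\ge b-\sigma_{\max}^2(X_2)$. Adding these and invoking the ordering $\sigma_{\min}(X_1)\ge\sigma_{\max}(X_2)$ from (\ref{invari}) makes the $\sigma$-terms cancel in our favor,
\[
\|E_{11}\|_F+\|E_{22}\|_F\ \ge\ \bigl(\sigma_{\min}^2(X_1)-\sigma_{\max}^2(X_2)\bigr)+(b-a)\ \ge\ b-a\ >\ 0,
\]
whence $\|E_{11}\|_F^2+\|E_{22}\|_F^2\ge\tfrac12\bigl(\|E_{11}\|_F+\|E_{22}\|_F\bigr)^2\ge\tfrac12(b-a)^2$.

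Putting the pieces together, $\|E\|_F^2\ge\tfrac12(b-a)^2+2ab=\tfrac12(a+b)^2$. On the other hand, Weyl's inequality applied to $Z^{T}Z=Z_1^{T}Z_1+Z_2^{T}Z_2$ gives $\lambda_{\min}(Z^{T}Z)\le\lambda_{\min}(Z_1^{T}Z_1)+\lambda_{\max}(Z_2^{T}Z_2)=a+b$. Therefore
\[
\rho^2=\frac{\|XX^{T}-ZZ^{T}\|_F^2}{\lambda_{\min}(Z^{T}Z)^2}\ \ge\ \frac{\tfrac12(a+b)^2}{(a+b)^2}=\frac12,
\]
so $\rho\ge 1/\sqrt{2}$, the desired contradiction; hence $a\ge b$.

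The main obstacle I anticipate is choosing the three lower bounds so that they assemble exactly into $\tfrac12(a+b)^2$ — the cross-term contribution $2ab$ together with $\tfrac12(b-a)^2$ — and, in particular, recognizing that the block ordering $\sigma_{\min}(X_1)\ge\sigma_{\max}(X_2)$ is precisely what makes the two diagonal-block estimates add constructively rather than partially cancel. Everything else is routine: the trace inequality, the Rayleigh-quotient bounds, and Weyl's inequality have all already appeared in the paper. The degenerate case in which $Z_1$ is rank-deficient (so $a=0$) needs no separate treatment, since the same argument still yields $\|E_{11}\|_F^2+\|E_{22}\|_F^2\ge\tfrac12 b^2=\tfrac12(a+b)^2$ and $\lambda_{\min}(Z^{T}Z)\le b$.
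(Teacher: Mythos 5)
Your proof is correct, and the overall contradiction strategy (assume $a<b$, bound $\|E\|_F^2\ge\tfrac12(a+b)^2$, bound $\lambda_{\min}(Z^TZ)\le a+b$ by Weyl, conclude $\rho\ge1/\sqrt2$) matches the paper's. The cross-term estimate $\|Z_1Z_2^T\|_F^2\ge ab$ and the final Weyl step are identical to the paper's. Where you genuinely depart from the paper is in lower-bounding the diagonal-block contribution $\|E_{11}\|_F^2+\|E_{22}\|_F^2\ge\tfrac12(b-a)^2$. The paper reduces this to a constrained scalar minimization problem (its inequality (\ref{eq:optim_ab})): it relaxes $X_iX_i^T$ to free PSD matrices $S_i$ subject to $\lambda_{\min}(S_1)\ge\lambda_{\max}(S_2)$, passes to the eigenbases of $Z_iZ_i^T$, drops all but two coordinates, and solves a two-variable quadratic program on the half-plane $d_1\ge d_2$. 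You instead evaluate Rayleigh quotients of $E_{11}$ and $E_{22}$ at the specific extremal eigenvectors $p$ (for $Z_1Z_1^T$ at eigenvalue $a$, using $k\ge r^\star$) and $v$ (for $Z_2Z_2^T$ at eigenvalue $b$), use $\|\cdot\|_F\ge\|\cdot\|_2$, and let the block ordering $\sigma_{\min}(X_1)\ge\sigma_{\max}(X_2)$ cancel the $X$-dependent terms. Your route is more elementary and self-contained — it sidesteps the paper's implicit simultaneous-diagonalization step when passing to eigenbases of $Z_iZ_i^T$ — at the small cost of having to recognize which eigenvectors to test; the paper's route is more systematic but requires more machinery. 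Both land on the same bound $\tfrac12(b-a)^2$, and your handling of the sign of the intermediate lower bounds (they may be negative but still add correctly) and of the degenerate case $a=0$ is sound.
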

\begin{proof}
Denote $\gamma_{1}=\lambda_{\min}(Z_{1}^{T}Z_{1})$ and $\gamma_{2}=\lambda_{\max}(Z_{2}^{T}Z_{2})$.
By contradiction, we will prove that $\gamma_{1}<\gamma_{2}$ implies
$\rho\geq1/\sqrt{2}$. This claim is invariant to scaling of $X$
and $Z$, so we assume without loss of generality that $\lambda_{\min}(Z^{T}Z)=1$.
Under (\ref{invari}), our radius hypothesis $\rho\ge\|XX^{T}-ZZ^{T}\|_{F}$
reads
\begin{align*}
\rho^{2} & \ge\|X_{1}X_{1}^{T}-Z_{1}Z_{1}^{T}\|_{F}^{2}+2\|Z_{1}Z_{2}^{T}\|_{F}^{2}+\|X_{2}X_{2}^{T}-Z_{2}Z_{2}^{T}\|_{F}^{2}.\\
 & \ge\|X_{1}X_{1}^{T}-Z_{1}Z_{1}^{T}\|_{F}^{2}+\|X_{2}X_{2}^{T}-Z_{2}Z_{2}^{T}\|_{F}^{2}+2\lambda_{\min}(Z_{1}^{T}Z_{1})\lambda_{\max}(Z_{2}^{T}Z_{2}).
\end{align*}
Below, we will prove that $X_{1},X_{2}$ that satisfy $\sigma_{\min}(X_{1})\ge\sigma_{\max}(X_{2})$
also satisfies
\begin{align}
 & \|X_{1}X_{1}^{T}-Z_{1}Z_{1}^{T}\|_{F}^{2}+\|X_{2}X_{2}^{T}-Z_{2}Z_{2}^{T}\|_{F}^{2}\nonumber \\
\ge & \min_{d_{1},d_{2}\in\R_{+}}\{[d_{1}-\gamma_{1}]^{2}+[d_{2}-\gamma_{2}]^{2}:d_{1}\ge d_{2}\}\label{eq:optim_ab}
\end{align}
If $\gamma_{1}<\gamma_{2},$ then $d_{1}=d_{2}$ holds at optimality,
so the minimum value is $\frac{1}{2}(\gamma_{1}-\gamma_{2})^{2}$.
Substituting $\gamma_{1}=\lambda_{\min}(Z_{1}^{T}Z_{1})$ and $\gamma_{2}=\lambda_{\max}(Z_{2}^{T}Z_{2})$
then proves that 
\[
\rho^{2}\geq\frac{(\gamma_{1}-\gamma_{2})^{2}}{2}+2\gamma_{1}\gamma_{2}=\frac{1}{2}(\gamma_{1}+\gamma_{2})^{2}.
\]
But we also have 
\begin{align*}
\gamma_{1}+\gamma_{2}=\lambda_{\min}(Z_{1}^{T}Z_{1})+\lambda_{\max}(Z_{2}^{T}Z_{2})\geq\lambda_{\min}(Z_{1}^{T}Z_{1}+Z_{2}^{T}Z_{2})=\lambda_{\min}(Z^{T}Z)=1
\end{align*}
and this implies $\rho^{2}\geq1/2$, a contradiction. 

We now prove (\ref{eq:optim_ab}). Consider the following optimization
problem
\[
\min_{X_{1},X_{2}}\left\{ \|X_{1}X_{1}^{T}-Z_{1}Z_{1}^{T}\|_{F}^{2}+\|X_{2}X_{2}^{T}-Z_{2}Z_{2}^{T}\|_{F}^{2}:\lambda_{\min}(X_{1}X_{1}^{T})\ge\lambda_{\max}(X_{2}X_{2}^{T})\right\} .
\]
We relax $X_{1}X_{1}^{T}$ into $S_{1}\succeq0$ and $X_{2}X_{2}^{T}$
into $S_{2}\succeq0$ to yield a lower-bound
\begin{align*}
\ge & \min_{S_{1}\succeq0,S_{2}\succeq0}\{\|S_{1}-Z_{1}Z_{1}^{T}\|_{F}^{2}+\|S_{2}-Z_{2}Z_{2}^{T}\|_{F}^{2}:\lambda_{\min}(S_{1})\ge\lambda_{\max}(S_{2})\}.
\end{align*}
The problem is invariant to a change of basis, so we change into the
eigenbases of $Z_{1}Z_{1}^{T}$ and $Z_{2}Z_{2}^{T}$ to yield 
\begin{align*}
= & \min_{s_{1}\ge0,s_{2}\ge0}\{\|s_{1}-\lambda(Z_{1}Z_{1}^{T})\|^{2}+\|s_{2}-\lambda(Z_{2}Z_{2}^{T})\|^{2}:\min(s_{1})\ge\max(s_{2})\}
\end{align*}
where $\lambda(Z_{1}Z_{1}^{T})\ge0$ and $\lambda(Z_{2}Z_{2}^{T})\ge0$
denote the vector of eigenvalues. We lower-bound this problem by dropping
all the terms in the sum of squares except the one associated with
$\gamma_{1}=\lambda_{\min}(Z_{1}^{T}Z_{1})$ and $\gamma_{2}=\lambda_{\max}(Z_{2}Z_{2}^{T})$
to obtain 
\begin{align}
 & \geq\min_{d_{1},d_{2}\in\R_{+}}\{[d_{1}-\gamma_{1}]^{2}+[d_{2}-\gamma_{2}]^{2}:d_{1}\ge d_{2}\}
\end{align}
which is exactly (\ref{eq:optim_ab}) as desired. 
\end{proof}

Now we are ready to prove the inequality in (\ref{eq:bnd}). Instead, we will prove a stronger result for $k=r^\star$. 
\begin{lemma}\label{lem:basis_k}
 Let $\tilde X$ denote the rank-$r^\star$ truncation of $X$, i.e. its projection onto the set of rank-$r^\star$ matrices.
 If $\rho = \|XX^{T}-ZZ^{T}\|_{F} / \lambda_{\min}(Z^TZ)<1$,
then, 
    \begin{equation}
\frac{\|(I-XX^{\dagger})ZZ^{T}(I-XX^{\dagger})\|_{F}}{\|XX^{T}-ZZ^{T}\|_{F}} \le \frac{\|(I-\tilde{X}\tilde{X}^{\dagger})ZZ^{T}(I-\tilde{X}\tilde{X}^{\dagger})\|_{F}}{\|XX^{T}-ZZ^{T}\|_{F}}\le\frac{1}{\sqrt{2}}\frac{\rho}{\sqrt{1-\rho^{2}}}.\nonumber
\end{equation}
\end{lemma}
\begin{proof}
Within the radius $\rho<1/\sqrt{2}$,
we begin by proving that the following incidence angle between $\tilde X$
and $Z$ satisfies 
\begin{equation}
\sin\phi\eqdef\frac{\|(I-\tilde{X}\tilde{X}^{\dagger})Z\|_{F}}{\sigma_{r^{\star}}(Z)} = \frac{\|Z_{2}\|_{F}}{\sqrt{\lambda_{\min}(Z^{T}Z)}}\le\frac{\|XX^{T}-ZZ^{T}\|_{F}}{\lambda_{\min}(Z^{T}Z)}=\rho.\label{eq:sinphi}
\end{equation}
First, note that $\sigma_{\min}(X_{1})>0$
and therefore
$(I-\tilde{X}\tilde{X}^{\dagger})Z  = \begin{bmatrix}0\\
Z_{2}
\end{bmatrix}.$ 
Next, 
\begin{align*}
\|XX^{T}-ZZ^{T}\|_{F}^{2} & =\|X_{1}X_{1}^{T}-Z_{1}Z_{1}^{T}\|_{F}^{2}+2\langle Z_{1}^{T}Z_{1},Z_{2}^{T}Z_{2}\rangle+\|X_{2}X_{2}^{T}-Z_{2}Z_{2}^{T}\|_{F}^{2}\\
 & \ge2\langle Z_{1}^{T}Z_{1},Z_{2}^{T}Z_{2}\rangle\geq2\lambda_{\min}(Z_{1}^{T}Z_{1})\|Z_{2}\|_{F}^{2}\ge\lambda_{\min}(Z^{T}Z)\|Z_{2}\|_{F}^{2}
\end{align*}
where we use $\lambda_{\min}(Z_{1}^{T}Z_{1})\ge\frac{1}{2}\lambda_{\min}(Z^{T}Z)$
because 
\begin{gather}
\lambda_{\min}(Z^{T}Z)=\lambda_{\min}(Z_{1}^{T}Z_{1}+Z_{2}^{T}Z_{2})\le\lambda_{\min}(Z_{1}^{T}Z_{1})+\lambda_{\max}(Z_{2}^{T}Z_{2})\le2\lambda_{\min}(Z_{1}^{T}Z_{1})\label{eq:zmin}
\end{gather}
where we used $\lambda_{\min}(Z_{1}^{T}Z_{1})\ge\lambda_{\max}(Z_{2}^{T}Z_{2})$
via \lemref{ab}. 
 Then, (\ref{eq:bnd}) is true because 
\begin{align}
\frac{\|Z_{2}Z_{2}^{T}\|_{F}^{2}}{\|XX^{T}-ZZ^{T}\|_{F}^{2}} & \overset{\text{(a)}}{\le}\frac{\|Z_{2}\|_{F}^{4}}{2\langle Z_{1}^{T}Z_{1},Z_{2}^{T}Z_{2}\rangle}\overset{\text{(b)}}{\le}\frac{\|Z_{2}\|_{F}^{4}}{2\lambda_{\min}(Z_{1}^{T}Z_{1})\|Z_{2}\|_{F}^{2}}\nonumber \\
 & \overset{\text{(c)}}{\le}\frac{\|Z_{2}\|_{F}^{2}}{2[\lambda_{\min}(Z^{T}Z)-\|Z_{2}\|_{F}^{2}]}\overset{\text{(d)}}{\le}\frac{\sin^{2}\phi}{2[1-\sin^{2}\phi]}\leq\frac{1}{2}\frac{\rho^{2}}{1-\rho^{2}}
\end{align}
Step (a) bounds $\|Z_{2}Z_{2}^{T}\|_{F}\leq\|Z_{2}\|_{F}^{2}$ and
$2\langle Z_{1}^{T}Z_{1},Z_{2}^{T}Z_{2}\rangle\le\|XX^{T}-ZZ^{T}\|_{F}^{2}$.
Step (b) bounds $\langle Z_{1}^{T}Z_{1},Z_{2}^{T}Z_{2}\rangle\geq\lambda_{\min}(Z_{1}^{T}Z_{1})\cdot\tr(Z_{2}Z_{2}^{T})$.
Step (c) uses (\ref{eq:zmin}) and $\|Z_{2}\|_{F}^{2}\ge\lambda_{\max}(Z_{2}^{T}Z_{2})$.
Finally, step (d) substitutes (\ref{eq:sinphi}).
\end{proof}

\section{\label{app:pf_decr}Proof of Gradient Lipschitz (\lemref{decr})}
\begin{proof}
Let $\phi$ be $L$-gradient Lipschitz. Let $M^{\star}=\arg\min\phi$
satisfy $M^{\star}\succeq0$. In this section, we prove that $f(X)\eqdef\phi(XX^{T})$
satisfies
\begin{gather*}
f(X+V)\le f(X)+\inner{\nabla f(X)}V+\frac{L}{2}\gamma_{X,\eta}\|V\|_{X,\eta}^{2}\\
\text{where }\gamma_{X,\eta}=4+\frac{2\|E\|_{F}+4\|V\|_{X,\eta}}{\lambda_{\min}+\eta}+\frac{\|V\|_{X,\eta}^{2}}{(\lambda_{\min}+\eta)^{2}}
\end{gather*}
where $\|V\|_{X,\eta}=\|V(X^{T}X+\eta I)^{-1/2}\|_{F}$ and $\lambda_{\min}\equiv\lambda_{\min}(X^{T}X)$
and $E=XX^{T}-M^{\star}$. First, it follows from the $L$-gradient
Lipschitz property of $\phi$ that
\begin{align*}
\underbrace{\phi((X+V)(X+V)^{T})}_{f(X+V)} & =\underbrace{\phi(XX^{T})}_{f(X)}+\underbrace{\inner{\nabla\phi(XX^{T})}{XV^{T}+VX^{T}}}_{\inner{\nabla f(X)}V}\\
 & +\inner{\nabla\phi(XX^{T})}{VV^{T}}+\frac{L}{2}\|XV^{T}+VX^{T}+VV^{T}\|_{F}^{2}.
\end{align*}
 Substituting the following
\begin{gather*}
\|VX^{T}\|_{F}\le\|(X^{T}X+\eta I)^{-1/2}X^{T}\|\cdot\|V\|_{X,\eta}\le\|V\|_{X,\eta}\\
\|VV^{T}\|_{F}\le\|(X^{T}X+\eta I)^{-1}\|\cdot\|V\|_{X,\eta}^{2}=[\lambda_{\min}(X^{T}X)+\eta]^{-1}\|V\|_{X,\eta}^{2}\\
\|\nabla\phi(XX^{T})\|_{F}=\|\nabla\phi(XX^{T})-\nabla\phi(M^{\star})\|_{F}\le L\|E\|_{F},
\end{gather*}
bounds the error term
\begin{align*}
 & \inner{\nabla\phi(XX^{T})}{VV^{T}}+\frac{L}{2}\|XV^{T}+VX^{T}+VV^{T}\|_{F}^{2}\\
= & \frac{\|\nabla\phi(XX^{T})\|_{F}\|V\|_{X,\eta}^{2}}{\lambda_{\min}+\eta}+\frac{L}{2}\left(4\|V\|_{X,\eta}^{2}+\frac{4\|V\|_{X,\eta}^{3}}{\lambda_{\min}+\eta}+\frac{\|V\|_{X,\eta}^{4}}{(\lambda_{\min}+\eta)^{2}}\right)\\
\le & \frac{L\cdot\|V\|_{X,\eta}^{2}}{2}\left(4+\frac{2\|E\|_{F}+4\|V\|_{X,\eta}}{\lambda_{\min}+\eta}+\frac{\|V\|_{X,\eta}^{2}}{(\lambda_{\min}+\eta)^{2}}\right).
\end{align*}
This completes the proof. 
\end{proof}

\section{\label{app:pf_bndgrad}Proof of Bounded Gradient (\lemref{bndgrad})}
\begin{proof}
Let $\phi$ be $L$-gradient Lipschitz, and let $f(X)\eqdef\phi(XX^{T})$.
Let $M^{\star}=\arg\min\phi$ satisfy $M^{\star}\succeq0$. In this
section, we prove that $V=\nabla f(X)(X^{T}X+\eta I)^{-1}$ satisfies
\[
\|V\|_{X,\eta}=\|\nabla f(X)\|_{X,\eta}^{*}\le2L\|XX^{T}-M^{\star}\|_{F},
\]
where $\|V\|_{X,\eta}=\|VP_{X,\eta}^{1/2}\|_{F}$ and $\|\nabla f(X)\|_{X,\eta}^{*}=\|\nabla f(X)P_{X,\eta}^{-1/2}\|_{F}$
and $P_{X,\eta}=X^{T}X+\eta I$. Indeed, $\|V\|_{X,\eta}=\|\nabla f(X)\|_{X,\eta}^{*}$
can be verified by inspection. We have
\begin{align*}
\|\nabla f(X)\|_{X,\eta}^{*} & =\max_{\|Y\|_{X,\eta}=1}\inner{\nabla\phi(XX^{T})}{XY^{T}+YX^{T}}\\
 & \le\|\nabla\phi(XX^{T})\|_{F}\|XY^{\star T}+Y^{\star}X^{T}\|_{F}\\
 & \le\|\nabla\phi(XX^{T})-\nabla\phi(M^{\star})\|_{F}\left(2\|X(X^{T}X+\eta I)^{-1/2}\|\cdot\|Y^{\star}\|_{X,\eta}\right)\\
 & \le L\|XX^{T}-M^{\star}\|_{F}\cdot2.
\end{align*}
This completes the proof. 
\end{proof}

\section{Proofs of Global Convergence}

In this section, we provide the proofs of Lemmas~\ref{lemma:escape} and~\ref{lem_Lip} which play critical roles in proving the global convergence of PMGD (Theorem~\ref{thm:pmgd}) and PPrecGD (Corollary~\ref{cor_pprecgd}).

\subsection{Proof of Lemma~\ref{lemma:escape}}
To proceed with the proof of Lemma~\ref{lemma:escape}, we define the following quantities to streamline our presentation:
\begin{gather}
\alpha=\frac{{p_\lb}}{2\ell_1},\quad \beta=\frac{\epsilon}{400{L_d}\cdot\iota^{3}},\quad \mathcal{T}=\frac{\ell_1}{p_\lb \sqrt{L_d\epsilon}}\cdot\iota,\quad\mathcal{F}=\frac{p_\lb}{50\iota^3}\sqrt{\frac{\epsilon^{3}}{L_d}},\quad
\mathcal{S} := \frac{1}{5\iota}\sqrt{\frac{\epsilon}{L_d}}\label{eq:para_gd} \\
L_d = \frac{5\max\{\ell_2,L_P \ell_1\sqrt{p_\ub}\}}{p_\lb^{2.5}}, \quad
\iota=c\cdot\log\left(\frac{{p_{\ub}}d\ell_1(f(x_0)-f^*)}{p_\lb\ell_2\epsilon\delta}\right)
\end{gather}
for some absolute constant $c$. Once Lemma~\ref{lem:coupleseq_gd} below is established, the proof of Lemma~\ref{lemma:escape} follows by identically repeating the arguments in the proof of~\cite[Lemma 5.3]{jin2021nonconvex}. 

\begin{sloppypar}
\begin{lemma}
	[Coupling Sequence]\label{lem:coupleseq_gd} Suppose that $\bar{x}$ satisfies $\|\nabla f(\tilde{x})\|_{\tilde{x}}^* \leq \epsilon$ and $\nabla^2 f(\tilde{x})\not\succeq-\sqrt{L_d\epsilon} \cdot P(\bar{x})$.
	Let $\{x_{t}\}_{t=0}^{\mathcal{T}}$ and $\{y_{t}\}_{t=0}^{\mathcal{T}}$ be two sequences generated by PMGD initialized respectively at $x_0$ and $y_0$ which
	satisfy: (1) $\max\{\norm{x_{0}-\bar{x}},\norm{y_{0}-\bar{x}}\}\le\alpha \beta$;
	and (2) $ P(\bar{x})^{1/2}\left(x_0-y_0\right)=\alpha \omega \cdot v$,
	where $v$ is the eigenvector corresponding to the minimum eigenvalue of ${P(\bar{x})^{-1/2}}\nabla^2 f(\bar{x}){P(\bar{x})^{-1/2}}$
	and $\omega>\bar\omega := 2^{3-\iota/4}\cdot\mathcal{S}$.
	Then:
	
	\[
	\min\{f(x_{\mathcal{T}})-f(x_{0}),f(y_{\mathcal{T}})-f(y_{0})\}\le-2\mathcal{F}.
	\]
\end{lemma}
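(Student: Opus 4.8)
The plan is to argue by contradiction, adapting the coupling‑sequence template of~\cite[Lemma 5.3]{jin2021nonconvex} but carrying the metric $P(\cdot)$ through every estimate. Suppose neither sequence makes the claimed progress, so that $f(x_t)-f(x_0)\ge -2\mathcal{F}$ and $f(y_t)-f(y_0)\ge -2\mathcal{F}$ for all $t\le\mathcal{T}$. The first step is to invoke the localization estimate of Lemma~\ref{lem_bound}: a decrement no larger than $2\mathcal{F}$ over $\mathcal{T}$ iterations of PMGD forces $\max\{\|x_t-\bar x\|,\|y_t-\bar x\|\}$ to stay within a ball of radius $O(\mathcal{S})$ around $\bar x$ for every $t\le\mathcal{T}$. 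This is the usual ``improve‑or‑localize'' computation: summing the per‑step decrement $\tfrac{\alpha}{2}(\|\nabla f(x_t)\|_{x_t}^{*})^2$, bounding $\|x_{t+1}-x_t\|\le\tfrac{\alpha}{\sqrt{p_\lb}}\|\nabla f(x_t)\|_{x_t}^{*}$, and applying Cauchy--Schwarz; the parameters $\mathcal{F},\mathcal{S},L_d$ are calibrated exactly so that the resulting radius is proportional to $\mathcal{S}$.

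Next I would introduce the difference variable $\hat x_t \eqdef P(\bar x)^{1/2}(x_t-y_t)$ and derive its recursion. Writing the gradient difference as an integral of Hessians and substituting the two PMGD updates gives
\[
\hat x_{t+1} = \bigl(I - \alpha\, \mathcal{H}\bigr)\hat x_t + \alpha\,\xi_t,\qquad \mathcal{H}\eqdef P(\bar x)^{-1/2}\nabla^2 f(\bar x)\,P(\bar x)^{-1/2},
\]
where $\xi_t=\xi(\bar x,x_t,y_t)$ is the deviation term from the sketch of Lemma~\ref{lemma:escape}. Since $x_0,y_0$ lie within $O(\alpha\beta)$ of $\bar x$ and the localization step keeps $x_t,y_t$ within $O(\mathcal{S})$ of $\bar x$, the distance bound (Lemma~\ref{lem_distance}) gives $\|\xi_t\|\le (C_1\mathcal{S}+C_2\epsilon)\,\|\hat x_t\|$, and the constants together with the definitions of $\mathcal{S}$ and $L_d$ are chosen so that $C_1\mathcal{S}+C_2\epsilon\le\tfrac14\sqrt{L_d\epsilon}$; that is, the deviation is strictly dominated by the per‑step negative‑curvature amplification $\alpha\gamma$, where $-\gamma=\lambda_{\min}(\mathcal{H})\le-\sqrt{L_d\epsilon}$. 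Here $\alpha\|\mathcal{H}\|\le\alpha\ell_1/p_\lb\le\tfrac12$ since $\alpha=p_\lb/(2\ell_1)$ and $f$ is $\ell_1$‑gradient Lipschitz.

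The core of the argument is a power‑iteration estimate. Decompose $\hat x_t=\psi_t v+\varphi_t$ with $\varphi_t\perp v$, where $v$ is the eigenvector of $\mathcal{H}$ at $-\gamma$; the hypothesis $\hat x_0=\alpha\omega v$ gives $\psi_0=\alpha\omega$ and $\varphi_0=0$. Because $(I-\alpha\mathcal{H})$ amplifies the $v$‑direction by exactly $1+\alpha\gamma$ and no other direction by more than $1+\alpha\gamma$, and because $\|\xi_t\|\le\tfrac14\sqrt{L_d\epsilon}\,\|\hat x_t\|$ with $\varphi_0=0$, a routine induction on $t\le\mathcal{T}$ shows that $\varphi_t$ stays a small fraction of $\psi_t$ and that $\psi_t$ grows geometrically with ratio at least $1+\tfrac12\alpha\gamma$. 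Taking $t=\mathcal{T}$ and using $\alpha\gamma\,\mathcal{T}\ge\alpha\sqrt{L_d\epsilon}\,\mathcal{T}=\iota/2$ with $(1+a)^{1/a}\ge 2$ on $(0,1]$ yields a lower bound $\|\hat x_{\mathcal{T}}\|\ge\psi_{\mathcal{T}}\ge\alpha\omega\,2^{c'\iota}$ for a positive absolute constant $c'$, hence, using $\omega>\bar\omega=2^{3-\iota/4}\mathcal{S}$, a separation $\|x_{\mathcal{T}}-y_{\mathcal{T}}\|\ge \|\hat x_{\mathcal{T}}\|/\sqrt{p_\ub}$ that is a multiple of $2^{c'\iota}\alpha\mathcal{S}/\sqrt{p_\ub}$. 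On the other hand the localization step bounds $\|x_{\mathcal{T}}-y_{\mathcal{T}}\|\le\|x_{\mathcal{T}}-\bar x\|+\|y_{\mathcal{T}}-\bar x\|=O(\mathcal{S})$, a quantity that is $\iota$‑free up to logarithmic factors. Since $\iota=c\log(p_\ub d\ell_1(f(x_0)-f^*)/(p_\lb\ell_2\epsilon\delta))$ with $c$ a sufficiently large absolute constant, $2^{c'\iota}$ dwarfs every polynomial factor such as $\sqrt{p_\ub}\,\ell_1/p_\lb$, and the two bounds are incompatible. This contradiction shows $\min\{f(x_{\mathcal{T}})-f(x_0),f(y_{\mathcal{T}})-f(y_0)\}\le-2\mathcal{F}$.

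I expect the main obstacle to be the control of the deviation term $\xi_t$. Unlike the identity‑metric case of~\cite{jin2021nonconvex}, here $P(x)$ moves along the trajectory, so $\xi_t$ picks up contributions proportional to $\|P(x_t)-P(\bar x)\|$ and to $\|P(x_t)^{-1}\nabla f(x_t)\|$; showing these are $o(\|\hat x_t\|)$ requires both the Lipschitz continuity of $P$ and the fact that $\|\nabla f\|$ stays small throughout a neighborhood of the approximate saddle $\bar x$ --- exactly what Lemma~\ref{lem_dist_informal} and its formal version Lemma~\ref{lem_distance} supply --- and this bound then has to be threaded through the induction without degrading the geometric growth rate below the window fixed by the choice of $\mathcal{S}$, $\mathcal{F}$, $L_d$ and $\bar\omega$. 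Once $\xi_t$ is under control, the rest is the now‑standard bookkeeping of the perturbed‑gradient escape argument.
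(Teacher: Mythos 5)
Your overall architecture matches the paper's: argue by contradiction, invoke the improve‑or‑localize estimate (Lemma~\ref{lem_bound}) to confine both sequences to a ball of radius $\mathcal{S}$ around $\bar x$, set up the recursion $\hat x_{t+1}=(I-\alpha\mathcal{H})\hat x_t+\alpha\xi_t$ with $\mathcal{H}=P(\bar x)^{-1/2}\nabla^2 f(\bar x)P(\bar x)^{-1/2}$, control $\xi_t$ by Lemma~\ref{lem_distance}, and extract a contradiction from the exponential separation. The paper, however, uses a different decomposition than yours: it writes $z_t=p(t)-q(t)$ with $p(t)=(I-\alpha\mathcal{H})^{t}z_0$ the pure power iteration and $q(t)$ the accumulated error, whereas you project onto $v$ and its orthogonal complement. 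Both can be made to work, but there is a quantitative gap in your version that prevents the contradiction from closing.

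The gap is in the per‑step deviation bound. You claim that the constants are calibrated so that $\|\xi_t\|\le\tfrac14\sqrt{L_d\epsilon}\,\|\hat x_t\|$ and conclude that $\psi_t$ grows geometrically with ratio at least $1+\tfrac12\alpha\gamma$. While the inequality $\|\xi_t\|\le\tfrac14\sqrt{L_d\epsilon}\,\|\hat x_t\|$ is technically true, it discards the essential $1/\iota$ factor built into $\mathcal{S}=\tfrac{1}{5\iota}\sqrt{\epsilon/L_d}$: substituting the definitions of $L_d$ and $\mathcal{S}$ into Lemma~\ref{lem_distance} in fact gives $\|\xi_t\|\le L_d\mathcal{S}\|\hat x_t\|=\tfrac{1}{5\iota}\sqrt{L_d\epsilon}\,\|\hat x_t\|$. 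With only the $\tfrac14$ bound, the accumulated error over the $\mathcal{T}\sim \iota/(\alpha\gamma)$ iterations is order one relative to the power‑iteration signal and your geometric growth degrades to $(1+\tfrac12\alpha\gamma)^{\mathcal T}\approx 2^{\iota/4}$. But $\bar\omega=2^{3-\iota/4}\mathcal{S}$ is specifically chosen with the exponent $-\iota/4$, so $2^{\iota/4}\cdot\bar\omega=8\mathcal{S}$: the $\iota$‑exponents exactly cancel, leaving you with a lower bound $\|x_{\mathcal{T}}-y_{\mathcal{T}}\|\ge 8\alpha\mathcal{S}/\sqrt{p_{\ub}}=4p_{\lb}\mathcal{S}/(\ell_1\sqrt{p_{\ub}})$, which is generically \emph{smaller} than the $O(\mathcal{S})$ localization upper bound rather than larger. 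Your claim that ``$2^{c'\iota}$ dwarfs every polynomial factor'' is therefore vacuous here, since after the cancellation by $\bar\omega$ there is no residual $2^{c'\iota}$ left. The paper avoids this because the $p/q$ split keeps the leading term exactly at $(1+\alpha\gamma)^{\mathcal{T}}\geq 2^{\iota/2}$, and the refined bound $2\alpha L_d\mathcal{S}\mathcal{T}=1/5$ (which uses the $1/\iota$ in $\mathcal{S}$) shows $\|q(t)\|\le\|p(t)\|/5$; the surplus $2^{\iota/2-\iota/4}=2^{\iota/4}$ then dominates the $\ell_1\sqrt{p_{\ub}}/p_{\lb}$ polynomial factor. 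Your orthogonal decomposition can be repaired by invoking the tight bound $L_d\mathcal{S}=\tfrac{1}{5\iota}\sqrt{L_d\epsilon}$, which makes the per‑step loss in $\psi_t$ of size $O(\alpha\gamma/\iota)$ and preserves nearly the full $(1+\alpha\gamma)$ growth rate.
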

\end{sloppypar}
 We point out that the Lemma~\ref{lem:coupleseq_gd} is \textit{not} a direct consequence of~\cite[Lemma 5.5]{jin2021nonconvex} which shows a similar result but for the perturbed gradient descent. The key difference in our analysis is the precise control of the general metric function as a preconditioner for the gradients. In particular, we show that, while in general $P(x)$ and $P(y)$ can be drastically different for different values of $x$ and $y$, they can essentially be treated as constant matrices in the vicinity of strict saddle points. 
More precisely, according to (\ref{eq_dist}), the term $P^{1/2}_{\bar{x}}\left(x_{t+1}-y_{t+1}\right)$ can be written as
\begin{align}
P^{1/2}_{\bar{x}}\left(x_{t+1}-y_{t+1}\right)
&= \left(I - \alpha P({\bar{x}})^{-1/2}\nabla^2f(\bar{x})P(^{-1/2}\right)P({\bar{x}})^{1/2}(x_{t}-y_{t})+\xi(\bar{x}, x_t,y_t)\nonumber
\end{align}
where $\xi(\bar{x},x_t,y_t)$ is a deviation term defined as 
$$
\xi(\bar{x},  x_t,y_t) = \alpha P^{1/2}_{\bar{x}}\left(P^{-1}_{\bar{x}}\nabla^2f(\bar{x})(x_{t}-y_{t})-\left(P^{-1}_{x_t}\nabla f(x_t)-P^{-1}_{y_t}\nabla f(y_t)\right)\right)
$$
Our  goal is to show that $\xi(\bar{x},  x_t,y_t)$ remains small for every $t\leq\mathcal{T}$.

\begin{lemma}\label{lem_distance}
	Let $f$ be $\ell_1$-gradient and $\ell_2$-Hessian Lipschitz. Let $p_{\lb}I\preceq P(x)\preceq p_\ub I$ and $\left\|P(x)-P(y)\right\|\leq L_P\|x-y\|$ for every $x$ and $y$. Suppose that $u$ satisfies $\|\nabla f(u)\|_u^*\leq \epsilon$. Moreover, suppose that $x$ and $y$ satisfy $\max\{\|x-u\|, \|y-u\|\}\leq \mathcal{S}$ and $\epsilon\leq \mathcal{S}/\sqrt{p_\ub}$ for some $\mathcal{S}\geq 0$. Then, we have 
	\begin{align*}
	\|\zeta(u,x,y)\|\leq L_d\mathcal{S}\left\|P(u)^{1/2}(x-y)\right\|.
	\end{align*}
\end{lemma}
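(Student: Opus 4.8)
Recall that in the coupling argument the deviation term is $\zeta(u,x,y) = P(u)^{1/2}W$, where
\[
W \eqdef P(u)^{-1}\nabla^2 f(u)(x-y) - \bigl(P(x)^{-1}\nabla f(x) - P(y)^{-1}\nabla f(y)\bigr).
\]
The plan is to bound $\|P(u)^{1/2}W\|$ by a constant multiple of $\mathcal{S}\,\|x-y\|$ and then convert $\|x-y\|$ into $\|P(u)^{1/2}(x-y)\|$ using $P(u)\succeq p_{\lb}I$, which costs one factor of $p_{\lb}^{-1/2}$. The one nontrivial maneuver is to rewrite $W$ as a sum of three terms, \emph{each carrying an explicit factor of $x-y$}:
\[
W = \underbrace{P(u)^{-1}\bigl[\nabla^2 f(u)(x-y) - (\nabla f(x)-\nabla f(y))\bigr]}_{T_1} + \underbrace{\bigl(P(u)^{-1}-P(x)^{-1}\bigr)(\nabla f(x)-\nabla f(y))}_{T_2} + \underbrace{\bigl(P(y)^{-1}-P(x)^{-1}\bigr)\nabla f(y)}_{T_3},
\]
obtained by first replacing $P(x)^{-1}\nabla f(x)-P(y)^{-1}\nabla f(y)$ with $P(u)^{-1}(\nabla f(x)-\nabla f(y))$ plus a correction, and then telescoping the correction through the differences $P(u)^{-1}\!-\!P(x)^{-1}$ and $P(u)^{-1}\!-\!P(y)^{-1}$.

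For $T_1$, I would write $\nabla f(x)-\nabla f(y)=\int_0^1 \nabla^2 f(y+t(x-y))(x-y)\,\d t$; since $y+t(x-y)-u=(1-t)(y-u)+t(x-u)$ is a convex combination of $x-u$ and $y-u$, its norm is at most $\max\{\|x-u\|,\|y-u\|\}\le\mathcal{S}$, so $\ell_2$-Hessian Lipschitzness of $f$ gives $\|\nabla f(x)-\nabla f(y)-\nabla^2 f(u)(x-y)\|\le \ell_2\mathcal{S}\|x-y\|$, whence $\|P(u)^{1/2}T_1\|=\|P(u)^{-1/2}[\cdots]\|\le \ell_2\mathcal{S}\,p_{\lb}^{-1/2}\|x-y\|$. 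For $T_2$ and $T_3$ the key estimate is $\|P(a)^{-1}-P(b)^{-1}\|=\|P(a)^{-1}(P(b)-P(a))P(b)^{-1}\|\le L_P\|a-b\|/p_{\lb}^2$; combined with $\|\nabla f(x)-\nabla f(y)\|\le \ell_1\|x-y\|$ this bounds $T_2$, and combined with the gradient-magnitude bound $\|\nabla f(y)\|\le \|\nabla f(u)\|+\ell_1\|y-u\|\le \sqrt{p_{\ub}}\,\|\nabla f(u)\|_u^{*}+\ell_1\mathcal{S}\le \sqrt{p_{\ub}}\,\epsilon+\ell_1\mathcal{S}\le(\ell_1+1)\mathcal{S}$ (using $\|\nabla f(u)\|_u^{*}\le\epsilon$ and the hypothesis $\epsilon\le\mathcal{S}/\sqrt{p_{\ub}}$) this bounds $T_3$. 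Summing the three contributions and then using $\|x-y\|\le p_{\lb}^{-1/2}\|P(u)^{1/2}(x-y)\|$ yields $\|\zeta(u,x,y)\|\le \mathcal{S}\bigl(\ell_2/p_{\lb}+\sqrt{p_{\ub}}\,L_P(2\ell_1+1)/p_{\lb}^{2.5}\bigr)\|P(u)^{1/2}(x-y)\|$, which is absorbed into $L_d = 5\max\{\ell_2,L_P\ell_1\sqrt{p_{\ub}}\}/p_{\lb}^{2.5}$ after invoking the harmless normalizations $p_{\lb}\le 1$, $\ell_1\ge 1$ standard in this literature (the slack between the constants $4$ and $5$ also accommodates the folded-in $\epsilon$-dependent term).

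I expect the main obstacle to be precisely the regrouping that forces \emph{every} summand of $W$ to be proportional to $x-y$: the raw difference $P(x)^{-1}\nabla f(x)-P(y)^{-1}\nabla f(y)$ involves $x$ and $y$ only separately, and a careless split produces error terms proportional to $\|x-u\|$ or $\|y-u\|$ alone, which do not decay and would be fatal to the downstream power-iteration coupling argument (Lemma~\ref{lem:coupleseq_gd}). The secondary technical point is controlling $\|\nabla f(x)\|$ and $\|\nabla f(y)\|$, which is exactly where the hypotheses $\|\nabla f(u)\|_u^{*}\le\epsilon$ and $\epsilon\le\mathcal{S}/\sqrt{p_{\ub}}$ enter and is the reason the statement must be localized near a stationary point $u$. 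Everything else is routine operator-norm bookkeeping relying only on the well-conditioning $p_{\lb}I\preceq P\preceq p_{\ub}I$, the $\ell_1$-gradient and $\ell_2$-Hessian Lipschitz hypotheses, and the $L_P$-Lipschitz continuity of the metric.
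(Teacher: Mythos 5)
Your proposal is correct and takes essentially the same route as the paper: the paper first splits $\zeta$ into a Hessian-linearization piece ($T_1$ here, bounded via $\ell_2$-Lipschitzness of the Hessian) and a metric-variation piece, then regroups the latter into exactly your $T_2+T_3$ before applying $\|P(a)^{-1}-P(b)^{-1}\|\le L_P\|a-b\|/p_{\lb}^2$, the $\ell_1$-gradient-Lipschitz bound, and the local-stationarity bound $\|\nabla f(y)\|\le\sqrt{p_{\ub}}\,\epsilon+\ell_1\|y-u\|$. The only cosmetic difference is that you perform the regrouping up front as a three-term decomposition, and you state explicitly the normalizations $p_{\lb}\le 1$, $\ell_1\ge 1$ that the paper's final constant-absorption step also relies on implicitly.
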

\begin{proof}
	Note that we can write 
	\begin{align*}
	\xi(u,x,y) =& -P(u)^{1/2}P(x)^{-1}\nabla f(x) + P(u)^{1/2}P(y)^{-1}\nabla f(y) + P(u)^{-1/2}\nabla^2 f(u)(x-y)\\ 
	=& \underbrace{-P(u)^{-1/2}\nabla f(x) + P(u)^{-1/2}\nabla f(y) + P(u)^{-1/2}\nabla^2 f(u)(x-y)}_{T_1}\\
	&- \underbrace{P(u)^{1/2}\left((P(x)^{-1}\nabla f(x)-P(u)^{-1}\nabla f(x)) +  (P(u)^{-1}\nabla f(y)-P(y)^{-1}\nabla f(y))\right)}_{T_2}.
	\end{align*}
	We bound the norm of $T_1$ and $T_2$ separately. First, we have 
	\begin{align*}
	\|T_1\| &= \|P(u)^{-1/2} \left(\nabla f(x)-\nabla f(y) - \nabla^2f(u)(x-y)\right)\|\\
	& \leq \frac{1}{\sqrt{p_\lb}}\|\nabla f(x)-\nabla f(y) - \nabla^2f(u)(x-y)\| \\
	& = \frac{1}{\sqrt{p_\lb}}\left\|\int_0^1 \left(\nabla^2 f(y+t(x-y))- \nabla^2f(u)\right)~dt\cdot (x-y) \right\| \\
	&\leq \frac{\ell_2}{\sqrt{p_\lb}}\cdot  \max\{\|x-u\|, \|y-u\|\} \cdot \|x-y\|,
	\end{align*}
	where the last inequality follows from the assumption that the Hessian is Lipscthiz. On the other hand, we have 
	\begin{align*}
	\|T_2\| &\leq \sqrt{p_{\ub}} 	\left\|P(x)^{-1}\nabla f(x)-P(u)^{-1}\nabla f(x) +  P(u)^{-1}\nabla f(y)-P(y)^{-1}\nabla f(y)\right\|\\
	&= \left\|(P(x)^{-1}-P(u)^{-1})(\nabla f(x)-\nabla f(y)) + (P(x)^{-1}-P(y)^{-1})\nabla f(y)\right\| \\
	& \leq \frac{L_P\sqrt{p_{\ub}}}{p_\lb^2}\|x-u\|\cdot \ell_1 \|x-y\| + \frac{L_P\sqrt{p_{\ub}}}{p_\lb^2}\|x-y\| \|\nabla f(y)\|.
	\end{align*}
	Since the gradient is $\ell_1$-Lipschitz, we have 
	\[
	\|\nabla f(y)\| \leq \|\nabla f(u)\| + \ell_1 \|y-u\| \leq p_{\ub}^{1/2}\cdot \|\nabla f(u)\|_u^* + \ell_1 \|y-u\|\leq  p_{\ub}^{1/2}\epsilon + \ell_1 \|y-u\|.
	\]
	As a result, we get 
	\begin{align*}
	\|T_2\| &\leq \frac{L_P\sqrt{p_{\ub}}}{p_\lb^2}\|x-u\|\cdot \ell_1 \|x-y\| + \frac{L_P\sqrt{p_{\ub}}}{p_\lb^2}\|x-y\| \left(\sqrt{p_{\ub}}\epsilon + \ell_1 \|y-u\|\right)
	\end{align*}
	Combining the derived upper bounds for $T_1$ and $T_2$ leads to
		\begin{align*}
		\|\zeta(u,x,y)\|\leq \frac{2\max\{\ell_2,L_P\ell_1\sqrt{p_\ub}\}}{p_\lb^2}\left(\|x-u\|+\|y-u\|\right)\|x-y\|+\frac{L_P{p_\ub}}{p_\lb^2}\epsilon\|x-y\|.
		\end{align*}
		Invoking the assumptions $\max\{\|x-u\|, \|y-u\|\}\leq \mathcal{S}$ and $\epsilon\leq \mathcal{S}/\sqrt{p_\ub}$ yields
		\begin{align*}
		\|\zeta(u,x,y)\|&\leq \frac{5\max\{\ell_2,L_P\ell_1\sqrt{p_\ub}\}}{p_\lb^2}\cdot\mathcal{S}\|x-y\|\\
		&\leq \frac{5\max\{\ell_2,L_P\ell_1\sqrt{p_\ub}\}}{p_\lb^{2.5}}\cdot\mathcal{S}\|P(u)^{1/2}(x-y)\|
		\end{align*}
		This completes the proof.
	
\end{proof}
To prove Lemma~\ref{lem:coupleseq_gd}, we also need  the following lemma, which shows that, for $t\leq \mathcal{T}$, the iterations $\{x_t\}$ remain close to the initial point $x_0$ if $f(x_0) - f(x_t)$ is small. The proof is almost identical to that of  \cite[Lemma 5.4]{jin2021nonconvex} and is omitted for brevity.

\begin{lemma}[Improve or Localize]\label{lem_bound}
	Under the assumptions of Lemma~\ref{lem:escape}, we have for every $t\leq \mathcal{T}$:
	\begin{align*}
	\|x_t-x_0\|\leq \frac{1}{\sqrt{p_\lb}}\sqrt{2\alpha t(f(x_0)-f(x_t))}.
	\end{align*}
\end{lemma}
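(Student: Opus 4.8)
The plan is to run the standard ``improve-or-localize'' argument in the preconditioned setting, combining the per-iteration descent bound already recorded in Lemma~\ref{lemma:grad_decrement} with the observation that, along the escape phase in Lemma~\ref{lem:escape}, every iterate after the initial perturbation is produced by a pure (unperturbed) PMGD step $x_{k+1}=x_k-\alpha P(x_k)^{-1}\nabla f(x_k)$ with the fixed step-size $\alpha=p_{\lb}/(2\ell_1)$, so the Euclidean step length is directly controlled by the one-step objective decrease. First I would bound the step length by the dual local norm of the gradient using only the uniform lower eigenvalue bound $P(x_k)\succeq p_{\lb}I$: since $P(x_k)^{-2}\preceq p_{\lb}^{-1}P(x_k)^{-1}$,
\[
\|x_{k+1}-x_k\|^2=\alpha^2\,\nabla f(x_k)^{\top}P(x_k)^{-2}\nabla f(x_k)\le\frac{\alpha^2}{p_{\lb}}\,\bigl(\|\nabla f(x_k)\|_{x_k}^*\bigr)^2 .
\]
Next I would invoke the descent estimate $f(x_{k+1})\le f(x_k)-\tfrac{\alpha}{2}\bigl(\|\nabla f(x_k)\|_{x_k}^*\bigr)^2$ established in the proof of Lemma~\ref{lemma:grad_decrement} (this inequality holds at every iteration of the escape phase, regardless of whether $\|\nabla f(x_k)\|_{x_k}^*$ is large or small — only the $\tfrac{\alpha}{2}(\cdot)^2$ form is needed, not the $\Omega(\epsilon^2)$ lower bound), to substitute $\bigl(\|\nabla f(x_k)\|_{x_k}^*\bigr)^2\le\tfrac{2}{\alpha}\bigl(f(x_k)-f(x_{k+1})\bigr)$ and obtain
\[
\|x_{k+1}-x_k\|\le\sqrt{\tfrac{2\alpha}{p_{\lb}}\bigl(f(x_k)-f(x_{k+1})\bigr)} .
\]

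The final step is to telescope. Summing over $k=0,\dots,t-1$, applying the triangle inequality $\|x_t-x_0\|\le\sum_{k=0}^{t-1}\|x_{k+1}-x_k\|$, and then Cauchy--Schwarz on the sum of square roots — using that the nonnegative increments $f(x_k)-f(x_{k+1})$ telescope to $f(x_0)-f(x_t)$ — yields
\[
\|x_t-x_0\|\le\sqrt{\tfrac{2\alpha}{p_{\lb}}}\sum_{k=0}^{t-1}\sqrt{f(x_k)-f(x_{k+1})}\le\sqrt{\tfrac{2\alpha}{p_{\lb}}}\sqrt{t\bigl(f(x_0)-f(x_t)\bigr)},
\]
which is exactly the claimed bound $\|x_t-x_0\|\le p_{\lb}^{-1/2}\sqrt{2\alpha t\,(f(x_0)-f(x_t))}$.

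There is essentially no serious obstacle here; the argument is the direct transcription of the improve-or-localize lemma of \citet[Lemma~5.4]{jin2021nonconvex} to the preconditioned iteration. The only point worth flagging is \emph{why} the changing metric $P(x_k)$ causes no difficulty for this particular statement: the conversion between the Euclidean step length and the dual-local-norm gradient magnitude uses nothing beyond $P(x)\succeq p_{\lb}I$, so no Lipschitz control of $P$ is required at all (that hypothesis is only needed downstream, in Lemma~\ref{lem_distance} and the coupling argument). Consequently the proof is genuinely ``almost identical'' to the unpreconditioned case, with $p_{\lb}$ appearing as the only extra factor.
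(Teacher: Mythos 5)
Your proof is correct and takes precisely the route the paper intends: the paper omits the proof, stating it is ``almost identical to that of [Lemma 5.4] [jin2021nonconvex],'' and your argument is the faithful transcription of that improve-or-localize lemma to the preconditioned step, combining $P(x)^{-2}\preceq p_{\lb}^{-1}P(x)^{-1}$, the unconditional descent inequality $f(x_{k+1})\le f(x_k)-\tfrac{\alpha}{2}(\|\nabla f(x_k)\|_{x_k}^*)^2$ (the $\Omega(\epsilon^2)$ clause in Lemma~\ref{lemma:grad_decrement} is not needed), and Cauchy--Schwarz telescoping. Your side remark---that only the lower eigenvalue bound $P\succeq p_{\lb}I$ is used here and no Lipschitz control of $P$ is required---is also accurate and a useful point to make explicit.
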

\begin{proof} [Lemma~\ref{lem:coupleseq_gd}.] By contradiction, suppose that 
\[
\min\{f(x_{\mathcal{T}})-f(x_{0}),f(y_{\mathcal{T}})-f(y_{0})\}>-2\mathcal{F}.
\]
Given this assumption, we can invoke Lemma \ref{lem_bound} to show that both sequences remain close to $\bar{x}$, i.e., for any $t\le\mathcal{T}$: 
\begin{align}
\max\{\norm{x_{t}-\bar{x}},\norm{y_{t}-\bar{x}}\}\le {\frac{1}{\sqrt{p_{\lb}}}}\sqrt{4\alpha\mathcal{T}\mathcal{F}}
= \sqrt{\frac{p_{\lb}\epsilon}{25L_d^2\iota^2\ell_2}}\le  \frac{1}{5\iota}\sqrt{\frac{\epsilon}{L_d}} := \mathcal{S}\label{eq:localization_gd}
\end{align}
where the first equality follows from our choice of $\alpha$, $\mathcal{T}$, $\mathcal{F}$, and $r$. 
Upon defining $z_t = P(\bar{x})^{1/2}(x_t-y_t)$, we have
\begin{align*}
z_{t+1} &=  z_{t}-\alpha{P(\bar{x})^{-1/2}[P(x_t)^{-1}\nabla f(x_{t})-P(y_t)^{-1}\nabla f(y_{t})]} \\
& =(I-\alpha H)z_{t}-\alpha \xi(\bar{x},  x_t,y_t)\\
& =\underbrace{(I-\alpha H)^{t+1}z_{0}}_{p(t+1)}-\underbrace{\alpha\sum_{\tau=0}^{t}(I-\alpha H)^{t-\tau} {\xi(\bar{x},  x_\tau,y_\tau)}}_{q(t+1)},
\end{align*}
where $H = P(\bar{x})^{-1/2}\nabla^2 f(\bar{x})P(\bar{x})^{-1/2}$, and 
$$
\xi(\bar{x},  x_t,y_t) = \alpha P(\bar{x})^{1/2}\left(P(\bar{x})^{-1}\nabla^2f(\bar{x})(x_{t}-y_{t})-\left(P({x_t})^{-1}\nabla f(x_t)-P({y_t})^{-1}\nabla f(y_t)\right)\right)
$$
In the dynamic of $z_{t+1}$, the term $p(t+1)$ captures the effect of the difference in the initial points of the sequences $\{x_t\}_{t=0}^T$ and $\{y_t\}_{t=0}^T$. Moreover, the term $q(t+1)$ is due to the fact that the
function $f$ is not quadratic{ and the metric function $P(x)$ changes along the solution trajectory}. We now use induction to show that the error term $q(t)$ remains smaller than the leading term $p(t)$. In particular, we show
\[
\norm{q(t)}\le\norm{p(t)}/2,\qquad t\in\mathcal{T}.
\]
The claim is true for the base case $t=0$ as $\norm{q(0)}=0\le\norm{z_{0}}/2=\norm{p(0)}/2$. Now suppose the induction hypothesis is true up to $t$. Denote $\lambda_{\min}(H)=-\gamma$ with $\gamma\geq\sqrt{L_d\epsilon}$.
Note that $z_{0}$ lies in the direction of the minimum eigenvector
of ${H}$. Thus, for any $\tau\le t$, we have
\begin{align}\label{eq_induction}
\norm{z_{\tau}}\le\norm{p(\tau)}+\norm{q(\tau)}\le 2\norm{p(\tau)}=2\norm{(I-\alpha H)^{\tau}z_{0}}=2(1+\alpha\gamma)^{\tau}\alpha \omega.
\end{align}
On the other hand, we have
\begin{align*}
\left\|{q(t+1)}\right\| & =\left\|{\alpha\sum_{\tau=0}^{t}(I-\eta H)^{t-\tau}\zeta(\bar{x}, x_t,y_t)}\right\| 
{\le}\alpha \sum_{\tau=0}^{t}\|{( I-\eta H)^{t-\tau}}\|\cdot{L_d \mathcal{S}}\|{z_{\tau}}\|\\
&\leq \alpha \sum_{\tau=0}^{t}\|{( I-\eta H)^{t-\tau}}\|\cdot{L_d \mathcal{S}}\cdot (2(1+\alpha\gamma)^{\tau}\alpha \omega)
\leq 2\alpha L_d\mathcal{S} \sum_{\tau=0}^{t} (1+\alpha\gamma)^{t}\alpha \omega\\
&\leq 2\alpha L_d\mathcal{S}\mathcal{T}p(t+1)
\end{align*}
where in the first inequality we used Lemma~\ref{lem_distance} to bound $\|\zeta(\bar{x}, x_t,y_t)\|$. Moreover, in the second and last inequalities we used (\ref{eq_induction}), $t\leq \mathcal{T}$, and $(1+\alpha\gamma)^{t}\alpha \omega\leq p(t+1)$. Due to our choice of $\alpha$, $L_d$, $\mathcal{S}$, and $\mathcal{T}$, it is easy to see that $2\alpha L_d\mathcal{S}\mathcal{T} = 1/5$. This leads to $\|q(t+1)\|\leq \|p(t+1)\|/5$, thereby completing our inductive argument. Based on this inequality, we have
\begin{align*}
	\max\{\|{x_{\mathcal{T}}-{x}_0}\|,\|{y_{\mathcal{T}}-{x}_0}\|\}
	&\geq \frac{1}{2\sqrt{p_{\ub}}}\|z_{\mathcal{T}}\|
	\geq \frac{1}{2\sqrt{p_{\ub}}}(\|p_{\mathcal{T}}\|-\|q_{\mathcal{T}}\|)
	\geq \frac{1}{4\sqrt{p_{\ub}}}\|p_{\mathcal{T}}\|\\
	&\geq \frac{(1+\alpha\gamma)^{\mathcal{T}}\alpha \omega}{4\sqrt{p_{\ub}}}
	\overset{(a)}{\ge} \frac{2^{\iota/2-3}p_\lb}{\sqrt{p_\ub}\ell_1}\bar{\omega}
	\overset{(b)}{>}\mathcal{S}.
\end{align*}
where in $(a)$, we used the inequality $(1+x)^{1/x}\geq 2$ for every $0<x\leq 1$ and in $(b)$, we used the definition of $\bar{\omega}$. 
The above inequality contradicts with (\ref{eq:localization_gd}) and therefore completes our proof.
\end{proof}

\subsection{Proof of Lemma~\ref{lem_Lip}}
To prove Lemma~\ref{lem_Lip}, first we provide an upper bound on $f(X_t)$.
\begin{lemma}\label{lem_ub}
	For every iteration $X_t$ of \ref{PPrecGD}, we have
	\[
	f(X_t)\leq f(X_0)+2\sqrt{\|X_0\|_F^2+\eta}\cdot \alpha \beta\epsilon
	\]
\end{lemma}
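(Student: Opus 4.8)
The plan is to prove Lemma~\ref{lem_ub} by induction on $t$, carrying along the auxiliary claim that every iterate produced so far lies in the sublevel set $\mathcal{X}$ and is therefore bounded, $\|X_s\|_F\le\Gamma_F$ and $\|X_s\|\le\Gamma_2$ for $s\le t$. This bootstrapping is essential because $f(X)=\phi(XX^T)$ is not globally gradient Lipschitz: the local constant $\ell_{X,\eta}$ from \lemref{decr} depends on $\|XX^T-M^\star\|_F$, so a descent-type estimate for a PrecGD step is only available once $X$ is known to lie in a bounded region. The base case $t=0$ is trivial. For the inductive step, assume $f(X_s)\le f(X_0)+2\sqrt{\|X_0\|_F^2+\eta}\,\alpha\beta\epsilon=:B$ for all $s\le t$, so by coercivity of $\phi$ (exactly as invoked in Lemma~\ref{lem_Lip}) all these iterates, and hence $X_tX_t^T$, lie in the bounded region $\mathcal{D}$ on which $f$ is $\ell_1$-gradient Lipschitz with $\ell_1=9\Gamma_F^2L_1$.

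Now split into two cases. If $\zeta_t=0$, the update is an ordinary PrecGD step $X_{t+1}=X_t-\alpha\nabla f(X_t)(X_t^TX_t+\eta I)^{-1}$; combining \lemref{decr} with the step-size bound $\|V\|_{X_t,\eta}=\|\nabla f(X_t)\|_{X_t,\eta}^{*}\le 2L_1\|X_tX_t^T-M^\star\|_F$ from \lemref{bndgrad} bounds the local Lipschitz constant uniformly over $\mathcal{D}$, and with the prescribed $\alpha=\eta/\ell_1$ the Lipschitz decrement (as in Lemma~\ref{lemma:grad_decrement}) gives $f(X_{t+1})\le f(X_t)-\tfrac{\alpha}{2}(\|\nabla f(X_t)\|_{X_t,\eta}^{*})^2\le f(X_t)\le B$. (The only subtlety here is that the segment $[X_t,X_{t+1}]$ must stay inside the region where the Lipschitz estimate is valid; this is absorbed by the slack $2\sqrt{\|X_0\|_F^2+\eta}\,\alpha\beta\epsilon$ built into the definition of $\mathcal{X}$, which inflates the sublevel set enough to accommodate one short step.) Thus the bound propagates, and moreover $f$ is \emph{non-increasing} on every unperturbed step.

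If $\zeta_t\neq 0$, then a perturbation was triggered, which can only happen when $\|\nabla f(X_t)(X_t^TX_t+\eta I)^{-1/2}\|_F\le\epsilon$; this yields the key pointwise estimate $\|\nabla f(X_t)\|_F\le\epsilon\,\|(X_t^TX_t+\eta I)^{1/2}\|=\epsilon\sqrt{\|X_t\|^2+\eta}$. I would decompose the step as the descent part $Y_t=X_t-\alpha\nabla f(X_t)(X_t^TX_t+\eta I)^{-1}$ (which, by the previous paragraph, satisfies $f(Y_t)\le f(X_t)$) followed by the pure perturbation $X_{t+1}=Y_t-\alpha\zeta_t$ with $\|\zeta_t\|_F\le\beta$. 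A mean-value / gradient-Lipschitz estimate of $f$ along $[Y_t,X_{t+1}]$ then gives $f(X_{t+1})-f(Y_t)\le\alpha\beta\sup_{[Y_t,X_{t+1}]}\|\nabla f\|_F+O(\alpha^2\beta^2\ell_1)$; using the small-gradient bound at $X_t$, $X_t\approx Y_t$, and the fact that the prescribed $\beta=\tilde O(\epsilon/L_d)$ is extremely small, this is at most $2\sqrt{\|X_0\|_F^2+\eta}\,\alpha\beta\epsilon$ provided the value just before the perturbation is itself at most $f(X_0)$.

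The main obstacle is precisely that last proviso: establishing that $f$ has returned to at most $f(X_0)$ before \emph{every} perturbation fires, so that the $+2\sqrt{\|X_0\|_F^2+\eta}\,\alpha\beta\epsilon$ band is never overshot by accumulating contributions from multiple perturbations. This is where the enforced gap $k\ge k_{\mathrm{last}}+\mathcal{T}$ must be used: after any perturbation the next $\mathcal{T}$ iterations are unperturbed descent steps, and one has to show their cumulative decrease is at least $\Delta:=2\sqrt{\|X_0\|_F^2+\eta}\,\alpha\beta\epsilon$ — any step in that window with $\|\nabla f\|_{x}^{*}>\epsilon$ already contributes $\tfrac{\alpha}{2}\epsilon^2$, which exceeds $\Delta$ by the choice of $\beta$, while a full window of small-gradient steps is still non-increasing (and, in the regime covered by the theorem, cannot persist). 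Matching the constants $\alpha,\beta,\mathcal{T}$ so that one perturbation's worth of increase is always repaid before the next one, and chasing the resulting factor of two into $2\sqrt{\|X_0\|_F^2+\eta}$, is the delicate part; everything else reduces to routine applications of \lemref{decr} and \lemref{bndgrad}.
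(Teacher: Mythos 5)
Your plan has the same backbone as the paper's proof: $f$ can only increase at a perturbation step, the one‑step increase there is bounded by $\sqrt{\|X\|_F^2+\eta}\cdot\alpha\beta\epsilon$ plus a lower‑order term because the trigger condition $\|\nabla f(X)\|_{X,\eta}^{*}\le\epsilon$ controls the Euclidean gradient norm by $\epsilon\sqrt{\|X\|_F^2+\eta}$, and each perturbation is followed by at least $\mathcal{T}$ unperturbed iterations that repay the increase. The paper then simply observes that, thanks to Lemma~\ref{lemma:escape}, a perturbation followed by $\mathcal{T}$ steps of descent strictly reduces $f$ below its pre‑perturbation value, so the running maximum is attained immediately after the \emph{first} perturbation, and the stated bound follows from the one‑step Taylor estimate at $X_0$.

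What you add that the paper leaves implicit is the induction carrying boundedness along: since $f(X)=\phi(XX^T)$ is not globally gradient Lipschitz, the descent estimate on an unperturbed step and the applicability of Lemma~\ref{lemma:escape} are only available once the iterates are known to lie in the sublevel set $\mathcal{X}$, and this circularity is properly broken by induction. That is a genuine clarification. Two cautions, though. First, do not invoke \lemref{decr} and \lemref{bndgrad} here: those are tailored to the local phase and their constants degrade with $\|XX^T-M^\star\|_F$, which need not be small in the global phase. The correct tool is the uniform $\ell_1=9\Gamma_F^2L_1$ gradient‑Lipschitz constant of Lemma~\ref{lem_Lip} on the sublevel set, as used in Lemma~\ref{lemma:grad_decrement}. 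Second, your repayment route — "a large‑gradient step alone repays $\Delta$; an all‑small‑gradient window cannot persist" — does not close by itself: a full $\mathcal{T}$‑window can consist entirely of small‑gradient steps with only negligible decrease, after which another perturbation adds another $\Delta$, and these can accumulate. You acknowledge this, and the resolution is exactly the paper's: invoke Lemma~\ref{lemma:escape}, which guarantees (with high probability) a decrease of $\mathcal{F}$ over the window that dominates the perturbation increase by the choice of $\alpha,\beta,\mathcal{T}$. There is no elementary substitute for that appeal, so the "delicate part" you flag is not just constant chasing — it is where the escape lemma must be used.
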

\begin{proof}
	Note that $f(X_{t+1})\leq f(X_t)$, except for when $X_t$ is perturbed with a random perturbation. Moreover, we have already shown that, each perturbation followed by $\mathcal{T}$ iterations of PMGD strictly reduces the objective function. Therefore, $f(X)$ takes its maximum value when it is perturbed at the initial point. This can only happen if $X_0$ is close to a strict saddle point, i.e.,
	\[
	\|\nabla f(X_0)\|_{X_0,\eta}^{*}\le\epsilon,\quad\text{and} \quad \nabla^{2}f(X_0)\not\succeq-\sqrt{L_d\epsilon}\cdot \P_{X_0,\eta},
	\]
	Therefore, we have $X_1 = X_0+\alpha \zeta$, where $\zeta\sim\mathbb{B}(\beta)$. This implies that
	\begin{align*}
		f(X_1)-f(X_0)&\leq \alpha\langle\nabla f(X_0), \zeta\rangle+\frac{\alpha^2L_1}{2}\|\zeta\|_F^2
		\leq \alpha\norm{\P_{X_0,\eta}^{1/2}}_F\norm{\nabla f(X_0)}^*_{X_0,\eta}\norm{\zeta}_F+\frac{\alpha^2L_1}{2}\|\zeta\|_F^2\\
		&\overset{(a)}{\leq} \sqrt{\norm{X_0}_F^2+\eta}\cdot\epsilon\alpha \beta +\frac{L_1}{2}\alpha^2\beta^2
		\overset{(b)}{\leq} 2\sqrt{\norm{X_0}_F^2+\eta}\cdot\epsilon\alpha \beta
	\end{align*}
	where $(a)$ follows from our assumption $\|\nabla f(X_0)\|_{X_0,\eta}^{*}\le\epsilon$, and $(b)$ is due to our choice of $\alpha$ and $\beta$. This implies that $f(X_t)\leq f(X_1)\leq f(X_0)+2\sqrt{\norm{X_0}_F^2+\eta}\cdot\epsilon\alpha \beta$, thereby completing the proof.
\end{proof}
The above lemma combined with the coercivity of $\phi$ implies that 
$$
X_t\in\mathcal{M}\left(\phi(X_0X_0^T)+2\sqrt{\|X_0\|_F^2+\eta}\cdot \alpha \beta\epsilon\right)
$$
for every iteration $X_t$ of \ref{PPrecGD}. Now, we proceed with the proof of Lemma~\ref{lem_Lip}.\vspace{2mm}

\begin{proof} [Lemma~\ref{lem_Lip}.] First, we prove the gradient lipschitzness of $f(X)$. Due to the definition of $\Gamma_F$ and Lemma~\ref{lem_ub}, every iteration of \ref{PPrecGD} belongs to the ball $\{M: \|M\|_F\leq \Gamma_F\}$. For every $X,Y\in\{M: \|M\|_F\leq \Gamma_F\}$, we have
\begin{align*}
	\norm{\nabla f(X)-\nabla f(Y)}_F &= 2\norm{\nabla \phi(XX^\top)X-\nabla \phi(YY^\top)Y}_F\\
	&\leq \norm{\nabla \phi(XX^\top)-\nabla \phi(YY^\top)}_F\norm{X}_F+2\norm{\phi(YY^\top)}_F\norm {X-Y}_F\\
	&\leq 2L_1\Gamma_F\norm {XX^\top-YY^\top}_F+5L_1\Gamma_F^2\norm {X-Y}_F\\
	&\leq 2L_1\Gamma_F\norm {X(X-Y)^\top - (Y-X)Y^\top}_F+5L_1\Gamma_F^2\norm {X-Y}_F\\
	&\leq 9L_1\Gamma_F^2\norm {X-Y}_F
\end{align*}
which shows that $f(X)$ is $9L_1\Gamma_F^2$-gradient Lipschitz within the ball $\{M: \|M\|_F\leq \Gamma_F\}$. 
Next, we prove the Hessian lipschitzness of $f(X)$. For any arbitrary $V$ with $\norm{V}_F=1$, we have
\begin{align*}
	&\left|\left\langle\nabla^2 f(X)[V],V\rangle-\langle\nabla^2 f(Y)[V],V\right\rangle\right| \\
	=& 2\left|\left\langle\nabla \phi(XX^\top)-\nabla \phi(YY^\top),VV^\top\right\rangle\right|\\
	&+\left|\left\langle\nabla^2\phi(XX^\top), XV^\top+VX^\top\right\rangle-\left\langle\nabla^2\phi(YY^\top), YV^\top+VY^\top\right\rangle\right|\\
	\leq & 2\left\|\nabla\phi(XX^\top)-\nabla \phi(YY^\top)\right\|_F\\
	&+\left|\left\langle\nabla^2\phi(XX^\top), XV^\top+VX^\top\right\rangle-\left\langle\nabla^2\phi(YY^\top), XV^\top+VX^\top\right\rangle\right|\\
	&+\left|\left\langle\nabla^2\phi(YY^\top), YV^\top+VY^\top\right\rangle-\left\langle\nabla^2\phi(YY^\top), XV^\top+VX^\top\right\rangle\right|\\
	\leq & 2L_1\left\|XX^\top-YY^\top\right\|_F\\
	&+\left\|\nabla^2\phi(XX^\top)-\nabla^2\phi(YY^\top)\right\|\left\|XV^\top+VX^\top\right\|_F\\
	&+\left\|\nabla^2\phi(YY^\top)\right\|\left\|(Y-X)V^\top+V(Y-X)^\top\right\|_F\\
	\leq& 4L_1\Gamma_F\left\|X-Y\right\|_F+4L_2\Gamma_F^2\left\|X-Y\right\|_F+2L_1\left\|X-Y\right\|_F\\
	=& ((4\Gamma_F+2)L_1+4\Gamma_F^2 L_2)\left\|X-Y\right\|_F.
\end{align*}
Therefore, $f(X)$ is $((4\Gamma_F+2)L_1+4\Gamma_F^2 L_2)$-Hessian Lipschitz within the ball $\{M: \|M\|_F\leq \Gamma_F\}$. Finally, it is easy to verify that the eigenvalues of $\P_{X,\eta} = (X^{T}X+\eta I_{n})\otimes I_{r}$ are between $\eta$ and $\Gamma_2^2+\eta$ within the ball $\{M:\norm {M}\leq \Gamma_2 \}$. Moreover, $\norm {\P_{X,\eta}-\P_{Y,\eta}}\leq \norm {X^\top X - Y^\top Y}\leq 2\Gamma_2\norm {X-Y}$. This completes the proof of this lemma.
\end{proof}

\bibliography{refsnew,reference}

\end{document}